\newcommand*\circled[1]{\tikz[baseline=(char.base)]{
            \node[shape=circle,draw,inner sep=2pt] (char) {#1};}}
\newtheorem*{rep@theorem}{\rep@title}
\newcommand{\newreptheorem}[2]{%
\newenvironment{rep#1}[1]{%
 \def\rep@title{#2 \ref{##1}}%
 \begin{rep@theorem}}%
 {\end{rep@theorem}}}
\theoremstyle{plain} 
\newtheorem{theorem}{Theorem}[section]
\newtheorem*{theorem*}{Theorem}
\newtheorem{lemma}[theorem]{Lemma}
\newtheorem{corollary}[theorem]{Corollary}          
\newtheorem{proposition}[theorem]{Proposition}              
\theoremstyle{definition} 
\newtheorem{definition}[theorem]{Definition}
\theoremstyle{remark}  
\newtheorem{remark}[theorem]{Remark}
\newtheorem{example}[theorem]{Example}
\newtheorem*{question*}{{Question}}
\newcommand{\Bord}{\mathrm{Bord}}
\DeclareMathOperator{\cl}{cl}  
\newcommand{\Cob}{\mathrm{Cob}}
\DeclareMathOperator{\Cat}{Cat}
\DeclareMathOperator*{\colim}{colim}
\newcommand{\Fun}{\mathrm{Fun}}
\DeclareMathOperator{\Map}{Map}
\DeclareMathOperator{\Maps}{Maps}  
\newcommand{\op}{\mathrm{op}}
\newcommand{\Alg}{\mathsf{Alg}}
\newcommand{\sTop}{\mathsf{sTop}}
\newcommand{\Top}{\mathsf{Top}}
\newcommand{\Tor}{\mathsf{Tor}}
\newcommand{\TQFT}{\mathsf{TQFT}}
\newcommand{\Vect}{\mathsf{Vect}}
\def\bDelta{\mathbf \Delta}
\def\cC{\mathcal C}\def\cD{\mathcal D}
\def\cF{\mathcal F}
\def\cJ{\mathcal J}
\def\cP{\mathcal P}
\def\cZ{\mathcal Z}
\def\C{\mathbb C}\def\D{\mathbb D}
\def\F{\mathbb F}
\def\N{\mathbb N}\def\P{\mathbb P}
\def\R{\mathbb R}
\def\S{\mathbb S}
\def\Z{\mathbb Z}
\newcommand*\Bm{\ensuremath{\boldsymbol m}}
\newcommand*\Bt{\ensuremath{\boldsymbol t}}
\newcommand*\BW{\ensuremath{\boldsymbol W}}
\newcommand*\Bxi{\ensuremath{\boldsymbol \xi}}
\definecolor{CSPcolor}{rgb}{0.0,0.5,0.75}	
\begin{document}


\pagestyle{empty}

\begin{minipage}{0.05\textwidth}
{\hspace{0.0cm}}
\tikz{\draw (0,0) -- (0, \textheight);}
\end{minipage}
\begin{minipage}{0.95\textwidth}
	\textbf{\LARGE Invertible Topological Field Theories}\\[1.5cm]
	\text{\Large \emph{Christopher Schommer-Pries}}
	
	\vspace{8cm}
	
	{\large \today}
\vfill

\end{minipage}

\cleardoublepage


\subsection*{Abstract}

A $d$-dimensional invertible topological field theory is a functor from the symmetric monoidal $(\infty,n)$-category of $d$-bordisms (embedded into $\R^\infty$ and equipped with a tangential $(X,\xi)$-structure) which lands in the Picard subcategory of the target symmetric monoidal $(\infty,n)$-category. We classify these field theories in terms of the cohomology of the $(n-d)$-connective cover of the Madsen-Tillmann spectrum. This is accomplished by identifying the classifying space of the $(\infty,n)$-category of bordisms with
	$\Omega^{\infty-n}MT\xi$ as an $E_\infty$-spaces. This generalizes the celebrated result of Galatius-Madsen-Tillmann-Weiss \cite{MR2506750} in the case $n=1$, and of B\"okstedt-Madsen \cite{MR3243393} in the $n$-uple case. 
 We also obtain results for the $(\infty,n)$-category of $d$-bordisms embedding into a fixed ambient manifold $M$, generalizing results of Randal-Williams \cite{MR2764873} in the case $n=1$. We give two applications: (1) We completely compute all extended and partially extended invertible TFTs with target a certain category of $n$-vector spaces (for $n \leq 4$), and (2) we use this to give a negative answer to a question raised by Gilmer and Masbaum in \cite{MR3100961}.


\clearpage
\pagestyle{plain}

\tableofcontents


\section{Introduction}

One of the original reasons that topological field theories (TFTs) were studied, though now far from the only or most important reason, is that TFTs provide a source of computable manifold invariants. Manifolds are inherently interesting and being able to distinguish them is commensurately useful. Manifold invariants can allow us to do this. One amusing invariant is the following $\Z/2\Z$-value invariant of connected smooth 4-manifolds: It gives the value one precisely on the smooth 4-sphere and otherwise the value zero. Together with its cousins, these invariants distinguish all smooth 4-manifolds. Of course, this absurd tautological invariant is useless without a way to compute it. 

Topological field theories provide manifold invariants which are \emph{computable}. These invariants satisfy a \emph{locality property}. Given a manifold $M$ we can imagine dividing it along a codimension-one submanifold $Y$:
\begin{equation*}
	M \cong M_1 \cup_Y M_2.
\end{equation*}
A topological field theory $\cZ$ assigns invariants $\cZ(M_1)$, $\cZ(M_2)$ to the two halves, and from these  we can recover the invariant $\cZ(M)$ of the whole manifold. In the simplest situation $\cZ(M_1)$ and $\cZ(M_2)$ would simply be numbers (say complex numbers) and we would obtain $\cZ(M) = \cZ(M_1) \cdot \cZ(M_2)$ as the product. The general situation is more complicated. In part this is motivated by physics, the historical examples of quantum Chern-Simons theory, and by a desire for the richest manifold invariants possible. In these cases the topological field theory assigns to $Y$ a vector space $\cZ(Y)$. Then $\cZ(M_2) \in \cZ(Y)$ is a vector and $\cZ(M_1)\in \cZ(Y)^*$ is a covector. The invariant for $M$ is obtained via pairing:
\begin{equation*}
	\cZ(M) = \langle \cZ(M_1), \cZ(M_2)\rangle.
\end{equation*}
Of course one also requires that these invariants satisfy an associativity property whenever $M$ is sliced along two parallel disjoint codimension-one submanifolds. 

Symmetric monoidal categories provide a convenient algebraic framework in which to encode these structures and requirements. The Atiyah-Segal axiomatization \cite{a88-tqft,segal} defines a topological field theory as a symmetric monoidal functor:
\begin{equation*}
	\cZ:\Cob_d \to \Vect,
\end{equation*}
where the source is the symmetric monoidal category $\Cob_d$ whose objects are closed compact $(d-1)$-dimensional manifolds, morphisms are equivalences classes of $d$-dimensional bordisms between these, the monoidal structure is given by the disjoint union of manifolds, and where the target is the category of vectors spaces with its standard tensor product monoidal structure. 

We will consider many variants of this notion. First we can replace $\Vect$ by any symmetric monoidal category of our choosing. Second we can require that our bordisms are equipped with a specified \emph{tangential structure}. Fix a fibration $\xi: X \to BO(d)$. An $(X, \xi)$-structure on a $d$-manifold $M$ is a lift $\theta$:
\begin{center}
\begin{tikzpicture}
		\node (LB) at (0, 0) {$M$};
		\node (RT) at (2, 1.5) {$X$};
		\node (RB) at (2, 0) {$BO(d)$};
		\draw [->, dashed] (LB) -- node [above left] {$\theta$} (RT);
		\draw [->>] (RT) -- node [right] {$\xi$} (RB);
		\draw [->] (LB) -- node [below] {$\tau_M$} (RB);
\end{tikzpicture}
\end{center}
where $\tau_M$ is the classifying map of the tangent bundle of $M$. There is a symmetric monoidal category of $\Cob_d^{(X,\xi)}$ where all of our manifolds and bordisms are equipped with $(X, \xi)$-structures.

In this work we will also be mainly concerned with \emph{extended topological field theories}, which were first introduced by Freed and Lawrence \cite{freed-higher,Lawrence} and subsequently studied by Baez-Dolan, Lurie, and many others \cite{bd95-hda, Bartlett:2015aa, Douglas:2013aa, Feshbach:2011aa, kapustin, kl01, lurie-tft, Schommer-Pries:2011aa, segal2010locality, MR3351570}. A traditional  
\begin{wrapfigure}{r}{5cm}
\includegraphics[width=5cm, height = 4.5cm]{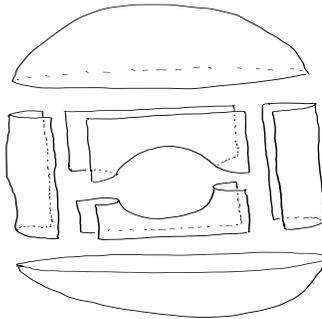}
\caption{A 2-categorical decomposition of a torus.}\label{wrap-fig:1}
\end{wrapfigure} 
topological field theory provides an invariant that is computable because slicing a manifold along codimension-one submanifolds allows us to realize it as a composite of more elementary  bordisms. 
In high dimensions even these elementary bordisms can be quite complicated\footnote{The simplest bordisms you can obtain by slicing along \emph{parallel} submanifolds correspond to arbitrary handle attachments in arbitrary $(d-1)$-manifolds. For example in dimension $d=4$, every knot in a 3-manifold gives a distinct elementary bordism.}. In an extended  topological field theory we are allowed to slice our manifold in multiple directions (the pieces will be manifolds with corners). 

Symmetric monoidal $n$-categories provide a convenient algebraic framework encoding our ability to slice our manifolds in $n$-different directions. These directions correspond to the $n$ different ways of composing morphisms in an $n$-category. As we increase the number of directions in which we can slice, the elementary pieces into which we decompose arbitrary manifolds become simpler and simpler. For example Figure~\ref{wrap-fig:1} shows a 2-categorical decomposition of the torus. Each piece in this decomposition is topologically a disk. 

In fact we will use the even more sophisticated framework of \emph{symmetric monoidal $(\infty,n)$-categories}. This framework is at once both more general and on better foundational grounds. There are several equivalent models for the theory of $(\infty,n)$-categories \cite{Barwick:2011aa, MR3109865}. 
In this work we use a topological variant of Barwick's theory of $n$-fold complete Segal spaces (see \cite{lurie-tft} and Sect.~\ref{sec:n_fold_segal}). Thus, as far as this paper is concerned, an $(\infty,n)$-category is simply a particular kind of $n$-fold simplicial space, a functor $(\Delta^\op)^{\times n} \to \Top$.

Any space may be regarded as a constant simplicial space and hence as an $(\infty,n)$-category. Thus any topological operad may correspondingly be regarded as  an operad in $(\infty,n)$-categories. For example the little cubes operads $E_p$ or $E_\infty$ can be regarded as operads in $(\infty,n)$-categories. A \emph{Symmetric monoidal} $(\infty,n)$-category is an $(\infty,n)$-category which is an $E_\infty$-algebra, and symmetric monoidal functor means $E_\infty$-homomorphism.

A $d$-dimensional extended topological field theory is  a symmetric monoidal functor:
\begin{equation*}
	\cZ: \Bord_{d;n}^{(X, \xi)} \to \cC
\end{equation*}
between symmetric monoidal $(\infty,n)$-categories. Here $\cC$ is an arbitrary target symmetric monoidal $(\infty,n)$-category, which means it is an $E_\infty$-algebra in certain kinds of $n$-fold simplicial spaces. $\Bord^{(X,\xi)}_{d;b}$ is a specific concrete $E_\infty$-algebra in $n$-fold simplicial spaces which we describe in detail in Section~\ref{sec:Bordncat} (see also \cite{lurie-tft,CalSch1509,Nguyen-thesis} for closely related treatments), and the extended field theory $\cZ$ is an $E_\infty$-map $\cZ: \Bord^{(X,\xi)}_{d;n} \to \cC$ between  $E_\infty$-$n$-fold simplicial spaces. 

Philosophically $\Bord_{d;n}^{(X,\xi)}$ is the $(\infty, n)$-category where: 
\begin{itemize}
	\item objects are compact $(d-n)$-manifolds embedded in $\R^\infty$;
	\item 1-morphisms are compact $(d-n+1)$-dimensional cobordisms embedded in $\R^\infty$;
	\item 2-morphisms are compact $(d-n+2)$-dimensional cobordisms between cobordisms embedded in $\R^\infty$;
	\item[] ...
	\item There is a space of $n$-morphisms which is the moduli space of $d$-dimensional cobordisms between cobordisms between cobordisms, etc. embedded in $\R^\infty$; 
\end{itemize}
Moreover all our manifolds are equipped with $(X,\xi)$-structures. 

The main theorem of this paper gives a way to classify a certain subclass of the topological field theories using methods from stable homotopy theory.
This builds on the context of the past decade, which has seen several significant advances in our methods and ability to classify topological field theories. In low dimensions $d$ and low category number $n \leq 2$ (and non $\infty$-categorically) one method is to use Morse theory, Cerf theory, and their generalizations to directly obtain a generators and relations presentation of the bordism $n$-category, see for example: \cite{MR1414088,MR1359651,Schommer-Pries:2011aa,PaperI,Bartlett:2014aa, Bartlett:2015aa,Pstragowski:2014aa,Juhasz:2014aa}. 
This gives a complete classification for arbitrary targets, but so far only works with classical $n$-categories ($(n,n)$-categories, not $(\infty,n)$-categories). 

Another method was developed by Hopkins and Lurie, re-envisioning the Baez-Dolan \emph{cobordism hypothesis}. See \cite{bd95-hda,lurie-tft,MR2994995,MR2742424}. This classification is valid for all $(\infty,n)$-category targets and works in all dimensions. However it only applies in the fully-local case where $d=n$. Moreover at the time of writing, a complete proof of the cobordism hypothesis has not yet appeared in the literature.  

In this paper we consider a subclass of the topological field theories, the so-called \emph{invertible topological field theories}, which we describe in the next section. This subclass can be regarded as topological field theories satisfying a certain property or equivalently as topological field theories taking values in a particular class of symmetric monoidal $(\infty, n)$-categories (the \emph{Picard} $\infty$-categories). The main theorem of this paper is valid for all category number $n$ and in all dimensions, and completely reduces the classification of invertible TFTs to approachable computations in stable homotopy. 

\subsection{Invertible topological field theories}

A topologicial field theory is \emph{invertible} if it sends every $k$-morphism of $\Bord^{(X, \xi)}_{d;n}$ ($1 \leq k \leq n$) to an invertible morphism in the target, and moreover if every object is sent to a $\otimes$-invertible object. This means that the TFT, with target $\cC$, factors through the maximal \emph{$\infty$-Picard} subcategory of $\cC$. 
An $\infty$-Picard category is a symmetric monoidal $(\infty,n)$-category $E$ in which all objects and morphisms are invertible. This can be defined in a model independent way as a symmetric monoidal $(\infty,n)$-category $E$ in which the shear map
\begin{equation*}
	(\otimes, \textrm{proj}_1): E \times E \to E \times E
\end{equation*}
is an equivalence. In this second definition it is clear that every object is $\otimes$-invertible, but in fact it also implies that every 1-morphism, 2-morphism, etc. is invertible. 

All $(\infty,n)$-categories satisfy a condition called \emph{completeness} (see Def.~\ref{def:univalent_segal_space}). When their morphisms are invertible, this forces the underlying $n$-fold simplicial space to be levelwise equivalent to a constant $n$-fold simplicial space\footnote{Hence $E$ is actuality a symmetric monoidal $(\infty,0)$-category.}. It follows that a symmetric monoidal $(\infty,n)$-category is Picard precisely if it is \emph{essentially constant} (all the multisimplicial maps are weak equivalences, see Section~\ref{sec:n_fold_segal}) and \emph{group-like}. In particular Picard $(\infty,n)$-categories are a model for connective spectra.

The inclusion of the constant $E_\infty$-$n$-fold simplicial spaces into all symmetric monoidal $E_\infty$-categories has a homotopical left adjoint given by the fat geometric realization (taken in each simplicial direction separately). When $n \geq 1$, the $E_\infty$-space $||\Bord_{d;n}^{(X, \xi)}||$ is automatically group-like and hence Picard. 
It then follows (see for example the discussion in \cite[Sect~2.5]{lurie-tft} or \cite[Sect~7]{Freed:2012aa}) that extended topological field theories valued in the Picard $\infty$-category $E$ are in natural bijection with 
\begin{equation*}
	\pi_0\Map_{E_\infty}(||\Bord_{d;n}^{(X, \xi)}||, E),
\end{equation*} 
that is homotopy classes of  $E_\infty$-maps from $||\Bord_{d;n}^{(X, \xi)}||$ to $E$. Equivalently this is the $E$-cohomology of the spectrum corresponding to $||\Bord_{d;n}^{(X, \xi)}||$. 

Our main theorem, which is described in detail in the next section, identifies $||\Bord_{d;n}^{(X, \xi)}||$ as an infinite loop space. Hence it reduces the classification of invertible topological field theories, in all dimensions $d$ and category number $n$ (and all tangential structure $(X, \xi)$) to computing the $E$-cohomology of a certain spectrum. In many cases this gives a complete solution to the classification of invertible field theories. This is important in part because
invertible TFTs occur often `in nature': 
\begin{itemize}
	\item Many bordism invariants such as characteristic numbers or the signature can be expressed as invertible topological field theories (which consequently gives rise to local formulas for these invariants). We will see some examples shortly and in Section~\ref{sec:application-classification}. One example of such a theory is classical Dijkgraff-Witten theory. This theory, which in dimension $d$ is parametrized by a finite group $G$ and a characteristic class $\omega \in H^d(BG; \C^\times)$, assigns data to oriented manifolds equipped with principal $G$-bundles. It assigns trivial 1-dimensional vector spaces to each $(d-1)$-manifold and to a closed oriented $d$-manifold $M$ with principal $G$-bundle $P$ it assigns
\begin{equation*}
	\langle [M], \omega(P) \rangle
\end{equation*}	
	the $\omega$-characteristic number of $P$.
	
	\item An invertible Spin theory based on the Arf invariant appears in Gunningham's work \cite{Gunningham:2012aa} on Spin Hurewicz numbers. 
	
	\item Invertible field theories govern and control anomalies in more general quantum field theories. See for example the work of Freed \cite{MR3330283}. 
	\item There are also recent real-world applications of invertible topological field theories to condensed matter physics. Specifically the low energy behavior of gapped systems experiencing \emph{short-range entanglement} are well-modeled by invertible topological field theories, see for example \cite{Freed:2014aa,Kapustin:2015aa,Freed:2016aa}.
	\item One approach to Quantum Chern-Simons theory describes it as an invertible 4-dimensional theory coupled together with a 3-dimensional boundary theory. See for instance \cite{MR2648901, walker-note-2006}
	\item Invertible field theories are also one of the key ingredients in the study of what are called `relative field theories' by Freed-Teleman \cite{MR3165462} and `twisted field theories' by Stolz-Teichner \cite{Stolz:2011aa}. 
	\item The author has recently shown that any extended TFT with category number $n \geq 2$ in which the value $\cZ(T^{d-1})$ of the $(d-1)$-torus in invertible, is automatically an invertible TFT \cite{Schommer-Pries:2015aa}.
\end{itemize}

Invertible topological field theories are completely governed by the cohomology of $||\Bord_{d;n}^{(X, \xi)}||$, and as such they could be regarded as significantly simpler than general TFTs. Despite this fact, invertible topological field theories demonstrate a rich mathematical structure, which is revealed through our classification result. For example let us return briefly to the classical $1$-categorical notion of topological field theory, valued in the category of vector spaces. Such theories  associate a vector space $\cZ(M)$ to each closed $(d-1)$-manifold $M$. In order for $\cZ$ to be invertible, each of these vector spaces must be one-dimensional (if this is the case, then $\cZ$ automatically assigns invertible linear maps to every $d$-dimensional bordism as well).

\begin{example} \label{ex:2dnon-local}
	As an illuminating concrete case, consider oriented topological field theories in dimension $d=2$ (category number $n=1$). It is well-known such TFTs with values in the 1-category of vector spaces $\Vect_1$ are in bijection with commutative Frobenius algebras over $\C$ \cite{MR1359651,MR1414088,MR2037238}. An invertible topological field theory will be a commutative Frobenius algebra whose underlying vector space is one-dimensional. Any one-dimensional $\C$-algebra is canonically identified with $\C$, as an algebra. 
	The comultiplication will be a map $\C \to \C \otimes \C \cong \C$, hence just a complex number. The counit is similarly multiplication by a complex number. To satisfy the Frobenius equations, however, these numbers must be inverses of each other. 

	Hence given any invertible complex number $\mu$ there exists a 1-dimensional commutative Frobenius algebra as specified in Figure~\ref{2DTQFT=CommFrobAlgFig}.
	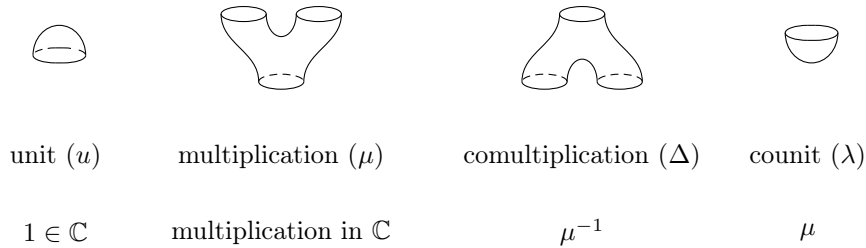
\begin{figure}[ht]
	\begin{center}
	 \begin{tikzpicture}
	 \node at (0, 1) {unit ($u$)};
	 \node at (3, 1) {multiplication ($\mu$)};
	\node at (7, 1) {comultiplication ($\Delta$)};
	\node at (10, 1) {counit ($\lambda$)};

	 \node at (0, 0) {$1 \in \C$};
	 \node at (3, 0) {multiplication in $\C $};
	\node at (7, 0) {$\mu^{-1}$};
	\node at (10, 0) {$\mu$};
	\begin{scope}[ xshift = 0cm, yshift = 0.25cm]
	\draw (.1, 2) -- (0,2);
	\draw (0,2) arc (270: 180: 0.3cm and 0.1cm);
	\draw [densely dashed] (0, 2.2) arc (90: 180: 0.3cm and 0.1cm); 
	\draw [densely dashed] (.1, 2.2) -- (0, 2.2);
	\draw (.1,2) arc (-90: 0: 0.3cm and 0.1cm);
	\draw [dashed] (0.1, 2.2) arc (90:0: 0.3cm and 0.1cm);
	\draw (-.3, 2.1) arc (180: 0: 0.35cm and 0.4cm);
	\end{scope}
	\begin{scope}[ xshift = 2.5cm, yshift = 0cm]
	\draw (0,2.9) ellipse (0.3cm and 0.1cm);
	\draw (1, 2.9) ellipse (0.3cm and 0.1cm);
	\draw (-0.3, 2.9) to [out = -90, in = 90] (0.2, 2) (0.8, 2) to [out = 90, in = -90] (1.3, 2.9);
	\draw (0.2, 2) arc (180: 360: 0.3cm and 0.1cm);
	\draw[densely dashed]  (0.2, 2) arc (180: 0: 0.3cm and 0.1cm);
	\draw (.3, 2.9) arc (180: 360: 0.2cm and 0.3cm);
	\end{scope}
	\begin{scope}[xshift = 4.5cm]
	\draw (2.5,2.9) ellipse (0.3cm and 0.1cm);
	\draw (1.7,2) arc (-180: 0: 0.3cm and 0.1cm);
	\draw [densely dashed] (2.3, 2) arc (0:180: 0.3cm and 0.1cm);
	\draw (1.7, 2) to [out = 90, in = -90] (2.2, 2.9) (2.8, 2.9) to [out = -90, in = 90] (3.3, 2);
	\draw (2.7,2) arc (180: 360: 0.3cm and 0.1cm);
	\draw [densely dashed] (3.3, 2) arc (0: 180: 0.3cm and 0.1cm); 
	\draw (2.3, 2) arc (180: 0: 0.2cm and 0.3cm);
	\end{scope}
	\begin{scope}[ xshift = 10cm, yshift = -0.25cm]
	\draw (.1, 3) arc (90: -90: 0.3cm and 0.1cm) -- (0, 2.8);
	\draw (0, 3) arc (90: 270: 0.3cm and 0.1cm) ;
	\draw (0, 3) -- (.1, 3);
	\draw (-.3, 2.9) arc (180: 360: 0.35cm and 0.4cm);
	\end{scope}
	\end{tikzpicture}
	\caption{A one-dimensional commutative Frobenius algebras}
	\label{2DTQFT=CommFrobAlgFig}
	\end{center}
	\end{figure}
\end{example}

This can be compared to the fully local $(d=2, n=2)$ case. For that we need a target 2-category. 
In \cite{kapranov19942} Kapranov and Voevodsky introduced a symmetric monoidal 2-category $\Vect_2$ of \emph{2-vector spaces}. It can be described as follows. The objects consist of natural numbers. The category of morphisms from $m$ to $n$ is the category of $m \times n$ matrices of vector spaces and matrices of linear maps. The horizontal composition is given by the usual matrix multiplication, but where one replaces the addition and multiplication of numbers with the direct sum and tensor product of vector spaces. Alternatively $\Vect_2$ can be regarded as a full sub-2-category of the Morita 2-category $\Alg$ of algebras, bimodules, and maps. It is the full subcategory on the objects of the form $\oplus_n \C$. 

$\Vect_2$ is a \emph{de-looping} of $\Vect_1$ in the sense that the 1-category of endomorphisms of the unit object of $\Vect_2$ is $\Vect_1$. The Picard sub-2-category of $\Vect_2$, is a delooping of the Picard subcatgory of $\Vect_1$, namely the connected delooping. It is a 2-category which up to isomorphism has one object, one 1-morphism, and $\C^\times$ many 2-morphisms. It is a 2-groupoid model of $K(\C^\times, 2)$.  

Kapranov and Voevodsky's construction can be repeated with $\Vect_2$ in place of $\Vect_1$. This yields a 3-category $\Vect_3$ of \emph{3-vector spaces} whose Picard subcategory models $K(\C^\times, 3)$. Repeating again yields a 4-category $\Vect_4$ of \emph{4-vector spaces} whose Picard subcategory models $K(\C^\times, 4)$, etc.  

\begin{example} \label{ex:2Dlocal}
	Given an invertible complex number $\lambda \in \C^\times$, there exists a fully local ($(d;n) = (2;2)$) invertible topological field theory valued in $\Vect_2$, known as the \emph{Euler field theory}. This filed theory is trivial on the first two layers of $\Bord_{2;2}^{SO(2)}$; it assigns to all 0- and 1-manifolds the respective unit object or identity morphism. Each 2-dimensional bordism $\Sigma$ will have a source $Y_0$ which is itself a 1-dimensional bordism. See the following illustrating example:
 \begin{center}
 \begin{tikzpicture}
\node (S) at (-1.5,2.5) {$\Sigma$};
\node (Y) at (-1.5,3.5) {$Y_0$};
\draw [->] (S) -- (-0.5, 2.5);
\draw [->] (Y.east) to [in=90] (0.5, 3.1);	 
\draw (0, 3) arc (90: -90: 0.3cm and 0.1cm) -- (0, 2);
\draw (1, 3) arc (90: 270: 0.3cm and 0.1cm) -- (1.1, 2.8) -- (1.1, 2) -- (0, 2);
\draw (1,3) -- (1, 2.8); \draw [densely dashed] (1, 2.8) -- (1, 2.2) -- (0,2.2);
\draw (.3, 2.9) arc (180: 360: 0.2cm and 0.3cm);
\draw (0, 3) -- (-.1, 3) -- (-.1, 2.2) -- (0, 2.2); 
 \end{tikzpicture}
 \end{center}
 The value of this field theory on $\Sigma$ is $\lambda^{e(\Sigma,Y)}$, where 
 \begin{equation*}
 	e(\Sigma,Y) = \chi(\Sigma) - \chi(Y)
 \end{equation*}
is the relative Euler characteristic. 
\end{example}

If we restrict a fully-local 2-dimensional TFT valued in $\Vect_2$ to the closed 1-manifolds and the 2-dimensional bordism between these, then we get a traditional 1-categorical TFT valued in vector spaces. Thus it makes sense to ask how the TFTs in Examples~\ref{ex:2dnon-local} and~\ref{ex:2Dlocal} compare? 
In Example~\ref{ex:2dnon-local} the value of the 2-dimensional TQFT on the closed surface $\Sigma_g$ of genus $g$ is $\mu^{1-g}$, while the Euler theory of Example~\ref{ex:2Dlocal} takes value
\begin{equation*}
	\lambda^{\chi(\Sigma_g)} = \lambda^{2-2g} = (\lambda^2)^{1-g}.
\end{equation*} 
In fact the Euler theory associated to $\lambda$ restricts to the commutative Frobenius algebra associated to $\mu = \lambda^2$. In particular the 2- and 1-dimensional part of the theory cannot distinguish between the Euler theories of $\lambda$ and $-\lambda$. The restriction map is at least 2-to-1. 

Using our main theorem, together with some computations of the cohomology of $||\Bord_{d;n}^{SO(d)}||$ which we carry out in Section~\ref{sec:application-classification}, we classify all the invertible field theories in a range of dimensions, as well as compute the associated restrictions maps. 
\begin{reptheorem}{thm:classify-invert-tqft}
For $1 \leq n \leq d \leq 4$ consider the symmetric monoidal functors 	
	\begin{equation*}
		Z: \Bord_{d;n}^{SO(d)} \to \Vect_n
	\end{equation*}
	landing in the Picard subcategory of $\Vect_n$, that is the invertible topological quantum field theories. Let $\TQFT_{d;n}^\textrm{invert}$ denote the set of natural isomorphism classes of such functors. These are 
are classified as follows:
	\begin{enumerate}
		\item When $d=1$ or $d=3$ (all allowed $n$) there is a unique such field theory up to natural isomorphism: the constant functor with value the unit object of $\Vect_n$.
		\item When $d=2$ and $n=1$ or $n=2$ such field theories are determined by a single invertible complex number. The restriction map
		\begin{equation*}
			\TQFT^\textrm{invert}_{2;2} \to \TQFT^\textrm{invert}_{2;1}
		\end{equation*}
		squares this number. 
		\item When $d=4$, then such field theories are determined by a pair of invertible complex numbers.
		The restriction maps
		\begin{equation*}
			\TQFT^\textrm{invert}_{4;3} \to \TQFT^\textrm{invert}_{4;2} \to \TQFT^\textrm{invert}_{4;1}
		\end{equation*}
		are isomorphisms (bijections). The restriction map $\TQFT^\textrm{invert}_{4;4} \to \TQFT^\textrm{invert}_{4;3}$ is given as follows:
		\begin{align*}
			\TQFT^\textrm{invert}_{4;4} &\to \TQFT^\textrm{invert}_{4;3} \\
			(\lambda_1,\lambda_2) & \mapsto ( \lambda_1^2, \frac{\lambda_2^3}{\lambda_1})
		\end{align*}
	\end{enumerate}
\end{reptheorem}

\begin{remark}
	This final restriction map is 6-to-1. If $\zeta$ is any sixth root of unity, then the fully local $(4;4)$-TQFTs corresponding to $(\lambda_1, \lambda_2)$ and to $(\zeta^3 \lambda_1, \zeta \lambda^2)$ have the same restriction to $(4;3)$-TQFTs. 
\end{remark}

The fully-local TFT associated to $(\lambda_1, \lambda_2)$ assigns to an oriented closed 4-manifold $W$ the value $\lambda_1^{e(W)} \lambda_2^{p_1(W)}$, where $e(W)$ is the Euler characteristic and $p_1(W)$ is the $1^\textrm{st}$-Pontryagin number. 

As a second application, we can use our classification result to answer an open problem posed by Gilmer and Masbaum in \cite{MR3100961}, which we now recall. 
In connection to studying anomalous 3-dimensional TFTs, Walker \cite{Walker-1991} considered a certain \emph{central extension} of the the 3-dimensional oriented bordism category. This is a new bordism category whose objects are `extended surfaces'  and whose morphisms are 3-dimensional `extended bordisms' (here `extended' is meant in Walker's terminology, not to be confused with our previous use and meaning of extended TFT). Briefly each surface $\Sigma$ is equipped with a choice of a bounding manifold $\tilde{\Sigma}$. That is $\partial{\tilde{\Sigma}} \cong \Sigma$ A morphism from $(\Sigma_0, \tilde{\Sigma}_0)$ to $(\Sigma_1, \tilde{\Sigma}_1)$ is a 3-dimensional bordism $M$ from $\Sigma_0$ to $\Sigma_1$ together with a choice of 4-manifold $W$ with $\partial W = \overline{\tilde{\Sigma_0}} \cup_{\Sigma_0} M \cup_{\Sigma_1} \tilde{\Sigma_1}$. Two such $W_0$ and $W_1$ are considered equivalent if $W_0 \cup_{\partial W_0} \overline{W_1}$ is null-cobordant. Thus for a given $M$ there are a $\Z$-torsor worth of equivalence classes of possible $W$'s (distinguished by their signature). 

If we fix a surface $\Sigma$, then for each diffeomorphism of $\Sigma$ we get a bordism from $\Sigma$ to itself. It is given by twisting the boundary parametrization of the cylinder bordism $\Sigma \times I$ by the given diffeomorphism. This bordism only depends on the diffeomorphism up to isotopy, and thus in in the bordism category we have a copy of the mapping class group $\Gamma(\Sigma)$ of the surfacce. If we lift $\Sigma$ to an extended surface and look at its automorphism group in the above category, then this fits into a central extension of groups:
\begin{equation*}
	1 \to \Z \to \tilde{\Gamma}(\Sigma) \to \Gamma(\Sigma) \to 1
\end{equation*}
For large genus $H^2(\Gamma; \Z) \cong \Z$ and Walker computed that this central extension corresponds to 4 times the generating extension, see also \cite{MR1329450}. 
In \cite{MR2096678} Gilmer identified an index 2 subcategory of Walker's category. This subcategory then induces the central extension of the mapping class group corresponding to twice the generator of $H^2(\Gamma; \Z)$. Gilmer and Masbaum ask whether it is possible to find an index four subcategory of Walker's category which would realize the fundamental central extension of the mapping class group \cite[Rmk.~7.5]{MR3100961}. 

Using our classification of topological field theories we can answer the Gilmer-Masbaum question negatively. See Section~\ref{sec:app-GM} for full details. 

\begin{theorem}\label{thm:GM-question}
	There is no $\Z$-central extension of the bordism category $\Cob_3^{SO(3)}$ which induces the fundamental central extension of the mapping class group (corresponding to a generator of $H^2(\Gamma_g; \Z) \cong \Z$, $g \geq 3$). In particular there is no index 4 symmetric monoidal subcategory of Walker's `extended bordism' category realizing this fundamental central extension.
\end{theorem}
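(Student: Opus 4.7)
The plan is to recast $\Z$-central extensions of $\Cob_3^{SO(3)}$ as invertible TFTs valued in $K(\Z,2)$, apply the main theorem to identify the classifying group, and then analyze the restriction map to $H^2(\Gamma_g;\Z)$. First, I would observe that a $\Z$-central extension of $\Cob_3^{SO(3)}$ as a symmetric monoidal category is equivalent data to a symmetric monoidal functor to the Picard $2$-category $B^2\Z$ (a model for $K(\Z,2)$); by the general framework recalled in the introduction these are classified by $\pi_0\Map_{E_\infty}(||\Cob_3^{SO(3)}||, K(\Z,2))$, which via the main theorem equals $H^2(\tau_{\geq 0}\Sigma MTSO(3);\Z)$. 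Using the Thom isomorphism $H^*(MTSO(3);\Z)\cong H^{*+3}(BSO(3);\Z)$ and the observation that the only negative homotopy of $\Sigma MTSO(3)$ (concentrated in degree $-2$ from the Thom class, since $\pi_{-3}MTSO(3)=\Z$ and $\pi_{-2}MTSO(3)=0$) does not contribute to $H^2$, this group is free cyclic of rank one: $H^4(BSO(3);\Z) = \Z$, generated by a class $c$ corresponding to the first Pontryagin class $p_1$.

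Next, I would analyze the restriction to the mapping class group. For $g \geq 3$, the oriented surface $\Sigma_g$ is an object of $\Cob_3^{SO(3)}$ whose automorphism space is homotopy equivalent to $B\Gamma_g$, and the inclusion into the full bordism category induces a restriction homomorphism
\begin{equation*}
\mathrm{res}\colon \Z = H^2(||\Cob_3^{SO(3)}||;\Z) \longrightarrow H^2(\Gamma_g;\Z) = \Z\cdot\lambda,
\end{equation*}
sending the generator $c$ to $N\lambda$ for some integer $N\geq 0$, with image exactly $N\Z\cdot\lambda$. From the constructions recalled in the main text, Walker's extension lies in the image and maps to $4\lambda$, while Gilmer's index-$2$ subextension lies in the image and maps to $2\lambda$; hence $N$ divides $2$. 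The crucial step is to rule out $N = 1$: I would identify $c$ with Gilmer's extension by computing $\mathrm{res}(c)$ directly via the parametrized Pontryagin-Thom / Madsen-Tillmann map associated to the universal $\Sigma_g$-bundle, in which $p_1$ of the stabilized vertical tangent bundle integrates over the fiber to the first Mumford-Morita-Miller class $\kappa_1 \in H^2(\Gamma_g;\Z)$; combining the Mumford relation $\kappa_1 = 12\lambda$ with the signature formula $\mathrm{sign}(W) = \tfrac{1}{3}\int p_1(W)$ and a parity analysis involving Meyer's signature cocycle pins down $N = 2$.

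With $N = 2$, the image of $\mathrm{res}$ is exactly $2\Z\cdot\lambda$, which does not contain the generator $\lambda$. Therefore no $\Z$-central extension of $\Cob_3^{SO(3)}$ induces the fundamental central extension of the mapping class group, and in particular no index-$4$ symmetric monoidal subcategory of Walker's extended bordism category can realize it (such a subcategory would necessarily produce a $\Z$-central extension of $\Cob_3^{SO(3)}$ restricting to $\lambda$). The main technical obstacle is the final step of pinning down $N = 2$ rather than $N = 1$, which requires careful tracking of normalization factors in the Madsen-Tillmann-Weiss construction together with the integer relations among $p_1$, $\kappa_1$, and $\lambda$ on $B\Gamma_g$; the details are the subject of Section~\ref{sec:app-GM}.
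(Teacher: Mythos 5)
Your overall strategy is the same as the paper's (recast a $\Z$-central extension as an invertible field theory valued in a model of $K(\Z,2)$, classify these by $H^2$ of the connective cover of $\Sigma MTSO(3)$, then compare with the known mapping-class-group restrictions of Walker's and Gilmer's extensions), but your computation of the classifying group is wrong, and the error sits exactly where the content of the theorem lies. You claim that passing from $\Sigma MTSO(3)$ to its connective cover does not affect $H^2$, so that the group is $H^4(BSO(3);\Z)\cdot u\cong\Z\langle p_1u\rangle$. It does affect it: the cofiber sequence $p_{\geq 1}\Sigma^3MTSO(3)\to\Sigma^3MTSO(3)\to H\Z$ of Cor.~\ref{cor:fibersequence} yields a short exact sequence
\begin{equation*}
0\to \Z\,p_1u\to H\Z^4\bigl(p_{\geq 1}\Sigma^3MTSO(3)\bigr)\to H\Z^5(H\Z)\cong\Z/6\Z\to 0,
\end{equation*}
and this extension is non-split; comparing with $\Sigma^4MTSO(4)$ (where $eu\mapsto 2\psi-\sigma$ and $p_1u\mapsto 3\sigma$) forces the middle group to be $\Z$ generated by a class $\rho$ with $p_1u=6\rho$ and $\sigma=2\rho$ (Thm.~\ref{thm:cohomology}). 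Your identification of the generator as $p_1$ is also internally inconsistent with your own next step: if the generator restricted to $B\Gamma_g$ as $\kappa_1=12\lambda$, then every $\Z$-central extension of the bordism category would restrict to a multiple of $12\lambda$, contradicting the existence of Walker's extension (restriction $4\lambda$) and Gilmer's (restriction $2\lambda$), both of which you place in the image.

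Relatedly, the step you yourself flag as the main obstacle, ruling out $N=1$, is precisely the step you have not supplied; the appeal to $\kappa_1=12\lambda$, the signature theorem, and Meyer's cocycle is not an argument, and with your (incorrect) generator it would more naturally produce $N=12$. In the paper this is resolved entirely at the spectrum level by the computation above: since $\sigma=2\rho$ and Walker's extension corresponds to $\sigma$ with known restriction $4\lambda$, the generator $\rho$ restricts to $2\lambda$, so the image of the restriction map is $2\Z\cdot\lambda$ and no extension induces $\lambda$; moreover an index-$4$ subcategory of Walker's category would have to correspond to $\tfrac{1}{2}\rho$, which does not exist in $\Z\rho$. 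So your final conclusion is the right one, but to make the proof work you must replace your assertion that the cover leaves $H^2$ unchanged by the actual computation of $H\Z^4(p_{\geq 1}\Sigma^3MTSO(3))$ carried out in Section~\ref{sec:cohom-of-MT-spectra}; once $p_1u=6\rho$ and $\sigma=2\rho$ are in hand, no auxiliary parity analysis is needed.
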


\subsection{Results}

Our main theorem identifies the homotopy type of $||\Bord_{d;n}^{(X, \xi)}||$ as an $E_\infty$-space: 
\begin{reptheorem}{thm:stable_main}
	Fix numbers $d$ and $n$ and a fibration $\xi: X \to BO(d)$, as in Section~\ref{subsec:symbord}, and let $\Bord_{d;n}^{(X, \xi)}$ be the corresponding symmetric monoidal $(\infty,n)$-category of bordisms. 
	There is a weak equivalence of $E_\infty$-spaces between $||\Bord_{d;n}^{(X, \xi)}||$ and $\Omega^{\infty-n} MT\xi$, where $MT\xi$ is the Madsen-Tillmann spectrum $MT\xi =X^{-\xi^* \gamma_d}$.
\end{reptheorem}

The case $n=1$ is a well-known theorem of Galatius-Madsen-Tillmann-Weiss \cite{MR2506750}, which lead to a solution of the Mumford conjecture. This celebrated result has received much attention. The original argument of GMTW only shows this equivalence at the space level, not as $E_\infty$-spaces. For the case $d=2$, the equivalence as infinite loop spaces can be deduced when combined with \cite{MR1856399}. For general $d$, an identification as infinite loop spaces appears in \cite{Nguyen:2015aa}, using Segal's $\Gamma$-space approach for infinite loop spaces. 

Several variants of the $n=1$ case have appeared subsequently, each one improving, streamlining, and simplifying the original proof \cite{Ayala:2008aa, MR2653727, MR2764873}. A similar result was obtained in \cite{MR3243393} for a different multisimplicial space corresponding to an `$n$-uple' category. The key difference is that the bordism $(\infty,n)$-category we consider here satisfies an additional \emph{globularity} condition (condition (A2) in \cite[Def.~2.1.37]{lurie-tft}). This is the source of most of the work in Section~\ref{sec:real_bordn}.
A proof of the cobordism hypothesis would also establish the case $n=d$, see \cite[Sect~2.5]{lurie-tft} and \cite[Sect~7]{Freed:2012aa}.

We also obtain a non-symmetric monoidal variant extending results of Randal-Williams \cite{MR2764873}. Fix a finite dimensional manifold $M$, possibly with boundary. Then we consider the $(\infty,n)$-category $\Bord_{d;n}^{(X,\xi)}(M)$ consisting of bordisms embedded into $M \times \R^n$ and disjoint from the boundary.  We have:
\begin{theorem*}[Thm~\ref{thm:first_realization_thm}, Cor.~\ref{cor:first_realization_thm}]
	If $M$ is \emph{tame} (see  Def.~\ref{def:tame_manifold}) then we have a weak equivalence
	\begin{equation*}
		||\Bord_{d;n}^{(X, \xi)}(M)|| \simeq \Gamma(M, \partial M; Th^\textrm{f.w.}_M(\xi^*_M \gamma_d^\perp))
	\end{equation*}  
	where $Th^\textrm{f.w.}_M(\xi^*_M \gamma_d^\perp))$ is a bundle associated to the the tangent bundle of $M$ with fiber $Th(\xi^* \gamma_d^\perp)$ a Thom space over $X$ (see Section~\ref{sec:emb_thom}). The right-hand side denotes sections which restrict to the base-point section on $\partial M$. 
	
	In the case $M = D^p$, we get a weak equivalence 
	\begin{equation*}
		||\Bord_{d;n}^{(X, \xi)}(D^p)|| \simeq \Omega^p Th(\xi^* \gamma_d^\perp)
	\end{equation*}
	This is an equivalence of $E_p$-spaces.
\end{theorem*}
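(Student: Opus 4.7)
The plan is to construct a parametrized Pontryagin--Thom (scanning) map from $||\Bord_{d;n}^{(X,\xi)}(M)||$ to the section space and to prove it is a weak equivalence by reducing to contractible pieces and then gluing along a good cover of $M$ via a homotopy cosheaf argument. This adapts the strategy pioneered by Galatius--Randal-Williams \cite{MR2764873} to the $(\infty,n)$-categorical setting.

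First I would define a natural scanning map
\[ s_M : ||\Bord_{d;n}^{(X,\xi)}(M)|| \longrightarrow \Gamma(M, \partial M; Th^{\textrm{f.w.}}_M(\xi_M^* \gamma_d^\perp)). \]
Given a family of $d$-submanifolds $W \subset M \times \R^n$ with $(X,\xi)$-structure, the value of the section at $m \in M$ is produced by the usual Thom--Pontryagin collapse of the intersection of $W$ with a tubular neighborhood of $\{m\} \times \R^n$; the $(X,\xi)$-structure on $W$ provides the lift needed to land in $Th(\xi^*\gamma_d^\perp)$ fiberwise. The construction is defined at each multisimplicial level, intertwines all face and degeneracy maps, and so descends to a map out of the fat realization.

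The heart of the argument is to show that both sides define homotopy cosheaves in $M$ (with the prescribed boundary condition) with respect to a suitable class of open covers. For the section space this is automatic. For the bordism side one must prove a Seifert--van-Kampen type property: for a standard open decomposition $M = U \cup V$, the square obtained by applying $||\Bord_{d;n}^{(X,\xi)}(-)||$ to $U \cap V$, $U$, $V$, $M$ is homotopy cocartesian. This reduces to showing that the operation of enlarging the ambient manifold behaves as a homotopy colimit along open covers, which in turn rests on a careful isotopy-extension argument together with the fact that fat realization commutes with the colimits in question. Combined with the base case of a contractible open (where both sides directly collapse to an iterated loop space of $Th(\xi^*\gamma_d^\perp)$ by a parametrized submanifold model in $\R^p \times \R^n$), the tameness hypothesis supplies a good cover along which an induction terminates, establishing the equivalence.

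Finally, for $M = D^p$ the section space trivializes (using a trivialization of $TD^p$ and the contractibility of $D^p$) to $\Omega^p Th(\xi^*\gamma_d^\perp)$. The $E_p$-structures on both sides are compatible: on the bordism side it is juxtaposition of embedded bordisms along the $p$ coordinate directions of $D^p \subset \R^p$, and on the loop-space side it is the standard little-$p$-cubes action; the scanning map is manifestly $E_p$-equivariant since it is defined by a local (cube-wise) collapse construction. The main technical obstacle I anticipate is the cosheaf step: fat geometric realization of $n$-fold complete Segal spaces does not commute with arbitrary homotopy colimits, and verifying that the open-cover colimits at issue here are of a form that is preserved is precisely where the globularity condition on $\Bord_{d;n}^{(X,\xi)}$ (condition (A2) of \cite[Def.~2.1.37]{lurie-tft}) must be brought to bear, paralleling the technical core of Section~\ref{sec:real_bordn}.
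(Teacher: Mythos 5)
Your scheme never engages with what is actually the hard content of this statement, and its induction is circular at the base case. The comparison of $\psi_d^{(X,\xi)}(M\times\R^n)$ with the section space $\Gamma(M,\partial M; Th^\textrm{f.w.}_M(\xi^*_M\gamma_d^\perp))$ is not the issue: that is quoted from Randal-Williams (Theorem~\ref{thm:h-prin} together with Theorem~\ref{thm:space_to_thom_space}). The real work is identifying the fat realization of the $n$-fold Segal space $\Bord_{d;n}^{(X,\xi)}(M)$ --- with its walls $t^i_j$ and cylindricality/globularity constraints --- with the wall-free space $\psi_d^{(X,\xi)}(M\times\R^n)$ of all embedded submanifolds. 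In your good-cover induction the ``base case of a contractible open,'' where you assert that $||\Bord_{d;n}^{(X,\xi)}(U)||$ collapses to an iterated loop space, is precisely the $M=D^p$ case of the theorem being proved, so nothing has been reduced. The paper instead peels off one categorical direction at a time (Theorem~\ref{thm:first_realization_thm} for $i=1$, then iterates), using the zig-zag of variant bordism categories of Figure~\ref{fig:daigramofBordcats2}: straightening and sliding deformations give the levelwise equivalences, and the one step that is not a levelwise equivalence is handled by viewing the relevant simplicial spaces as nerves of topological posets over $(\R,\le)$ (Lemma~\ref{lem:PosetRealization}), enlarged by the space $\cl(\R)$ of closed subsets to manufacture an open cover to which the gluing lemma applies. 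Tameness enters exactly there: compressing $M$ into a compact $K$ makes $p(c(W)\cap K\times\R\times L)$ a \emph{proper} closed subset of $\R$, so that $||u||$ is exhibited as a retract of a weak equivalence. It is not used to produce a finite good cover, and nothing in the definition of tame obviously yields one; likewise your homotopy-cocartesian claim for $||\Bord_{d;n}^{(X,\xi)}(-)||$ on $M=U\cup V$ is an unproved and genuinely difficult assertion, and the globularity condition plays no role of the kind you assign to it.

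Separately, your $E_p$ claim is false as stated: the paper stresses that the scanning map, though a weak equivalence, is \emph{not} an $E_p$-algebra homomorphism --- ``manifestly $E_p$-equivariant'' is exactly the trap, since any fixed scanning exponential is incompatible with the rectilinear rescalings in the little-cubes action. Repairing this is the whole point of Section~\ref{sec:Ep-scanning}: one enlarges $\psi_d^{(X,\xi)}(D^p\times\R^m)$ to $\Psi_d^{(X,\xi)}(D^p\times\R^m)$ by adjoining scanning functions, obtaining the zig-zag of $E_p$-equivalences of Theorem~\ref{thm:E_pSpaces}, which is then combined with Theorem~\ref{thm:first_realization_thm} to give Corollary~\ref{cor:first_realization_thm}. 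Without this device (or a substitute) the final sentence of your proposal, that the equivalence $||\Bord_{d;n}^{(X,\xi)}(D^p)||\simeq\Omega^p Th(\xi^*\gamma_d^\perp)$ is one of $E_p$-spaces, is unsubstantiated.
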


\noindent In the above `tame' is a technical condition which, for example, is satisfied by any manifold which has a finite handle decomposition.

Durring this work we have endeavored to incorporate as many improvements and simplifications to the proof of the GMTW theorem as possible. Some improvement which are unique to our treatment are the following:
\begin{itemize}
	\item The construction of the bordism category involves variants on a topological space, $\psi_d(M)$, of embedded submanifolds. As a set $\psi_d(M)$ consists of all those closed subsets $W \subseteq M$ which are smoothly embedded manifolds, disjoint from the boundary of $M$. The topology on $\psi_d(M)$ has been notoriously delicate to construct. For example in \cite[Sect.~2.1]{MR2653727} it takes slightly more than a page to define.  

	Our construction of the topology on the space of embedded manifolds is done via the theory of plots, see Section~\ref{sec:space_of_man}. This allows for a much shorter definition for which it is immediate to see that the various deformations carried out in Section~\ref{sec:real_bordn} are continuous. We show that our topology agrees with the topology constructed in \cite{MR2653727} in Theorem~\ref{thm:spaceofmanifoldcomparison} in an Appendix. Note, however, that the proof of our main theorem doesn't rely this comparison. 
	\item In the course of the proof of the GMTW theorem one is lead to compare the space $\psi_d(D^p \times \R^n)$ and the $p$-fold loop space $\Omega^p\psi_d(\R^{p+1})$. A choice of Segal's `scanning map' gives a comparison map:
	\begin{equation*}
		\psi_d(D^p \times \R^n) \to \Omega^p\psi_d(\R^{p+1}).
	\end{equation*}
	Moreover both $\psi_d(D^p \times \R^n)$ and $\Omega^p\psi_d(\R^{p+1})$ are naturally $E_p$-spaces, the latter with its usual $E_p$-space structure, and former because $\psi_d(-)$ is covariant for closed codimension-one embeddings. However, while the scanning map is a weak equivalence it is \emph{not} compatible 
with the $E_p$-algebra structure. It is not an $E_p$-algebra homomorphism\footnote{This is perhaps one reason that the GMTW theorem was originally only proven at the space level and not as infinite loop spaces.}. 	

	In section~\ref{sec:Ep-scanning} we show how to overcome this to get the desired $E_p$-equivalence. We introduce a larger space $\Psi_d(D^p \times \R^n)$ where a point includes a submanifold and what we call a \emph{scanning function}. This is additional data used to construct an alternative scanning map. We end up with a zig-zag
	\begin{equation*}
		\psi_d(D^p \times \R^n) \stackrel{\sim}{\leftarrow} \Psi_d(D^p \times \R^n) \stackrel{\sim}{\to} \Omega^p \psi_d(\R^{p+n})
	\end{equation*}
of $E_p$-algebra homomorphisms which are weak equivalences. See Thm.~\ref{thm:E_pSpaces}. 
		
	\item  Our proof most closely resembles the one in \cite{MR2653727} (but see also  \cite{Ayala:2008aa} and \cite{MR3243393}). In these proofs the authors rely on a technical result of Graeme Segal \cite[A.1]{MR516216} about \'etale maps of simiplicial spaces. The simplical space corresponding to the bordism category has parameters from the space $\R$ of real numbers. In order to apply Segal's lemma, the authors must replaces their simplicial space with a new simplicial space in which the standard topology on $\R$ is replaced with the discrete topology. This drastically changes the underlying $(\infty,1)$-category, and so one must prove that this nevertheless doesn't effect the homotopy type of the geometric realization.
	
	 We give a different argument (see Section~\ref{subsec:arrow3}) which is more elementary and avoids using Segal's lemma. We avoid changing the topology of the simplicial spaces involved, which gives a more direct comparison, decreasing the number of zig-zags of weak equivalences.
	  This yields a somewhat streamlined comparison. 
\end{itemize}

\subsection{Overview}

In section~\ref{sec:space-of-manifolds} we introduce the \emph{method of plots} which allows us to define many interesting topological spaces. Briefly for a set we can specify a collection of set-theoretic maps, called plots, from test topological spaces to the set. This specifies a topology, the finest making the plots continuous. This is used to define a topology on the set of closed subsets of a fixed topological space, and also on the set $\psi_d(M)$ of submanifolds of $M$. 

In section~\ref{sec:scanning} we review Segal's method of scanning, and the relationship between $\psi_d(\R^n)$ and Thom spaces. Then in section~\ref{sec:Ep-scanning} we modify the scanning map to show that $\psi_d(D^p \times \R^n) \simeq \Omega^p\psi_d(\R^{p+n})$ as $E_p$-algebras. 

In section~\ref{sec:Bordncat} we review $n$-fold Segal spaces (the model of $(\infty,n)$-categories which we employ) and write down precisely the $n$-fold simplicial space $\Bord_{d;n}^{(X, \xi)}$ which is the bordism $(\infty,n)$-category. In
section~\ref{sec:real_bordn} we prove our main theorem, which identifies the weak homotopy type of the geometric realization $||\Bord_{d;n}^{(X, \xi)}||$.

Section~\ref{subsec:example_applications} is devoted to applications. We focus on the oriented case ($X = BSO(d)$). We compute the cohomology of certain connected covers of $MTSO(d)$ in low dimensions. We then use this to prove Theorem~\ref{thm:classify-invert-tqft}, classifying invertible TFTs in low dimensions, and Theorem~\ref{thm:GM-question}, which answers an open question posed by Gilmer and Masbaum \cite[Rmk.~7.5]{MR3100961}.

We also include three appendices. Appendix~\ref{app:comparison} gives a comparison between our topology on the space $\psi_d(M)$ of embedded manifolds and the topology constructed by Galatius--Randal-Williams in \cite[Sect.~2.1]{MR2653727}. We show in Theorem~\ref{thm:spaceofmanifoldcomparison} that these two topologies coincide. In Appendix~\ref{app:embedded_classifying}, show that with our topology $\psi_d(D^\infty)$ is a disjoint union of classifying spaces $BDiff(W)$ taken over diffeomorphism classes of compact closed $d$-manifolds $W$. This justifies regarding it as a moduli space of $d$-manifolds. Finally in Appendix~\ref{app:lowhomotopy} we comute a few low-dimensional homotopy groups of $MTSO(d)$ for $d \leq 4$. These homotopy groups and more were computed in \cite{MR3356279}, but since knowledge of these groups is used in section~\ref{subsec:example_applications} this appendix is provided for the sake of the reader. 

\subsection{Acknowledgements}

We would like to give special thanks Dan Freed and Peter Teichner for their continued support and interest in this work, as well as Oscar-Randal Williams, S\o ren Galatius, and David Ayala for numerous conversations. I would also like to thank Stephan Stolz, Andr\'e Henriques, and Claudia Scheimbauer for useful discussions regarding this work.  

\section{The Space of Embedded Manifolds} \label{sec:space-of-manifolds}

\subsection{Topological Spaces via Plots}

Topological spaces come to us in many ways. Typically we begin with simple familiar spaces, such as vector spaces or other simple manifolds, and we get new spaces by either gluing together those that we know (e.g. CW-complexes, manifolds) or by passing to subspaces (e.g. the Hawaiian earrings, cantor sets, the topologist's sine curve, etc.). Here we will describe another method, the method of plots. 

The idea is to start with a collection of `test objects', which are known topological spaces, together with a collection of set-theoretic maps from these into a given set $X$. These maps, which we call \emph{plots}, then induce a maximal topology on $X$ in which they are continuous.

\begin{definition}\label{def:plots}
	Let $X$ be a set. A \emph{collection of plots} for $X$ is a collection of pairs $\cJ = \{ (J, p) \}$ consisting of topological spaces $J$ and set-theoretic maps $p: J \to X$, which we call \emph{plots}. The collection $\cJ$ determines a topology $\tau_\cJ$ on $X$, the \emph{plot topology}. A subset $U \subseteq X$ is open in $\tau_\cJ$ if and only if $p^{-1}(U) \subseteq J$ is open for all plots; $\tau_\cJ$ is the finest topology on $X$ making all the plots continuous.  
\end{definition}

Generally we will impose further properties on our collections of plots. For example in \cite{MR0300277} a \emph{quasi-topological space} is defined to be a set with a collection of plots such that the spaces $J$ range over the compact Hausdorff spaces, and such that the collection of plots:
\begin{itemize}
	\item contains the constant maps $J \to \{x\} \to X$;
	\item is closed under precomposition with continuous maps, i.e. if $f: J' \to J$ is continuous and $p: J \to X$ is a plot,  then $p f$ is a plot;
	\item if $J$ is the disjoint union of two closed sets $J = J_1 \cup J_2$, then $p: J \to X$ is a plot if and only if $p|_{J_1}$ and $p|_{J_2}$ are plots; and
	\item if $f:J \to J'$ is surjective, then $p: J' \to X$ is a plot if and only if $pf$ is a plot. 
\end{itemize}


\noindent We will primarily be concerned with the situation in which the source of each plot is a smooth manifold.  
Given a topological space $Y$, there is a canonical collection of plots $\cP_Y$ given by taking the plots to be all continuous maps from $U$ into $Y$, with $U$ a smooth manifold. The identity map of sets gives a canonical continuous map $(Y, \tau_{\cP_Y}) \to Y$. This is a homeomorphism if and only if $Y$ is $\Delta$-generated. 
 See \cite{dugger_dgs, MR0300277} for general properties of $\Delta$-generated spaces. 

Similarly a \emph{diffeology}\cite{MR3025051}  on $X$ is a collection of plots $\cD$ where the $J$ range over open subsets of $\R^n$ (for all $n$), such that the collection of plots:
\begin{itemize}
	\item contains the constant maps $J \to \{x\} \to X$;
	\item is closed under precomposition with smooth maps: if $f:V \to U$ is smooth and $p:U \to X$ is a plot, then $pf: V \to X$ is a plot; 
	\item If $U = \cup_i U_i$ is a union of open sets, then $p:U \to X$ is a plot if and only if each $U_i \to X$ is a plot. 
\end{itemize}
Spaces equipped with a diffeology are called \emph{diffeological spaces} and are one of many possible notions of `generalized smooth space'. The resulting topology $\tau_\cD$ was studied in \cite{Christensen:2013aa}. 

\begin{example}[{\cite[example 3.2]{Christensen:2013aa}}]
	Given a smooth manifold $M$, the standard diffeology on $M$ is the collection of plots consisting of all smooth maps $U \to M$. The resulting plot topology is the standard topology on $M$. 
\end{example}

\begin{example}
	Let $M$ and $N$ be smooth manifolds of the same dimension such that $M \subseteq N$. Let $Emb(M,N)$ be the set of (open) smooth embeddings. Then we equip $Emb(M,N)$ with a diffeology as follows: a map $p:U \to Emb(M,N)$ is a plot if the  adjoint map $\tilde{p}:U \times M \to N$ is a smooth map such that for each $u \in U$, 
	\begin{equation*}
		\tilde{p}(u, -): M \to N
	\end{equation*}
	is an (open) embedding. 
\end{example}

If a space is defined by a collection of plots, then it is easy to detect continuous maps out of it; in some cases we can detect some continuous maps into it. 

\begin{lemma}
	Let $(X, \cP)$ and $(X', \cP')$ be a sets with collections of plots $\cP$ indexed on the same spaces $J$, and let $Z$ be a topological space. 
	\begin{enumerate}
		\item A map $f: X \to Z$ is continuous for the plot topology on $X$ if and only if for each plot $p: J \to X$, the composite $fp: J \to Z$ is continuous. 
		\item The plots $p: J \to X$ are continuous for the plot topology; if a set-theoretic map $f: X \to X'$ sends plots to plots (i.e. for each plot $(p: J \to X) \in \cP$ we have $(fp: J \to X') \in \cP'$), then it is continuous. \qed
	\end{enumerate}
\end{lemma}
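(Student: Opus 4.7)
The plan is to argue both parts directly from the defining property of the plot topology: it is the finest topology on $X$ making every plot continuous, equivalently $U \subseteq X$ is open exactly when $p^{-1}(U)$ is open in $J$ for every plot $(J,p)$.

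For part (1), the forward direction is immediate: if $f \colon X \to Z$ is continuous and $p \colon J \to X$ is a plot, then $p$ is continuous for the plot topology (this is part (2)), so the composite $fp$ is continuous. For the converse, I would take an arbitrary open set $V \subseteq Z$ and verify that $f^{-1}(V)$ is open in the plot topology. By the defining criterion, this amounts to showing that $p^{-1}(f^{-1}(V))$ is open in $J$ for every plot $p$. But $p^{-1}(f^{-1}(V)) = (fp)^{-1}(V)$, and this is open by hypothesis since $fp \colon J \to Z$ is continuous.

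For part (2), the continuity of each plot $p \colon J \to X$ is essentially built into Definition~\ref{def:plots}: if $U \subseteq X$ is open in $\tau_{\cJ}$ then by definition $p^{-1}(U)$ is open in $J$, which is exactly continuity of $p$. For the second statement, suppose $f \colon X \to X'$ carries plots to plots. Then for each plot $p \colon J \to X$ in $\cP$, the composite $fp \colon J \to X'$ is by assumption a plot in $\cP'$, hence continuous for the plot topology $\tau_{\cP'}$ on $X'$ by what we just proved. Applying part (1) with $Z = (X', \tau_{\cP'})$ then yields continuity of $f$.

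There is no real obstacle here; both assertions are essentially unwindings of the definition of the plot topology as a final (i.e., coinduced) topology, analogous to the standard universal property of quotient topologies. The only thing to be mildly careful about is keeping track of which collection of plots is being used on each side in part (2), which is why the statement assumes $\cP$ and $\cP'$ are indexed on the same spaces $J$ so that the phrase ``sends plots to plots'' is unambiguous.
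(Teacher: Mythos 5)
Your argument is correct, and it is exactly the routine unwinding of the final-topology definition that the paper itself treats as immediate (the lemma is stated with no written proof beyond \qed). Nothing further is needed.
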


\noindent Note that in general there will also be continuous maps from $X$ to $X'$ which do not send plots to plots.

\subsection{The space of closed subsets}\label{sec:space_of_closed_sets}

Let $Y$ be a topological space and let $\cl(Y)$ denote the set of closed subsets of $Y$. We will define a collection of plots (and hence a topology) on the set $\cl(Y)$. The source of our plots will be always be  the real line $\R$.

Given $f: \R \to \cl(Y)$ a set-theoretical map consider the \emph{graph}
\begin{equation*}
	\Gamma(f) = \{ (r,y) \in \R \times Y \; | \; y \in f(r) \subseteq Y \}.
\end{equation*}
We will declare that a map $p: \R \to \cl(Y)$ is a plot if the graph $\Gamma(p)$ is a closed subset of $\R \times Y$. Regard $\cl(Y)$ as a topological space with the plot topology for this collection of plots. 

\begin{lemma}\label{lem:mapsarepoltsforclosedsubsets}
	If $Y$ be a locally compact Hausdorff space then every continuous map $p: \R \to \cl(Y)$ is a plot. 
\end{lemma}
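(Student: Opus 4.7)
The plan is to show that $\Gamma(p) \subseteq \R \times Y$ is closed by producing, for each $(r_0, y_0) \notin \Gamma(p)$, an open box around $(r_0, y_0)$ disjoint from $\Gamma(p)$. The key auxiliary observation is that for every compact set $K \subseteq Y$, the ``avoidance set''
\begin{equation*}
    W_K = \{ C \in \cl(Y) : C \cap K = \emptyset \}
\end{equation*}
is open in the plot topology on $\cl(Y)$. I will prove this first, then feed it into the usual local-compactness separation argument.

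To see $W_K$ is open, I need, by the definition of the plot topology, to check that $q^{-1}(W_K) \subseteq \R$ is open for every plot $q: \R \to \cl(Y)$. Its complement is $\{ r \in \R : q(r) \cap K \neq \emptyset \}$, which is precisely the image of $\Gamma(q) \cap (\R \times K)$ under the projection $\pi_1 : \R \times Y \to \R$. Since $q$ is a plot, $\Gamma(q)$ is closed in $\R \times Y$, so $\Gamma(q) \cap (\R \times K)$ is closed in $\R \times K$. Because $K$ is compact, the projection $\R \times K \to \R$ is a closed map (it is proper), so the image is closed in $\R$. Hence $q^{-1}(W_K)$ is open and $W_K$ is open in $\cl(Y)$.

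Now suppose $p : \R \to \cl(Y)$ is continuous and let $(r_0, y_0) \notin \Gamma(p)$, so $y_0 \notin p(r_0)$. Since $Y$ is locally compact Hausdorff and $p(r_0)$ is closed, I can choose an open neighborhood $V$ of $y_0$ with $\overline{V}$ compact and $\overline{V} \cap p(r_0) = \emptyset$. Then $p(r_0) \in W_{\overline{V}}$, and by continuity of $p$ together with the first step, $U := p^{-1}(W_{\overline{V}})$ is an open neighborhood of $r_0$ in $\R$. For every $r \in U$ we have $p(r) \cap \overline{V} = \emptyset$, so $p(r) \cap V = \emptyset$; hence $U \times V$ is an open neighborhood of $(r_0, y_0)$ disjoint from $\Gamma(p)$. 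Therefore $\Gamma(p)$ is closed and $p$ is a plot.

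The only nontrivial ingredient is the closed-projection fact used to show $W_K$ is open; everything else is a direct separation argument. I do not expect any real obstacle here beyond keeping the quantifiers (continuity is tested against \emph{all} plots, not just $p$ itself) straight.
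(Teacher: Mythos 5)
Your proof is correct and follows essentially the same route as the paper: the same key step that $W_K=\{C\in\cl(Y):C\cap K=\emptyset\}$ is open (proved identically via closedness of $\Gamma(q)$ and properness of the projection $\R\times K\to\R$), followed by the same local-compactness separation of $y_0$ from the closed set $p(r_0)$. The only difference is cosmetic: the paper concludes by contradiction with a limit-point/sequence argument, whereas you directly exhibit an open box $U\times V$ in the complement of $\Gamma(p)$.
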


\begin{proof}
	First we consider some open subsets of $\cl(Y)$. Let $K \subseteq Y$ be a compact subset and consider
	\begin{equation*}
		M(K) = \{ A \in \cl(Y) \; |\; A \cap K = \emptyset \}. 
	\end{equation*}
	Let $p: \R \to \cl(Y)$ be a plot, thus $\Gamma(p) \subseteq \R \times Y$ is closed. Since $Y$ is Hausdorff $K$ is closed and hence $\Gamma(p) \cap \R \times K$ is a closed subset of $\R \times K$. Since $K$ is compact, the projection $\R \times K \to \R$ is proper, and hence the image of  $\Gamma(p) \cap \R \times K$ in $\R$, which is precisely the complement of $p^{-1}(M(K))$, remains closed. It follows that $M(K)$ is open in $\cl(Y)$. 
	
	Now suppose that $g: \R \to \cl(Y)$ is continuous. We wish to show that $\Gamma(g)$ is closed. Let $(r,y) \in \R \times Y$ be a limit point of $\Gamma(g)$. 	We wish to show that $(r,y) \in \Gamma(g)$. Suppose the contrary, that  $(r,y) \not\in \Gamma(g)$. This means that $y \not\in g(r) \subseteq Y$. Since $g(r)$ is closed and $Y$ is locally compact Hausdorff, we can separate $g(r)$ and $y$ by a compact neighborhood. Specifically there exists a compact subset $K \subseteq Y$ such that $K \cap g(r) = \emptyset$ and an open subset $V \subseteq Y$ such that $y \in V \subseteq K$. Note that by construction $r \in g^{-1}(M(K))$. 

Next for each $n \in \N$ we may consider the open subset $B_{\frac{1}{n}}(r) \times V$, which is an open neighborhood of $(r,y)$. Since $(r,y)$ was assumed to be a limit point of $\Gamma(g)$, there exists $(r_n, y_n) \in \Gamma(g)$ with $(r_n, y_n) \in B_{\frac{1}{n}}(r) \times V$. Thus, since $y_n \in V \subseteq K$, we have $r_n \in (g^{-1}(M(K)))^c$. By construction $r_n \to r$ converges, and since $g$ is continuous, 	$(g^{-1}(M(K)))^c$ is closed, and each $r_n \in (g^{-1}(M(K)))^c$, it follows that $r \in (g^{-1}(M(K)))^c$ as well, a contradiction. 
\end{proof}

\noindent By checking on plots we can see that the following maps are continuous:

\begin{lemma}
	Let $K \subset Y$ be a closed subset, then the map:
	\begin{align*}
		(-) \cap K: \cl(Y) &\to \cl(Y) \\
		Z & \mapsto Z \cap K
	\end{align*}
	is continuous. \qed
\end{lemma}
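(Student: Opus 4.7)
The plan is to verify continuity by using the second part of the preceding lemma on plot topologies: it suffices to show that the map $(-)\cap K$ sends plots to plots, i.e.\ that for every plot $p\colon \R\to \cl(Y)$ the composite $q\colon \R\to\cl(Y)$, $q(r) = p(r)\cap K$, is again a plot. Since both the source and target collections of plots on $\cl(Y)$ are indexed by the same test space $\R$, this will give continuity directly.

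By the definition of the plot topology on $\cl(Y)$, being a plot means that the graph is a closed subset of $\R\times Y$. So the key step is to compute $\Gamma(q)$ in terms of $\Gamma(p)$. Directly from the definitions,
\begin{equation*}
  \Gamma(q) \;=\; \{(r,y)\in \R\times Y \;|\; y\in p(r)\cap K\} \;=\; \Gamma(p)\,\cap\,(\R\times K).
\end{equation*}
By hypothesis $\Gamma(p)$ is closed in $\R\times Y$, and since $K$ is closed in $Y$ the product $\R\times K$ is closed in $\R\times Y$. Hence $\Gamma(q)$ is an intersection of two closed subsets, so is itself closed, which shows that $q$ is a plot.

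There is no real obstacle here; the statement is essentially formal once one unwinds the definition of the plot topology on $\cl(Y)$. The only thing to watch is that we are comparing plots in the \emph{same} collection (both indexed by $\R$), so the ``sends plots to plots'' criterion applies without any further adjustment.
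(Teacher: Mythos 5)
Your proof is correct and follows exactly the route the paper intends: the lemma is stated with "By checking on plots we can see that the following maps are continuous," and your verification that $\Gamma(p\cap K)=\Gamma(p)\cap(\R\times K)$ is closed is precisely that check. Nothing further is needed.
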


\begin{lemma}\label{lem:proper_cts_on_closed}
	Let $f:Y \to Y'$ be a proper map.  Then the map
	\begin{align*}
		f_*: \cl(Y) &\to \cl(Y') \\
		Z & \mapsto f(Z)
	\end{align*}
	is continuous. 
\end{lemma}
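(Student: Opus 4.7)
The plan is to check continuity by verifying that $f_*$ sends plots to plots, using the second clause of the preceding lemma on plot-continuity. Concretely, given a plot $p \colon \R \to \cl(Y)$, I need to show that $f_* \circ p \colon \R \to \cl(Y')$ is again a plot, i.e.\ that its graph is closed in $\R \times Y'$. Before that I have to confirm that $f_*$ lands in $\cl(Y')$ at all; this is exactly the assertion that $f$ is a closed map, which is the standard consequence of properness (under the mild point-set hypotheses implicit here, i.e.\ $Y'$ locally compact Hausdorff; otherwise it can be included in the definition of ``proper'' being used).

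The key observation is the identity
\begin{equation*}
    \Gamma(f_* \circ p) \;=\; (\id_\R \times f)\bigl(\Gamma(p)\bigr),
\end{equation*}
which is immediate from unwinding the definition of the graph: $(r, y') \in \Gamma(f_* \circ p)$ iff $y' \in f(p(r))$ iff there exists $y \in p(r)$ with $f(y) = y'$, iff $(r, y') \in (\id_\R \times f)(\Gamma(p))$. So the task reduces to showing that $\id_\R \times f$ carries the closed set $\Gamma(p) \subseteq \R \times Y$ to a closed set in $\R \times Y'$.

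For this I invoke two standard facts: (i) the product $\id_\R \times f$ of a proper map with the identity on a locally compact Hausdorff space is again proper (preimages of compact $K' \subseteq \R \times Y'$ sit inside $\pi_\R(K') \times f^{-1}(\pi_{Y'}(K'))$, and are closed in this compact set); and (ii) a proper continuous map to a locally compact Hausdorff space is closed. Combining these, $(\id_\R \times f)(\Gamma(p))$ is closed, so $f_* \circ p$ is a plot and $f_*$ is continuous.

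The only mild obstacle is the standing point-set hypothesis needed to deduce ``proper $\Rightarrow$ closed'' and the properness of $\id_\R \times f$. In the applications of this lemma in the paper the spaces $Y, Y'$ are smooth manifolds (so locally compact Hausdorff), and everything goes through without issue; if one wants to dispense with local compactness, one can instead prove closedness of $(\id_\R \times f)(\Gamma(p))$ directly, using that for any $(r_0, y'_0)$ off the image the fiber $\{r_0\} \times f^{-1}(y'_0)$ is compact and disjoint from the closed set $\Gamma(p)$, and then separating them by a tube-lemma argument.
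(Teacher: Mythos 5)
Your proposal is correct and follows essentially the same route as the paper: the paper's (one-sentence) proof likewise notes that properness makes $\mathrm{id}\times f$ a closed map, so the graph of $f_*\circ p$ — the image of $\Gamma(p)$ under $\mathrm{id}\times f$ — is closed, hence plots go to plots. You simply spell out the standard point-set details (the graph identity, properness of $\mathrm{id}\times f$, and the well-definedness of $f_*$) that the paper leaves implicit.
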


\begin{proof}
	Since $f:Y \to Y'$ is proper we have that $id \times f: \R \times Y \to \R \times Y'$ is a closed map, and hence $f_* \circ p$ is a plot for any plot $p:\R \to \cl(Y)$. 
\end{proof}

\noindent The topology on $\cl(Y)$ is far from Hausdorff. 

\begin{lemma}
	Let $Z_1, Z_2 \in \cl(Y)$. If $Z_1 \subseteq Z_2$, then $Z_2 \in \overline{\{ Z_1 \}}$ is contained in the closure of $Z_2$. 
\end{lemma}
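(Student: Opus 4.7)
The plan is to exhibit a single plot $p:\R \to \cl(Y)$ that equals $Z_2$ at $r=0$ and equals $Z_1$ for all $r>0$; continuity of $p$ for the plot topology will then force every open neighborhood of $Z_2$ to also contain $Z_1$, so that $Z_2 \in \overline{\{Z_1\}}$.

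Concretely I would set
\[
p(r) = \begin{cases} Z_2 & \text{if } r \leq 0, \\ Z_1 & \text{if } r > 0, \end{cases}
\]
and verify that the graph
\[
\Gamma(p) = \bigl((-\infty,0]\times Z_2\bigr) \cup \bigl((0,\infty)\times Z_1\bigr)
\]
is closed in $\R \times Y$. This is a routine check on a limit point $(r_n,y_n) \to (r,y)$ of $\Gamma(p)$. If $r<0$, eventually $r_n \le 0$, so $y_n \in Z_2$, and closedness of $Z_2$ yields $y \in Z_2$; if $r>0$, eventually $r_n>0$, so $y_n \in Z_1$, and closedness of $Z_1$ gives $y \in Z_1$. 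The only delicate case is $r=0$, where we need $y \in p(0) = Z_2$: splitting the sequence by sign of $r_n$, the terms with $r_n \le 0$ give $y_n \in Z_2$ hence $y \in Z_2$, while the terms with $r_n > 0$ give $y_n \in Z_1 \subseteq Z_2$ and hence again $y \in Z_2$. Exactly here the hypothesis $Z_1 \subseteq Z_2$ is used.

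Granted that $p$ is a plot, the conclusion is immediate: if $U \subseteq \cl(Y)$ is any open neighborhood of $Z_2$, then $p^{-1}(U) \subseteq \R$ is open (by the defining property of the plot topology in Definition~\ref{def:plots}) and contains $0$, since $p(0)=Z_2 \in U$. Therefore $p^{-1}(U)$ contains some $r>0$, and for such $r$ we have $Z_1 = p(r) \in U$. Thus every open neighborhood of $Z_2$ meets $\{Z_1\}$, which is exactly the statement $Z_2 \in \overline{\{Z_1\}}$.

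I do not foresee any real obstacle: the argument is a single explicit construction, and the only subtle point is the orientation of the plot, namely placing the larger set $Z_2$ at the degenerate time $r=0$ (reversing the roles would produce a graph that fails to be closed at $(0,y)$ for $y \in Z_2 \setminus Z_1$, which is precisely the obstruction that prevents the reverse specialization).
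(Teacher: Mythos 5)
Your proof is correct and is essentially the paper's argument: you exhibit the same kind of two-valued ``jump'' plot (the larger set $Z_2$ sitting at the closed time $0$, $Z_1$ on an adjacent open ray, which is just the mirror image of the paper's choice) and conclude via continuity of the plot that every open neighborhood of $Z_2$ contains $Z_1$. If anything, you are slightly more careful than the paper, since you verify explicitly that the graph is closed and note where $Z_1 \subseteq Z_2$ is used.
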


\begin{proof}
	Consider the plot $p:\R \to \cl(Y)$ defined as
	\begin{equation*}
		p(t) = \begin{cases}
			Z_1 & \text{if } t <0 \\
			Z_2 & \text{if } t \geq 0
		\end{cases}
	\end{equation*}	
	We want to show that $Z_2$ is a limit point of $Z_1$. Let $M\subset \cl(Y)$ be an open subset containing $Z_2$. Then $p^{-1}(M)$ is an open subset of $\R$ containing $0 \in \R$. Hence it also contains a small neighborhood $(a, b) \subseteq p^{-1}(M)$ with $a < 0 < b$. In particular $p(a/2) = Z_1 \in M$. This $Z_1$ is contained in every open neighborhood of $Z_2$, and hence $Z_2 \in \overline{\{ Z_1 \}}$. 
\end{proof}

In particular the empty set $\emptyset \in \cl(Y)$ is a point on $\cl(Y)$ which is dense in $\cl(Y)$. It is a \emph{generic point}. 

\begin{example}[closed set classifier] \label{ex:closed_classifier}
	Consider the one point space $pt$. Then $\cl(pt) = \{ \emptyset, \{pt\}\}$ consists of two points. By Lemma~\ref{lem:mapsarepoltsforclosedsubsets} a continous map $\R \to \cl(pt)$ is the same as a closed subset of $\R \cong \R \times pt$. 
	That closed subset is the inverse image of $\{pt\}$, and hence $\{pt\}$ is a closed subset of $\cl(pt)$. Likewise, $\{\emptyset\}$ is open. As we just observed the closure of $\emptyset$ is all of $\cl(pt)$, and this completely determines the topology of $\cl(pt)$. It is the \emph{closed subset classifier}. For any topological space $Y$, a continuous map $f:Y \to \cl(pt)$ is precisely the data of a closed subset $A \subseteq Y$, and $A = f^{-1}(\{pt\})$ under this correspondence. 
\end{example}

\begin{corollary}\label{cor:empty_open}
	If $Y$ is compact, then the set $\{ \emptyset \} \subseteq \cl(Y)$ is open. 
\end{corollary}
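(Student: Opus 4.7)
The plan is to reuse the argument already carried out inside the proof of Lemma~\ref{lem:mapsarepoltsforclosedsubsets}, applied with $K = Y$. Recall that for a compact subset $K \subseteq Y$ one considers
\begin{equation*}
    M(K) = \{ A \in \cl(Y) \mid A \cap K = \emptyset \},
\end{equation*}
and the first paragraph of that proof shows $M(K)$ is open in $\cl(Y)$. When $Y$ is itself compact, $M(Y) = \{\emptyset\}$, so we are done.

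More explicitly, I would proceed as follows. Let $p : \R \to \cl(Y)$ be any plot, so that $\Gamma(p) \subseteq \R \times Y$ is closed. I must show that $p^{-1}(\{\emptyset\}) \subseteq \R$ is open. Its complement is
\begin{equation*}
    \{ r \in \R \mid p(r) \neq \emptyset \} = \pi_\R\bigl(\Gamma(p)\bigr),
\end{equation*}
where $\pi_\R : \R \times Y \to \R$ is the projection. Since $Y$ is compact, $\pi_\R$ is a proper, hence closed, map. Therefore the image of the closed set $\Gamma(p)$ under $\pi_\R$ is closed in $\R$, which means $p^{-1}(\{\emptyset\})$ is open. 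Because this holds for every plot $p$, the subset $\{\emptyset\} \subseteq \cl(Y)$ is open in the plot topology.

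No step here looks like a real obstacle; the content of the corollary is essentially already proved inside Lemma~\ref{lem:mapsarepoltsforclosedsubsets}. The only thing worth emphasizing is the identification of the complement of $p^{-1}(\{\emptyset\})$ with the image $\pi_\R(\Gamma(p))$, together with the properness of $\pi_\R$ when $Y$ is compact.
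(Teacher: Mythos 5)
Your proof is correct. The only thing to be careful about is your opening appeal to the proof of Lemma~\ref{lem:mapsarepoltsforclosedsubsets}: that lemma is stated under the hypothesis that $Y$ is locally compact Hausdorff, which the corollary does not assume, so strictly speaking you are borrowing an argument from a proof whose standing hypotheses differ. But with $K = Y$ the set $K$ is automatically closed in $Y$ and the only facts actually used are that $\Gamma(p)$ is closed and that the projection $\R \times Y \to \R$ is a closed map when $Y$ is compact (the tube lemma, no Hausdorffness needed), which is exactly what your explicit second argument supplies — so the proof is self-contained and fine. Your route differs in organization from the paper's: the paper notes that $Y \to pt$ is proper when $Y$ is compact, pushes forward along the induced continuous map $p_*:\cl(Y) \to \cl(pt)$ (Lemma~\ref{lem:proper_cts_on_closed}), and observes that $\{\emptyset\} \subseteq \cl(Y)$ is the preimage of the open point $\{\emptyset\}$ of the closed-set classifier $\cl(pt)$ treated in Example~\ref{ex:closed_classifier}. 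Both proofs ultimately rest on the same properness/closedness of projection along a compact factor; yours is the more elementary plot-by-plot verification, while the paper's reduction showcases the functoriality of $\cl(-)$ and the classifier $\cl(pt)$, which it has already set up and reuses elsewhere.
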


\begin{proof}
	If $Y$ is compact, then $p:Y \to pt$ is proper. The set $\{ \emptyset \} \subseteq \cl(Y)$ is the inverse image of $\{ \emptyset \} \subseteq \cl(pt)$ under $p_*$ and so by Lemma~\ref{lem:proper_cts_on_closed} it suffices to prove the corollary in the case $Y=pt$, but this was done in Example~\ref{ex:closed_classifier}. 	
\end{proof}

\subsection{The space of embedded manifolds} \label{sec:space_of_man}

Let $M$ be a smooth manifold (possibly with boundary and corners). We will refer to $M$ as the \emph{ambient manifold}. Let $\psi_d(M)$ denote the set of subsets $W \subseteq M$ which are smooth $d$-dimensional submanifolds of $M$ without boundary which are also disjoint from $\partial M$ and which are closed as subsets of $M$. We will make this a topological space by using plots. 


As in the previous section, given a set theoretic map $f: X \to \psi_d(M)$ we let the \emph{graph} denote the set
\begin{equation*}
	\Gamma(f) = \{ (x, w) \in X \times M \; | \; w \in f(x) \subseteq M \}.
\end{equation*}

\begin{definition}
	Let $U$ be a smooth manifold
	A set theoretic map $p: U \to \psi_d(M)$ is a \emph{plot} (or a \emph{smooth map}) if $\Gamma(p) \subseteq U \times M$ is a smooth submanifold and the projection $\pi: \Gamma(p) \to U$ is a submersion. 
\end{definition}

\noindent One may verify that this collection of plots satisfies the axioms of a diffeology, but we will not need this fact. We will regard $\psi_d(M)$ as a topological space with the plot topology. 

By checking on plots we immediately see that $\psi_d(-)$ has two functorial properties: 
\begin{itemize}
	\item If $M \subseteq M'$ is an open embedding, then we have a pullback functor
	\begin{equation*}
		\psi_d(M') \to \psi_d(M)
	\end{equation*}
	sending $W \subset M'$ to $W \cap M \subseteq M$. 
	\item If $M \subseteq M'$ is a closed embedding, then we have a pushforward functor $\psi_d(M) \to \psi_d(M')$ which simply regards $W \subseteq M$ as a subset of $M'$.  
\end{itemize}
In this way $\psi_d(-)$ can be thought of as both a presheaf and a co-presheaf. Both of these properties will be important. 

More generally we have:
\begin{lemma}
	Let $M$ and $M'$ be smooth manifolds of the same dimension. Let $f_y:M \subseteq M'$, $y \in Y$ be a smooth family of smooth open embeddings parametrized by a manifold $Y$. Then the map $Y \times \psi_d(M') \to \psi_d(M)$ defined by sending $(y,W)$ to $f_y^{-1}(W)$ is continuous. Similarly if $g_y:M \subseteq N$ is a smooth family of smooth closed embeddings, than $Y \times \psi_d(M) \to \psi_d(N)$ defined by sending $(y, W)$ to $f_y(W)$ is continuous. 
\end{lemma}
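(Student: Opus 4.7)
The plan is to verify continuity by appealing to the universal property of the plot topology on the target. Because both $\psi_d(M)$ and $\psi_d(N)$ carry the plot topology, and because $Y \times \psi_d(M')$ and $Y \times \psi_d(M)$ carry the product topology, it suffices to check that the assignments in the lemma send the natural test maps from manifolds into the product (namely pairs $(\phi, p)$ with $\phi: U \to Y$ smooth and $p: U \to \psi_d(M')$, resp.\ $\psi_d(M)$, a plot) to plots in the target; these test maps generate the topology and so suffice for continuity into the plot-topologized target, by the continuity lemma stated earlier.

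For the pullback case, fix a smooth $\phi: U \to Y$ and a plot $p: U \to \psi_d(M')$. Consider the auxiliary smooth map
\begin{equation*}
F: U \times M \to U \times M', \qquad F(u,m) = \bigl(u,\, f_{\phi(u)}(m)\bigr).
\end{equation*}
Since $F$ preserves the $U$-coordinate and each $f_y$ is injective, $F$ is globally injective. Its derivative is block triangular with diagonal blocks $I_{TU}$ and $df_{\phi(u)}|_m$, the latter invertible because $f_{\phi(u)}$ is an open embedding and $\dim M = \dim M'$. Thus $F$ is a local diffeomorphism, hence a smooth open embedding. A direct set-theoretic check shows that the graph of $u \mapsto f_{\phi(u)}^{-1}(p(u))$ equals $F^{-1}(\Gamma(p))$, which is closed in $U \times M$ (preimage of a closed set under a continuous map), is a smooth submanifold (transverse preimage under a local diffeomorphism), and whose projection to $U$ factors as $F^{-1}(\Gamma(p)) \xrightarrow{F} \Gamma(p) \to U$, the composition of a local diffeomorphism with the given submersion, hence itself a submersion. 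So the pullback assignment is a plot.

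The pushforward case runs along the same lines with $G: U \times M \to U \times N$, $G(u,m) = (u, g_{\phi(u)}(m))$, in place of $F$. The same derivative computation shows $G$ is an injective immersion. The key additional input is that $G$ is a closed embedding: the pointwise closed-embedding hypothesis on each $g_y$, combined with smooth (hence locally uniform) dependence on $y$, forces $G$ to be proper, and so a closed embedding. One then has that $G(\Gamma(p))$ is a closed smooth submanifold of $U \times N$ equal to the graph of $u \mapsto g_{\phi(u)}(p(u))$, and the submersion condition is verified via the factorization $G(\Gamma(p)) \cong \Gamma(p) \to U$ exactly as before.

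The main technical obstacle, and the only step that is not a formal manipulation, is upgrading the pointwise closed-embedding property of each $g_y$ to global properness of $G$ over $U$. The standard route is to pass to a compact exhaustion of $U$ and combine continuity of the family with properness of each individual $g_y$; this is routine but must be executed with some care.
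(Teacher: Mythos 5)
Your open-embedding half is correct and is essentially the paper's own proof: the paper forms exactly the map $F(u,m)=(u,f_{\phi(u)}(m))$, observes it is an open smooth embedding, pulls back the graph of the plot, and checks the submersion condition; for the closed case the paper only says the argument is analogous.

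The genuine gap is in your closed-embedding half, namely the claim that the pointwise closed-embedding hypothesis on each $g_y$, together with smooth dependence on $y$, forces $G(u,m)=(u,g_{\phi(u)}(m))$ to be proper; this implication is false, and the deferred "compact exhaustion" argument cannot produce it, because properness can fail at an \emph{interior} parameter value: points of $M$ may escape to infinity while their images stay bounded. Concretely, take $Y=\R$, $M=\R$, $N=\R^2$; let $g_t$ be the inclusion of the $x$-axis for $t\le 0$, and for $t>0$ let $g_t$ be a properly embedded arc that runs along the $x$-axis out to $x\approx 1/t$, makes a U-turn, returns at height $y=t$ back to $x\approx -10$, and then rises vertically to $+\infty$. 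This can be made jointly smooth in $(t,m)$ (near $t=0$ and any fixed $m$ the formula is just $(m,0)$), and every $g_t$ is a closed embedding, but $G$ is not proper: the return strand gives $m_k\to\infty$, $t_k\to 0$ with $g_{t_k}(m_k)=(1,t_k)$ bounded. Worse, applying the pushforward to the constant plot $W=\R$ yields the set $\{(t,x,y): y=0\}\cup\{(t,x,t): t>0\}$ near $(0,1,0)$, three half-planes meeting along a line, which is not a submanifold of $\R\times\R^2$ (the link of a point on the line is a theta graph); so under the purely pointwise reading the "plots go to plots" step itself breaks, and no more careful execution of your sketch can close it. What the argument really requires — and what holds in the situations where the lemma is invoked in the paper (a fixed closed embedding $M\subseteq M'$, or families coming from ambient isotopies and translations), and is presumably the intended meaning of "smooth family of closed embeddings" — is that the family map $(y,m)\mapsto(y,g_y(m))$ is itself a closed embedding (equivalently, proper over $Y$). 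If you take that as the hypothesis rather than trying to derive it, the rest of your pushforward argument goes through exactly as in the open case.
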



\begin{proof}
	We can check this on plots. We will consider the case of open embeddings, the case for closed embeddings is analogous.  A plot $U \to Y \times \psi_d(M')$ consists of two pieces of data. 
	First we have a smooth map $U \to Y$ and hence a smooth family of maps $f: U \times M \to M'$ such that for each fixed $u \in U$ the map $f(u,-): M \to M'$ is an open embedding. 
	Second we have a smooth embedded submanifold $W \subseteq U \times M'$ 
	of dimension $d + \dim U$, such that the projection to $U$ is a submersion. 
	
From this we consider the smooth map 
\begin{align*}
	F:U \times M &\to U \times M' \\
	(u,m) &\mapsto (u, f(u,m))
\end{align*}
This is a open smooth embedding and hence $F^{-1}(W) \subseteq U \times M$ is a smoothly embedded submanifold. The projection map $F^{-1}(W) \to U$ is still a submersion and this defines a plot of $\psi_d(M)$. Since the map in consideration sends plots to plots it is continuous.  
\end{proof}

\subsection{Embedded manifolds with tangential/normal structure}\label{sect:embded_with_tangent}

We will be interested in spaces of manifolds which are not just embedded into a fixed ambient manifold $M$, but also equipped with topological structures on the tangent/normal bundle. Fix a dimension $d$. Let $Gr_d(TM)$ denote the natural fiber bundle over $M$ whose fiber over $m$ is the Grassmannian of $d$-planes $Gr_d(T_mM)$ in the tangent space of $M$ at $m$. If $W \subseteq M$ is an embedded submanifold of dimension $d$, then we have a canonical Gauss map
\begin{equation*}
	\tau_W: W \to  Gr_d(TM)
\end{equation*}
taking $w$ in $W$ to the tangent space $T_wW \subseteq T_w M$.

Suppose that $\rho:Y \to Gr_d(TM)$ is a fibration. Then we define a $(Y, \rho)$-structure $\theta$ on $W$ to be a lift:
\begin{center}
\begin{tikzpicture}
		\node (LB) at (0, 0) {$W$};
		\node (RT) at (2, 1.5) {$Y$};
		\node (RB) at (2, 0) {$Gr_d(TM)$};
		\draw [->, dashed] (LB) -- node [above left] {$\theta$} (RT);
		\draw [->] (RT) -- node [right] {$\rho$} (RB);
		\draw [->] (LB) -- node [below] {$\tau_W$} (RB);
\end{tikzpicture}
\end{center}
of the Gauss map, i.e. a dashed arrow making the triangle commute.

We will also define $(Y, \rho)$-structures for smooth families. Suppose that $U$ is a smooth manifold of dimension $k$ and that $W \subseteq U \times M$ is a smooth manifold of dimension $k +d$ such that the projection $\pi: W \to U$ is a submersion. In this case $\ker(d\pi)$ is a smooth vector bundle over $W$ of dimension $d$ which is naturally embedded into $TM$. We have an induced Gauss map:
\begin{align*}
	\tau_{W/U}: W &\to Gr_d(TM)  \\
	w &\mapsto \ker(d\pi)_w
\end{align*}
An $(Y, \rho)$-structure $\theta$ on the $U$-family $W$ is a lift: 
\begin{center}
\begin{tikzpicture}
		\node (LB) at (0, 0) {$W$};
		\node (RT) at (2, 1.5) {$Y$};
		\node (RB) at (2, 0) {$Gr_d(TM)$};
		\draw [->, dashed] (LB) -- node [above left] {$\theta$} (RT);
		\draw [->] (RT) -- node [right] {$\rho$} (RB);
		\draw [->] (LB) -- node [below] {$\tau_{W/U}$} (RB);
\end{tikzpicture}
\end{center}

Let $\psi_d(M; Y, \rho)$ denote the set of $d$-dimension smooth submanifolds of $M$ equipped with $(Y, \rho)$-structures. The above maps define the plots for $\psi_d(M; Y, \rho)$, making it into a diffeological space as in the previous section. We regard it as a topological space with the plot topology.  

Now in general we will want to regard $\psi_d(M; Y, \rho)$ as a functor of $M$, but this is not possible for arbitrary $(Y,\rho)$. They must also be defined functorially in $M$. The following is one way to achieve this.

Throughout we fix dimensions $d$, as before, and $m$ which is the dimension of the ambient manifold. Let $X$ be a space with a $GL_m$-action, and let $\xi: X \to Gr_d(\R^m) = GL_m / GL_d \times GL_{m-d}$ be a map which is $GL_m$-equivariant and a fiber bundle. 
Then for each choice of ambient manifold $M$ we may consider the associated bundle:
\begin{equation*}
	Fr(TM) \times_{GL_m} X \to Gr_d(TM) \to M
\end{equation*} 
which is a fiber bundle over $Gr_d(TM)$. Here $Fr(TM)$ is the frame bundle of $TM$. We set $X_M = Fr(TM) \times_{GL_m} X$ and $\xi_M: X_M \to Gr_d(TM)$ to be the induced map.

Thus given such an $(X, \xi)$, we obtain a for each $M$ a structure $(X_M, \xi_M)$ for $d$-manifolds embedding in $M$. Hence we can consider the space $\psi_d(M; X_M, \xi_M)$ of manifolds embedded in $M$ equipped with an $(X_M, \xi_M)$-structure. To simplify notation we will write this as:
\begin{equation*}
	\psi_d^{(X, \xi)}(M) = \psi_d(M; X_M, \xi_M).
\end{equation*}
We retain the previous functoriality for the spaces of embedded manifolds:   
\begin{itemize}
	\item If $M \subseteq M'$ is an open embedding (or more generally a submersion), then we have a pullback functor:
	\begin{equation*}
		\psi_d^{(X, \xi)}(M') \to \psi_d^{(X, \xi)}(M).
	\end{equation*}
	\item If $M \subseteq M'$ is a closed embedding, then we have a pushforward functor:
	\begin{equation*}
		\psi_d^{(X, \xi)}(M) \to \psi_d^{(X, \xi)}(M').
	\end{equation*}
\end{itemize}

\begin{example}[tangential structures]
	Suppose that $B$ is a space equipped with a $d$-dimensional vector bundle $E$. Let $\gamma_d$ be the canonical $d$-plane bundle over $Gr_d(\R^m)$. Then we let
	\begin{equation*}
		X = Fr(\gamma_d) \times_{GL_d} Fr(E) = (GL_m \times Fr(E))/ GL_d \times GL_{m-d}
	\end{equation*}
	with its natural map $\xi:X \to Gr_d(\R^m)$. Then the corresponding structure on a manifold $W \subseteq M$ consists of a map $f:W \to B$ and a vector bundle isomorphism $\tau_W \cong f^*(E)$. 
	
	Some special cases are orientations ($B = BSO(d)$), spin structures ($B = BSpin(d)$), tangental framings ($B = pt$ with trivial bundle), $G$-principle bundles ($B = BO(d) \times BG$ with the bundle induced from $BO(d)$), etc. Note that these structures are defined for all $m$. 
\end{example}

\begin{example}[normal structures]
	Suppose that $B$ is a space equipped with an $(m-d)$-dimensional vector bundle $E$. Let $\gamma^\perp_d$ be the canonical $(d-m)$-plane bundle on $Gr_d(\R^m)$. Then similarly to above we let
	\begin{equation*}
		X = Fr(\gamma_d^\perp) \times_{GL_{m-d}} Fr(E) = (GL_m \times Fr(E))/ GL_d \times GL_{m-d}
	\end{equation*}
	with its natural map $\xi:X \to Gr_d(\R^m)$. Then the corresponding structure on a manifold $W \subseteq M$ consists of a map $f:W \to B$ and a vector bundle isomorphism $\nu_W \cong f^*(E)$, where $\nu_W$ is the normal bundle of the embedding $W \subset M$.  
\end{example}

\section{Scanning} \label{sec:scanning}

\subsection{Segal's method of scanning}\label{sec:scan}

As in previous sections we fix a dimension $m$ for our ambient manifolds, and a dimension $d$ for our embedded manifolds. 
Let $(X, \xi)$ be a space with a $GL_m$-action and equivariant fiber 
bundle $\xi: X \to Gr_d(\R^m)$, as in Section~\ref{sect:embded_with_tangent}. 
As in that section this induces a bundle $\xi_M: X_M \to Gr_d(TM)$ for any $m$-manifold $M$, and hence we have a space $\psi_d^{(X, \xi)}(M)$ of manifolds embedded in $M$ equipped with $(X_M, \xi_M)$-structures. 
In this section we will review what is known about this space. 

First we consider the closely related space $\psi_d(\R^m; X, \xi)$ of $d$-manifolds manifolds embedded into $\R^m$ equipped with an $(X, \xi)$-structure. This space has a natural $GL_m$-action and hence for any $m$-manifold manifold $M$ we may form the associated bundle:
\begin{equation*}
	Fr(TM) \times_{GL_m} \psi_d(\R^m; X, \xi) \to M.
\end{equation*}
The fiber over $m \in M$ is identified with $\psi_d^{(X,\xi)}(T_mM)$ the space of manifolds embedded in $T_mM$ equipped with $(X, \xi)$-structures. We will denote this bundle $\psi_d^{(X, \xi), \text{fib}}(TM)$. 

Each fiber is equipped 
with a canonical basepoint corresponding to the empty manifold embedded in $T_mM$, and this gives rise to a canonical section of $\psi_d^{(X, \xi), \text{fib}}(TM)$, which we call the \emph{zero section}.
We let $\Gamma(M, \partial M; \psi_d^{(X, \xi), \text{fib}}(TM))$ denote the space of sections of $\psi_d^{(X, \xi), \text{fib}}(TM)$ which restrict to the zero section on $\partial M$. 

\begin{remark}
	The space of sections of the bundle $\psi_d^{(X, \xi), \text{fib}}(TM)$ is the first derivative of the functor $\psi_d^{(X,\xi)}(-)$ in the Goodwillie-Weiss manifold calculus.  
\end{remark}

Segal's method of `Scanning' will allow us to compare $\psi_d^{(X, \xi)}(M)$ and the space of sections $\Gamma(M, \partial M; \psi_d^{(X, \xi), \text{fib}}(TM))$, and by a result of Oscar Randal-Williams \cite{MR2764873} this comparison map is a weak equivalence when $M$ is open (has no compact components).  

The situation is slightly easier when $M$ is without boundary, and we treat that case first. 
\begin{definition}
	Suppose that $M$ is a manifold without boundary. Then a \emph{scanning exponential} for $M$ is a smooth map
	\begin{equation*}
		e: TM \to M
	\end{equation*}
	whose restriction to the zero section is the identity map and such that the restriction
	\begin{equation*}
		e|_{T_mM} = e_m: T_mM \to M
	\end{equation*}
	embeds $T_mM$ as an open neighborhood of $m\in M$. 
\end{definition}

A choice of scanning exponential induces the \emph{scanning map}
\begin{align*}
	\psi_d^{(X, \xi)}(M) \times M &\to \psi_d^{(X, \xi), \text{fib}}(TM) \\
	(W, m) & \mapsto e_m^{-1}(W) \subseteq T_mM
\end{align*}
which we regard as a map 
\begin{equation*}
	s_e:\psi_d^{(X, \xi)}(M) \to \Gamma(M;\psi_d^{(X, \xi), \text{fib}}(TM)).
\end{equation*}

When $M$ has boundary the set up is more complicated. First if $m \in \partial M$ it is not possible to embed $T_mM$ as an open subset of $M$ with the origin centered at $m$. Moreover we want the scanning map to give rise to a section which restricts to the zero section on $\partial M$. These issues can be resolved by adopting the following definition.

\begin{definition}\label{def:scanning exponential}
	Suppose that $M$ is a manifold, possibly with boundary. Then a \emph{scanning exponential} for $M$ is a smooth map
	\begin{equation*}
		e: TM \to M
	\end{equation*}
	satisfying the following requirements:
	\begin{enumerate}
		\item the restriction of $e$ to the zero section is the identity map on $M$;
		\item for each $m\in M \setminus \partial M$ on the interior, $e$ embeds $T_mM$ as an open neighborhood of $m$; 
		\item For each open neighborhood of the boundary $U \supseteq \partial M$ there exists a open set $V$, with $\partial M \subseteq V \subseteq U$ such that $e(T_mM) \subseteq U$ for each $m \in V$. 
	\end{enumerate}
\end{definition}
In other words we require our scanning exponential to degenerate near the boundary of $M$. The embedding of $T_mM$ becomes smaller and smaller as $m$ approaches the boundary. 


Given a scanning exponential, $e$, the induced \emph{scanning map} is defined by the assignment
 \begin{align*}
 	\psi_d^{(X, \xi)}(M) \times M &\to \psi_d^{(X, \xi), \text{fib}}(TM) \\
 	(W, m) & \mapsto \begin{cases}
 		e_m^{-1}(W) \subseteq T_mM & \text{if } m \notin \partial M \\
		\emptyset \subseteq T_mM & \text{if } m \in \partial M. 
 	\end{cases}
 \end{align*}
 
\begin{lemma}\label{lem:scan_cts}
	The above defined scanning map is continous. 
\end{lemma}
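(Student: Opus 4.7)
The plan is to verify continuity by showing that the scanning map sends plots to plots, which is straightforward when $M$ has no boundary and requires a careful boundary argument when $M$ has boundary. Since continuity is local on the target, and $\psi_d^{(X, \xi), \text{fib}}(TM) \to M$ is a fiber bundle, I would work over local trivializations of $TM$. A plot of $\psi_d^{(X, \xi)}(M) \times M$ is a pair: a smooth family $W \subseteq U \times M$ with compatible $(X_M, \xi_M)$-structure and submersive projection to $U$, together with a smooth map $m\colon U \to M$, $u \mapsto m_u$. The composite with the scanning map should be shown to be a plot of $\psi_d^{(X, \xi), \text{fib}}(TM)$.

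First I would dispose of the interior. Over any open $V \subseteq M \setminus \partial M$ with $TM$ trivialized, the restriction of $e$ gives a smooth family of open embeddings $e_m \colon T_m M \to M$ parametrized by $m \in V$. Pulling back the family $W$ along the resulting smooth family of open embeddings $U \times_V T_m M \to M$ (indexed by points of $U$ with $m_u \in V$) is continuous by the earlier Lemma on parametrized open embeddings applied to the family $\{e_{m_u}\}_{u \in U}$. This immediately produces a plot, giving continuity at every interior point.

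The main obstacle is the boundary case, where the scanning map takes the value $\emptyset \in \psi_d^{(X, \xi)}(T_m M)$ for $m \in \partial M$. Here I would exploit the degeneration condition (3) of Definition~\ref{def:scanning exponential}. Since each $W_u$ is closed in $M$ and disjoint from $\partial M$, and the submersion property of $\pi\colon \Gamma(p) \to U$ together with closedness of fibers gives that $\Gamma(p)\subseteq U\times M$ is closed there, its complement is open and contains $U \times \partial M$. A tube-lemma style argument, localizing in $U$ near a given $u_0$ and using a compact exhaustion of a collar of $\partial M$, yields a neighborhood $U' \ni u_0$ and an open neighborhood $V$ of $\partial M$ with $W_u \cap V = \emptyset$ for all $u \in U'$. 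Applying condition (3) with $U := M \setminus \overline{W}_{U'}$ (the complement, in a sufficiently small sub-neighborhood, of the union of the $W_u$'s) then gives a smaller neighborhood $V' \subseteq V$ of $\partial M$ with $e_m(T_m M) \subseteq V$, and hence $e_m^{-1}(W_u) = \emptyset$, whenever $u \in U'$ and $m_u \in V'$.

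Putting these together, the preimage of the zero section contains a neighborhood of each $u_0$ mapping to $\partial M$, which matches the required boundary behavior. Combined with the interior computation, the composite plot is continuous on all of $U$, and its graph in the total space $\psi_d^{(X, \xi), \text{fib}}(TM)$ is a smooth submanifold submersive over $U$, being locally the pullback of the smooth family $W$ along the smooth family of embeddings $e_{m_u}$, extended by the zero section on the preimage of a neighborhood of $\partial M$. This shows that the scanning map takes plots to plots, which by the plot topology characterization is enough to conclude continuity.
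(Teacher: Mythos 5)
Your proposal is correct and follows essentially the same route as the paper's proof: reduce to plots of $\psi_d^{(X,\xi)}(M)\times M$, note continuity is already clear on the interior via the lemma on parametrized families of open embeddings, and near $\partial M$ combine disjointness of the graph from the boundary with the degeneration property (3) of the scanning exponential to conclude the composite is the zero section on a neighborhood of boundary parameters. The only slips are cosmetic and match the paper's own level of rigor: closedness of the graph is an implicit assumption rather than a consequence of ``submersion plus closed fibers,'' and your product neighborhood $U'\times V$ really comes from taking a precompact neighborhood of $u_0$ in the parameter manifold (so the projection to $M$ is proper), not from a compact exhaustion of a collar of $\partial M$.
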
 
 
\begin{proof}
	The domain is a space whose topology can be defined using plots, and hence it is enough to show that plots are mapped to continous maps. Moreover we already know the map is continuous on $\psi_d^{(X, \xi)}(M) \times (M \setminus \partial M)$.

A plot parametrized by the smooth manifold $Z$ consists of two parts. First there is a smooth map $f:Z \to M$. In addition we have a submanifold $W \subseteq Z \times M$ such that the projection $\pi: W \to Z$ is a submersion and $W$ is equipped with a $\pi$-fiberwise $(X,\xi)$-structure.  

The subspaces $Z \times \partial M$ and $W$ are disjoint closed subsets and so there exists an open neighborhood $U$ of $Z \times \partial M$ disjoint from $W$. Now it follows from 
Definition~\ref{def:scanning exponential} property (3)
that there exists an open neighborhood $V \subseteq Z$ of $f^{-1}(\partial M)$ such that for any $z \in V$, the image $e(T_{f(z)}M)$ of scanning exponential is contained in $U$. 

In particular this means that for an $z \in (V \setminus f^{-1}(\partial M))$, we have that $W_z = \pi^{-1}(z)$ is disjoint from $e(T_{f(z)}M)$, and hence the scanning map (composed with the given plot) restricts to the zero section on $(V \setminus f^{-1}(\partial M))$. Since the above scanning map simple extends by the zero section on $f^{-1}(\partial M)$, it follows that this is continous. 
	
Since this is true for all plots, it follows that the above scanning map itself is continuous. 
\end{proof}

\begin{theorem}[\cite{MR2764873}]\label{thm:h-prin}
	If $M$ has no compact components, then for each choice of scanning exponential, the scanning map induces a weak homotopy equivalence of spaces: 
	\begin{equation*}
		\psi_d^{(X,\xi)}(M) \to \Gamma(M, \partial M; \psi_d^{(X, \xi), \text{fib}}(TM)).
	\end{equation*} \qed
\end{theorem}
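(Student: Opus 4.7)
The strategy is to treat both sides as homotopy functors on the poset of open subsets of $M$, establish that both satisfy Mayer--Vietoris descent, reduce the equivalence to a local check on open balls and half-balls, and then assemble globally via a handle decomposition of $M$ whose handles all have index strictly less than $m := \dim M$.

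First I would show that both assignments $U \mapsto \psi_d^{(X,\xi)}(U)$ and $U \mapsto \Gamma(U, \partial U; \psi_d^{(X,\xi),\text{fib}}(TU))$, defined by restriction on open subsets of $M$, are homotopy cosheaves: for an open cover $U = U_0 \cup U_1$, the commutative square with vertices $F(U)$, $F(U_0)$, $F(U_1)$, $F(U_0 \cap U_1)$ is a homotopy pullback. For the section functor this is the standard local-to-global principle for sections of a fibration, combined with the zero-section boundary condition. For $\psi_d^{(X,\xi)}$ it requires a compactly supported isotopy argument to glue two submanifolds whose restrictions agree on $U_0 \cap U_1$, made continuous in families using the plot-topology description from Section~\ref{sec:space_of_man}. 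Naturality of the scanning map under open inclusions (evident from its pointwise definition) makes it compatible with this descent.

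For the base case, take $U$ diffeomorphic to $\mathbb{R}^m$ and disjoint from $\partial M$. A trivialization of $TU$ trivializes $\psi_d^{(X,\xi),\text{fib}}(TU)$ as $U \times \psi_d^{(X,\xi)}(\mathbb{R}^m)$, so the section space is homotopy equivalent to $\psi_d^{(X,\xi)}(\mathbb{R}^m)$ since $U$ is contractible. The scanning map is then homotopic to the identity via a linear homotopy of scanning exponentials shrinking $U$ to a point. For a half-disk $U$ meeting $\partial M$, the vanishing boundary condition forces the section space to be contractible, and $\psi_d^{(X,\xi)}(U)$ is likewise contractible by an isotopy pushing submanifolds out of a collar of $\partial M$.

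Finally, because $M$ has no compact components, it admits a proper Morse function with no critical points of index $m$, hence an exhausting handle decomposition $M = \bigcup_k M_k$ in which each $M_{k+1}$ is obtained from $M_k$ by attaching a handle of index strictly less than $m$. Such an attachment is modeled by an open cover whose pieces and intersection are homotopy equivalent to products of open Euclidean spaces, so Step~1 combined with Step~2 shows by induction that scanning is a weak equivalence on each $M_k$. Both functors convert sequential unions of open inclusions into homotopy colimits (transparent for $\psi_d^{(X,\xi)}$ from the plot-topology definition, standard for the section functor), so passing to the colimit yields the equivalence on $M$. The main obstacle is Step~1, namely verifying Mayer--Vietoris for $\psi_d^{(X,\xi)}(-)$, since gluing submanifolds along an intersection requires compactly supported isotopies whose parameters vary smoothly with an auxiliary family; the hypothesis that $M$ has no compact components is essential precisely because both the base case and the induction would fail at a top-dimensional handle attachment.
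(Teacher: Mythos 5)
First, note that the paper does not prove Theorem~\ref{thm:h-prin} at all: it is quoted from Randal-Williams \cite{MR2764873} with a \(\qedsymbol\), so the only thing to compare your argument with is the cited proof (which, in the style of Gromov's h-principle for microflexible sheaves and of the Galatius--Randal-Williams arguments, proceeds by a handle induction in which restriction maps are shown to be fibrations, or ``flexible'', only for handles of index $<\dim M$ attached along open collars --- and this is exactly where non-compactness is consumed). Your overall skeleton (local check on balls and half-balls, handle induction with handles of index $<m$, passage to sequential colimits) matches the shape of that argument, but your Step~1 contains a genuine gap: the claim that $U\mapsto \psi_d^{(X,\xi)}(U)$ satisfies homotopy Mayer--Vietoris descent for an arbitrary open cover $U=U_0\cup U_1$ is false, and no compactly supported gluing isotopy can repair it. The square in question is a \emph{strict} pullback (closedness and smoothness are local), but the restriction maps $\psi_d(U_i)\to\psi_d(U_0\cap U_1)$ are not fibrations, and the square is not a homotopy pullback in general. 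Concretely, take $d=0$ and cover $S^1$ by two open arcs: $\pi_0\,\psi_0(S^1)\cong\N$ (a proper submersion over $\R$ has locally constant fiber cardinality, so points can neither collide nor escape), whereas the arcs and their intersection are open, scanning is an equivalence on them, and the homotopy pullback is therefore equivalent to the homotopy pullback of the section spaces, i.e.\ to $\Map(S^1,S^1)$ with $\pi_0\cong\Z$.

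This is not a repairable technicality but the crux of the theorem, and it also shows that your localization of where the hypothesis ``no compact components'' is used cannot be right. If Step~1 held as stated, then for a closed $M$ you could write $M=U_0\cup U_1$ with $U_0=M\smallsetminus\{pt\}$, $U_1$ an open ball, and $U_0\cap U_1\simeq S^{m-1}\times\R$; all three pieces are open manifolds on which the theorem holds (by your own induction), so descent on both sides would force the conclusion for closed $M$ --- contradicting the example above (and contradicting the paper's Appendix~\ref{app:embedded_classifying}, where $\psi_d$ of a compact ambient manifold is a union of classifying spaces of diffeomorphism groups, not a section space). The hypothesis must therefore enter in the descent/fibration step itself, not merely through the absence of an index-$m$ handle in the decomposition or through the half-ball base case. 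The standard way to fix this, as in the cited proof, is to replace naive descent by a \emph{microflexibility} statement (homotopy lifting only for small deformations of compact families) and to use the non-compactness --- the ability to push submanifolds off to infinity in the open directions --- to upgrade this to the genuine lifting property needed when attaching handles of index $<m$. Without an argument of that kind, Steps~2 and~3 of your proposal have nothing valid to induct with.
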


\noindent An important special case of the above is when $M = D^p \times \R^k$ ($k>0$) in which case we have a weak homotopy equivalence of spaces:
\begin{equation*}
	\psi_d^{(X,\xi)}(D^p \times \R^k) \simeq \Omega^p \psi_d(\R^{p+k}; X, \xi).
\end{equation*} 
 Note, however that this weak equivalence is not necessarily compatible with the natural $E_p$-algebra structures present on both spaces, see Section~\ref{sec:Ep-scanning}.

\subsection{Fiberwise Thom spaces}

Fix $E \to X$ a vector bundle. If $x \in X$ let $E_x$ denote the fiber of $E$ at $x$. The \emph{fiberwise one-point compactification} $E^\infty$ of $E$ is defined as follows. The underlying set is $E^\infty = E \sqcup X$, the set $E$ with an additional copy of $X$.  We will denote points $x \in X$ in the additional copy of $X$  by $\infty_x$, and it should be thought of as the `point at infinity in the fiber $E_x$'. We topologize $E^\infty$ as follows: a subset $V \subseteq E \sqcup X$ is open if $V \cap E$ is open in $E$ and if in addition for each $\infty_x \in V$, the intersection $V \cap E_x$ is compact. When $X = pt$ is the one point space, then $E^\infty$ is the usual one point compactification.

Suppose that we are given a map $\pi: X \to M$ to another space $M$. We will want to view $X$ as defining a family of spaces parametrizes by $M$. The space associated to $m \in M$ is $X_m = \pi^{-1}(m)$. The vector bundle $E$ restricts to a vector bundle over $X_m$ for which we can form the Thom space. The fiberwise Thom space assembles these into a family of spaces over $M$. 

\begin{definition}\label{def:fiberwiseThom}
	In the notation above, the \emph{fiberwise Thom space} $Th^\textrm{f.w.}_M(E)$ is defined to be the quotient of $E^\infty$ be the relation $\infty_x \sim \infty_{x'}$ whenever $\pi(x) = \pi(x') \in M$. 
\end{definition}

\noindent When $M = pt$, then we recover the usual Thom space.

\subsection{The space of embedded manifolds and Thom spaces}\label{sec:emb_thom}

Let $(X, \xi)$ be a space with a $GL_m$-action and equivariant fiber 
bundle $\xi: X \to Gr_d(\R^m)$, as above and in Section~\ref{sect:embded_with_tangent}. Let $\gamma_d$ be the tautological $d$-plane bundle on $Gr_d(\R^m)$ and let $\gamma_d^\perp$ denote the  complementary bundle (of dimension $m - d$). The fiber of $\gamma_d^\perp$ over the $d$-plane $L \subset \R^m$ is the quotient vector space $(\gamma^\perp_d)_L \cong \R^m/L$. 

The pullback $\xi^* \gamma_d^\perp$ is a vector bundle over $X$ and we may form the Thom space $Th(\xi^* \gamma_d^\perp)$. There is a pointed map 
\begin{equation*}
	L:Th(\xi^* \gamma_d^\perp) \to \psi_d(\R^m; X, \xi)
\end{equation*}
defined as follows. The base point $\{\infty\}$ of $Th(\xi^* \gamma_d^\perp)$ is mapped to the base point of $\psi_d(\R^m; X, \xi)$, the empty $d$-manifold embedded in $\R^m$. A point of $Th(\xi^* \gamma_d^\perp)$ which is not the base point consists of a point $x \in X$, and a vector $v \in \xi^*(\gamma_d^\perp)_x \cong (\gamma^\perp_d)_{\xi(x)} \cong \R^m / \xi(x)$.  This data specifies an affine subspace of $\R^n$
\begin{equation*}
	L_{(x,v)} = q^{-1}_x(v)
\end{equation*}
where $q_x: \R^m \to \R^m/\xi(x)$ is the quotient map by the subspace $\xi(x)$. This is the $d$-dimensional hyperplane of $\R^m$ which is parallel to $\xi(x)$, but offset by $v$. We regard it as a $d$-dimensional embedded submanifold of $\R^n$. The gauss map for this embedded submanifold $L_{(x,v)}$ is the constant map to $Gr_d(\R^m)$ taking value $\xi(x)$. We equip $L_{(x,v)}$ with an $(X, \xi)$ structure consisting of the constant map to $X$ with value $x$. The map $Th(\xi^* \gamma_d^\perp) \to \psi_d(\R^m; X, \xi)$ is given be sending $(x,v)$ to the submanifold $L_{(x,v)}$ with this $(X, \xi)$-structure. A simple inspection on plots shows that this map is continuous. In fact it is a weak equivalence. 

\begin{theorem}[{\cite[Lm.~3.8.1]{Ayala:2008aa}\cite[Th.~3.22]{MR2653727} }]\label{thm:space_to_thom_space}
		The map  
		\begin{equation*}
			L:Th(\xi^* \gamma_d^\perp) \to \psi_d(\R^m; X, \xi)
		\end{equation*}
		is a $GL_m$-equivariant weak homotopy equivalence. \qed
\end{theorem}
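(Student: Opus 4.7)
$GL_m$-equivariance of $L$ is immediate from the construction: both the source $Th(\xi^* \gamma_d^\perp)$ and the target $\psi_d(\R^m;X,\xi)$ carry compatible $GL_m$-actions, and $L$ intertwines them since a linear change of coordinates of $\R^m$ transports affine planes (with their constant $(X,\xi)$-structure) to affine planes.

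For the weak equivalence, the plan is to construct an explicit homotopy inverse $r\colon \psi_d(\R^m;X,\xi) \to Th(\xi^* \gamma_d^\perp)$ via a Pontryagin--Thom collapse at the origin. Given $W \in \psi_d(\R^m;X,\xi)$ meeting a suitable neighborhood of $0 \in \R^m$, let $w_0 \in W$ be the point of $W$ nearest to $0$: its structure $\theta(w_0) \in X$ together with $q_{\xi(\theta(w_0))}(w_0) \in \R^m/\xi(\theta(w_0)) \cong (\xi^*\gamma_d^\perp)_{\theta(w_0)}$ specifies a point of $Th(\xi^* \gamma_d^\perp)$, while $W$ disjoint from a chosen neighborhood of $0$ is sent to the basepoint. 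For an affine plane $L_{(x,v)}$, the nearest-to-origin point is precisely the orthogonal lift of $v$, so $r \circ L = \mathrm{id}_{Th(\xi^*\gamma_d^\perp)}$ on the nose.

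The harder composition $L \circ r \simeq \mathrm{id}$ is established by a straightening homotopy deforming $W$ to its affine tangent-plane approximation at $w_0$. One concrete choice is the scaling $W \mapsto w_0 + t(W - w_0)$ for $t \in [1, \infty]$: on any bounded neighborhood of $w_0$, the rescaled manifold $C^1$-converges to $w_0 + T_{w_0}W$, which is exactly $L \circ r(W)$; for $W$ whose nearest-to-$0$ point drifts off to infinity, the rescaling sends $W$ to the empty manifold, matching the value $r(W) = \infty$ of the basepoint. The $(X,\xi)$-structure is transported along the rescaling in the obvious way, converging to the constant structure on the tangent plane.

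The principal technical obstacle is continuity of $r$ and of the straightening homotopy in the plot topology of Section~\ref{sec:space_of_man}. The nearest-point assignment is not globally continuous on $\psi_d(\R^m;X,\xi)$ (it can be multi-valued, and its domain of definition shrinks as $W$ varies), and the effective radius of a usable tubular neighborhood depends on $W$. Both issues are handled plot-by-plot: for any plot $p\colon U \to \psi_d(\R^m;X,\xi)$ with graph $\Gamma(p)\subseteq U\times\R^m$, one defines $r$ on $\Gamma(p)$ using smooth radial cutoffs that smoothly interpolate between the tangential-collapse regime near $0$ and the basepoint-collapse regime far from $0$; choices can be made locally smoothly in $u \in U$. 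Smoothness of plots then upgrades pointwise $C^1$-convergence of rescaled submanifolds to their affine tangent planes into bona fide convergence in $\psi_d(\R^m;X,\xi)$, completing the argument and yielding the $GL_m$-equivariant weak equivalence.
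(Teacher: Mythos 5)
The central step of your proposal --- a globally defined homotopy inverse $r$ given by ``record the point of $W$ nearest the origin'' --- has a gap that I do not think can be repaired in the form you describe. The nearest-point assignment is genuinely discontinuous in $W$ in the interior of the Thom space, not just near the basepoint: take $W$ to be the union of two parallel affine $d$-planes at distances $a$ and $b$ from $0$ and let $(a,b)$ cross the wall $a=b$. The recorded value jumps between two distinct non-basepoint points of $Th(\xi^*\gamma_d^\perp)$ (different offsets, and in general different tangent data), and since both competing values have small normal vectors, no radial cutoff interpolating between a ``tangential-collapse regime near $0$'' and a ``basepoint regime far from $0$'' can smooth this out --- the cutoff only sees the distance to the origin, not which sheet is nearest. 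Moreover, the proposed fix of making ``choices locally smoothly in $u$, plot-by-plot'' does not define a map at all: continuity in the plot topology is checked plot-by-plot, but the underlying set map $r$ must be fixed once and for all, independent of the plot, and different local choices on overlapping plots are exactly what is not available here. The same problem infects the straightening homotopy $W\mapsto w_0+t(W-w_0)$, since it is centered at the ill-defined point $w_0$. (The smaller claims are fine: equivariance is indeed immediate, manifolds escaping to infinity do converge to $\emptyset$, and rescaling a fixed $W$ about one of its points does converge to the affine tangent plane in this topology.)

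Note also that the paper does not prove this statement at all; it quotes it from Galatius--Randal-Williams \cite[Th.~3.22]{MR2653727} and Ayala \cite[Lm.~3.8.1]{Ayala:2008aa}. Those proofs deliberately avoid constructing a global collapse map out of $\psi_d(\R^m;X,\xi)$, precisely because of the discontinuity above. Instead they argue on homotopy groups with compact families: given a map of a sphere (or a disk, for injectivity) into $\psi_d(\R^m;X,\xi)$, one first puts the family in general position relative to a small ball about the origin, then applies a parametrized translation/zooming deformation so that each manifold in the family is either disjoint from a large ball (and can be pushed off to $\emptyset$) or is, on a neighborhood of the origin, a graph over an affine plane, and finally linearizes such graphs; this deforms the family into the image of $L$ and shows $L$ is an isomorphism on all homotopy groups. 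If you want a complete argument you should either follow that family-wise scheme or find a genuinely new way around the non-existence of a continuous nearest-point retraction; as written, the construction of $r$ is the missing ingredient rather than a technical detail.
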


For each $m$-manifold $M$, the map $L$ induces a map of associated bundles, which by the above theorem is also a weak equivalence:
\begin{equation*}
	L: Th^\textrm{f.w.}_M(\xi^*_M \gamma_d^\perp) \to \psi_d^{(X, \xi), \text{fib}}(TM)
\end{equation*}
where $\xi_M: X_M \to Gr_d(TM)$ is the associated map (recall $X_M = Fr(TM) \times_{GL_m} X$), $\gamma_d^\perp$ is the complementary bundle on $ Gr_d(TM)$ to the tautological $d$-plane bundle $\gamma_d$, and $\xi^*\gamma_d^\perp$ is the pullback bundle to $X_M$.

Combining this with Theorem~\ref{thm:h-prin} we have:
\begin{corollary}[\cite{MR2764873}]
	If $M$ has no compact components, then we have weak homotopy equivalences:
	\begin{equation*}
		\psi_d^{(X,\xi)}(M) \stackrel{\sim}{\longrightarrow} \Gamma(M, \partial M; \psi_d^{(X, \xi), \text{fib}}(TM)) \stackrel{\sim}{\longleftarrow} \Gamma(M, \partial M; Th^\textrm{f.w.}_M(\xi^*_M \gamma_d^\perp))
	\end{equation*}
	In particular, if  $M = D^p \times \R^k$ ($k>0$), then we have weak homotopy equivalences:
 	\begin{equation*}
 		\psi_d^{(X,\xi)}(D^p \times \R^k) \stackrel{\sim}{\longrightarrow} \Omega^p \psi_d(\R^{p+k}; X, \xi)
		  \stackrel{\sim}{\longleftarrow} \Omega^p Th(\xi^*\gamma_d^\perp)
 	\end{equation*}
	where $Th(\xi^*\gamma_d^\perp)$ is the Thom space of the bundle $\xi^* \gamma_d^\perp$ over $X$. 
\end{corollary}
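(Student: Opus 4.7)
The plan is to string together the two main inputs already established in the excerpt: Theorem~\ref{thm:h-prin} (the scanning map is a weak equivalence when $M$ has no compact components) and Theorem~\ref{thm:space_to_thom_space} (the map $L: Th(\xi^* \gamma_d^\perp) \to \psi_d(\R^m; X, \xi)$ is a $GL_m$-equivariant weak equivalence). The first gives the left-hand arrow immediately. For the right-hand arrow, I would apply the Borel construction $Fr(TM) \times_{GL_m} (-)$ to the equivariant weak equivalence $L$. Because $L$ is $GL_m$-equivariant and preserves the canonical basepoints (the point at infinity $\infty \in Th(\xi^*\gamma_d^\perp)$ is sent to the empty manifold), this yields a fiberwise weak equivalence of pointed bundles over $M$
\begin{equation*}
L_M: Th^\textrm{f.w.}_M(\xi^*_M \gamma_d^\perp) \to \psi_d^{(X, \xi), \text{fib}}(TM)
\end{equation*}
that is compatible with the two zero sections.

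The next step is to promote this fiberwise weak equivalence to a weak equivalence of section spaces with prescribed boundary behavior. This is standard parametrized homotopy theory: a fiberwise weak equivalence of Hurewicz fibrations over a paracompact base of the homotopy type of a CW-complex (which $M$ certainly is) induces a weak equivalence on spaces of sections, and the same holds when sections are required to coincide with a fixed section on a closed subspace — here $\partial M$. Applying this to $L_M$ gives the weak equivalence $\Gamma(M, \partial M; Th^\textrm{f.w.}_M(\xi^*_M \gamma_d^\perp)) \stackrel{\sim}{\to} \Gamma(M, \partial M; \psi_d^{(X, \xi), \text{fib}}(TM))$, and combining with Theorem~\ref{thm:h-prin} yields the stated zig-zag of weak equivalences.

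For the special case $M = D^p \times \R^k$ with $k > 0$, $M$ is manifestly open (no compact components), so the general result applies. The tangent bundle $TM$ is canonically trivial, so both fiber bundles become globally trivial: $\psi_d^{(X, \xi), \text{fib}}(TM) \cong M \times \psi_d(\R^{p+k}; X, \xi)$ and $Th^\textrm{f.w.}_M(\xi^*_M \gamma_d^\perp) \cong M \times Th(\xi^*\gamma_d^\perp)$. A section restricting to the zero section on $\partial M$ is then the same as a based map $(M, \partial M) \to (\psi_d(\R^{p+k}; X, \xi), \emptyset)$ (respectively for $Th$). Since $\R^k$ is contractible, the pair $(D^p \times \R^k, \partial D^p \times \R^k)$ is weakly equivalent rel boundary to $(D^p, \partial D^p)$, so the section spaces reduce to $\Omega^p \psi_d(\R^{p+k}; X, \xi)$ and $\Omega^p Th(\xi^*\gamma_d^\perp)$ respectively, giving the displayed formulas.

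The only nontrivial point is the passage from fiberwise to global weak equivalence on sections rel boundary; this is routine given the bundle structures but does rely on the bundles being locally trivial fibrations with compatible zero sections, which is exactly what the Borel construction and the basepoint-preservation of $L$ provide. Everything else is formal bookkeeping with the already-established theorems.
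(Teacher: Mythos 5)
Your proposal is correct and follows essentially the same route as the paper: the left arrow is Theorem~\ref{thm:h-prin}, the right arrow comes from applying the associated-bundle (Borel) construction to the $GL_m$-equivariant equivalence $L$ of Theorem~\ref{thm:space_to_thom_space} and passing to section spaces rel $\partial M$, and the special case follows by trivializing $T(D^p\times\R^k)$ and using contractibility of $\R^k$ to identify sections rel boundary with $p$-fold loop spaces. The only detail you make explicit that the paper leaves implicit is the passage from a fiberwise weak equivalence to a weak equivalence of section spaces, which is unproblematic here since $M$ is a finite-dimensional manifold.
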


\section{$E_p$-operads and algebras} \label{sec:Ep-scanning}

The \emph{$E_p$-operad} is the operad of little $p$-cubes \cite{MR0420610}. The $k^\textrm{th}$ space $E_p(k)$ of this operad is the space of embeddings
\begin{equation*}
	\coprod_k D^p \to D^p
\end{equation*}
where $D^p = \{ (x_i) \in \R^n \; | \; |x_i| \leq 1 \}$ is the unit cube, and such that restricted to each component the embedding is \emph{rectilinear}. This means that it is given by the formula
\begin{equation*}
	(x_i) \mapsto (a_i x_i + b_i)
\end{equation*}
for real constants $a_i$ and $ b_i$, with $a_i >0$. Thus the set of embeddings can be viewed as a subset of $\R^{2pk}$, and we view it as a topological space using the subspace topology. 

An $E_p$-algebra (in $\Top$) is a (pointed) space equipped with an action of the $E_p$-operad. This means that we have a space $X$ and for each $k$ we have a $\Sigma_k$-equivariant composition:
\begin{equation*}
	E_p(k) \times X^k \to X.
\end{equation*}
See \cite{MR0420610} for details. 

If $\cF$ is a functor from $p$-manifolds to spaces which is co-variant for closed embeddings and a contravaraint sheaf for open embeddings, then under mild conditions the value on the unit $p$-cube, $\cF(D^p)$, is naturally an $E_p$-algebra. We will consider two important examples.

\begin{example}[$p$-fold loop spaces] \label{ex:loop_space}
	Fix a pointed topological space $(Z,*)$ then the relative mapping space functor $\cF(M) = \Map((M, \partial M), (Z, *))$ sends $M$ to the space of maps which restrict to the constant base-point map on $\partial M$.  We have $\cF(D^p) = \Omega^P Z$, the $p$-fold loop space of $Z$.
	
If $M \to M'$ is a closed embedding (of manifolds of the same dimension), then we get a map 
	\begin{equation*}
		\cF(M) \to \cF(M')
	\end{equation*}
	by extending $M \to Z$ to $M' \to Z$ by the constant map to the basepoint. This defines a continuous covariant functor for the category of manifolds and closed embeddings. The space $\Omega^p Z$ is naturally an $E_p$-algebra. 
\end{example}

\begin{example}[Spaces of Embedded Manifolds] \label{ex:emb_man}
	Fix a dimension $m$ and a tangential structure $\xi: X \to Gr_{d}(\R^{p+m})$.
	For any $m$-dimensional manifold $M$, we have a continous functor on $p$-manifolds $\psi_d^{(X,\xi)}(- \times M)$. Again the space of embedded submanifolds  $\psi_d(D^p \times M)$ is an $E_p$-algebra. 
\end{example}

Taking the product with the standard interval $D^1 = [-1,1]$ gives us a way to regard $n$-cubes as $(n+1)$-cubes, and this induces a homomorphism from the $E_n$ operad to the $E_{n+1}$ operad. The colimit is the \emph{$E_\infty$ operad}. It consists of componentwise rectilinear embeddings of infinite dimensional cubes with are trivial in all but finitely many variables. Infinite loop spaces are the prototypical example of $E_\infty$ algebras. 

We saw in Section~\ref{sec:scan} that for any choice of scanning exponential, the induced scanning map
\begin{equation*}
	\psi_d^{(X, \xi)}(D^p \times \R^m) \to \Omega^p\psi_d(\R^{p+m}; X, \xi)
\end{equation*}
is a weak equivalence (provided $m >0$). As we saw in Examples~\ref{ex:loop_space} and~\ref{ex:emb_man}, both spaces are $E_p$-algebras and it is natural to speculate that the they are weakly equivalent as $E_p$-algebras. To the author's knowledge this has not been shown in the literature for finite $p$.

One difficulty is that there is no choice of scanning exponential which is compatible with the action of the $E_p$-operad. To remedy this we will enlarge $\psi_d^{(X, \xi)}(D^p \times \R^m)$ with additional data which will determine a new scanning map. 

\subsection{Scanning functions and an $E_p$-equivalence}

\begin{definition}
	A \emph{scanning function} on $D^p$ is a $p$-tuple $(\varepsilon^{(i)})$ of smooth functions $\varepsilon^{(i)}: D^p \to \R_{\geq 0}$ such that $\varepsilon^{(i)}|_{\partial D^p}$ agrees with the zero function to all order. That is the $\infty$-jet $(j^\infty \varepsilon^{(i)})|_{\partial D^p}$ restricted to $\partial D^p$ agrees with the $\infty$-jet of the $p$-tuple of constant zero functions. 
			
	Fix an embedded manifold $W \subseteq D^p \times \R^m$ which is disjoint from $\partial (D^p \times \R^m)$. We will say that a scanning function $\varepsilon$ is \emph{compatible} with $W$ if $\varepsilon^{(i)}(x) >0$ for all $x \in W$ and all $1 \leq i \leq p$.

Let $\Psi^{(X,\xi)}_d(D^p \times \R^m)$ be  defined as the space of all pairs $(W, \varepsilon)$ consisting of an embedded manifold $W \in \psi^{(X,\xi)}_d(D^p \times \R^m)$ and a compatible scanning function $\varepsilon$. 
\end{definition}

\begin{remark}
	There are several variations one can imagine for the notion of scanning function. The one we are using has the advantage that both (1) we can extend the domain of any scanning function $\varepsilon^{(i)}$ to all of $\R^p$ by extending by the $p$-tuple of zero functions outside of $D^p$, and (2) there exist scanning functions such $\varepsilon^{i}(x) > 0$ whenever $x$ is on the interior of $D^p$. Such scanning functions are compatible with all embedded manifolds $W$ disjoint from the boundary. 	
\end{remark}

\begin{example} \label{ex:box_scanning_function}
	Let $(x_1, \dots, x_p)$ be standard coordinates on 	$\R^p$. Define a function 
	\begin{equation*}
		g(x_1, \dots, x_p) = \prod_{i=1}^p (1 - x_i^2)
	\end{equation*}
	There is a scanning function $\varepsilon = \{ \varepsilon^{(i)}$ with $\varepsilon^{(1)}(x) = \varepsilon^{(2)}(x) = \cdots = \varepsilon^{(p)}(x)$ equal to the following function:
	\begin{equation*}
		\varepsilon^{(i)}(x_1) = e^{- \frac{1}{g(x)}}.
	\end{equation*}
	 This scanning function is non-zero on the interior of $D^p$ and hence is compatible with all closed embedded manifolds disjoint from $\partial D^p \times \R^m$. 
\end{example}

The space $\Psi^{(X,\xi)}_d(D^p \times \R^m)$ is naturally an $E_p$-algebra, which we can see as follows. The action on the space of manifolds is as it is on $\psi^{(X,\xi)}_d(D^p \times \R^m)$. We need only describe what happens on the scanning functions. 
Suppose that $\coprod_k D^p \to D^p$ is an embedding such that each disk is embedded rectilinearly. Suppose also that we are given $k$-many scanning functions $(\varepsilon_j^{(i)})$ on $D^p$ ($1 \leq j \leq k$ and $1 \leq i \leq p$). Then we get a new scanning function $(\varepsilon^{(i)})$ on $D^p$ as follows. Outside of the image of $\coprod_k D^p$, each $\varepsilon^{(i)} \equiv 0$ is identically zero for all $1 \leq i \leq p$. Inside the $j^\text{th}$ embedded $D^p$ the scanning function agrees with $(\varepsilon_j^{(i)})$ composed with the inverse of the rectilinear embedding. 

The forgetful map $u:\Psi^{(X,\xi)}_d(D^p \times \R^m) \to \psi^{(X,\xi)}_d(D^p \times \R^m)$ is a map of $E_p$-algebras by construction. 

\begin{lemma}\label{lem:forget_equiv}
	The forgetful map $u:\Psi^{(X,\xi)}_d(D^p \times \R^m) \to \psi^{(X,\xi)}_d(D^p \times \R^m)$ is an acyclic Serre fibration and hence a weak equivalence of $E_p$-algebras.
\end{lemma}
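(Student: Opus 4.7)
The plan is to verify directly that $u$ has the right lifting property against every boundary inclusion $S^{n-1} \hookrightarrow D^n$, since once $u$ is shown to be an acyclic Serre fibration the $E_p$-equivalence is automatic from the already-noted fact that $u$ is an $E_p$-map. The proof will rest on three elementary observations: (1) the set of scanning functions is convex, since smoothness, non-negativity, and infinite-order vanishing at $\partial D^p$ are all preserved under convex combinations; (2) the set of scanning functions compatible with a given $W$ is likewise convex, because strict positivity on the $D^p$-projection of $W$ is preserved; and (3) the scanning function $\varepsilon_\ast$ of Example~\ref{ex:box_scanning_function} is strictly positive on the interior of $D^p$, and hence is compatible with every $W \in \psi^{(X,\xi)}_d(D^p \times \R^m)$, since any such $W$ is disjoint from $\partial D^p \times \R^m$.

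Given a lifting problem consisting of $f: D^n \to \psi^{(X,\xi)}_d(D^p \times \R^m)$ together with a partial lift $\tilde f|_{S^{n-1}}(\omega) = (f(\omega), \varepsilon_\omega)$ for $\omega \in S^{n-1}$, I extend by the formula
\[
\tilde f(t) = \bigl(f(t),\; |t| \cdot \varepsilon_{t/|t|} + (1-|t|) \cdot \varepsilon_\ast\bigr), \qquad t \in D^n,
\]
with the convention $\tilde f(0) = (f(0), \varepsilon_\ast)$. By observations (1) and (2), the second coordinate is a scanning function compatible with $f(t)$, so $\tilde f(t)$ lies in $\Psi^{(X,\xi)}_d(D^p \times \R^m)$, and it restricts to the given partial lift on $S^{n-1}$.

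The remaining task is to confirm that $\tilde f$ is continuous. Off the origin this is immediate from continuity of addition, scalar multiplication, and the composition $t \mapsto (f(t), \varepsilon_{t/|t|})$. At the origin, continuity follows because $\{\varepsilon_\omega\}_{\omega \in S^{n-1}}$ is a continuous family on a compact domain and hence uniformly bounded in whatever topology is used on scanning functions; thus $|t| \cdot \varepsilon_{t/|t|} \to 0$ as $t \to 0$. The principal subtlety of the argument is confirming that the plot-topology on $\Psi^{(X,\xi)}_d(D^p \times \R^m)$ indeed makes pointwise addition, scalar multiplication of scanning functions, and restriction to fibers continuous. This is the only step that might require care, but it is routine once the topology on $\Psi^{(X,\xi)}_d(D^p \times \R^m)$ is unpacked along the lines of Section~\ref{sec:space_of_man}.
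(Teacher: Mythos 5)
Your proof is correct and is essentially the paper's own argument: the paper also solves the lifting problem against boundary inclusions by writing $D^k$ as a cone on $\partial D^k$ and taking the convex interpolation $t\cdot\tilde{\varepsilon} + (1-t)\cdot\overline{\varepsilon}_x$ between the given boundary family and the everywhere-compatible scanning function of Example~\ref{ex:box_scanning_function}. One small wording point: compatibility of your interpolant with $f(t)$ at interior points does not follow from your observation (2) (since $\varepsilon_{t/|t|}$ need not be compatible with $f(t)$), but rather, as in the paper, from the fact that the term $(1-|t|)\varepsilon_\ast$ is strictly positive on the interior of $D^p$ while the other term is nonnegative.
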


\begin{proof}
We will show that for any commutative square, as below, we can solve the indicated lifting problem:	
	\begin{center}
	\begin{tikzpicture}
			\node (LT) at (0, 1.5) {$ \partial D^k $};
			\node (LB) at (0, 0) {$D^k $};
			\node (RT) at (3, 1.5) {$ \Psi^{(X,\xi)}_d(D^p \times \R^m)$};
			\node (RB) at (3, 0) {$\psi^{(X,\xi)}_d(D^p \times \R^m) $};
			\draw [->] (LT) -- node [left] {$ $} (LB);
			\draw [->] (LT) -- node [above left] {$ $} (RT);
			\draw [->] (RT) -- node [right] {$ u$} (RB);
			\draw [->] (LB) -- node [below] {$ $} (RB);
			\draw [->, dashed] (LB) -- (RT);
	\end{tikzpicture}
	\end{center}
The data of such a lift consists of an assignment for each $x \in D^k$ of a compatible scanning function $\varepsilon_x$ such that $\varepsilon_x$ agrees with the specified lift on the boundary. 

We will view $D^k = \partial D^k \times [0,1] / \sim$ where $(x, 1) \sim (x', 1)$ for any $x, x'$. Let $\overline \varepsilon$ be the family of scanning functions on $\partial D^k$ specified by the initial lift. Let $\tilde{\varepsilon}$ be any scanning function which is compatible with all embedded manifolds (i.e. $\tilde{\varepsilon}^{(i)}(y) > 0$ for any interior point $y \in D^p$ and for all $1 \leq i \leq p$), for example the scanning function in Example~\ref{ex:box_scanning_function}. 
Then the desired family of scanning functions is given by:
\begin{equation*}
	\varepsilon_{(x, t)} = t \cdot \tilde{\varepsilon} + (1-t) \cdot \overline \varepsilon_x.
\end{equation*}
For all $t>0$, $\varepsilon_{(x, t)}$ is compatible with all embedded manifolds, and hence this does define a lift. 
\end{proof}

\subsection{A scanning map for $\Psi_d^{(X, \xi)}(D^p \times \R^m)$}

We will now describe a map 
\begin{equation*}
	s:\Psi_d^{(X, \xi)}(D^p \times \R^m) \to \Omega^p \psi_d(\R^{p + m}; X, \xi)
\end{equation*}
which is a variation on the scanning map in Section~\ref{sec:scan}. Given a point $z = (z_1, \dots, z_p) \in D^p$ and a $p$-tuple of positive real numbers $\varepsilon = (\varepsilon_1, \dots, \varepsilon_p)$ we have an embedding:
\begin{equation*}
	\phi_{x, \varepsilon}: \R^{p + m} \hookrightarrow \R^{p + m} 
\end{equation*}
In the $i^\textrm{th}$-coordianate $\phi_{x, \varepsilon}$ is given by 
\begin{equation*}
	(x_1, \dots, x_p, x_{p+1}, \dots, x_{p+m}) \mapsto \begin{cases}
		z_i + \varepsilon_i \arctan(x_i) & \text{if } 1 \leq i \leq p \\
		x_i & \text{if } p+1 \leq i \leq p+m
	\end{cases}.
\end{equation*}
In other words $\phi_{x, \varepsilon}$ is a diffeomorphism between $\R^{p + m}$ and the open box centered at $x$ with sides of length $2 \varepsilon_i$ on the $i^\textrm{th}$-coordinate direction (and infinite in the $\R^m$-directions). 

Given an embedded manifold $W \subseteq D^p \times \R^m$, we regard it as a manifold embedded in $\R^{p \times m}$ using the standard closed embedding of $D^p \subseteq \R^p$ as the unit cube. Then $s$ is adjoint to the map:
\begin{align*}
	\tilde{s}:\Psi_d^{(X, \xi)}(D^p \times \R^m) \times D^p &\to  \psi_d(\R^{p + m}; X, \xi) \\
	(((W, \theta), \varepsilon, x)) & \mapsto \begin{cases}
		\phi_{x, \varepsilon(x)}^{-1}(W, \theta) & \text{if } \varepsilon^{(i)}(x) > 0 \text{ for all} 1 \leq i \leq p \\
		\emptyset & \text{otherwise}
	\end{cases}
\end{align*}

\begin{lemma}
	The map $\tilde{s}$ and its adjoint $s: \Psi_d^{(X, \xi)}(D^p \times \R^m) \to \Omega^p \psi_d(\R^{p + m}; X, \xi)$ are continuous maps. 
\end{lemma}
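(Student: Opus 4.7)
The approach is to verify continuity of $\tilde s$ directly by testing on plots, since $\psi_d(\R^{p+m}; X, \xi)$ carries the plot topology. A plot $q \colon U \to \Psi_d^{(X,\xi)}(D^p \times \R^m) \times D^p$ from a smooth manifold $U$ consists of (a) a plot $\mathcal{W} \subseteq U \times D^p \times \R^m$ of $\psi_d^{(X,\xi)}(D^p \times \R^m)$ in the sense of Section~\ref{sec:space_of_man}, (b) a smooth family of compatible scanning functions $\varepsilon \colon U \times D^p \to \R_{\geq 0}^p$, and (c) a smooth map $f \colon U \to D^p$. I will show that $\tilde s \circ q$ is itself a plot of $\psi_d(\R^{p+m}; X, \xi)$; continuity of $\tilde s$ then follows because every plot is automatically continuous.

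Let $U^+ = \{u \in U : \varepsilon^{(i)}(u, f(u)) > 0 \text{ for all } 1 \leq i \leq p\}$, which is an open subset of $U$. Over $U^+$ consider the smooth map
\[
\Phi \colon U^+ \times \R^{p+m} \longrightarrow U^+ \times (D^p \times \R^m), \qquad (u, y) \longmapsto \bigl(u,\; \phi_{f(u),\, \varepsilon(u, f(u))}(y)\bigr).
\]
Each fiber of $\Phi$ is the smooth open embedding $\phi_{f(u), \varepsilon(u, f(u))}$ of $\R^{p+m}$ onto an open sub-box of $D^p \times \R^m$; since $\Phi$ preserves the $U^+$-coordinate and is injective on fibers, $\Phi$ is itself a smooth open embedding onto an open subset of $U^+ \times (D^p \times \R^m)$. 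The graph of $\tilde s \circ q$ over $U^+$ is precisely $\Phi^{-1}(\mathcal{W} \cap \operatorname{image}(\Phi))$, a smooth $(X,\xi)$-submanifold of $U^+ \times \R^{p+m}$ with submersive projection to $U^+$, both inherited from $\mathcal{W} \to U$. Off $U^+$ the graph is empty by the piecewise definition of $\tilde s$, so it is automatically a smooth submanifold of the full product $U \times \R^{p+m}$ with submersive projection to $U$: smoothness and submersivity are local conditions, checked only at points of the graph, and every such point lies over the open set $U^+$.

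This produces the desired plot and hence continuity of $\tilde s$. Continuity of the adjoint $s$ then follows from the exponential law, and the image lies in the loop space $\Omega^p \psi_d(\R^{p+m}; X, \xi)$ because every scanning function vanishes on $\partial D^p$, forcing $s(W, \varepsilon)$ to restrict to the constant map at the basepoint $\emptyset$ on $\partial D^p$.

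The main obstacle is conceptual rather than computational: the graph need not be closed in $U \times \R^{p+m}$. Limit points may accumulate over $u_0 \notin U^+$ (for instance, when some $\varepsilon^{(i)}(u_0, f(u_0)) = 0$ while the remaining $\varepsilon^{(j)}$ stay positive, leaving a partially degenerate box that may still meet $W_{u_0}$), even though the fiber at $u_0$ is empty by construction. It is therefore essential to note that the definition of a plot requires only a smooth submanifold, not a closed one, and that the plot topology on $\psi_d$ accommodates such jumps through the generic-point behavior of $\emptyset \in \psi_d(\R^{p+m})$ identified in Section~\ref{sec:space_of_closed_sets}.
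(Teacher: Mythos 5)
Your route differs from the paper's: the paper simply reruns the argument of Lemma~\ref{lem:scan_cts} (continuity away from the degenerate locus, plus an argument forcing the value to the basepoint near it), whereas you try to show that the composite of $\tilde s$ with any plot is itself a plot. The step that fails is exactly the one you flag at the end and then dismiss. In the paper's framework the graph of a plot is treated as a closed subset of $U \times M$ (this is used explicitly in the proof of Lemma~\ref{lem:scan_cts}, and it is what makes the plot topology agree with the Galatius--Randal-Williams topology in Theorem~\ref{thm:spaceofmanifoldcomparison}). In particular, the sets of manifolds disjoint from a fixed compact $K \subseteq \R^{p+m}$ are \emph{open} neighborhoods of $\emptyset$ (this is the content of the proofs of Lemma~\ref{lem:mapsarepoltsforclosedsubsets} and Corollary~\ref{cor:empty_open}); genericity of $\emptyset$ means only that $\emptyset$ is dense, not that jumps to $\emptyset$ are continuous. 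So a family of scanned manifolds that keeps meeting a fixed compact set cannot converge to $\emptyset$, and a ``plot'' whose graph accumulates over the degenerate locus is not continuous in the topology the paper actually uses. If non-closed graphs were admissible your argument would prove continuity of any such jumping family, i.e.\ it proves too much.

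Concretely, what must be shown at a parameter $u_0$ with $\varepsilon^{(i)}(u_0, f(u_0)) = 0$ for some $i$ is that for every compact $K \subseteq \R^{p+m}$ the scanned manifolds $\phi^{-1}_{f(u),\,\varepsilon_u(f(u))}(W_u)$ miss $K$ for all $u$ sufficiently close to $u_0$ --- equivalently, that the graph you build has no limit points over $U \setminus U^+$. This is not a formality: if only $\varepsilon^{(1)}$ degenerates while $\varepsilon^{(2)}$ stays of order one (possible for $p \geq 2$, since scanning functions may have interior zeros away from $W$), the scanning box is thin in one direction but wide in another, and it can continue to capture points of $W$ whose preimages have bounded coordinates; compatibility of $\varepsilon$ with $W$ and the closedness of the ambient family are what must be brought to bear, as in the boundary argument of Lemma~\ref{lem:scan_cts} where the entire scanning image shrinks into a region disjoint from $W$. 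Your proposal supplies the easy half (the graph is a smooth submanifold with submersive projection over $U^+$) but replaces the essential estimate at the degenerate locus with an appeal to formal properties of the plot topology that do not hold, so the proof has a genuine gap precisely where the analytic content of the lemma lies.
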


\begin{proof}
	The proof is the same as for Lemma~\ref{lem:scan_cts}.
\end{proof}

\begin{lemma}
	The map $s: \Psi_d^{(X, \xi)}(D^p \times \R^m) \to \Omega^p \psi_d(\R^{p + m}; X, \xi)$ is a weak homotopy equivalence. 
\end{lemma}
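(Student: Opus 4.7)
The plan is to reduce the lemma to the classical scanning theorem of Section~\ref{sec:scan} via the forgetful map $u$. To begin, I would fix a distinguished scanning function $\varepsilon_{0}$ on $D^{p}$ that is strictly positive throughout the interior of $D^{p}$ --- for instance the one from Example~\ref{ex:box_scanning_function}. Because such a $\varepsilon_{0}$ is compatible with every embedded manifold $W$ disjoint from $\partial D^{p}\times \R^{m}$, the rule $W\mapsto (W,\varepsilon_{0})$ defines a continuous section
\begin{equation*}
	\sigma : \psi_{d}^{(X,\xi)}(D^{p}\times \R^{m})\to \Psi_{d}^{(X,\xi)}(D^{p}\times \R^{m})
\end{equation*}
of $u$. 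Since $u\circ\sigma=\mathrm{id}$ and $u$ is a weak equivalence by Lemma~\ref{lem:forget_equiv}, $\sigma$ is a weak equivalence as well, and so by the two-out-of-three property it suffices to prove that $s\circ\sigma$ is a weak equivalence.

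Next I would recognize $s\circ\sigma$ as a classical scanning map. The formula $(s\circ\sigma)(W)(z)=\phi_{z,\varepsilon_{0}(z)}^{-1}(W)$ is induced by a natural scanning exponential $e$ on $M:=D^{p}\times\R^{m}$, defined by
\begin{equation*}
	e_{(z,y)}(v_{1},\ldots,v_{p+m}) = \bigl(z_{1}+\varepsilon_{0}^{(1)}(z)\arctan v_{1},\,\ldots,\, z_{p}+\varepsilon_{0}^{(p)}(z)\arctan v_{p},\, y_{1}+v_{p+1},\ldots, y_{m}+v_{p+m}\bigr).
\end{equation*}
On the interior this embeds $\R^{p+m}$ as the open box around $(z,y)$ with half-widths $\tfrac{\pi}{2}\varepsilon_{0}^{(i)}(z)$ in the $D^{p}$-directions and unbounded in the $\R^{m}$-directions, while the smooth vanishing of $\varepsilon_{0}$ along $\partial D^{p}$ supplies property (3) of Definition~\ref{def:scanning exponential}. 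Restricting the classical scanning section $s_{e}(W):M\to\psi_{d}^{(X,\xi),\mathrm{fib}}(TM)$ to the slice $D^{p}\times\{0\}$ returns exactly the map $z\mapsto\phi_{z,\varepsilon_{0}(z)}^{-1}(W)$, i.e.\ $s\circ\sigma$.

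Finally I would conclude using Theorem~\ref{thm:h-prin} applied to $M=D^{p}\times\R^{m}$, which has no compact components since $m>0$: the scanning map $s_{e}$ is a weak equivalence onto $\Gamma(M,\partial M;\psi_{d}^{(X,\xi),\mathrm{fib}}(TM))$. The restriction map from this section space to $\Omega^{p}\psi_{d}(\R^{p+m};X,\xi)$ evaluating at $y=0$ is a homotopy equivalence via the relative deformation retract of pairs $(D^{p}\times \R^{m},\partial D^{p}\times\R^{m})\simeq (D^{p},\partial D^{p})$ given by $(z,y,t)\mapsto (z,(1-t)y)$. Since the composite of the two is $s\circ\sigma$, the latter is a weak equivalence, and hence so is $s$.

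The hard part will be the bookkeeping of the identification in the middle step --- confirming that the $\varepsilon_{0}$-built family $e$ really does produce a bona-fide scanning exponential in the sense of Definition~\ref{def:scanning exponential} (this is where the requirement in our notion of scanning function that each $\varepsilon^{(i)}$ vanish to all orders on $\partial D^{p}$ earns its keep), and that the classical scanning formula restricted to $D^{p}\times\{0\}$ agrees pointwise with $\phi_{z,\varepsilon_{0}(z)}^{-1}$. Both verifications are direct once the conventions are aligned.
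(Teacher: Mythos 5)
Your argument is essentially the paper's own proof: take the section $W\mapsto (W,\varepsilon_0)$ of $u$ using the scanning function of Example~\ref{ex:box_scanning_function}, note it is a weak equivalence by Lemma~\ref{lem:forget_equiv}, identify the composite $s\circ\sigma$ with the scanning map of Section~\ref{sec:scan} for the exponential built from $\varepsilon_0$, invoke Theorem~\ref{thm:h-prin}, and conclude by two-out-of-three. The only difference is that you make explicit the identification of $\Gamma(D^p\times\R^m,\partial D^p\times\R^m;\psi_d^{(X,\xi),\text{fib}}(TM))$ with $\Omega^p\psi_d(\R^{p+m};X,\xi)$ by restricting to the slice $D^p\times\{0\}$, a step the paper leaves implicit.
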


\begin{proof}
Let $\varepsilon_0$ be the scanning function from Example~\ref{ex:box_scanning_function}. This scanning function is non-zero on the interior of $D^p$ and hence defines a section of the forgetful map 
\begin{equation*}
	u:\Psi_d^{(X, \xi)}(D^p \times \R^m) \to \psi_d^{(X, \xi)}(D^p \times \R^m)
\end{equation*}
where under this section an embedded manifold $W$ is sent to $(W, \varepsilon_0)$.

The composite 
\begin{equation*}
	\psi_d^{(X, \xi)}(D^p \times \R^m) \to \Psi_d^{(X, \xi)}(D^p \times \R^m) \stackrel{s}{\to} \Omega^p \psi_d(\R^{p + m}; X, \xi)
\end{equation*}
is adjoint to the map
\begin{align*}
	\psi_d^{(X, \xi)}(D^p \times \R^m) \times D^p &\to \psi_d(\R^{p + m}; X, \xi) \\
	((W, \theta), x) \mapsto \begin{cases}
		\phi_{x, \varepsilon_0(x)}^{-1}(W, \theta) & x \not \in \partial D^p \times \R^m \\
		\emptyset & x \in \partial D^p \times \R^m
	\end{cases}
\end{align*}
But this map is the scanning map from Section~\ref{sec:scan} associated to the scanning exponential
\begin{align*}
	e: T(D^p \times \R^m) = \R^{p+m} \times (D^p \times \R^m) &\to D^p \times \R^m \\
	(y, x) & \mapsto \begin{cases}
		\phi_{x, \varepsilon_0(x)}(y) & \text{if } x \not \in \partial D^p \times \R^m \\
		x & \text{if } x  \in \partial D^p \times \R^m \
	\end{cases} 
\end{align*}

The section of the forgetful map is a weak equivalence by Lemma~\ref{lem:forget_equiv}, and the composite map, being the scanning map from Section~\ref{sec:scan}, is a weak equivalence by Theorem~\ref{thm:h-prin}. It follows from the two-out-of-three property that $s$ is a weak equivalence. 
\end{proof}

\begin{theorem}\label{thm:E_pSpaces}
		We have natural weak equivalences of $E_p$-algebras:
		\begin{center}
		\begin{tikzpicture}
			\node (LB) at (0, 0) {$\psi^{(X,\xi)}_d(D^p \times \R^m)  $};
				\node (LT) at (3, 1.5) {$ \Psi^{(X,\xi)}_d(D^p \times \R^m) $};
				\node (RB) at (6, 0) {$\Omega^p \psi_d(\R^{p + m}; X, \xi) $};
				\node (RT) at (9, 1.5) {$\Omega^p Th(\xi^* \gamma_d^\perp) $};
				\draw [-Computer Modern Rightarrow] (LT) -- node [below right] {$u $} (LB) node [midway, above, sloped]  {$\sim$};
				\draw [-Computer Modern Rightarrow] (LT) -- node [below left] {$s $} (RB) node [midway, above, sloped]  {$\sim$};
				\draw [-Computer Modern Rightarrow] (RT) -- node [below right] {$ L$} (RB) node [midway, above, sloped]  {$\sim$};
		\end{tikzpicture}
		\end{center}

\end{theorem}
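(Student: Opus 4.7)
The theorem bundles three weak equivalences into a diagram and claims each respects the $E_p$-algebra structure. My plan is to treat the three maps $u$, $s$, and $\Omega^p L$ separately and observe that the weak-equivalence half of each claim has already been established in the excerpt; what remains is to verify $E_p$-compatibility.

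For $u$, Lemma~\ref{lem:forget_equiv} provides the weak equivalence, and the $E_p$-compatibility is immediate from the construction: the $E_p$-action on $\Psi_d^{(X,\xi)}(D^p \times \R^m)$ was designed so that $u$ forgets only the scanning function while preserving the operation on manifolds. For $L$, Theorem~\ref{thm:space_to_thom_space} gives that $L:Th(\xi^*\gamma_d^\perp) \to \psi_d(\R^{p+m}; X, \xi)$ is a pointed weak equivalence, and applying $\Omega^p$ to any pointed map between pointed spaces yields a strict $E_p$-homomorphism with respect to the standard $E_p$-action on $p$-fold loop spaces, so $\Omega^p L$ needs no further argument.

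The heart of the proof is therefore to verify that $s$ is an $E_p$-homomorphism (the previous lemma already establishes it is a weak equivalence). I would fix a rectilinear little-cubes configuration $\iota = (\iota_1, \ldots, \iota_k)$ with $\iota_j(y) = a_j y + b_j$ and a $k$-tuple of inputs $((W_j, \varepsilon_j))_j$, then compare the two resulting loops $D^p/\partial D^p \to \psi_d(\R^{p+m}; X, \xi)$. By the definition of the $E_p$-action on $\Psi_d^{(X,\xi)}(D^p \times \R^m)$, the combined scanning function $\varepsilon$ vanishes outside $\bigsqcup_j \iota_j(D^p)$ and equals $\varepsilon_j \circ \iota_j^{-1}$ on the $j$-th sub-cube, while $W$ is locally the image of $W_j$ under $\iota_j \times \mathrm{id}$. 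Thus at any $z \notin \bigsqcup_j \iota_j(D^p)$ one has $\tilde{s}(W, \varepsilon, z) = \emptyset$, matching the basepoint value dictated by the loop-space $E_p$-action; at $z = \iota_j(z')$ I would identify $\tilde{s}(W, \varepsilon, z)$ with $\tilde{s}(W_j, \varepsilon_j, z')$ via the change-of-variables induced by $\iota_j$. Once this identification is in hand, $s$ manifestly intertwines the two actions.

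I expect the main obstacle to be precisely this interior comparison: the scanning box $\phi_{z, \varepsilon(z)}(\R^{p+m})$ has absolute width $\varepsilon(z) = \varepsilon_j(z')$, whereas the rectilinear embedding $\iota_j$ scales lengths in the ambient cube by the factor $a_j$, so the two boxes differ by this dilation and the identification above is not strictly on the nose. I would resolve this either by modifying the $E_p$-action on scanning functions to incorporate the dilation factor $a_j$ when pulling back along $\iota_j$ (an internal adjustment that does not change $\Psi_d^{(X,\xi)}(D^p \times \R^m)$ up to weak equivalence, as any two such conventions are related by a contractible choice of positive rescalings, verifiable via a convex homotopy argument identical in structure to Lemma~\ref{lem:forget_equiv}), or by passing through a natural straight-line homotopy of positive widths which remains inside $\Psi_d^{(X,\xi)}$ throughout. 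Either remedy turns the needed comparison into a strict equality (or a canonical homotopy), completing the verification that $s$ is an $E_p$-map and hence establishing the chain of $E_p$-weak equivalences in the theorem.
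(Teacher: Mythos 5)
Your outline coincides with the paper's proof: $u$ is disposed of by Lemma~\ref{lem:forget_equiv}, the map $L$ by Theorem~\ref{thm:space_to_thom_space} together with the observation that looping a pointed map gives a strict $E_p$-map, and the real content is the equivariance of $s$. The difference lies in how that last point is handled, and your extra care is warranted. The paper asserts that $s$ is an $E_p$-map ``by design,'' saying the scanning functions are ``shifted and scaled in precisely the same way'' as the loops; but the $E_p$-action it actually defines transports $\varepsilon_j$ by composition with $\iota_j^{-1}$ only, with no rescaling, and your computation of the resulting mismatch is correct: $\phi_{z,\varepsilon(z)}$ and $(\iota_j\times\mathrm{id})\circ\phi_{z',\varepsilon_j(z')}$ differ by the dilation factors $a_i$, so with the convention as literally written $s$ is not strictly equivariant. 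Your first remedy --- building the factors $a_i$ into the action on scanning functions --- is exactly what makes the paper's sentence true; it changes nothing for $u$ or for Lemma~\ref{lem:forget_equiv} (whose lifting argument never sees the operad action on $\varepsilon$), and it should be preferred to your homotopy remedy, since the theorem asserts maps of $E_p$-algebras rather than maps commuting with the action only up to homotopy. One further wrinkle that neither you nor the paper addresses: even with the rescaled convention, the change of variables identifies the two loops only when the image of $\phi_{z',\varepsilon_j(z')}$ has $\R^p$-projection contained in $D^p$; since the definition of a scanning function places no upper bound on $\varepsilon$, a large $\varepsilon_j$ lets the scanning box leak out of the $j$-th little cube and detect pieces of some $W_{j'}$ with $j'\neq j$, which the individual loop $s(W_j,\varepsilon_j)$ never sees. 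This is repaired by adding to the definition (or to the compatibility condition) the pointwise bound forcing the box to stay in the interior of $D^p\times\R^m$: this bound is a convex condition on $\varepsilon$, so the lifting argument of Lemma~\ref{lem:forget_equiv} and your convex-homotopy remark go through verbatim, and suitable scanning functions still exist (a small positive multiple of the one in Example~\ref{ex:box_scanning_function}). With those two adjustments your verification of equivariance closes strictly, and the resulting argument is, if anything, more complete than the one given in the paper.
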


\begin{proof}
	The left-most map is a weak equivalence of $E_p$-algebras by Lemma~\ref{lem:forget_equiv}. The right most map is a weak equivalence of $E_p$-algebras by Theorem~\ref{thm:space_to_thom_space}.  The middle map was shown to be a weak equivalence in the previous lemma, and so all that remains is to show that it is an $E_p$-algebra map. 
	
The map $s$ is an $E_p$-algebra map by design. For suppose we are given a rectilinear embedding $\sqcup_k D^p \to D^p$ and a $k$-tuple of elements of $\{(W_k, \theta_k, \varepsilon_k) \}$ of elements of $\Psi^{(X,\xi)}_d(D^p \times \R^m)$. The $E_p$-composition is a new element $(W, \theta, \varepsilon)$ in $\Psi^{(X,\xi)}_d(D^p \times \R^m)$. The scanning function $\varepsilon$ is the constant function zero outside the images of the $k$-little disks embedded in $D^p$. Hence $(W, \theta, \varepsilon)$ is mapped via $s$ to a $p$-fold loop in $\psi_d(\R^{p + m}; X, \xi)$ which is the constant base-point valued loop outside the images of the $k$-little disks embedded in $D^p$. Inside the images, the scanning functions $\varepsilon_k$ are shifted and scaled in precisely the same way and the corresponding $p$-fold loops. Hence $s$ is an $E_p$-homomorphism. 	
\end{proof}

\section{The Bordism $n$-category}\label{sec:Bordncat}

\subsection{$n$-Fold Segal spaces}

\subsubsection{Segal Spaces} \label{sec:Segal_space_subsection}

Segal spaces are a homotopical weakening of the notion of nerve of a category. The category of simplicial sets is often used as the model of space the context of Segal spaces, but here we will use a variant using actual topological space. Specifically, $\Top$ will mean the category of  $\Delta$-generated topological spaces. With the weak homotopy equivalences this forms a combinatorial Cartesian simplicial Quillen model category with fibrations the Serre fibrations \cite{dugger_dgs}. All homotopy pull-backs will refer to this model structure.

The {\em spine} $S_n$ of the simplex $\Delta[n]$ is a sub-simplicial set consisting of the union of all the consecutive 1-simplices. There is the natural inclusion of simplicial sets
\begin{equation*}
	s_n: S_n = \Delta^{\{0,1\}} \cup^{\Delta^{\{1\}}} \Delta^{\{1,2\}} \cup^{\Delta^{\{2\}}} \cdots  \cup^{\Delta^{\{n-1\}}}  \Delta^{\{n-1,n\}} \to \Delta[n],
\end{equation*}
which corepresents the $n^\textrm{th}$ {\em Segal map}:
\begin{equation*}
	s_n: Z_n \to Z(S_n) = Z_1 \times_{Z_0}  Z_1 \times_{Z_0} \cdots \times_{Z_0}  Z_1.
\end{equation*}
Here $Z$ is a simplical space (or any simplicial object in a complete category). 

Recall that a simplicial set is isomorphic to the nerve of a category if and only if each Segal map is a bijection for $n \geq 1$. Moreover the full subcategory of simplical sets satisfying this property is equivalent to the category of small categories and functors. 

Also recall that given a pull-back diagram of spaces we can form both the fiber product $X \times_Y Z$, and the homotopy fiber product $X \times_Y^{h} Z$. There is a map
\begin{equation*}
	X \times_Y Z \to X \times_Y^{h} Z, 
\end{equation*}
which is well defined up to homotopy. So for example in a simplicial space $X$ (i.e. a functor $X : \bDelta^{op} \to \Top$) the Segal maps induce composite maps:
\begin{equation*}
	X_n \to X_1 \times_{X_0} \cdots \times_{X_0} X_1 \to X_1 \times^h_{X_0} \cdots \times^h_{X_0} X_1.
\end{equation*}

\begin{definition}
A \emph{Segal Space} is a simplicial space $X: \bDelta^\op \to \Top$ such that:
\begin{itemize}
\item \textbf{Segal Condition.} For each $n>0$ the Segal map induces a weak homotopy equivalence
\begin{equation*}
	s_n: X_n \xrightarrow{\simeq} \underbrace{ X_1 \times^{h}_{X_0} X_1 \times^{h}_{X_0} \dotsb \times^{h}_{X_0} X_1 \times^{h}_{X_0} X_1 }_{n \text{ factors}}.
\end{equation*}
\end{itemize}
\end{definition}
\noindent The Segal condition guarantees that we have a notion of composition which is coherent up to higher homotopy.


\begin{lemma}
	Segal spaces enjoy the following closure properties:
	\begin{enumerate}
		\item If $X$ is a Segal space and $Y$ is a simplicial space which is levelwise weakly equivalent to $X$ (meaning there is a finite zig-zag of levelwise weak equivalences of simplicial spaces connecting $X$ and $Y$), then $Y$ is also a Segal space. 
		\item If $X$, $Y$, and $Z$ are Segal spaces and $X \to Y$, $Z \to Y$ are any maps, then $X \times_Y^h Z$ is a Segal spaces, where the later denotes the levelwise homotopy fiber product of simplicial spaces. \qed
	\end{enumerate}	
\end{lemma}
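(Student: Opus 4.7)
The plan is to prove each part directly from the definition, using only the standard fact that homotopy pullbacks (and more generally finite homotopy limits) are invariant under levelwise weak equivalences and commute with each other. Both parts reduce to a diagram chase in the homotopy category.

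For part (1), suppose $f \colon X \to Y$ is a levelwise weak equivalence (the general zig-zag case follows by induction on the length of the zig-zag). For each $n$ the diagram
\begin{equation*}
\begin{tikzpicture}[baseline=(current bounding box.center)]
    \node (A) at (0,1.5) {$X_n$};
    \node (B) at (5,1.5) {$X_1 \times^h_{X_0} \cdots \times^h_{X_0} X_1$};
    \node (C) at (0,0) {$Y_n$};
    \node (D) at (5,0) {$Y_1 \times^h_{Y_0} \cdots \times^h_{Y_0} Y_1$};
    \draw[->] (A) -- node[above] {$s_n^X$} (B);
    \draw[->] (C) -- node[below] {$s_n^Y$} (D);
    \draw[->] (A) -- node[left] {$f_n$} (C);
    \draw[->] (B) -- node[right] {$\simeq$} (D);
\end{tikzpicture}
\end{equation*}
commutes, the left vertical map is a weak equivalence by hypothesis, and the right vertical map is a weak equivalence because each $X_i \to Y_i$ is one and iterated homotopy pullbacks preserve weak equivalences. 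Since $s_n^X$ is a weak equivalence by assumption, two-out-of-three forces $s_n^Y$ to be one as well, so $Y$ is a Segal space.

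For part (2), set $W := X \times^h_Y Z$, which by definition is the levelwise homotopy pullback, so $W_n \simeq X_n \times^h_{Y_n} Z_n$ for each $n$. By the Segal conditions on $X$, $Y$, and $Z$ together with the fact that iterated homotopy pullbacks preserve weak equivalences, we have
\begin{equation*}
W_n \;\simeq\; \bigl( X_1 \times^h_{X_0} \cdots \times^h_{X_0} X_1 \bigr) \times^h_{\left(Y_1 \times^h_{Y_0} \cdots \times^h_{Y_0} Y_1\right)} \bigl( Z_1 \times^h_{Z_0} \cdots \times^h_{Z_0} Z_1 \bigr).
\end{equation*}
Now I would invoke commutativity of homotopy limits: the right-hand side is a homotopy limit over a finite diagram that can be reorganized into a pullback along each edge of the spine, yielding
\begin{equation*}
W_n \;\simeq\; W_1 \times^h_{W_0} W_1 \times^h_{W_0} \cdots \times^h_{W_0} W_1,
\end{equation*}
and the composite weak equivalence is the Segal map for $W$ (up to the canonical identifications). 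Hence $W$ satisfies the Segal condition.

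The only step that requires a little care is the reorganization of the iterated homotopy limit in part (2); the cleanest way is to note that both sides arise as the homotopy limit of the same finite diagram indexed by the poset $[n]$ with the appropriate $X$, $Y$, $Z$ labels, and two rearrangements of limits always agree up to canonical isomorphism in the homotopy category. Since everything is taking place in a proper combinatorial model category (as recalled at the start of the section), all of these invariance and commutation properties hold on the nose.
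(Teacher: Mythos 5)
Your argument is correct, and it supplies exactly the standard reasoning that the paper leaves implicit: the lemma is stated there with a \qed and no proof, being regarded as a routine consequence of homotopy invariance of homotopy pullbacks (part 1, via two-out-of-three on the commuting square of Segal maps) and Fubini for finite homotopy limits (part 2). The only imprecision is your phrase ``indexed by the poset $[n]$'': the relevant indexing shape is the spine zig-zag category $1 \to 0 \leftarrow 1 \to \cdots \leftarrow 1$ (corepresenting the Segal map) crossed with the cospan category, but this does not affect the validity of the commutation-of-homotopy-limits step.
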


\begin{definition}
	A map $X \to Y$ of Segal spaces is a \emph{weak equivalence} if it is a levelwise weak equivalence, equivalently if $X_0 \to Y_0$ and $X_1 \to Y_1$ are weak equivalences of spaces.  
\end{definition}


There is a good theory of $(\infty,1)$-categories based off of Segal spaces, but this requires considering Segal spaces which satisfy a further axiom. This additional axiom, called \emph{completeness} (or \emph{univalence}).

Let $K$ be the simplicial set given by the pushout square:
\begin{center}
\begin{tikzpicture}
		\node (LT) at (0, 1.5) {$\Delta^{\{0,2\}} \sqcup \Delta^{\{1,3\}}$};
		\node (LB) at (0, 0) {$\Delta^0 \sqcup \Delta^0$};
		\node (RT) at (4, 1.5) {$\Delta^{[3]}$};
		\node (RB) at (4, 0) {$K$};
		\draw [->] (LT) -- node [left] {$ $} (LB);
		\draw [->] (LT) -- node [above] {$ $} (RT);
		\draw [->] (RT) -- node [right] {$ $} (RB);
		\draw [->] (LB) -- node [below] {$ $} (RB);
		\node at (3.5, 0.5) {$\lrcorner$};
\end{tikzpicture}
\end{center}
The space of maps from $K$ into a simplcial space $X$ is given by a fiber product
\begin{equation*}
	\Maps(K, X) = (X_0 \times X_0) \times_{X_1 \times X_1} X_3.
\end{equation*}
The {derived} space of maps is given by $\Maps^h(K, X) = (X_0 \times X_0) \times_{X_1 \times X_1}^h X_3$. 

\begin{definition}\label{def:univalent_segal_space}
	A Segal space $X$ is called \emph{complete} (also called \emph{univalent}) if the canonical map (induced by $K \to pt$)
	\begin{equation*}
		X_0 \to \Maps^h(K, X)
	\end{equation*}
	is a weak homotopy equivalence.  
\end{definition}

\noindent As before if $X$ is sufficiently fibrant then we may use $\Maps$ instead of $\Maps^h$. We will not need to consider complete Segal spaces, but we include the definition since it is crucial for a general theory of $(\infty,1)$-categories based on Segal spaces.

\subsubsection{$n$-Fold Segal Spaces}\label{sec:n_fold_segal}

An $n$-fold simplicial space is a functor $(\bDelta^\op)^n \to \Top$, and these form a category $\sTop_n$ (we will drop the subscript in the case $n=1$). We will denote the objects of $(\bDelta)^n$ as products:
\begin{equation*}
	\Delta^{[k_1]} \boxtimes \Delta^{[k_2]} \boxtimes \cdots \boxtimes \Delta^{[k_n]} 
\end{equation*} 
or more briefly $[k_1] \boxtimes [k_2] \boxtimes \cdots \boxtimes [k_n]$. This notation extends to an assembly functor:
\begin{equation*}
	(-) \boxtimes \cdots \boxtimes (-): (\sTop)^{\times n} \to \sTop_n
\end{equation*}
which is the unique functor sending $([k_1], \dots, [k_n])$ to $[k_1] \boxtimes \cdots \boxtimes [k_n]$, and which commutes with colimits separately in each variable. The value of an $n$-fold simplicial space $X$ on $[k_1] \boxtimes \cdots \boxtimes [k_n]$ will be denoted $X_{k_1, \dots, k_n}$.

An $n$-fold simplicial space $X$ will be called \emph{essentially constant} if the canonical map
\begin{equation*}
	X_{0, \dots, 0} \to X_{k_1, \dots, k_n}
\end{equation*}
is a weak equivalence for all $[k_1] \boxtimes \cdots \boxtimes [k_n] \in (\bDelta)^n$. By convention a 0-fold simplicial space is simply a space and is always regarded as essentially constant. 

By adjunction we can equivalently regard an $n$-fold simplicial space as a simplicial object in $(n-1)$-fold simplicial spaces. This can be done in each coordinate, but we will make to following convention which avoids ambiguity. If $X$ is an $n$-fold simplicial space then $X_i$ will denote the $(n-1)$-fold simplicial space determined by:
\begin{equation*}
	X_i: [k_1] \boxtimes \cdots \boxtimes [k_{n-1}] \mapsto X_{i, k_1, \dots, k_{n-1}}. 
\end{equation*}

\begin{definition}
An \emph{$n$-fold Segal Space} is an $n$-fold simplicial space $X$ (i.e. a functor $X : \bDelta^{\op} \to \Fun((\bDelta^\op)^{\times n-1}, \Top)$) such that
\begin{itemize}
\item \textbf{Local.} $X_n$ is is an $(n-1)$-fold Segal space for each $n\geq 0$;
\item \textbf{Globularity.} The Segal space $X_0$ is essentially constant;
\item \textbf{Segal Condition.} For each $n>0$ the Segal map induces a levelwise weak homotopy equivalence
\begin{equation*}
	s_n: X_n \xrightarrow{\simeq} \underbrace{ X_1 \times^{h}_{X_0} X_1 \times^{h}_{X_0} \dotsb \times^{h}_{X_0} X_1 \times^{h}_{X_0} X_1 }_{n \text{ factors}}.
\end{equation*}
Here these homotopy fiber products of $(n-1)$-fold Segal spaces are taken levelwise. 
\end{itemize}
\end{definition}

\noindent If we replace homotopy fiber products in the the Segal condition with ordinary fiber products, then we will say that $X$ satisfies the \emph{strict Segal condition}.  

%

\begin{lemma}
	$n$-Fold Segal spaces 
	enjoy the following closure properties:
	\begin{enumerate}
		\item  If $X$ is an $n$-fold 
		Segal space and $Y$ is an $n$-fold simplicial space which is levelwise weakly equivalent to $X$ (meaning there is a finite zig-zag of levelwise weak equivalences of simplicial spaces connecting $X$ and $Y$), then $Y$ is also an $n$-fold Segal space.
		\item If $X$, $Y$, and $Z$ are $n$-fold Segal spaces 
		and $X \to Y$, $Z \to Y$ are any maps, then $X \times_Y^h Z$ is an $n$-fold Segal space 
		where the later denotes the levelwise homotopy fiber product of simplicial spaces. \qed
	\end{enumerate}
\end{lemma}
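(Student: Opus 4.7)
The plan is to prove both parts simultaneously by induction on $n$, with the base case $n=1$ being the closure properties for Segal spaces already stated in the excerpt. (One could view $n=0$ as the trivial base case: every space is essentially constant.) The three defining conditions — Locality, Globularity, and the Segal condition — will be checked to be preserved, and the verifications are essentially independent of each other.

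For part (1), suppose $X$ is an $n$-fold Segal space and $Y$ is levelwise weakly equivalent to $X$. To verify Locality for $Y$, observe that for each $k$ the $(n-1)$-fold simplicial space $Y_k$ is levelwise weakly equivalent to $X_k$, which is an $(n-1)$-fold Segal space by assumption; apply the inductive hypothesis. For Globularity, the essentially constant property of $X_0$ passes to $Y_0$ by the two-out-of-three property applied to the commuting triangles with vertices $Y_{0,0,\ldots,0}$, $X_{0,0,\ldots,0}$, and $X_{0,k_1,\ldots,k_{n-1}}$ (or the corresponding $Y$-vertex). For the Segal condition, homotopy fiber products of $(n-1)$-fold simplicial spaces along levelwise weak equivalences are preserved up to levelwise weak equivalence, so the Segal map for $Y$ fits into a commutative square with the Segal map for $X$ in which three sides are levelwise weak equivalences; the fourth then is as well.

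For part (2), set $W = X \times_Y^h Z$. Since the homotopy fiber product is taken levelwise, $W_k = X_k \times_{Y_k}^h Z_k$, and the inductive hypothesis gives that $W_k$ is an $(n-1)$-fold Segal space, yielding Locality. For Globularity, $W_0 = X_0 \times_{Y_0}^h Z_0$ as $(n-1)$-fold simplicial spaces; for each multi-index $(k_1,\ldots,k_{n-1})$ the canonical map from $W_{0,0,\ldots,0}$ to $W_{0,k_1,\ldots,k_{n-1}}$ factors through the homotopy pullback of the vertex spaces, and since homotopy pullbacks preserve (levelwise) weak equivalences, essentially constancy of $X_0$, $Y_0$, $Z_0$ implies essentially constancy of $W_0$. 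Finally, for the Segal condition one commutes the iterated homotopy fiber products: the Segal map for $W_n$ can be identified with the homotopy pullback of the Segal maps for $X_n$, $Y_n$, and $Z_n$, all of which are levelwise weak equivalences of $(n-1)$-fold simplicial spaces; their homotopy pullback therefore is also a levelwise weak equivalence.

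The main obstacle is bookkeeping around the Segal condition in part (2): one must carefully justify the identification
\begin{equation*}
    (X \times_Y^h Z)_1 \times^h_{(X \times_Y^h Z)_0} \cdots \times^h_{(X \times_Y^h Z)_0} (X \times_Y^h Z)_1
    \simeq \bigl(X_1 \times^h_{X_0} \cdots \times^h_{X_0} X_1\bigr) \times^h_{(Y_1 \times^h_{Y_0} \cdots)} \bigl(Z_1 \times^h_{Z_0} \cdots \times^h_{Z_0} Z_1\bigr),
\end{equation*}
which is an instance of the fact that finite homotopy limits commute in the model category $\Top$ of $\Delta$-generated spaces. This is standard, and with it in hand both closure properties follow routinely.
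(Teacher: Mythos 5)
Your proof is correct. The paper states this lemma without proof (it is marked \qed as routine), and your induction on $n$ — checking Locality, Globularity, and the Segal condition using two-out-of-three, homotopy invariance of levelwise homotopy pullbacks, and the commutation of finite homotopy limits in $\Top$ — is exactly the standard argument the paper implicitly relies on.
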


\subsection{Notation for Bordism $n$-categories}

The goal of this section is to carefully define the higher bordism categories as an $n$-fold Segal space. There are quite a few variations on the bordism category that we will need to consider simultaneously. For example we will want to vary the category number of our bordism $n$-category; our bordisms will be equipped with embeddings into an ambient manifold, which we will want to vary; we will want to consider unstable bordism categories which are not symmetric monoidal (i.e. $E_\infty$), but which retain an $E_p$-monoidal structure; and our bordism will be equipped with tangential structures as in section \ref{sect:embded_with_tangent}.

To keep track of all of these variations we will need to develop a consistent notation. There will be several variables and it is the goal of this section to define and explain the meaning of all these variables, the parameters used to specify the higher bordism categories. 
Let us begin: 
\begin{itemize}
	\item The category number of our higher bordism category will be denoted $n$. Specifically this means that we will be considering an $(\infty,n)$-category of bordisms.  
	\item The maximal dimension of the bordisms in our higher bordism category will be denoted by $d$. Hence the minimal dimension of the bordisms involved will be $(d-n)$.
	\item We will have an ambient manifold $M$ (of dimenions $\dim M = m$) into which our bordisms will be embedded. More specifically they will be embedded into the product of $M$ and a Euclidean space of appropriate dimension. The manifold $M$ is allowed to be non-compact and to have boundary. 
	If $M$ is non-compact then the embedded submanifolds can be `deformed off to infinity' in the non-compact directions of $M$, and if $M$ has boundary, then we require our embedded manifolds to always be disjoint from this boundary, as in section \ref{sec:space_of_man}.
	
	The $E_p$ monoidal structure arises when $M = D^p \times N$ is a product with the $p$-disk. 
	
	\item Our bordisms will be equipped with tangential structures, such as framings, orientations, spin structures, etc. The type of tangential structure is specified by a $GL_{m+n}$-equivariant
	fibration $\xi: X \to Gr_{d}(\R^{m+n})$, and the corresponding tangential/normal structure is called an $(X, \xi)$-structure. See Section~\ref{sect:embded_with_tangent} for details. 
\end{itemize}

\noindent These conventions are neatly summarized in the Table~\ref{tab:Notation}:
\begin{table}[h]
  \centering
 \begin{tabular}{| c | l |}
 	\hline
 	variable & meaning \\ \hline
 	$n$ & category number \\
 	 $d$ & maximal dimension of our bordisms \\
 	 $M$ & ambient manifold \\
 	 $(X, \xi)$ & fibration defining tangential structures  \\
 	 \hline
 \end{tabular}
  \caption{Summary of the parameters specifying the higher bordism category.}
  \label{tab:Notation}
\end{table}

\noindent Our principal object of study will be the $n$-fold multisimplicial space 
\begin{equation*}
	\Bord_{d;n}^{(X, \xi)}(M)
\end{equation*}
This is the $(\infty, n)$-category  of $d$-dimensional $(X, \xi)$-bordisms with embeddings into $M$. It is a particular $n$-fold fold Segal space, which we will define in complete precision in the next section, but we can think of as an $(\infty, n)$-category, where philosophically it has:
	\begin{itemize}
		\item objects which are $(d-n)$-manifolds embedded into $M$;
		\item 1-morphisms which are $(d-n+1)$-dimensional bordisms embedded into $M \times I$;
		\item 2-morphisms which are $(d-n+2)$-dimensional bordisms between bordisms embedded into $M \times I^2$;
		\item ...
		\item $n$-morphisms which are $d$-dimensional bordisms between bordisms between ... embedded into $M \times I^n$.
	\end{itemize}
	Moreover everything is equipped with an $(X,\xi)$-structure.

\subsection{Bordism n-categories} \label{sec:bordncat}

Now we turn to the precise definition of the $n$-fold multisimplicial space $\Bord_{d;n}^{(X, \xi)}(M)$ as a functor:
\begin{equation*}
	\Bord_{d;n}^{(X, \xi)}(M): (\bDelta^\op)^n \to \Top.
\end{equation*}
The objects of $(\bDelta^\op)^n$ will be denoted $[\Bm]$ where $\Bm = (m_1, \dots, m_n)$ is an $n$-tuple of natural numbers.
Thus to define $\Bord_{d;n}^{(X, \xi)}(M)$ we must specify a collection of spaces $\Bord_{d;n}^{(X, \xi)}(M)_{[\Bm]}$ together with face and degeneracy maps. 

To aid in this we will need some further notation. Let
\begin{equation*}
	\R^{[k]} = \{ (t_i)_{i=0}^{i=k} \; | \; t_i \leq t_{i+1} \} \subseteq \R^{k+1}
\end{equation*}
denote the space of order preserving maps from the poset $[k] \in \bDelta$ to $(\R, <)$. An element $\Bt \in \R^{[k]}$ consists of a $(k+1)$-tuple of real numbers $\Bt=(t_0, t_1, \dots, t_k)$  satisfying $t_i \leq t_{i+1}$ for $0 \leq i < k$. 

A point in the space $\Bord_{d;n}^{(X, \xi)}(M)_{[\Bm]}$ includes an element $\Bt^i \in \R^{[m_i]}$ for each $1 \leq i \leq n$. These numbers specify various hyperplanes in $\R^n$, and the space $\Bord_{d;n}^{(X, \xi)}(M)_{[ \Bm]}$ is built as a subspace of $\psi_d^{(X, \xi)}(M \times  \R^n)$ of submanifolds which satisfy certain cylindricality conditions with respect to these hyperplanes. 

\begin{definition}\label{def:cylindrical}
	Let $M_i$, $i = 1,2$ be manifolds of dimension $k_i$. Let $U \subseteq M_1$ be an open set. Then $W \subseteq M_1 \times M_1$ in $\psi_d(M_1 \times M_2)$ is \emph{cylindrical over $U$} if there exists a manifold $W_0 \in \psi_{d - k_1}(M_2)$ such that
	\begin{equation*}
		W \cap U \times M_2 = U \times W_0 \in \psi_d(U \times M_2)
	\end{equation*}
	as elements of $\psi_d(U \times M_2)$. If $Z \subseteq M_1$ is any subset, then we will say that $W$ is \emph{cylindrical near $Z \subseteq M_1$} if there exists an open neighborhood $U$ of $Z$ so that $W$ is cylindrical over $U$. 
\end{definition}

\noindent The above definition supposes a splitting of the ambient manifold $M = M_1 \times M_2$. In the case of the bordism category, the relevant ambient manifold is $M \times \R^n$ which may be split in many different ways. Indeed, to define $\Bord_{d;n}^{(X, \xi)}(M)$ we will have to use the above definitions for several different splittings. We will always be careful to specify which manifold $M_1$ the subspace $Z$ is contained in, thereby implicitly specifying the splitting $M \times \R^n \cong M_1 \times M_2$.  

We are now ready to define the bordism category. 

\begin{definition}\label{def:Bordismncat}
	Fix natural numbers $n,d$, an ambient manifold $M$, and a $GL_{m+n}$-equivariant fibration $(X, \xi)$, as in Table~\ref{tab:Notation}. Then the functor
	\begin{equation*}
		\Bord_{d;n}^{(X, \xi)}(M):  (\bDelta^\op)^n \to \Top
	\end{equation*}
	is defined by assigning to $[ \Bm] \in  (\bDelta^\op)^n$ the space consisting of tuples
	$( (\Bt^i)_{i=1}^n, (W, \theta))$ where $\Bt^i \in \R^{[m_i]}$  for each $1 \leq i \leq n$ and $(W, \theta) \in \psi^{(X, \xi)}_d( M  \times \R^n)$ is an embedded submanifold of $M \times \R^n$ with $(X,\xi)$-structure. These are required to satisfy the following condition:
	\begin{itemize}
		\item \textbf{Globular.}  For all $ 1 \leq i \leq n$, and $0 \leq j \leq m_i$, $W$ is cylindrical near 
			\mbox{$\{t^i_j\} \times \R^{\{i+1, \dots, n\}} \subseteq \R^{\{i, i+1, \dots, n\}}$}; 
	\end{itemize}
\end{definition}

The topology on the space $\Bord_{d;n}^{(X, \xi)}(M)_{[ \Bm]}$ is defined, just as before, by specifying a collection of smooth plots. Such a plot, parametrized by a smooth manifold $U$, consists of a smooth function
\begin{equation*}
	 (\Bt^i)_{i=1}^n: U \to  \prod_{i = 1}^n \R^{[m_i]}
\end{equation*}
and a smooth plot $p: U \to \psi^{(X,\xi)}_d(M \times \R^n)$ such that the globular conditions are satisfied for each $u \in U$. 

\pgfdeclareimage[height=5cm, width=8cm]{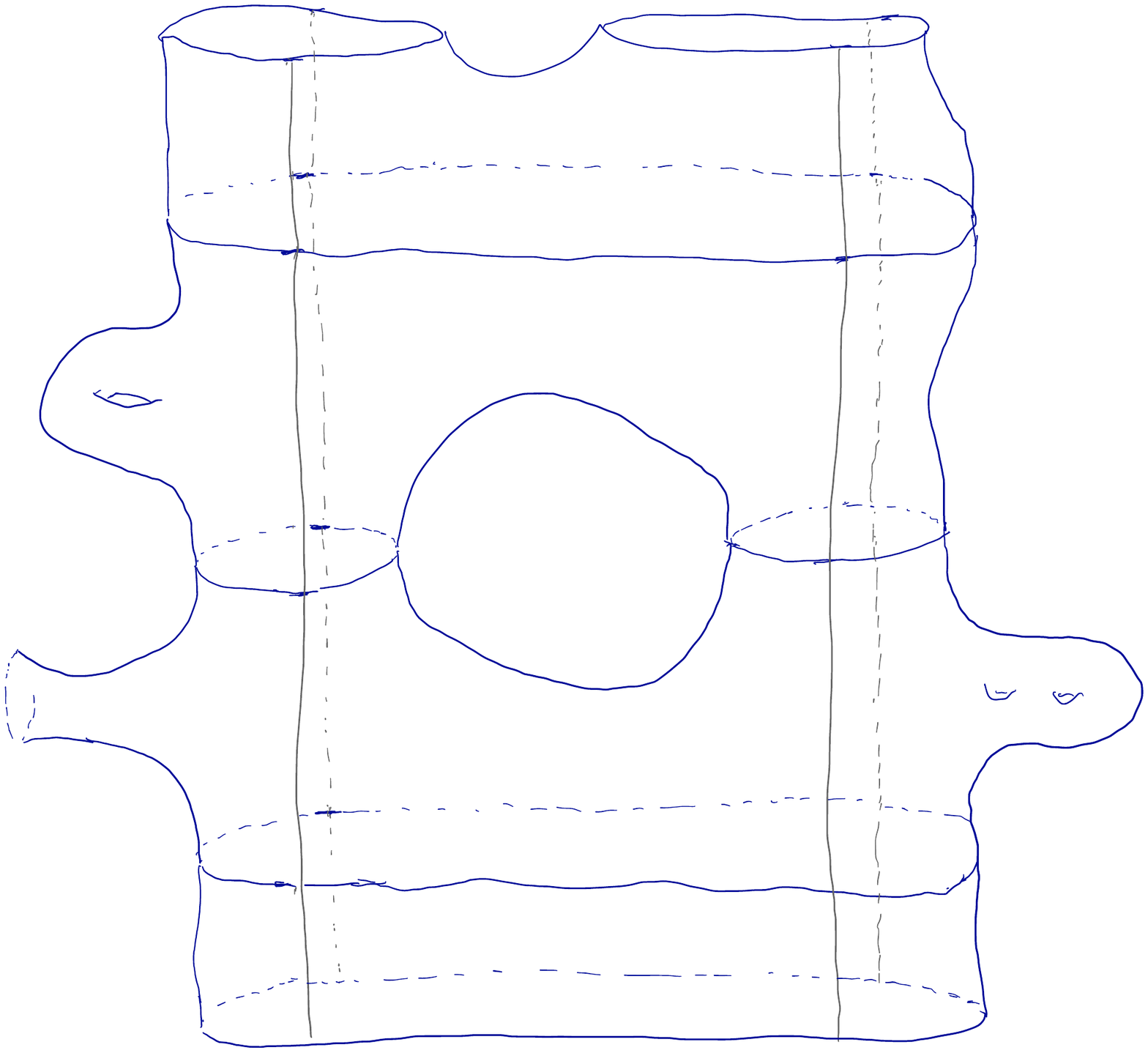}{Bord2-pic}

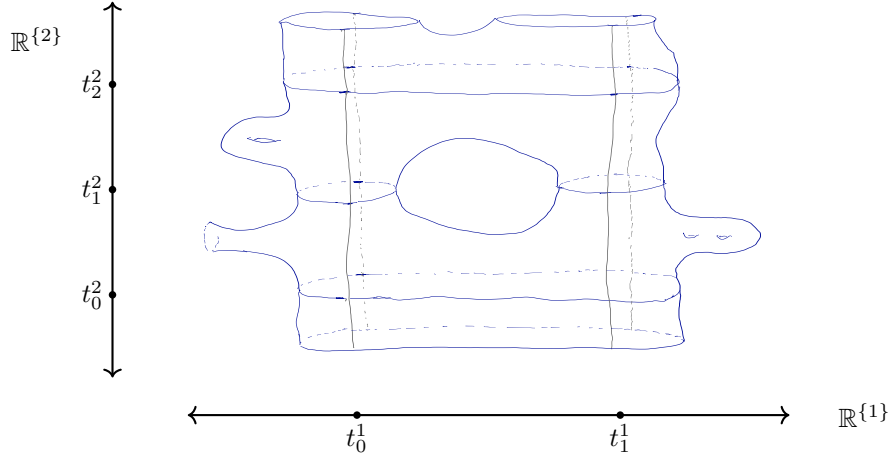
\begin{figure}[ht]
	\centering
	\begin{tikzpicture}
			\node (A) at (0, 0) {\pgfuseimage{Bord2-pic}}; 
			\draw [thick, <->] (-5, -2.5) -- (-5,2.5);
			\draw [thick, <->] (-4, -3) -- (4,-3);
			\node [fill = black, inner sep = 1pt, circle] (B0) at (-1.75,-3) {};
			\node at (B0) [below] {$t^1_0$};
			
			\node [fill = black, inner sep = 1pt, circle] (B1) at (1.75,-3) {};
			\node at (B1) [below] {$t^1_1$};
			
			\node [fill = black, inner sep = 1pt, circle] (C0) at (-5,-1.4) {};
			\node at (C0) [left] {$t^2_0$};
			
			\node [fill = black, inner sep = 1pt, circle] (C1) at (-5,0) {};
			\node at (C1) [left] {$t^2_1$};
			
			\node [fill = black, inner sep = 1pt, circle] (C2) at (-5,1.4) {};
			\node at (C2) [left] {$t^2_2$};
			\node at (5,-3) {$\R^{\{1\}}$};
			\node at (-6, 2) {$\R^{\{2\}}$};
	\end{tikzpicture}	
	\caption{A point in the $[1,2]$-space of $\Bord_2(D^1)$.  }
	\label{fig:label}
\end{figure}

\section{Realizations of Bordism $n$-categories}\label{sec:real_bordn}

\subsection{The main theorem}

Fix natural numbers $n,d$, an ambient manifold $M$, and a $GL_{m+n}$-equivariant fibration $(X, \xi)$, as in Table~\ref{tab:Notation}. These parameters specify a bordism $n$-category, realized as a multisimplicial space:
\begin{equation*}
	\Bord_{d;n}^{(X, \xi)}(M):  (\bDelta^\op)^n \to \Top.
\end{equation*}
The goal of this section is to identify the geometric realization of this multisimplicial space. We will be able to do this under the assumption that the ambient manifold $M$ is \emph{tame} in the sense defined below. This rules out certain pathological $M$ like the surface of infinite genus. 

\begin{definition}\label{def:tame_manifold}
	A manifold $M$ is \emph{tame} if there exists a compact subspace $K \subseteq M$ and a continuous  1-parameter family of embeddings $\varphi_t: M \to M$ starting with the identity and ending with an embedding whose image is contained within $K$.
\end{definition}

\noindent We will prove:

\begin{theorem} \label{thm:first_realization_thm}
	If $M$ is \emph{tame}, then for each $i \leq n$, there is a natural levelwise weak homotopy equivalence of $(n-i)$-fold simplicial spaces:
		\begin{equation*}
		B^i \Bord_{d;n}^{(X, \xi)}(M) \stackrel{\simeq}{\to} \Bord_{d;n-i}^{(X, \xi)}(M \times \R^i)
	\end{equation*}
	where the classifying space functor is applied to the final $i$-many simplicial directions $\{ n-i +1, n-i +2, \dots, n\}$. 
\end{theorem}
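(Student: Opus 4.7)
The plan is to induct on $i$. The inductive step requires that $M \times \R^i$ remain tame whenever $M$ is, which follows because tameness is preserved under products with $\R$: given a tameness deformation $\varphi_t$ of $M$ ending in a compact subset $K \subseteq M$, and a tameness deformation of $\R$ (for example $\psi_t(x) = (1-t)x + t\arctan(x)$ compressing $\R$ into $[-\pi/2,\pi/2]$), the product $\varphi_t \times \psi_t$ witnesses tameness of $M \times \R$. It thus suffices to prove the base case
\[
B\Bord_{d;n}^{(X,\xi)}(M) \stackrel{\simeq}{\longrightarrow} \Bord_{d;n-1}^{(X,\xi)}(M\times\R),
\]
where $B$ realizes the final simplicial direction.

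\textbf{Setup and the forgetful map.} Fix $\Bm'=(m_1,\ldots,m_{n-1})$ and set $X_\bullet := \Bord_{d;n}^{(X,\xi)}(M)_{\Bm',\bullet}$, $Y := \Bord_{d;n-1}^{(X,\xi)}(M\times\R)_{\Bm'}$. The forgetful map $u_k : X_k \to Y$ sends $((\Bt^1,\ldots,\Bt^n), W, \theta)$ to $((\Bt^1,\ldots,\Bt^{n-1}), W, \theta)$, reinterpreting the same manifold $W \subseteq M \times \R^n$ as embedded in $(M\times\R^{\{n\}})\times\R^{\{1,\ldots,n-1\}}$. The source globularity condition for $1 \le i \le n-1$ implies the target condition by taking $W_0' := W_0 \times \R^{\{n\}}$; the globularity conditions in the $n$-th direction are simply forgotten. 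Since $u_k$ is independent of $\Bt^n$, these maps descend to a continuous map $u : |X_\bullet| \to Y$, which is the map to analyze.

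\textbf{Strategy via an auxiliary simplicial space.} To show $u$ is a weak equivalence, I would introduce $Z_\bullet$ with $Z_k := Y \times \R^{[k]}$, equipped with the simplicial structure coming only from $\R^{[\bullet]}$. Each $\R^{[k]}$ is a convex subset of $\R^{k+1}$, hence contractible; the simplicial space $\R^{[\bullet]}$ is proper (degeneracies are closed cofibrations), so its realization is contractible, giving $|Z_\bullet| \simeq Y \times * \simeq Y$ (since realization commutes with finite products of $\Delta$-generated spaces). The subspaces $X_k \subseteq Z_k$ cut out by the globularity conditions at $\Bt^n$ assemble into a levelwise closed inclusion $X_\bullet \hookrightarrow Z_\bullet$, and $u$ factors as $|X_\bullet| \hookrightarrow |Z_\bullet| \stackrel{\simeq}{\to} Y$. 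Thus it suffices to prove that $|X_\bullet| \hookrightarrow |Z_\bullet|$ is a weak equivalence, i.e., that any map from a compact polyhedron $P$ into $|Z_\bullet|$ can be deformed into $|X_\bullet|$ relative to any compatible boundary data.

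\textbf{Geometric heart and the main obstacle.} The essential geometric input is the following: for each $W \in Y$, the set $C_W \subseteq \R$ of points $t$ near which $W$ is cylindrical in the $\R^{\{n\}}$-direction is open and---after an appropriate deformation---dense. This is a combination of Sard's theorem applied to the projection $\pi_n: W \to \R^{\{n\}}$ (its regular values form a dense open set) with Ehresmann's lemma (which upgrades a regular value, in the presence of compactness of the fiber $W \cap \pi_n^{-1}(t)$, to a genuine product neighborhood, i.e.\ to actual cylindricality). The tameness of $M$ provides precisely this compactness: the tameness deformation pushes $W$ into a compact region of $M \times \R^n$, after which the slices $W \cap \pi_n^{-1}(t)$ are compact. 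In families over $P$, a parametrized Sard--Ehresmann argument allows one to continuously deform both $W_p$ and $\Bt^n_p$ (through the required $(X,\xi)$-structures and preserving the other globularity conditions at $\Bt^1, \ldots, \Bt^{n-1}$) so as to move $\Bt^n_p$ into the cylindrical regions of the $W_p$. Uniqueness of the deformation up to homotopy, by the same parametrized argument applied to $P \times [0,1]$, then yields the desired weak equivalence.

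The main obstacle is making the parametrized Sard--Ehresmann deformation rigorous and compatible with all the bookkeeping: tangential structures, the open-closed ambient geometry, and the other globularity conditions that must not be disturbed. The tameness hypothesis cannot be dropped---without it, ends of $W$ escaping to infinity in $M$ can obstruct the existence of any cylindrical slice in the $n$-th direction, making the inclusion $X_\bullet \hookrightarrow Z_\bullet$ fail to be a levelwise deformation retract.
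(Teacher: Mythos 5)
Your setup (reduction to $i=1$, the forgetful map $u$, and the observation that the content is in moving the data $\Bt^n$ into position where the manifold is transverse/cylindrical in the $n$-th direction) matches the shape of the problem, but the step you flag as "the geometric heart" is exactly where the proof actually lives, and your sketch of it does not survive scrutiny. The parametrized Sard--Ehresmann deformation you invoke presupposes that the condition "$\lambda$ is a regular value of the projection $\pi_n\colon W \to \R^{\{n\}}$" cuts out subsets of the space of bordisms that behave like an open cover, so that local sections/deformations can be glued over a compact polyhedron. This fails: the subsets $V_\lambda$ of bordisms for which $\lambda$ is a regular value are \emph{not} open in $\Bord_{d;n-1}^{(X,\xi)}(M\times\R)_{[\Bm]}$ (consider $W \subseteq \R^2$ the graph of $\sin x/x$ translated in the $\R$-direction: the critical values accumulate at a value that is itself regular). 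So a family of bordisms over $P$ need not admit, even locally in $P$, a continuous choice of regular value, and the "deform $\Bt^n_p$ into the cylindrical region of $W_p$" step has no justification as stated. The paper's proof is organized precisely around this obstruction: it replaces levelwise deformation by a poset argument (each level of the extra simplicial direction is the nerve of a topological subposet of $(\R,\le)\times \Bord_{d;n-1}$, and realization of such a poset is an equivalence whenever the projection admits a section), and, since no global section exists, it enhances the poset by the extra datum of a proper closed subset $A \subsetneq \R$ containing the critical values, living in the space $\cl(\R)$ of closed subsets; the sets $U_\lambda = \{(\BW,A): \lambda \notin A\}$ \emph{are} open, the gluing lemma for weak equivalences applies, and a retraction comparing the enhanced and unenhanced posets finishes the argument. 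None of this machinery (or an equivalent substitute) appears in your outline.

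Relatedly, your account of where tameness enters is off. Tameness is not needed to produce compact slices or to guarantee existence of a cylindrical level---Sard's theorem gives regular values for any $W$, and the straightening of a transverse slice to a cylindrical one is a local deformation independent of tameness. Tameness is used to make the comparison with the enhanced poset possible: one compresses $M\times\R\times\R^{n-1}$ into $K\times\R\times L$ with $K$, $L$ compact, so that the projection of the critical locus to the $\R$-coordinate becomes proper; only then is $\BW \mapsto p(c(W)\cap K\times\R\times L)$ a continuous map to $\cl(\R)$ landing in proper closed subsets, which is what lets one map the original realization into the enhanced one and conclude by a retract argument. Without such a construction your inclusion $|X_\bullet| \hookrightarrow |Z_\bullet|$ has no proof of being a weak equivalence, so the proposal has a genuine gap at its central step. (Your auxiliary space $Z_\bullet$ and the identification $|Z_\bullet|\simeq Y$ are fine, as is the preservation of tameness under products with $\R$.)
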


\noindent Of course Theorem~\ref{thm:first_realization_thm} follows immediately from the special case $i=1$, and we will focus on that case. When $n =1$ we obtain the topological category considered by Randal-Williams~\cite{MR2764873}, which is a generalization of the categories considered by Galatius-Madsen-Tillmann-Weiss~\cite{MR2506750} and Ayala~\cite{Ayala:2008aa}. In this version the manifolds are embedded into $M$ instead of $\R^\infty$. 

\begin{corollary}\label{cor:first_realization_thm}
	If $n \geq 1$, the classifying space $B^n \Bord_{d;n}^{(X, \xi)}(D^p)$ is weakly equivalent  to $\Omega^p Th(\xi^* \gamma^\perp_d)$ as an $E_p$-algebra.  
\end{corollary}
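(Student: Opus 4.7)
The plan is to deduce the corollary by composing the main theorem (Theorem~\ref{thm:first_realization_thm}) with the scanning equivalences established in Section~\ref{sec:Ep-scanning}. First, I would specialize Theorem~\ref{thm:first_realization_thm} to the case $i=n$ and $M = D^p$. Since $D^p$ is compact it is tame in the sense of Definition~\ref{def:tame_manifold}, so the theorem applies and yields a weak equivalence of $0$-fold simplicial spaces
\begin{equation*}
	B^n \Bord_{d;n}^{(X, \xi)}(D^p) \xrightarrow{\ \simeq\ } \Bord_{d;0}^{(X,\xi)}(D^p \times \R^n).
\end{equation*}
Unwinding Definition~\ref{def:Bordismncat} with $n=0$ (no real parameters $\Bt^i$, vacuous globularity), the right-hand side is canonically identified with the space $\psi_d^{(X,\xi)}(D^p \times \R^n)$ of embedded manifolds.

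Next, I would invoke Theorem~\ref{thm:E_pSpaces}, which provides a natural zig-zag of weak equivalences of $E_p$-algebras
\begin{equation*}
	\psi_d^{(X,\xi)}(D^p \times \R^n) \xleftarrow{\ \sim\ } \Psi_d^{(X,\xi)}(D^p \times \R^n) \xrightarrow{\ \sim\ } \Omega^p \psi_d(\R^{p+n}; X, \xi) \xleftarrow{\ \sim\ } \Omega^p Th(\xi^* \gamma_d^\perp).
\end{equation*}
Concatenating with the first equivalence produces the desired underlying weak equivalence of spaces between $B^n \Bord_{d;n}^{(X,\xi)}(D^p)$ and $\Omega^p Th(\xi^* \gamma_d^\perp)$.

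The remaining (and central) task is to promote this zig-zag to $E_p$-algebra homomorphisms. The $E_p$-structure on $B^n \Bord_{d;n}^{(X,\xi)}(D^p)$ arises in the same manner as Example~\ref{ex:emb_man}: the bordism multisimplicial space $\Bord_{d;n}^{(X,\xi)}(-)$ is covariant for closed codimension-$0$ embeddings of the ambient manifold, so applying it to a rectilinear configuration $\coprod_k D^p \hookrightarrow D^p$ and taking the $n$-fold classifying space yields the $E_p$-action. The identification of $B^n\Bord_{d;n}^{(X,\xi)}(D^p)$ with $\psi_d^{(X,\xi)}(D^p \times \R^n)$ should be natural in the ambient manifold $D^p$, so it transports the $E_p$-structure on the former to the standard $E_p$-structure on the latter (the one used in Theorem~\ref{thm:E_pSpaces}). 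The rest of the zig-zag is then $E_p$-compatible by Theorem~\ref{thm:E_pSpaces} itself.

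The main obstacle I anticipate is precisely this last naturality verification: one must inspect the proof of Theorem~\ref{thm:first_realization_thm} to confirm that the equivalence $B \Bord_{d;n}^{(X,\xi)}(M) \simeq \Bord_{d;n-1}^{(X,\xi)}(M \times \R)$ is constructed functorially in closed embeddings $M \hookrightarrow M'$, and that iterating this $n$ times produces a homomorphism of $E_p$-algebras when $M = D^p$ is equipped with its little-cubes action. Assuming the construction in Section~\ref{sec:real_bordn} is given by natural geometric operations on the embedded bordisms (shifting, cylinder insertion, etc.), this naturality should hold essentially by inspection, but it is where all the bookkeeping lives.
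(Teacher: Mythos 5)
Your proposal matches the paper's own proof: it deduces the corollary by specializing Theorem~\ref{thm:first_realization_thm} to $i=n$, $M=D^p$ (tame since compact), identifying $\Bord_{d;0}^{(X,\xi)}(D^p\times\R^n)$ with $\psi_d^{(X,\xi)}(D^p\times\R^n)$, and then applying the $E_p$-zig-zag of Theorem~\ref{thm:E_pSpaces}. The only difference is that you spell out the $E_p$-compatibility of the first equivalence (naturality in the ambient manifold under rectilinear embeddings), a point the paper simply asserts, so your attempt is correct and essentially identical in approach.
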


\begin{proof}
	By Theorem~\ref{thm:first_realization_thm}  $B^n \Bord_{d;n}^{(X, \xi)}(D^p)$ is weakly equivalent to $ \Bord_{d;0}^{(X, \xi)}(D^p \times \R^n) = \psi_d^{(X, \xi)}(D^p \times \R^n)$ as ab $E_p$-algebra and the later is weakly equivalent to  $\Omega^p Th(\xi^* \gamma^\perp_d)$ as an $E_p$-algebra by Theorem~\ref{thm:E_pSpaces}.
\end{proof}

\subsection{Overview of proof of Theorem~\ref{thm:first_realization_thm} and Variations on the Bordism $n$-Category}

We will prove Theorem~\ref{thm:first_realization_thm} in the special case $i=1$. The general case follows from this by induction. Thus we wish to relate $\Bord_{d;n}^{(X, \xi)}(M)$ and $\Bord_{d;n-1}^{(X, \xi)}(M \times \R)$, and to do so we need a map comparing them. The latter object is only a $(n-1)$-fold multisimplicial space, but we can regard it as a $n$-fold multisimplicial space which is constant in the final simplicial direction. Regarded in this way there is a natural map: 
\begin{equation*}
	\Bord_{d;n}^{(X, \xi)}(M) \to \Bord_{d;n-1}^{(X, \xi)}(M \times \R)
\end{equation*}
of $n$-fold multisimplicial spaces. A point in the left-hand space consists of a tuple $( (\Bt^i)_{i=1}^n, (W, \theta))$, where $W \subseteq M \times \R^{\{1, \dots, n\}}$ is an embedded manifold. Similarly a point in the right-hand space consists of a smaller tuple $( (\Bt^i)_{i=1}^{n-1}, (W, \theta))$, where $W \subseteq M \times \R^{\{n\}} \times \R^{\{1, \dots, n-1\}}$ is an embedded manifold. Using the obvious identifications of these ambient spaces (into which the $W$ are embedded) the above map is given simply by forgetting the final tuple of coordinates $(\Bt^n)$. 

Upon taking classifying spaces (which will always mean the fat geometric realization in the final ($n^\text{th}$) simplicial coordinate) we get maps
\begin{equation*}
	B(\Bord_{d;n}^{(X, \xi)}(M) \to B(\Bord_{d;n-1}^{(X, \xi)}(M\times \R))  \stackrel{\simeq}{\to} \Bord_{d;n-1}^{(X, \xi)}(M\times \R)
\end{equation*}
 and we will call the composite $u: B(\Bord_{d;n}^{(X, \xi)}(M)) \to \Bord_{d;n-1}^{(X, \xi)}(M \times \R)$. Theorem~\ref{thm:first_realization_thm} is established once we can show that $u$ is a weak equivalence of $(n-1)$-fold multisimplicial spaces.

Although we will show that this map is a weak equivalence, our proof will not be direct. Instead we will introduce six additional variations of the bordism higher category which arrange into the large commutative diagram in Figure~\ref{fig:daigramofBordcats2}. We will describe these variations momentarily.
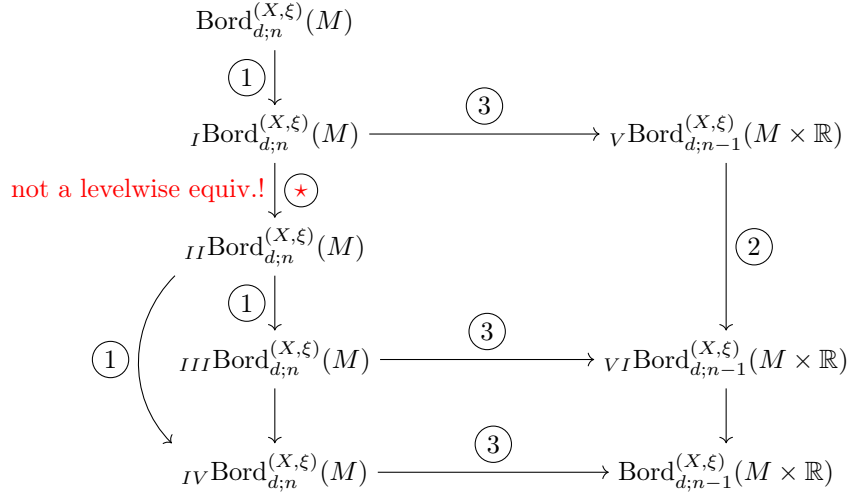
\begin{figure}[thpb]
	\begin{center}
	\begin{center}
	\begin{tikzpicture}
			\node (LT) at (0, 6) {$\Bord_{d;n}^{(X, \xi)}(M)$};
			\node (LM1) at (0, 4.5) {${}_{I}\Bord_{d;n}^{(X, \xi)}(M)$};
			\node (LM2) at (0, 3) {${}_{II}\Bord_{d;n}^{(X, \xi)}(M)$};
			\node (LM3) at (0, 1.5) {${}_{III}\Bord_{d;n}^{(X, \xi)}(M)$};
			\node (LB) at (0, 0) {${}_{IV}\Bord_{d;n}^{(X, \xi)}(M)$};
			
			\node (RM1) at (6, 4.5) {${}_{V}\Bord_{d;n-1}^{(X, \xi)}(M \times \R)$};
			\node (RM3) at (6, 1.5) {${}_{VI}\Bord_{d;n-1}^{(X, \xi)}(M \times \R)$};	
			\node (RB) at (6, 0) {$\Bord_{d;n-1}^{(X, \xi)}(M \times \R)$};
			
			\draw [->] (LT) -- node [left] {$\circled{1}$} (LM1);
			\draw [->] (LM1) -- node [left] {\color{red}{not a levelwise equiv.!}} node [right] {$\circled{\color{red}{{$\star$}}}$} (LM2);
			\draw [->] (LM2) -- node [left] {$\circled{1}$} (LM3);
			\draw [->] (LM3) -- node [left] {$ $} (LB);
			\draw [->] (LM2.south west) to [out = 225, in = 135] node [left] {$\circled{1}$} (LB.north west);
			
			\draw [->] (LM1) -- node [above] {$\circled{3}$} (RM1);
			\draw [->] (LM3) -- node [above] {$\circled{3}$} (RM3);
			\draw [->] (LB) -- node [above] {$\circled{3}$} (RB);

			\draw [->] (RM1) -- node [right] {$\circled{2}$} (RM3);
			\draw [->] (RM3) -- node [right] {$ $} (RB);
	\end{tikzpicture}
	\end{center}
	\end{center}
	\caption{Many variations of the bordism higher category.}
	\label{fig:daigramofBordcats2}
\end{figure}

To obtain our desired result we will then show that each of these maps becomes a weak equivalence upon passing to geometric realizations. The most difficult map with respect to this measure is the one labeled by \circled{\color{red}{$\star$}}, which is not a levelwise weak equivalence. 

 Theorem~\ref{thm:first_realization_thm} will be proven in three stages. First we will show that all the arrows labeled by \circled{1} are levelwise weak equivalences of multisimplicial spaces.
 This follows easily by applying deformations to the spaces of bordisms of the sort considered in the next Section~\ref{sec:examples_of_deformations}. A slightly different set of deformations will similarly allow us to also show that the arrow labeled by  \circled{2} is  a levelwise weak equivalence.
The final step, to show that each of the horizontal arrows labeled by \circled{3} are weak equivalences after passing to geometric realization, is more difficult and requires a new argument. This argument is based on the observation that each of the sources of these maps are (levelwise) the nerves of certain topological posets. The desired result is proven in Lemma~\ref{lem:3_equiv} below. This is the key place where the tameness of $M$ appears. 

Now let us describe the variants that appear above in Figure~\ref{fig:daigramofBordcats2}. As usual we have fixed natural numbers $n,d$, an ambient manifold $M$, and an equivariant fibration $(X, \xi)$, as in Table~\ref{tab:Notation}. We will consider five functors
\begin{align*}
	\Bord_{d;n}^{(X, \xi)}(M): (\bDelta^\op)^n &\to \Top \\
	{}_{I}\Bord_{d;n}^{(X, \xi)}(M):  (\bDelta^\op)^n &\to \Top \\
	{}_{II}\Bord_{d;n}^{(X, \xi)}(M): (\bDelta^\op)^n &\to \Top \\
	{}_{II}\Bord_{d;n}^{(X, \xi)}(M):  (\bDelta^\op)^n &\to \Top \\
	{}_{IV}\Bord_{d;n}^{(X, \xi)}(M):  (\bDelta^\op)^n &\to \Top 
\end{align*}
and three functors
\begin{align*}
	{}_{V}\Bord_{d;n-1}^{(X, \xi)}(M \times \R):  (\bDelta^\op)^{n-1} &\to \Top \\
	{}_{VI}\Bord_{d;n-1}^{(X, \xi)}(M \times \R):  (\bDelta^\op)^{n-1} &\to \Top \\
	\Bord_{d;n-1}^{(X, \xi)}(M \times \R):  (\bDelta^\op)^{n-1} &\to \Top 
\end{align*}
The first and last of these, $\Bord_{d;n}^{(X, \xi)}(M)$ and $\Bord_{d;n-1}^{(X, \xi)}(M \times \R)$, are described in Definition~\ref{def:Bordismncat}. We recall this definition now for the convenience of the reader. 

The multisimplicial space $\Bord_{d;n}^{(X, \xi)}(M)$ assigns to $[ \Bm] \in  (\bDelta^\op)^n$ the space consisting of tuples
	$((\Bt^i)_{i=1}^n, (W, \theta))$ where $\Bt^i \in \R^{[m_i]}$  for each $1 \leq i \leq n$ and $(W, \theta) \in \psi^{(X, \xi)}_d( M  \times \R^n)$ is a submanifold with $(X,\xi)$-structure $\theta$. These tuples are required to satisfy the Globularity conditions of Definition~\ref{def:Bordismncat}:
		\begin{itemize}
			\item \textbf{Globular.}  For all $ 1 \leq i \leq n$, and $0 \leq j \leq m_i$, $W$ is cylindrical near 
				\mbox{$\{t^i_j\} \times \R^{\{i+1, \dots, n\}} \subseteq \R^{\{i, i+1, \dots, n\}}$}; 
		\end{itemize}
See Defintion~\ref{def:cylindrical} for the meaning of \emph{cylindrical}.

The four additional functors 	${}_{I}\Bord_{d;n}^{(X, \xi)}(M)$, ${}_{II}\Bord_{d;n}^{(X, \xi)}(M)$, ${}_{III}\Bord_{d;n}^{(X, \xi)}(M)$, and ${}_{IV}\Bord_{d;n}^{(X, \xi)}(M)$ are defined in precisely the same way, except that the globularity condition is modified in each case:
\begin{itemize}
	\item \textbf{Globular} (equivalent to the original condition for $\Bord_{d;n}^{(X, \xi)}(M)$)
		\begin{itemize}
			\item  For all $ 1 \leq i < n$, and $0 \leq j \leq m_i$, $W$ is cylindrical near \\
				\mbox{$\{t^i_j\} \times \R^{\{i+1, \dots, n\}} \subseteq \R^{\{i, i+1, \dots, n\}}$}; 
			\item For all $0 \leq j \leq m_n$, $W$ is cylindrical near $\{t^n_j\} \subseteq \R^{\{n\}}$
		\end{itemize}
		\item \textbf{Globular}-$I$
			\begin{itemize}
				\item  For all $ 1 \leq i < n$, and $0 \leq j \leq m_i$, $W$ is cylindrical near \\
					\mbox{$\{t^i_j\} \times \R^{\{i+1, \dots, n\}} \subseteq \R^{\{i, i+1, \dots, n\}}$};
				\item For all $0 \leq j \leq m_n$, $t^n_j$ is a regular value of the projection $W \to \R^{\{n\}}$.
			\end{itemize}
			\item \textbf{Globular}-$II$
				\begin{itemize}
					\item  For all $ 1 \leq i < n$, and $0 \leq j \leq m_i$, $W$ is cylindrical near \\
						\mbox{$\{t^i_j\} \times \R^{\{i+1, \dots, n-1\}} \subseteq \R^{\{i, i+1, \dots, n-1\}}$};
					\item For all $ 1 \leq i < n$, and $0 \leq j \leq m_i$, $W$ is cylindrical near \\
						\mbox{$\{t^i_j\} \times \R^{\{i+1, \dots, n-1\}} \times (\infty, t_0^n] \subseteq \R^{\{i, i+1, \dots, n\}}$};
					\item For all $0 \leq j \leq m_n$, $t^n_j$ is a regular value of the projection $W \to \R^{\{n\}}$.
				\end{itemize}
			\item \textbf{Globular}-$III$
				\begin{itemize}
					\item  For all $ 1 \leq i < n$, and $0 \leq j \leq m_i$, $W$ is cylindrical near \\
						\mbox{$\{t^i_j\} \times \R^{\{i+1, \dots, n-1\}} \subseteq \R^{\{i, i+1, \dots, n-1\}}$};
					\item For all $ 1 \leq i < n$, and $0 \leq j \leq m_i$, there exists an $L \in \R$ such that $W$ is cylindrical near \\
						\mbox{$\{t^i_j\} \times \R^{\{i+1, \dots, n-1\}} \times (\infty, L] \subseteq \R^{\{i, i+1, \dots, n\}}$};
					\item For all $0 \leq j \leq m_n$, $t^n_j$ is a regular value of the projection $W \to \R^{\{n\}}$.
				\end{itemize}
			\item \textbf{Globular}-$IV$
				\begin{itemize}
					\item  For all $ 1 \leq i < n$, and $0 \leq j \leq m_i$, $W$ is cylindrical near \\
						\mbox{$\{t^i_j\} \times \R^{\{i+1, \dots, n-1\}} \subseteq \R^{\{i, i+1, \dots, n-1\}}$};
					\item For all $0 \leq j \leq m_n$, $t^n_j$ is a regular value of the projection $W \to \R^{\{n\}}$.
				\end{itemize}
\end{itemize}

	The multisimplicial spaces ${}_{V}\Bord_{d;n-1}^{(X, \xi)}(M\times \R)$, ${}_{VI}\Bord_{d;n-1}^{(X, \xi)}(M\times \R)$, and $\Bord_{d;n-1}^{(X, \xi)}(M\times \R)$ are also quite similar. They assigns spaces to each  $[ \Bm] \in  (\bDelta^\op)^{n-1}$ and these spaces consist of tuples $( (\Bt^i)_{i=1}^{n-1}, (W, \theta))$ where $\Bt^i \in \R^{[m_i]}$  for each $1 \leq i \leq n-1$ and $(W, \theta) \in \psi^{(X, \xi)}_d( M \times \R  \times \R^{n-1})$ is a submanifold with $(X,\xi)$-structure. These can be compared to the previous spaces using the natural identification $M \times \R  \times \R^{n-1} \cong M  \times \R^{n}$, which identifies the additional $\R$ factor with the additional $\R^{\{n\}}$ coordinate. 
	
These spaces are required to satisfy the the following globularity conditions:
\begin{itemize}
	\item \textbf{Globular}-$V$
		\begin{itemize}
			\item  For all $ 1 \leq i < n$, and $0 \leq j \leq m_i$, $W$ is cylindrical near \\
				\mbox{$\{t^i_j\} \times \R^{\{i+1, \dots, n\}} \subseteq \R^{\{i, i+1, \dots, n\}}$};
		\end{itemize}
	\item \textbf{Globular}-$VI$
		\begin{itemize}
			\item  For all $ 1 \leq i < n$, and $0 \leq j \leq m_i$, $W$ is cylindrical near \\
				\mbox{$\{t^i_j\} \times \R^{\{i+1, \dots, n-1\}} \subseteq \R^{\{i, i+1, \dots, n-1\}}$};
			\item For all $ 1 \leq i < n$, and $0 \leq j \leq m_i$, there exists an $L \in \R$ such that $W$ is cylindrical near \\
				\mbox{$\{t^i_j\} \times \R^{\{i+1, \dots, n-1\}} \times (\infty, L] \subseteq \R^{\{i, i+1, \dots, n\}}$};
		\end{itemize}	
	\item \textbf{Globular} (the original condition for $\Bord_{d;n-1}^{(X, \xi)}(M\times \R)$)
		\begin{itemize}
			\item  For all $ 1 \leq i < n$, and $0 \leq j \leq m_i$, $W$ is cylindrical near \\
				\mbox{$\{t^i_j\} \times \R^{\{i+1, \dots, n-1\}} \subseteq \R^{\{i, i+1, \dots, n-1\}}$};
		\end{itemize}	
\end{itemize}
These conditions are identical to Globular-$I$, Globular-$III$, and Globular-$IV$, respectively, except that the condition that $t_j^n$ be a regular value of the projection from $W$ to $\R^{\{n\}}$ has been dropped. 	

\subsection{Examples of continuous deformations} \label{sec:examples_of_deformations}

In what is coming, we will want to manipulate these spaces of embedded manifolds in various ways. Using the yoga of plots this will be very easy. We have already seen in Section~\ref{sec:space_of_man} that $\psi_d^{(X, \xi)}(-)$ is contravariant for open embeddings and covariant for closed embeddings. In some cases we have additional functoriality. For example:

\begin{lemma}\label{lem:conditional_inverse}
	Suppose that $c$ is a condition for embedded manifolds in $N$ defining the subspace $\psi_d(N; c)$. Let $i:U \times M \to N$ be smooth and set $i_u = i(u, -): M \to N$. Suppose that $i_u \pitchfork W$ for every $W \in \psi_d(N; c)$ and $u \in U$. Then 
	\begin{align*}
		i^*: U \times \psi_d(N; c) &\to \psi_d(M) \\
		(u,W) & \mapsto i_u^{-1}(W)
	\end{align*}
	is continuous. \qed
\end{lemma}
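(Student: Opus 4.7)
My plan is to prove the lemma directly by checking that the map sends plots to plots, which suffices since $\psi_d(M)$ is equipped with the plot topology. So I would fix a plot of $U \times \psi_d(N;c)$ parametrized by a smooth manifold $Z$. Such a plot consists of a smooth map $f \colon Z \to U$ together with a plot of $\psi_d(N;c)$, i.e. a closed smooth submanifold $\widetilde W \subseteq Z \times N$ of dimension $\dim Z + d$ for which the projection $\pi \colon \widetilde W \to Z$ is a submersion (and each fiber $W_z := \pi^{-1}(z) \subseteq \{z\} \times N$ lies in $\psi_d(N;c)$).

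The auxiliary gadget I would use is the smooth map
\begin{equation*}
    F \colon Z \times M \to Z \times N, \qquad F(z,m) = \bigl(z,\, i(f(z), m)\bigr),
\end{equation*}
together with its preimage $F^{-1}(\widetilde W) \subseteq Z \times M$. By construction the fiber of $F^{-1}(\widetilde W)$ over $z \in Z$ is precisely $i_{f(z)}^{-1}(W_z)$, so this is exactly the graph of the candidate plot $(z) \mapsto i^*(f(z), W_z)$, and $F^{-1}(\widetilde W)$ is closed since $\widetilde W$ is.

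The first technical step is to verify that $F \pitchfork \widetilde W$, so that $F^{-1}(\widetilde W)$ is a smooth submanifold of $Z \times M$. At a point $(z,m)$ mapping to $(z,n) \in \widetilde W$, a general vector of $T_{(z,n)}(Z \times N)$ has the form $(v,\xi)$. Using that $\widetilde W \to Z$ is a submersion, I can find $(v,\eta) \in T_{(z,n)}\widetilde W$; then $T_{(z,n)}\widetilde W$ also contains all vertical vectors $(0,\zeta)$ with $\zeta \in T_n W_z$. Meanwhile $dF$ produces $(0,\, di_{f(z)}(w))$ for any $w \in T_m M$. Thus $dF(T(Z\times M)) + T\widetilde W$ surjects onto $T(Z \times N)$ as soon as $di_{f(z)}(T_m M) + T_n W_z = T_n N$, which is precisely the hypothesis $i_{f(z)} \pitchfork W_z$. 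The main obstacle is really the bookkeeping here — making sure the submersion property of $\widetilde W \to Z$ is combined correctly with the pointwise transversality hypothesis.

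The second technical step is to verify that the projection $F^{-1}(\widetilde W) \to Z$ is itself a submersion, so that $F^{-1}(\widetilde W)$ indeed defines a plot of $\psi_d(M)$. Given $v \in T_z Z$, I would again lift to $(v,\eta) \in T_{(z,n)}\widetilde W$ using the submersion $\widetilde W \to Z$, and then need to find $w \in T_m M$ so that $dF(v,w) = (v,\, di_{f(z)}(w) + \partial_u i \cdot df(v)) \in T_{(z,n)}\widetilde W$; subtracting $(v,\eta)$, this reduces to solving
\begin{equation*}
    di_{f(z)}(w) \equiv \eta - \partial_u i \cdot df(v) \pmod{T_n W_z},
\end{equation*}
which is again solvable by the transversality hypothesis $i_{f(z)} \pitchfork W_z$. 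A dimension count confirms $F^{-1}(\widetilde W)$ has dimension $\dim Z + d$ (using $\dim M = \dim N$, implicit in the target being $\psi_d(M)$), so submersivity implies the fibers have the correct dimension $d$. Hence the assignment $(u,W) \mapsto i_u^{-1}(W)$ carries plots to plots and is therefore continuous.
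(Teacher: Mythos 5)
Your argument is correct and is exactly the intended one: the paper states this lemma with no written proof (the \qed is in the statement), and the expected argument is the same plot-check used for the earlier lemma on smooth families of open embeddings, with your transversality computation replacing the "open embedding" step to show $F^{-1}(\widetilde W)$ is a submanifold submersing onto $Z$. Your verification of both $F \pitchfork \widetilde W$ and the submersion property from the pointwise hypothesis $i_{f(z)} \pitchfork W_z$ fills in precisely the details the paper omits.
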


This latter kind of deformation can be used to `straighten' our embedded manifolds, as we will now show. Let $a \in \R$ and consider the following two spaces:
\begin{align*}
		\psi_d(\R \times M; (a, \pitchfork)) &= \{ W \in \psi_d(\R \times M) \; | \; a \text{ is a regular value of the projection } W \to \R \} \\
		\psi_d(\R \times M; (a, \perp)) & = \{ W \in \psi_d(\R \times M) \; | \; W \text{ is cylindrical near } a \}.
\end{align*}
The condition that $a$ is a regular value of the projection $W \to \R$ is the same as requiring the transversality condition $W \pitchfork \{a\} \times M$. This is clearly satisfied for manifolds which are cylindrical near $\{a\}$ and so $\psi_d(\R \times M; (a, \perp, \epsilon)) \subseteq \psi_d(\R \times M; (a, \pitchfork))$ includes as a subset.

\begin{lemma}\label{lem:straightening_transverse}
	The inclusion map $i: \psi_d(\R \times M; (a, \perp)) \to \psi_d(\R \times M; (a, \pitchfork))$ is a homotopy equivalence. 
\end{lemma}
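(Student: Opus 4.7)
The plan is to construct an explicit strong deformation retraction of $\psi_d(\R \times M; (a, \pitchfork))$ onto $\psi_d(\R \times M; (a, \perp))$ by pulling back along a carefully chosen $1$-parameter family of smooth maps $f_s: \R \times M \to \R \times M$, and to derive continuity via Lemma~\ref{lem:conditional_inverse}. First, I would fix a smooth bump $\chi: \R \to [0, 1]$ with $\chi \equiv 1$ on $[-\tfrac{1}{2}, \tfrac{1}{2}]$, $\chi \equiv 0$ outside $[-1, 1]$, and $t \chi'(t) \leq 0$ for all $t$, and define
$$
f_s(x, m) := \bigl(\, a + (x - a)\bigl(1 - s \chi(x - a)\bigr),\ m\, \bigr).
$$
A short computation gives $\partial_x f_s = 1 - s \chi(x - a) - s(x - a)\chi'(x - a)$, which the sign hypothesis on $t\chi'(t)$ forces to be strictly positive for every $s < 1$; hence each such $f_s$ is a diffeomorphism. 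At $s = 1$, the map $f_1$ collapses $[a - \tfrac{1}{2}, a + \tfrac{1}{2}] \times M$ onto $\{a\} \times M$, interpolates smoothly on $\tfrac{1}{2} < |x - a| < 1$, and coincides with the identity outside $(a - 1, a + 1) \times M$.

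The crux of the argument is to verify that $f_s \pitchfork W$ for every $s \in [0, 1]$ and every $W \in \psi_d(\R \times M; (a, \pitchfork))$. Away from the degenerate slab (i.e.\ for $s < 1$, or for any $s$ when $|x - a| > \tfrac{1}{2}$), the map $f_s$ is a submersion and transversality is automatic. In the degenerate region $s = 1$, $|x - a| \leq \tfrac{1}{2}$, we have $f_1(x, m) = (a, m)$ and the image of $df_1$ at $(x, m)$ is exactly $\{0\} \oplus T_m M$, so transversality of $f_1$ to $W$ at a point with $(a, m) \in W$ reduces precisely to the requirement that $T_{(a, m)} W$ surjects onto $T_a \R$ — which is the regular-value hypothesis defining $\psi_d(\R \times M; (a, \pitchfork))$. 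Lemma~\ref{lem:conditional_inverse} therefore produces a continuous map
$$
H: [0, 1] \times \psi_d(\R \times M; (a, \pitchfork)) \to \psi_d(\R \times M; (a, \pitchfork)), \qquad H(s, W) := f_s^{-1}(W),
$$
with $H_0 = \mathrm{id}$; one also checks directly that $a$ remains a regular value of $\pi_1|_{H_s(W)}$ for all $s$.

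Since $f_1 \equiv (a, m)$ on $|x - a| \leq \tfrac{1}{2}$, the pullback $H_1(W)$ restricted to that slab equals $(a - \tfrac{1}{2}, a + \tfrac{1}{2}) \times V_W$, where $V_W := W \cap (\{a\} \times M)$; in particular $r := H_1$ factors through $\psi_d(\R \times M; (a, \perp))$ and $H$ is a homotopy from $\mathrm{id}$ to $i \circ r$. Finally, if $W$ is already cylindrical over some $(a - \delta, a + \delta) \times M$, the elementary estimate $|g_s(x) - a| = |x - a|\,|1 - s\chi(x-a)| \leq |x - a|$ shows that $f_s(x, m)$ always lands in the cylindrical region whenever $|x - a| < \min(1, \delta)$; hence $H_s(W)$ is itself cylindrical on that same slab. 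This means $H$ restricts to a homotopy in $\psi_d(\R \times M; (a, \perp))$ from $\mathrm{id}$ to $r \circ i$, completing the proof that $i$ is a homotopy equivalence. The only delicate step in the entire argument is the transversality check for $f_1$ inside the collapsed slab, which is exactly the place at which the regular-value hypothesis is genuinely used.
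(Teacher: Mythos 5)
Your proof is correct and takes essentially the same route as the paper: there one pulls back along a one-parameter family $\varphi_t$ of self-maps of $\R \times M$ interpolating between the identity and a map $q_\epsilon$ (built from a cutoff $\lambda$) that collapses a slab around $\{a\}\times M$, with the regular-value hypothesis supplying transversality exactly on the collapsed slab and with cylindricality preserved so the homotopy restricts to $\psi_d(\R\times M;(a,\perp))$. One small caveat: with only the stated hypotheses on $\chi$, the map $f_1$ need not be a submersion on $\tfrac{1}{2}<|x-a|<1$ (where $\chi$ may still equal $1$), but any such degenerate point is sent into $\{a\}\times M$, so your transversality argument covers it — or simply take $\chi<1$ off $[-\tfrac12,\tfrac12]$.
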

 
\begin{proof}
	We will construct the homotopy equivalence using the previous lemma. Our treatment is based on \cite[Lem.~3.4]{MR2653727}.
	We will first construct our potential inverse homotopy equivalence. Choose once and for all a smooth function $\lambda: \R \to \R$ with $\lambda(s) = 0$ for $|s| \leq 1$, $\lambda(s) = s$ for $|s| \geq 2$, and $\lambda'(s) > 0$ for $|s| > 1$. Also fix $\epsilon > 0$, and set
	\begin{align*}
		q_\epsilon: \R \times M &\to \R \times M \\
			(s, m) & \mapsto \left( \epsilon \lambda(\frac{s -a}{\epsilon}) + a , m   \right).
	\end{align*}
	This is a smooth map, and the requirement that $a$ is a regular value for $W \in \psi_d(\R \times M; (a, \pitchfork))$ ensures that $W \pitchfork q_\epsilon$. Thus 
	\begin{align*}
q_\epsilon^*: \psi_d(\R \times M; (a, \pitchfork)) &\to \psi_d(\R \times M; (a, \perp)) \\
			W & \mapsto q_\epsilon^{-1}(W).
	\end{align*}
	is continuous. Note that the image of $q_\epsilon$ is contained in those $W$ which are cylindrical over the interval $(a - \epsilon, a + \epsilon)$.
  
Next we will show that the composites $i \circ q_\epsilon^*$ and $q_\epsilon^* \circ i$ are homotopic to identity maps. For $t \in [0,1]$ we set
	\begin{align*}
		\varphi_t: \R \times M &\to \R \times M \\
			(s, m) & \mapsto \left( (1-t)s + t \epsilon \lambda(\frac{s -a}{\epsilon}) , m   \right).
	\end{align*}
	This gives a smooth map $\varphi: [0,1] \times \R \times M \to \R \times M$. Again the fact that $a$ is a regular value for each $W \in \psi_d(\R \times M; (a, \pitchfork))$ ensures that the conditions of Lemma~\ref{lem:conditional_inverse} are met (in fact for $t < 1$, the map $\varphi_t$ is a diffeomorphism). Thus we have a continuous homotopy
	\begin{equation*}
		\varphi^*: I \times \psi_d(\R \times M; (a, \pitchfork)) \to \psi_d(\R \times M; (a, \pitchfork)). 
	\end{equation*}
We have $\varphi_0 = id_{\R \times M}$ and $\varphi_1 = q_\epsilon$, and so $\varphi^*$ gives the desired homotopy between $i \circ q_\epsilon^*$ and the identity on $\psi_d(\R \times M; (a, \pitchfork))$.

Finally we note that $\varphi_t^*$ preserves the property of being cyclindrical near $\{a\}$, and hence $\varphi^*$ also restricts to give a homotopy between $q_\epsilon \circ i$ and the identity on $\psi_d(\R \times M; (a, \perp))$.	
\end{proof}

 \noindent Similar deformations will occur later. 

In some situations we can also pull-back and deform by functions which are not smooth. We will now give an example of this phenomenon. Let $a \in \R$ and consider the following two spaces:
\begin{align*}
	\psi_d(\R \times M; (a, \perp)) & = \{ W \in \psi_d(\R \times M) \; | \; W \text{ is cylindrical near } a \} \\
	\psi_d(\R \times M; ([a, \infty), \perp)) & = \{ W \in \psi_d(\R \times M) \; | \; W \text{ is cylindrical near } [a, \infty) \}. 
\end{align*}
The first we have already considered. The later space consists of those $W \in \psi_d(\R \times M)$ which are not only cylindrical near $a \in \R$ but also over the inverval $(a, +\infty) \subseteq \R$. 

Let $\alpha: \R \to \R$ be the following function:
\begin{equation*}
	\alpha(s) = \begin{cases}
		a & s \geq a \\
		s & s \leq a
	\end{cases}
\end{equation*}
and let $p= (\alpha, id_M): \R \times M \to \R \times M$. Then $\alpha$ is continuous but not smooth. Nevertheless it induces a continuous map
 	\begin{align*}
		p^*: \psi_d(\R \times M; (a, \perp)) &\to \psi_d(\R \times M; ([a, \infty), \perp)) \\
 			W & \mapsto p^{-1}(W).
 	\end{align*}
This is because for each smooth plot $\gamma: X \to \psi_d(\R \times M; (a, \perp))$, which consists of manifolds which are cylindrical near $a$, the result of applying $p^*$ is again a smooth plot. Thus, even though $p$ itself is not smooth, it induces a continuous map between these spaces of smoothly embedded manifolds. 

\begin{lemma}\label{lem:extend_cylindrical}
	The map $p^*$ extends to a strong deformation retraction of $\psi_d(\R \times M; (a, \perp))$ onto $\psi_d(\R \times M; ([a, \infty), \perp))$. 
\end{lemma}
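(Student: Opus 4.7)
The plan is to construct an explicit homotopy
$$H \colon I \times \psi_d(\R \times M; (a, \perp)) \longrightarrow \psi_d(\R \times M; (a, \perp)),$$
linearly interpolating the identity to $p^*$. For $t \in [0,1]$ I would define
$$\alpha_t(s) = \begin{cases} s & s \le a, \\ (1-t)s + t a & s \ge a, \end{cases}$$
so that $\alpha_0 = \mathrm{id}_\R$ and $\alpha_1 = \alpha$, set $P_t = (\alpha_t, \mathrm{id}_M)$, and declare $H(t, W) = P_t^{-1}(W)$. As with $p$ itself, each $P_t$ is only continuous---it acquires a corner along $\{a\}\times M$ for $t > 0$---but the same mechanism used to define $p^*$ will make the pullback sensible.

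The first step is to verify that $P_t^{-1}(W) \in \psi_d(\R \times M; (a, \perp))$. If $W$ is cylindrical over an interval $(a-\epsilon, a+\epsilon)$ with cross-section $W_0$, then
$$P_t^{-1}(W) \cap \bigl(\alpha_t^{-1}((a-\epsilon, a+\epsilon)) \times M\bigr) = \alpha_t^{-1}((a-\epsilon, a+\epsilon)) \times W_0,$$
which is cylindrical on the open interval $\alpha_t^{-1}((a-\epsilon, a+\epsilon)) \ni a$; outside this region $\alpha_t$ is affine and smooth, so $P_t^{-1}(W)$ is smoothly embedded there. Clearly $H(0,W)=W$ and $H(1,W)=p^*(W)$. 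Moreover, if $W$ is cylindrical over a neighborhood $(a-\epsilon,\infty)$ of $[a,\infty)$, then for $s \ge a$ one has $\alpha_t(s) \in [a, s] \subseteq (a-\epsilon, \infty)$ and membership in $W$ is determined purely by $m \in W_0$ for both $(s,m)$ and $(\alpha_t(s),m)$, while for $s \le a$ we have $\alpha_t(s) = s$; hence $P_t^{-1}(W) = W$ for all $t$, and $H$ restricts to the identity on $\psi_d(\R \times M; ([a, \infty), \perp))$.

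The hardest step will be continuity of $H$, to be checked by the plot description. Given a plot $U \to I \times \psi_d(\R \times M;(a,\perp))$, consisting of a smooth $t: U \to I$ and a family $W \subseteq U \times \R \times M$ submersively projecting to $U$ with each fiber cylindrical near $a$, I must verify that
$$\widetilde{W} = \{(u, s, m) \in U \times \R \times M : (\alpha_{t(u)}(s), m) \in W_u\}$$
is a smoothly embedded submanifold submerging onto $U$. On the open set $\{s \ne a\}$ this is immediate because $\alpha_{t(u)}$ is then jointly smooth in $(u,s)$. The genuine obstacle is the slice $s=a$, where the corner in $\alpha_{t(u)}$ appears; to deal with it I would promote the pointwise cylindricality of the fibers, via the smooth structure on $W$ and submersivity over $U$, to a joint cylindricality of $W$ over an open neighborhood of $U_0 \times \{a\} \subseteq U_0 \times \R$ (shrinking $U$ around any chosen $u_0$). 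In such a neighborhood, $\widetilde W$ coincides with $\{(u,s,m): \alpha_{t(u)}(s) \in (a-\epsilon,a+\epsilon),\ m \in W_{u,0}\}$, which is the transverse preimage of a smooth open condition crossed with the smooth family $W_{u,0}$, hence itself smooth. Gluing the cylindrical and non-cylindrical regions yields the smooth graph $\widetilde W$, which gives continuity of $H$ and completes the construction of the strong deformation retraction.
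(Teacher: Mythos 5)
Your construction is the same as the paper's: the identical interpolating family $\alpha_t$, the identical pullback homotopy $H(t,W)=P_t^{-1}(W)$, and the same checks of the endpoints and of the fact that the homotopy is constant on $\psi_d(\R \times M; ([a,\infty),\perp))$. The genuine gap is in the continuity step, precisely at the claimed ``promotion'' of pointwise cylindricality of the fibers to joint cylindricality of $W$ over an open neighborhood of $U_0\times\{a\}$ after shrinking $U$ around $u_0$. A plot only guarantees that each fiber $W_u$ is cylindrical on some interval $(a-\epsilon_u,a+\epsilon_u)$, and $\epsilon_u$ may shrink to $0$ as $u\to u_0$. For example, with $d=1$, $M=\R$, $a=0$, let $W\subseteq \R_u\times\R_s\times\R_y$ be the graph of $f(u,s)=e^{-1/u^2}\,h(s/u)$ for $u\neq 0$ and $f(0,s)=0$, where $h$ is smooth, vanishes on $[-1,1]$ and equals $1$ off $[-2,2]$. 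This is a smooth plot of $\psi_1(\R\times\R;(0,\perp))$ (all derivatives of $f$ extend by $0$ across $u=0$), each fiber is cylindrical exactly on $(-|u|,|u|)$, and no neighborhood of $\{u_0\}\times\{0\}$ in $U\times\R$ is jointly cylindrical, so the description of $\widetilde W$ near $s=a$ as ``an open condition crossed with the smooth family $W_{u,0}$'' is not available. As written, the hardest step of your proof therefore fails.

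The step can be repaired without any uniformity, and this is what is implicitly behind the paper's one-line assertion that the homotopy is continuous ``for the same reasons'' that $p^*$ sends plots to plots. Near a point of the slice, write $W$ locally as the graph of a smooth function $H(u,s,x)$. Fiberwise cylindricality gives $\partial_s^k H(u,a,x)=0$ for all $k\geq 1$ and all $(u,x)$; since these functions vanish identically along $\{s=a\}$, every mixed partial of $H$ involving at least one $s$-derivative also vanishes there. Consequently the smooth function $(u,s,x)\mapsto H(u,\alpha_{t(u)}(s),x)$, whose formula on $\{s\geq a\}$ extends smoothly to all $s$, agrees with $H$ to infinite order along $\{s=a\}$, and the two halves glue to a smooth function (Whitney-type gluing of smooth functions on half-spaces agreeing to infinite order along the boundary). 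Hence $\widetilde W$ is locally a smooth graph, submersive over $U$, and the homotopy is continuous; away from $\{s=a\}$ your argument already suffices. With this replacement your proof coincides with the paper's.
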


\begin{proof}
	The desired homotopy is given by replacing $\alpha: \R \to \R$ in the definition of $p$ by the family of maps $\alpha_t$ for $t \in [0,1]$:
	\begin{equation*}
		\alpha_t(s) = \begin{cases}
			(1-t) s + ta & s \geq a \\
			s & s \leq a
		\end{cases}
	\end{equation*}
	We have $\alpha_0 = id$ and $\alpha_1 = \alpha$, our original map. 	This induces a continuous homotopy for the same reasons that $p$ induces a continuous map, and direct inspection shows that it restricts to the constant homotopy on $\psi_d(\R \times M; ([a, \infty), \perp))$.
\end{proof}

\subsection{Showing that the arrows (1) and (2) are levelwise equivalences}
	
\begin{lemma}
	The three arrows labeled with \circled{1} in Figure~\ref{fig:daigramofBordcats2}:
	\begin{align}
		\label{eqn:1}
		\Bord_{d;n}^{(X, \xi)}(M) &\to {}_{I}\Bord_{d;n}^{(X, \xi)}(M) \\
		\label{eqn:2}
		{}_{II}\Bord_{d;n}^{(X, \xi)}(M) & \to {}_{III}\Bord_{d;n}^{(X, \xi)}(M)  \\
		\label{eqn:3}
		{}_{II}\Bord_{d;n}^{(X, \xi)}(M) & \to {}_{IV}\Bord_{d;n}^{(X, \xi)}(M) 
	\end{align}
	are levelwise weak equivalences. 
\end{lemma}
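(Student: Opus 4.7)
The plan is to exhibit each of the three maps as a levelwise homotopy equivalence by constructing explicit deformation retractions from the larger space onto the smaller one, using the straightening (Lemma~\ref{lem:straightening_transverse}) and cylindricality-extension (Lemma~\ref{lem:extend_cylindrical}) techniques from Section~\ref{sec:examples_of_deformations}. At each level $[\Bm] \in (\bDelta^\op)^n$ the three maps are inclusions of subspaces, since Globular implies Globular-$I$ (cylindricality implies regularity) and Globular-$II$ implies both Globular-$III$ and Globular-$IV$.

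For arrow~\eqref{eqn:1}, I will apply Lemma~\ref{lem:straightening_transverse} in the $\R^{\{n\}}$ direction simultaneously at each hyperplane $\{x_n = t^n_j\}$, taking $\epsilon = \epsilon(\Bt^n)$ to depend continuously on $\Bt^n$ and to be small enough (e.g.\ less than $\tfrac{1}{3}\min_{j}|t^n_{j+1} - t^n_j|$, with coincident $t^n_j$'s giving coincident deformations) that the supports of the individual straightenings are pairwise disjoint. Since the deformation modifies only the $\R^{\{n\}}$ coordinate, it preserves the remaining Globular conditions at indices $i < n$: those are cylindricality conditions in the $\R^{\{i\}}$ direction, holding uniformly over all of $\R^{\{i+1,\dots,n\}}$, and are therefore invariant under any modification acting only on the $\R^{\{n\}}$ coordinate.

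For arrows~\eqref{eqn:2} and~\eqref{eqn:3}, the extra content of Globular-$II$ is that the slice $W_0 \subseteq M \times \R^{\{1,\dots,i-1\}} \times \R^{\{n\}}$ coming from the $\R^{\{i,\dots,n-1\}}$-cylindricality must itself be a product in the $\R^{\{n\}}$ direction over $(-\infty, t_0^n]$; equivalently, $W$ is cylindrical in the $\R^{\{n\}}$ direction over that half-line (within the relevant strip). I construct the retraction in two steps. \emph{Step one:} apply Lemma~\ref{lem:straightening_transverse} in the $\R^{\{n\}}$ direction at $x_n = t_0^n$, which is legal since $t_0^n$ is already a regular value in both ${}_{III}\Bord$ and ${}_{IV}\Bord$; this produces a $W$ cylindrical in $\R^{\{n\}}$ on some open neighborhood of $\{x_n = t_0^n\}$. \emph{Step two:} pull back the $n$-th coordinate by the continuous but non-smooth map $\alpha \colon \R \to \R$ with $\alpha(x)=t_0^n$ for $x \leq t_0^n$ and $\alpha(x)=x$ for $x \geq t_0^n$, exactly as in Lemma~\ref{lem:extend_cylindrical}. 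The cylindricality established in step one guarantees that $\alpha^{*}(W)$ is again a smoothly embedded submanifold and that the pullback defines a continuous map of manifold-spaces. The output is cylindrical in $\R^{\{n\}}$ over $(-\infty, t_0^n + \delta)$ for some $\delta > 0$, and combined with the preserved $\R^{\{i,\dots,n-1\}}$-cylindricality yields the Globular-$II$ condition. The analogous one-parameter versions of steps one and two supply the homotopies showing that both compositions with the inclusion are homotopic to the identity.

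The main obstacle is technical continuity: confirming that this two-step deformation, whose second step is non-smooth, really does give a continuous map at the level of the plot topology. This is settled along the lines of Lemma~\ref{lem:extend_cylindrical}: given a smooth plot $\gamma\colon U \to {}_{IV}\Bord_{d;n}^{(X,\xi)}(M)_{[\Bm]}$, the straightening of step one produces a plot for which the $\R^{\{n\}}$-cylindricality near $t_0^n$ holds on a neighborhood that is uniform in $u$ over compact subsets of $U$; on this uniform neighborhood the $\alpha$-pullback of $\gamma$ is smooth, so the composite sends smooth plots to smooth plots, which is enough by the plot-topology characterization.
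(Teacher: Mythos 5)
Your proposal is correct and follows essentially the same route as the paper: arrow~(\ref{eqn:1}) is handled by applying the straightening of Lemma~\ref{lem:straightening_transverse} in the $\R^{\{n\}}$ direction at each $t^n_j$ with $\epsilon$ small relative to the spacing, and arrows~(\ref{eqn:2}) and~(\ref{eqn:3}) by first straightening at $t_0^n$ and then collapsing the region below $t_0^n$ via the non-smooth family $\alpha_t$ exactly as in Lemma~\ref{lem:extend_cylindrical}. The continuity and preservation-of-globularity points you flag are the same ones the paper relies on, so no gap.
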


\begin{proof}
Fix  $[\Bm] \in (\bDelta^\op)^n$. 
	Consider first the map (\ref{eqn:1}). The difference between these two spaces of manifolds is that in $\Bord_{d;n}^{(X, \xi)}(M)$ the embedded manifold is required to be cylindrical near $\{t^n_j\} \subseteq \R^{\{n\}}$ while in  ${}_{I}\Bord_{d;n}^{(X, \xi)}(M)$  the manifold is only required to be transverse to the hyperplane $M \times \R^{n-1} \times \{t^n_j\}$. We need to `straighten' the manifold near each $t^n_j$ hyperplane. 
	
	This is exactly the situation that we considered in Lemma~\ref{lem:straightening_transverse} and (\ref{eqn:1}) can be shown to be an equivalence by precisely the same argument, applied at each $t^n_j$. The only care that must be taken is that, in the notation of the proof of Lemma~\ref{lem:straightening_transverse}, $\epsilon$ is sufficiently small (see the proof of Lemma~\ref{lem:straightening_transverse}). Taking 
	\begin{equation*}
		\epsilon < \frac{1}{3} \min_j \{ |t^n_{j-1} - t^n_j| \}
	\end{equation*}
is sufficient. 
	
For the other two arrows (\ref{eqn:2}) and (\ref{eqn:3}) the difference between the bordism categories is that the embedded manifold in ${}_{II}\Bord_{d;n}^{(X, \xi)}(M)$ is required to be cylindrical near $\{t^i_j\} \times \R^{\{i+1, \dots, n-1\}} \times (-\infty, t_0^n] \subseteq \R^{\{i, \dots, n\}}$ while in ${}_{III}\Bord_{d;n}^{(X, \xi)}(M)$ it is only required to be cylindrical near $\{t^i_j\} \times \R^{\{i+1, \dots, n-1\}} \times (-\infty, L]$ for some $L$ and in ${}_{IV}\Bord_{d;n}^{(X, \xi)}(M)$ there is no corresponding cylindricality condition. 

We will procede in two stages. First we use the same method as above to straighten the embedded manifold near $\{t_0^n\}$, again applying the same argument as in Lemma~\ref{lem:straightening_transverse}. The result is that we may assume that the embedded manifolds are cylindrical near $\{t_0^n\} \subseteq \R^{\{n\}}$. Since this deformation only occurs in the $\R^{\{n\}}$ coordinate it does not change the cylindricality near $\{t^i_j\} \times \R^{\{i+1, \dots, n-1\}}$. As a consequence we have that $W$ is now cylindrical near $\{t^i_j\} \times \R^{\{i+1, \dots, n-1\}} \times \{t_0^n\}$. 

For the next stage, we use an argument that is nearly identical to the proof of Lemma~\ref{lem:extend_cylindrical}. Effectively we will deform the embedded manifold $W$ to satisfy the conditions for ${}_{II}\Bord_{d;n}^{(X, \xi)}(M)$  by `sliding' the bordism to infinity below $t_0^n$ in the $\R^{\{n\}}$ coordinate. This is in fact a deformation retraction onto ${}_{II}\Bord_{d;n}^{(X, \xi)}(M)$. Specifically we will precompose the $\R^{\{n\}}$ coordinate by the family of maps:
\begin{equation*}
	\alpha_t(s) = \begin{cases}
			s & s \geq t_0^n \\
			(1-t)\cdot s + t \cdot t_0^n & s \leq t_0^n.
	\end{cases}
\end{equation*}
As in Lemma~\ref{lem:extend_cylindrical} this yields the desired deformation retraction. 
\end{proof}

\begin{lemma}
	The arrow labeled with \circled{2} in Figure~\ref{fig:daigramofBordcats2}:
	\begin{equation} \label{eqn:4}
		{}_{V}\Bord_{d;n-1}^{(X, \xi)}(M \times \R) \to {}_{VI}\Bord_{d;n-1}^{(X, \xi)}(M \times \R)
	\end{equation}
	is a levelwise weak equivalence. 
\end{lemma}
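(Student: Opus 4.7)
The map ${}_V\Bord_{d;n-1}^{(X, \xi)}(M \times \R) \hookrightarrow {}_{VI}\Bord_{d;n-1}^{(X, \xi)}(M \times \R)$ is an inclusion of multisimplicial subspaces differing only in the globularity condition in the $\R^{\{n\}}$-coordinate: ${}_V$ requires $W$ to be cylindrical near $\{t^i_j\} \times \R^{\{i+1,\dots,n\}}$ (cylindricality in the $\R^{\{n\}}$-direction throughout), while ${}_{VI}$ requires this cylindricality only over a half-line, i.e.\ near $\{t^i_j\} \times \R^{\{i+1,\dots,n-1\}} \times (-\infty, L]$ for some $L \in \R$ depending on $W$ (and on $(i,j)$; by taking a minimum a single $L$ may be used for all $(i,j)$). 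My plan is to prove the inclusion is a levelwise weak equivalence by exhibiting a deformation retraction, following the template of Lemma~\ref{lem:extend_cylindrical} now applied in the $\R^{\{n\}}$-coordinate.

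For fixed $L \in \R$, define $\alpha^L_t: \R \to \R$ by
\begin{equation*}
    \alpha^L_t(r) = \begin{cases} r & r \leq L - 1, \\ (1-t)r + t(L-1) & r \geq L - 1, \end{cases}
\end{equation*}
so that $\alpha^L_0 = \mathrm{id}_\R$ and $\alpha^L_1$ retracts $[L-1, \infty)$ onto the single point $\{L-1\}$. Pre-composition by $(\mathrm{id}_M, \alpha^L_t, \mathrm{id}_{\R^{n-1}})$ on the ambient space $M \times \R^{\{n\}} \times \R^{n-1}$ is well-defined on the subspace of those $W$ cylindrical near the strata over $(-\infty, L]$; smoothness of the pulled-back manifold at $r = L-1$ follows from the cylindricality there, exactly as in Lemma~\ref{lem:extend_cylindrical}. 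At $t = 1$ the resulting manifold agrees with $W$ for $r \leq L - 1$ and is the constant extension in $r$ of the slice of $W$ at $r = L-1$ for $r \geq L - 1$; since this slice is cylindrical in the $\R^{\{i+1,\dots,n-1\}}$ directions, the pull-back becomes cylindrical near the full stratum $\{t^i_j\} \times \R^{\{i+1,\dots,n\}}$ and therefore lies in ${}_V$. Moreover, for any $W$ already in ${}_V$, $W$ is locally constant in the $n$-coordinate on a neighborhood of each stratum, so pre-composition in the $n$-coordinate leaves $W$ unchanged there and the deformation preserves ${}_V$.

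The main (and really the only) technical subtlety I expect to need care is that the cutoff $L$ depends on the point of ${}_{VI}$, so to promote the pointwise deformations into a single continuous homotopy one must choose $L$ compatibly across plots. For a smooth plot $p: U \to {}_{VI}$, the admissible cutoffs at each $u \in U$ form a nonempty downward-closed subset of $\R$ which persists locally under smooth perturbation of the plot (cylindricality over a slightly smaller half-line extends to a neighborhood in $U$). A standard partition-of-unity argument on $U$ then produces a smooth function $L: U \to \R$ such that $W_u$ is cylindrical over $(-\infty, L(u)]$ for every $u \in U$, and the combined family $(u,t) \mapsto (\alpha^{L(u)}_t)^* W_u$ defines a continuous homotopy of plots from $p$ to a plot landing in ${}_V$, while preserving those plots which already land in ${}_V$. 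Since this applies to all plots, the inclusion is a levelwise weak homotopy equivalence.
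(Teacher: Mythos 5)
Your overall strategy — a deformation retraction obtained by reparametrizing the extra $\R$-coordinate, modeled on Lemma~\ref{lem:extend_cylindrical} — is the same as the paper's, but two steps of your execution fail. First, your homotopy $\alpha^L_t$ acts on all of $\{r\ge L-1\}$ and is not localized near the strata $\{t^i_j\}\times\R^{\{i+1,\dots,n-1\}}$. Its corner locus $\{r=L-1\}$ (and, at $t=1$, the collapse of $[L-1,\infty)$ to a point) sweeps through regions where Globular-$VI$ imposes no cylindricality whatsoever, so for $0<t<1$ the pullback $(\mathrm{id}_M\times\alpha^L_t\times\mathrm{id})^{-1}(W)$ need not be a smooth submanifold, and at $t=1$ it need not be a manifold at all: away from the strata, $L-1$ need not be a regular value of the projection $W\to\R^{\{n\}}$, so the "slice at $r=L-1$ extended constantly" can be singular, and even when it is a manifold the gluing to $W\cap\{r\le L-1\}$ is generally only continuous, not smooth. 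The appeal to Lemma~\ref{lem:extend_cylindrical} does not transfer, because there \emph{every} manifold in the domain is cylindrical near the corner locus, which is exactly what fails here. (Relatedly, your claim that the deformation leaves points of ${}_{V}\Bord$ unchanged is false as stated, since the compression also acts away from the strata; mere preservation of ${}_{V}\Bord$ would suffice, but that is not what you argue.) The paper avoids all of this by using genuinely smooth embeddings $f_a$ of $\R$ and, crucially, damping them by the bump factor $\rho(\tfrac{1}{1-t}|y-t^i_j|)$ so that the reparametrization is the identity away from the strata, i.e.\ it only acts where the half-line cylindricality is available.

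Second, your partition-of-unity choice of a cutoff function $L(u)$ does not exist in general: admissible cutoffs are not locally bounded below along plots. For instance, take a plot over $U=\R$ whose fiber $W_u$ is, near the strata, a product in the $y$-directions with the graph $\{m=e^{-1/u^2}\,\beta(r+u^{-2})\}$ in an $(m,r)$-plane, where $\beta$ is smooth and bounded, vanishes for $s\le 0$ and is positive for $s>0$. Each fiber lies in ${}_{VI}\Bord$, with optimal cutoff $L_u\approx -u^{-2}\to-\infty$ as $u\to 0$, while $W_0$ is cylindrical for every $L$; any continuous $L(u)$ is bounded below near $u=0$ and hence inadmissible for small $u\neq 0$. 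This is precisely the complication the paper flags ("$L$ is not fixed"), and its resolution is not a continuous choice of $L$ but a single $t$-family of self-embeddings whose compression parameter $a=\cot\pi t$ sweeps through all possible values of $L$, localized on a window shrinking onto the strata, so that for each individual $W$ the deformation eventually stabilizes and the homotopy extends continuously to $t=1$. To close the gap you would need to rebuild your argument along those lines, or find some other device that is uniform in $L$ and keeps every stage inside the space of smoothly embedded manifolds.
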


\begin{proof}
	The difference between ${}_{V}\Bord_{d;n-1}^{(X, \xi)}(M \times \R)$ and ${}_{VI}\Bord_{d;n-1}^{(X, \xi)}(M \times \R)$ is in the cylindricality condition satisfied by the embedded manifolds. In the former the manifold is cylindrical near \mbox{$\{t^i_j\} \times \R^{\{i+1, \dots, n\}} \subseteq \R^{\{i, i+1, \dots, n\}}$} while in the latter it is only required to be cylindrical near \mbox{$\{t^i_j\} \times \R^{\{i+1, \dots, n-1\}} \subseteq \R^{\{i, i+1, \dots, n-1\}}$} and near
	\mbox{$\{t^i_j\} \times \R^{\{i+1, \dots, n-1\}} \times (\infty, L] \subseteq \R^{\{i, i+1, \dots, n\}}$} for some $L$. 
	
We will show that this map of multisimplicial spaces is a levelwise homotopy equivalence by exhibiting it as part of a specific deformation retraction. In words the idea is to slide the embedded bordism in the additional $\R$ direction to extend the cylindricality condition from near $\{t^i_j\} \times \R^{\{i+1, \dots, n-1\}} \times (\infty, L]$ to one near all of $\{t^i_j\} \times \R^{\{i+1, \dots, n-1\}} \times \R$. In the course of this deformation some of the manifold $W$ may `disappear at $\infty$'. 

Mathematically this will be accomplished by precomposing our manifold by a family of self-embeddings of $M \times \R  \times \R^{n-1}$. One complication is that $L$ is not fixed,  and thus we must choose a family of embeddings which will be compatible with all possible $L$. 

Thus we fix $i$ and proceed as follows. First we fix a smooth bump function $\rho: \R \to [0,1]$ satisfying:
	\begin{equation*}
		\rho(s) = \begin{cases}
			0 & s \leq 0 \\
			1 & s \geq 1
		\end{cases}
	\end{equation*}
In addition we will need a family of embeddings from $\R$ into $\R$ parametrized by a parameter $a$. For concreteness we will use:
	\begin{equation*}
		f_a(s) = \frac{s-a - \sqrt{(s-a)^2 + 4}}{2} + a
	\end{equation*}
The important features of this family of functions are that
\begin{enumerate}
	\item for each $a$ it is a diffeomorphism onto its image $(-\infty, a) \subseteq \R$,
	\item for $s << a$, $f_a(s)$ is asymptotic to the identity function, and
	\item for $s >> a$, $f_a$ is 	asymptotic to the constant function with value $a$.
\end{enumerate}
In particular the limit of $f_a$ as $a \to + \infty$ exists and is the identity function. 

Using this we can now construct a family of self-embeddings $\varphi_t$ of $M \times \R \times \R^{n-1}$ parametrized by $t \in [0,1]$. In fact this family only depends on and changes the additional $\R$-coordinate and the $i^\text{th}$ coordinate of $\R^{n-1}$; it is the identity on the remaining variables. On $\R \times \R^{\{i\}}$ it is given as follows (for $t \in [0,1)$):
	\begin{align*}
		\alpha_t: \R \times \R^{\{i\}} & \to \R \times  \R^{\{i\}} \\
		(s, y) & \mapsto (\sum_j \rho(\frac{1}{1-t} | y - t^i_j|) s + (1 - \rho(\frac{1}{1-t} | y - t^i_j|)) f_{\cot \pi t}(s), y  )
	\end{align*}
When $t=0$ we have that $\alpha_0 = id$ is the identity map. For positive $t$ $\alpha_t$  leaves the $\R^{\{i\}}$ coordinate, $y$, unchanged and applies a diffeomorphism to the additional $\R$ direction. This diffeomorphism depends on both the time variable $t$ and on the $\R^{\{i\}}$ coordinate $y$. When $y$ is sufficiently far away from the $t^i_j$ values, then the diffeomorphism is simply the identity morphism of $\R$. When $y$ is near to $t^i_j$, then the diffeomorphism is essentially the function $f_a$ with $t=0$ corresponding to $a = +\infty$ and $t=1$ corresponding to $a = -\infty$. Moreover as $t$ increases the condition of being `near to $t^i_j$' becomes increasingly stringent, so that the diffeomorphism of $\R$ is the identity for more and more values of $y$. 
	
These conditions plus the fact that $W$ is cylindrical near \mbox{$\{t^i_j\} \times \R^{\{i+1, \dots, n-1\}} \subseteq \R^{\{i, i+1, \dots, n-1\}}$} and near
	\mbox{$\{t^i_j\} \times \R^{\{i+1, \dots, n-1\}} \times (\infty, L] \subseteq \R^{\{i, i+1, \dots, n\}}$} for some $L$ ensure that for any fixed $W$ eventually there exists a $t_W < 1$ after which $W$ remains fixed $W = \varphi_t^{-1}(W)$ for $t \geq t_W$. Thus this deformation extends to a deformation well-defined even at $t=1$. Since this deformation also preserves the subspace ${}_{V}\Bord_{d;n-1}^{(X, \xi)}(M \times \R)$ it gives the desired deformation retraction. 
\end{proof}

\subsection{Showing that the arrows labeled (3) are equivalences after geometric realization} \label{subsec:arrow3}	

To complete the proof of Theorem~\ref{thm:first_realization_thm} we must show that the three arrows in Figure~\ref{fig:daigramofBordcats2} labeled with \circled{3} are weak homotopy equivalences after geometric realization, which is the statement of the following lemma.  

\begin{lemma}\label{lem:3_equiv}
	The  three maps induced by Figure~\ref{fig:daigramofBordcats2}:
	\begin{align} 
		\label{eqn:5}
		B({}_{I}\Bord_{d;n}^{(X, \xi)}(M)) \to {}_{V}\Bord_{d; n-1}^{(X, \xi)}(M\times \R) \\
		\label{eqn:6}
		B({}_{III}\Bord_{d;n}^{(X, \xi)}(M)) \to {}_{VI}\Bord_{d; n-1}^{(X, \xi)}(M\times \R) \\
		\label{eqn:7}
		B({}_{IV}\Bord_{d;n}^{(X, \xi)}(M)) \to \Bord_{d; n-1}^{(X, \xi)}(M\times \R)
	\end{align}
	are levelwise weak equivalences.
\end{lemma}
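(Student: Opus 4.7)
The plan is to treat all three maps uniformly: in each case the source (in the $n$-th simplicial direction) and the target differ only by the additional tuple $\Bt^n = (t^n_0 \leq \cdots \leq t^n_{m_n})$, subject to the constraint that each $t^n_j$ is a regular value of the projection $W \to \R^{\{n\}}$, and the comparison map is induced by forgetting $\Bt^n$. Fix a multi-index $[\Bm] = ([m_1], \ldots, [m_{n-1}])$, write $Z$ for the value of the target at $[\Bm]$, and write $Y_\bullet$ for the simplicial space in the $[m_n]$-direction of the source at $[\Bm]$. Then $Y_k$ sits inside $Z \times \R^{[k]}$ as the subspace cut out by the regular-value conditions, and $Y_\bullet$ inherits the structure of a simplicial subspace of $Z \times \R^{[\bullet]}$ fibered over $Z$; in particular it is (levelwise) the nerve of a topological poset over $Z$ whose fiber at $(W, \cdot)$ is the ordered set $R_W$ of regular values. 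The problem reduces to showing that the projection $p : |Y_\bullet| \to Z$ is a weak homotopy equivalence.

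The key geometric inputs are Sard's theorem, which guarantees that the set $R_W \subseteq \R$ of regular values of $W \to \R^{\{n\}}$ is dense in $\R$ for every $W$ appearing in $Z$, together with an openness property in families: for any $W_0 \in Z$ and $t_0 \in R_{W_0}$, there is an open neighborhood $U$ of $W_0$ in $Z$ with $t_0 \in R_W$ for all $W \in U$. The tameness of $M$ enters precisely here: because $W \subseteq M \times \R^n$ need not be compact, we use the compact exhaustion of $M$ furnished by tameness to replace $W$, up to a small deformation supported near infinity, by a submanifold whose critical set lies in a compact region. The critical values then form a closed subset of $\R$ of measure zero, so the regular-value condition is stable under small perturbations of $W$.

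We verify the weak equivalence by a direct lifting argument. Given a map $f : K \to Z$ with $K$ a compact CW complex, set $V_t = \{W \in Z : t \in R_W\}$ for each $t \in \R$. The above facts imply that $\{V_t\}_{t \in \R}$ is an open cover of $Z$, which restricts to a cover of $f(K)$ that we refine to a finite subcover $\{V_{t_\alpha}\}_{\alpha=1}^{N}$ with the $t_\alpha$ chosen pairwise distinct. Pick a subordinate partition of unity $\{\lambda_\alpha\}$ on $K$. For each $k \in K$, the set of indices $I(k) = \{\alpha : \lambda_\alpha(k) > 0\}$ is finite; order its elements so that $t_{\alpha_1} < \cdots < t_{\alpha_r}$. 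The chain $(t_{\alpha_1}, \ldots, t_{\alpha_r})$ consists of regular values for $W_{f(k)}$, hence defines a point of $Y_{r-1}$ lying over $f(k)$; paired with $(\lambda_{\alpha_j}(k))_{j=1}^{r} \in \Delta^{r-1}$ it determines $\tilde f(k) \in |Y_\bullet|$. The face identifications in the realization ensure continuity of $\tilde f$ across the strata where elements enter or leave $I(k)$, and $p \circ \tilde f = f$ by construction. A relative version of the same argument, applied to $K \times [0,1]$ while matching any prescribed lifts on the endpoints, shows that any two such lifts are homotopic, so $p_\ast$ is an isomorphism on all homotopy groups.

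The main obstacle is establishing the parametrized openness of the regular-value condition; this is precisely where the tameness hypothesis on $M$ is used, since without it critical values may accumulate at infinity and the sets $V_t$ need not be open. The arguments for the three maps differ only in the cylindricality conditions carried by the bordisms, and since those are conditions on $W$ alone and are preserved by the construction of $\tilde f$ (which manipulates only the $\Bt^n$-coordinates), the same argument applies in each case.
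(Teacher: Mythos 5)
Your reduction to the statement that, for each fixed $[\Bm]$, the realization of the nerve of the topological poset of pairs (regular value, bordism datum) maps by a weak equivalence to the target is exactly the paper's starting point, and your partition-of-unity lifting argument would indeed work \emph{if} the sets $V_t = \{\,\BW : t \text{ is a regular value of } W \to \R^{\{n\}}\,\}$ formed an open cover. But that is precisely the claim that fails, and it is the crux of the whole lemma. In the plot topology a neighborhood of $W$ only constrains nearby submanifolds over compact regions of the ambient manifold, while the projection of the critical locus $c(W)$ to $\R^{\{n\}}$ need not be closed (the projection is not proper), so critical values can accumulate at a regular value $t$. The paper's explicit example is $W = \{y = \sin x / x\} \subseteq \R^2$: translating $W$ gives a plot $\gamma:\R \to Z$ for which $\gamma^{-1}(V_0)$ is not open, so $V_0$ is not open; since this plot has compact pieces, your covering argument already breaks for maps $f: K \to Z$ from compact CW complexes. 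Your appeal to tameness does not repair this: replacing $W$ by a deformation supported near infinity moves you to a different point of $Z$, it does not show that the subset $V_t \subseteq Z$ is open in the topology of $Z$, which is what the finite subcover and the subordinate partition of unity require. (Tameness of $M$ also does not by itself confine critical loci, since the ambient manifold is $M \times \R \times \R^{n-1}$ and the Euclidean factors remain noncompact.)

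The paper's proof circumvents exactly this obstruction. It first proves a general realization lemma for topological posets $P \subseteq P_0 \times T$ admitting a section (via an explicit zig-zag of natural transformations), which handles the restriction of $u$ over each $V_\lambda$ and over finite intersections. Then, because the $V_\lambda$ are not open, it introduces an auxiliary space $Q_0$ of pairs $(\BW, A)$ where $A \subsetneq \R$ is a proper closed subset containing the critical values; the sets $U_\lambda = \{(\BW,A): \lambda \notin A\}$ \emph{are} open (pulled back from $\cl(\R)$), so the gluing lemma applies there and $\lVert Q\rVert \to Q_0$ is a weak equivalence. Tameness enters only afterwards, to build a map $v_K$ back into this auxiliary picture: one compresses the ambient manifold into $K \times \R \times L$ by an isotopy and takes $A = p\bigl(c(W) \cap K \times \R \times L\bigr)$, which is closed because this restricted projection is proper and proper in $\R$ by Sard; the composite with the forgetful map is homotopic to the identity, exhibiting $\lVert u \rVert$ as a retract (in the homotopy category) of a weak equivalence. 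If you want to keep your lifting-style argument, you would have to run it on this enhanced space (or otherwise manufacture a genuinely open cover), since on $Z$ itself the regular-value sets are not open and no perturbation of individual manifolds changes that.
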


\noindent Recall that here the (fat) geometric realization has been preformed on the extra simplicial direction present on the source of the maps in  Figure~\ref{fig:daigramofBordcats2}. 

The argument in each case (\ref{eqn:5}), (\ref{eqn:6}), and (\ref{eqn:7}) is the same and for simplicity we will focus on the final case (\ref{eqn:7}). A key observation is that if we fix $[\Bm] \in (\bDelta^\op)^{n-1}$, and consider the induced simplicial space:
\begin{equation*}
	{}_{IV}\Bord_{d;n}^{(X, \xi)}(M)_{[\Bm], \bullet}
\end{equation*}
This simplicial space is the nerve of a topological poset. Specifically it is the nerve of a topological poset which is a subposet of 
\begin{equation*}
	( \R, \leq) \times \Bord_{d; n-1}^{(X, \xi)}(M\times \R)_{[\Bm]}
\end{equation*}
where the partial order is induced from the standard order on $\R$. The topological poset ${}_{IV}\Bord_{d;n}^{(X, \xi)}(M)_{[\Bm]}$ consists of those pairs $(\lambda,  ( (\Bt^i)_{i=1}^{n-1}, (W, \theta)))$  which satisfy the condition that  $\lambda$
is a regular value of the projection of $W$ onto the additional $\R$-direction. 

The strategy underlying the proof of Lemma~\ref{lem:3_equiv} (the final step in the proof Theorem~\ref{thm:first_realization_thm}) relies on exploiting the description in terms of topological posets. We begin with a easy lemma.

\subsubsection{A lemma about topological posets}
	

	Let $P_0$ be a topological space and let $T$ be a totally ordered set with the order topology. In our  examples we have $T = \R$ with the standard topology. We may regard $P_0$ as a topological poset in which no elements are comparable. In this case the nerve of $P_0$ is a constant simplicial space. Let $(P, \leq) \subseteq P_0 \times T$ be a sub-topological poset, which means it is a subset endowed with the induced pre-order and the subspace topology. Let $u: P \to P_0$ be the projection, which we will regard as a map from the nerve of $P$ to the constant simplicial space $P_0$. The fat geometric realization (a.k.a. classifying space) functor will be denoted $||-||$. 

	\begin{lemma}\label{lem:PosetRealization}
		Let $u:(P, \leq) \to P_0$ be as in the situation above. Assume that $u$ admits a section $s: P_0 \to (P, \leq)$
		 as a map of topological posets. Then after fat geometric realization
		 \begin{equation*}
		 	||u||: ||P|| \to ||P_0|| \cong P_0 \times ||pt|| \simeq P_0
		 \end{equation*}
		 the map $||u||$ is a weak homotopy equivalence. 
	\end{lemma}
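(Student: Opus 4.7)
The plan is to exhibit the map $||s||: P_0 \to ||P||$ induced by the section as a two-sided homotopy inverse to $||u||$. One direction is immediate: by functoriality of fat realization and the identity $u \circ s = \mathrm{id}_{P_0}$, we have $||u|| \circ ||s|| = \mathrm{id}_{P_0}$. What remains is to produce a homotopy $||s|| \circ ||u|| \simeq \mathrm{id}_{||P||}$.

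To build this homotopy, I would introduce an auxiliary continuous poset map $M: P \to P$ defined by
\[ M(p) \;=\; \bigl(u(p),\, \max(\pi_T(p),\,\pi_T(s(u(p))))\bigr), \]
where $\pi_T: P_0 \times T \to T$ denotes the second projection. The key point, and the reason totality of $T$ is indispensable, is that $\max\{a,b\} \in \{a,b\}$ for $a,b \in T$, so that $M(p)$ is literally one of $p$ or $s(u(p))$; in either case $M(p)$ lies in $P$, not merely in the ambient $P_0 \times T$. Continuity of $M$ follows from continuity of $u$, $s$, and $\max: \R \times \R \to \R$, and $M$ is order-preserving because $(x,t) \leq (x',t')$ in $P$ forces $x = x'$, which reduces monotonicity of $M$ to monotonicity of $\max$ in each coordinate.

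The construction immediately yields pointwise inequalities $p \leq M(p)$ and $s(u(p)) \leq M(p)$ for every $p \in P$, i.e., natural transformations $\mathrm{id}_P \Rightarrow M$ and $s \circ u \Rightarrow M$ between continuous poset maps $P \to P$. Applying the classical principle that a natural transformation of continuous poset maps induces a homotopy after fat realization---obtained by assembling the transformation into a continuous poset map $P \times [0,1] \to P$, where $[0,1]$ denotes the two-object poset $\{0 < 1\}$, and invoking the compatibility of fat realization with products against a finite discrete simplicial set---one deduces $||\mathrm{id}_P|| \simeq ||M|| \simeq ||s \circ u||$. Combined with $||u|| \circ ||s|| = \mathrm{id}_{P_0}$, this shows $||u||$ is a weak homotopy equivalence.

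The main conceptual obstacle is the very existence of the map $M$ landing inside $P$: this is exactly where totality of the order on $T$ is used, and without it one would need to replace $M$ by some more elaborate construction, such as a zig-zag through a larger poset containing the required maxima. The technical compatibility of fat realization with the product by the two-point poset $[0,1]$ is standard for good simplicial spaces and causes no difficulty here.
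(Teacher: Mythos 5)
Your proof is correct and follows essentially the same route as the paper: your map $M(p)=\max(p,\,s(u(p)))$ (taken fiberwise, using totality of $T$) is exactly the paper's functor $F_{\leq}$, and your zig-zag of pointwise inequalities $s\circ u \Rightarrow M \Leftarrow \mathrm{id}_P$, realized to homotopies via the standard fact that natural transformations of continuous poset maps induce homotopies after fat realization, is the same zig-zag the paper writes as $su = F_{\geq}F_{\leq} \to F_{\leq} \leftarrow \mathrm{id}_P$. The only cosmetic difference is that you dispense with the auxiliary ``min'' functor $F_{\geq}$, which the paper uses merely to present $su$ as a composite.
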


	\begin{proof}
		The composite $||u||  \circ ||s|| = || us || = id_{||P_0||}$, and thus it suffices to show that that $||s|| \circ ||u|| = ||su||$ is homotopic to the identity map on $||P||$. 
		The fat geometric realization of the nerve sends functors between topological posets to maps and natural transformations to homotopies between these maps. 
		We will construct a zig-zag of natural transformations between the endofunctors $su$ and the identity on $(P, \leq)$.
	
	First define closed subsets of $P$ as follows:
		\begin{align*}
			P_{\leq} &= \{ w \in P \; | \; w \leq su(w) \} \\
			P_{\geq} &= \{ w \in P \; | \; w \geq su(w) \} 
		\end{align*}
		These sets are well-define because  by construction for each fixed $w_0 \in P_0$, the fiber $u^{-1}(w_0)$ is a (possibly empty) totally ordered set. Next define continuous functors $(P, \leq) \to (P, \leq)$ as follows:
	\begin{equation*}
		F_{\leq}(w) = \begin{cases}
			su(w) & w \in P_{\leq} \\
			w & w \in P_{\geq} 
		\end{cases}
	\end{equation*}	
	\begin{equation*}
		F_{\geq}(w) = \begin{cases}
			w & w \in P_{\leq} \\
			su(w) & w \in P_{\geq} 
			\end{cases}
	\end{equation*}	
	The composite $F_\geq F_\leq = su$ and we have (unique) natural transformations
	\begin{equation*}
		su = F_\geq F_\leq \to F_\leq  \leftarrow id_{P}.
	\end{equation*}
	Thus the result follows. 
	\end{proof}

\subsubsection{The proof of Lemma~\ref{lem:3_equiv}}

Fix $[ \Bm] \in  (\bDelta^\op)^{n-1}$. Our aim is to show that the map (\ref{eqn:7})
\begin{equation*}
 B( {}_{IV}\Bord_{d;n}^{(X, \xi)}(M)_{[ \Bm], \bullet}) \to \Bord_{d;n-1}^{(X, \xi)}(M \times \R)_{[ \Bm]}
\end{equation*}
is a weak homotopy equivalence. This would follow from Lemma~\ref{lem:PosetRealization} if there were a section of the map 
\begin{equation*}
	u: {}_{IV}\Bord_{d;n}^{(X, \xi)}(M)_{[ \Bm]}  \to \Bord_{d;n-1}^{(X, \xi)}(M\times \R)_{[ \Bm]},
\end{equation*}
where both ${}_{IV}\Bord_{d;n}^{(X, \xi)}(M)_{[ \Bm]}$ and $\Bord_{d;n-1}^{(X, \xi)}(M\times \R)_{[ \Bm]}$ are viewed as a topological posets. 
Unfortunately no such section presents itself. 

However for each $\lambda \in \R$ we may define the subspace $V_\lambda \subseteq \Bord_{d;n-1}^{(X, \xi)}(M\times \R)_{[ \Bm]}$ which consists of all those tuples $( (\Bt^i)_{i=1}^{n-1}, (W, \theta))$ such that $\lambda$ is a regular value of the projection of $W$ onto the extra $\R$ direction. Let $P^\lambda = u^{-1}(V_\lambda)$ and $u_{\lambda}: P^\lambda = u^{-1}(V_\lambda) \to V_\lambda$ be the restriction. Then $u_\lambda$ does admit a section. This section takes the tuple $( (\Bt^i)_{i=1}^{n-1}, (W, \theta))$ to the pair
\begin{equation*}
	\left(\lambda, ( (\Bt^i)_{i=1}^{n-1}, (W, \theta)) \right),
\end{equation*}
and hence \begin{equation*}
	B u_\lambda: B P^\lambda \to V_\lambda
\end{equation*}
is a weak equivalence (by Lemma~\ref{lem:PosetRealization}). The same holds for the restrictions to any finite number of intersections of the $V_\lambda$ (the section adds, say, the least of the $\lambda$'s). The union of the $V_\lambda$ is all of $\Bord_{d;n-1}^{(X, \xi)}(M\times \R)_{[ \Bm]}$. If the $V_\lambda$ formed an open cover then we would be done by the gluing lemma for weak homotopy equivalences \cite[Thm.~6.7.11]{MR2456045}. However the $V_\lambda$ are not open unless $n=1$ and $M$ is compact, assumptions we do not want to make.\footnote{The case $n=1$ and $M$ compact is enough to recover the original theorem of Galatius-Madsen-Tillmann-Weiss. If we were only interested in their original result, we could stop here. }

It is important to understand why the $V_\lambda$ fail to be an open cover. Given $W \subseteq M \times\R  \times \R^{n-1}$, let $p$ be the projection onto the first $\R$ factor. Define
\begin{equation*}
	c(W) = \{ w \in W \; | \; w \text{ is a critical point of the projection} p:W \to \R \},
\end{equation*}
the critical locus.  The subset $c(W)$ is a closed subset of $ M \times\R  \times \R^{n-1}$, but the projection $p(c(W)) \subseteq \R$ to the first $\R$ coordinate is not necessarily closed since this projection is not proper. If this set fails to be closed at $\lambda \in \R$, then we can form a 1-dimensional family $\gamma$ which simply translates the manifold $W$ in the $\R$-direction. This is a plot, but $\gamma^{-1}(V_\lambda)$ will fail to be open, and hence $V_\lambda$ is not open.

\begin{example}
	Let $W \subseteq \R^2$ be the curve $y = \frac{\sin x}{x}$. Then $W$ is an embedded manifold. The map $p$ is projection to the $x$-axis, and the image of the critical set $c(W)$ in the $x$-axis converges to the origin, but does not contain the origin. 
\end{example}

\noindent However we can consider a modified version of the poset ${}_{IV}\Bord_{d;n}^{(X, \xi)}(M )_{[ \Bm]}$.  We define a topological space $Q_0\subseteq \Bord_{d;n-1}^{(X, \xi)}(M\times \R)_{[ \Bm]} \times \cl(\R)$ as: 
\begin{equation*}
	Q_0 = \{ (\BW, A) \; | \;  A \neq \R, \;  p(c(W)) \subseteq A \subseteq \R \}.
\end{equation*}
Here $\BW = ( (\Bt^i)_{i=1}^n, (W, \theta)) \in \Bord_{d;n-1}^{(X, \xi)}(M\times \R)_{[ \Bm]}$, and $\cl(Y)$ denotes the previously introduced topological space whose points are closed subsets of $Y$ (see Section~\ref{sec:space_of_closed_sets}). Thus $Q_0$ enhances the data of $\Bord_{d;n-1}^{(X, \xi)}(M\times \R)_{[ \Bm]}$
with the additional choice of a proper closed subset $A \subsetneq \R$ which contains the critical values of the projection of $W$ into the first $\R$-coordinate.

We also define the topological poset $(Q, \leq)$, analogously to the poset $(P, \leq)$, to be the subset of $(\R, \leq) \times Q_0$ consisting of those $(a, \BW, A)$ such that the value $a \in \R$ is a regular value of the projection $p:W \to \R$ to the first $\R$-coordinate. 
 
For $\lambda \in \R$ we define subsets $U_\lambda \subseteq Q_0$: 
\begin{equation*}
	U_\lambda = \{ (\BW, A) \; | \; \lambda \not\in A\}
\end{equation*}
These subsets are pulled back from open subsets $M(\{\lambda\})$ of $\cl(\R)$ (see the proof of Lemma~\ref{lem:mapsarepoltsforclosedsubsets}) and hence are open in $Q_0$. Since the closed subsets $A$ are proper subsets ($A \neq \R$) each $(\BW, A)$ is contained in some $U_\lambda$,
 and hence they form an open cover of $Q_0$. Moreover the restriction of $(Q, \leq)$ to each $U_\lambda$ (and each finite intersections of these) admits a section just as before in the case of $(P, \leq)$ and $V_\lambda$. However now, since the $U_\lambda$ form an open cover of $Q_0$, the gluing lemma for weak homotopy equivalences \cite[Thm.~6.7.11]{MR2456045} and Lemma~\ref{lem:PosetRealization} both apply and show that the map
\begin{equation*}
	|| u_Q ||: ||Q|| \to Q_0
\end{equation*}
is a weak homotopy equivalence. 

Moreover, we have a commuting square:
\begin{center}
\begin{tikzpicture}
		\node (LT) at (0, 1.5) {$||{}_{IV}\Bord_{d;n}^{(X, \xi)}(M)_{[ \Bm]}||$};
		\node (LB) at (0, 0) {$\Bord_{d;n-1}^{(X, \xi)}(M\times \R)_{[ \Bm]}$};
		\node (RT) at (6, 1.5) {$||Q||$};
		\node (RB) at (6, 0) {$Q_0$};
		\draw [->] (LT) -- node [left] {$||u||$} (LB);
		\draw [<-] (LT) -- node [above] {$j$} (RT);
		\draw [->] (RT) -- node [left] {$||u_Q||$} node [right] {$\simeq$} (RB);
		\draw [<-] (LB) -- node [below] {$j$} (RB);
\end{tikzpicture}
\end{center}
where the horizontal maps forget the closed subset $A$. We will show, provided $M$ is tame, that there exists another commuting square:
\begin{center}
\begin{tikzpicture}
		\node (LT) at (0, 1.5) {$||{}_{IV}\Bord_{d;n}^{(X, \xi)}(M)_{[\Bm]}||$};
		\node (LB) at (0, 0) {$\Bord_{d;n-1}^{(X, \xi)}(M\times \R)_{[ \Bm]}$};
		\node (RT) at (6, 1.5) {$||Q||$};
		\node (RB) at (6, 0) {$Q_0$};
		\draw [->] (LT) -- node [left] {$||u_P||$} (LB);
		\draw [->] (LT) -- node [above] {$v_K$} (RT);
		\draw [->] (RT) -- node [left] {$||u_Q||$} node [right] {$\simeq$} (RB);
		\draw [->] (LB) -- node [below] {$v_K$} (RB);
\end{tikzpicture}
\end{center}
such that the horizontal composites $j \circ v_K$ are homotopic to the identity map of $||{}_{IV}\Bord_{d;n}^{(X, \xi)}(M)_{[ \Bm]}||$, respectively $\Bord_{d;n-1}^{(X, \xi)}(M\times \R)_{[ \Bm]}$. It then follows, by the fact that weak equivalences form a saturated class\footnote{\emph{Saturated} in this context means that every map which becomes an isomorphism in the homotopy category was already a weak equivalence. We will have shown that in the homotopy category, $[||u||]$ is a retract of the isomorphism $[||u_Q||]$, and hence also an isomorphism.} that $||u||$ is also a weak equivalence. 

To construct the maps $v_K$ we need to use the fact that $M$ is tame, which we recall means that there exists a compact subset $K \subseteq M$ and a smooth 1-parameter family of embeddings $\psi^M_t: M \to M$ with $\psi^M_0 = id_M$ and $\psi^M_1(M) \subseteq K \subseteq M$. In addition we choose smooth 1-parameter families of embeddings $ \psi^i_t: \R \to \R$,  $1 \leq i \leq n-1$ such that:
\begin{itemize}
	\item $\psi_0^i = id_{\R}$
	\item $\psi_1^i(\R) \subseteq [t_0^i - 1, t^i_{m_i}+1]$
	\item $\psi^i_t(s) =s$ for $t_0^i - \frac{1}{2} \leq s \leq t^i_{m_i}+ \frac{1}{2} $ 
\end{itemize}
These combine to give a 1-parameter family of embeddings
\begin{equation*}
	\varphi_t = (\psi^M_t, id, (\psi^i_t)_{i=1}^{n-1}): M \times \R  \times \R^{n-1} \to M \times \R  \times \R^{n-1}. 
\end{equation*}
Set 
\begin{equation*}
	L =  \prod_{i =1}^{n-1} [t_0^i - 1, t^i_{m_i}+1] \subseteq \R^{n-1}
\end{equation*}
Then $\varphi_1(M \times \R \times \R^{n-1}) \subseteq K \times \R \times L$.

Viewing $Q_0\subseteq \Bord_{d;n-1}^{(X, \xi)}(M\times \R)_{[ \Bm]} \times \cl(\R)$ as a subspace of the product, we may write the map $v_K = (\varphi_1^*, a_K)$  in two parts. The first part is simply the pullback along the embedding $\varphi_1$. The second map is more subtle and is defined by:
\begin{align*}
	a_K: \Bord_{d;n-1}^{(X, \xi)}(M\times \R)_{[ \Bm]} &\to \cl(\R) \\
	\BW = ( (\Bt^i)_{i=1}^n, (W, \theta)) & \mapsto p( c(W) \cap K \times \R \times L)
\end{align*}
takes a submanifold, considers its critical locus $c(W) \subseteq M \times \R \times \R^{n-1}$, intersects this with $K \times \R \times L$, and finally projects the result to the $\R$-coordinate. This is well-defined because the projection $p:K \times \R \times L \to \R$ is proper and hence sends closed sets to closed sets. Moreover Sard's theorem states that $c(W)$ has Lebesgue measure zero, and hence $a_K(\BW) \neq \R$ is necessarily a \emph{proper} closed subset. Since  
\begin{equation*}
	p(c(\varphi_1^{-1}(W))) \subseteq  p( c(W) \cap K \times \R \times L) = a_K(\BW)
\end{equation*}
it follows that  $v_K = (\varphi_1^*, a_K)$ does indeed land in $Q_0$. 

If $\lambda \in \R$ is a regular value of the projection $p: W \to \R$, then $\lambda$ is also a regular value of the projection $p: \varphi_t^{-1}(W) \to \R$ and hence the map $v_K$ also induces a map of topological posets:
\begin{equation*}
	v_K: {}_{IV}\Bord_{d;n}^{(X, \xi)}(M)_{[ \Bm]} \to Q.
\end{equation*}

Finally the composite $j \circ v_K$ coincides with the map $\varphi^*_1$, which by construction is homotopic to the identity by the homotopy $\varphi^*_t$. This completes the proof of Lemma~\ref{lem:3_equiv} and hence also Theorem~\ref{thm:first_realization_thm}. \qed

\subsection{Madsen-Tillmann spectra}\label{sec:MTspectra}

Let $\xi_p: X_p \to Gr_d(\R^p)$ be a sequence of $GL_p$-equivariant fibrations together with $GL_p$-equivariant connecting maps $f_p:X_p \to X_{p+1}$ making the following diagram commute
\begin{center}
\begin{tikzpicture}
		\node (LT) at (0, 1.5) {$ X_p $};
		\node (LB) at (0, 0) {$Gr_d(\R^p) $};
		\node (RT) at (3, 1.5) {$ X_{p+1}$};
		\node (RB) at (3, 0) {$ Gr_d(\R^{p+1})$};
		\draw [->] (LT) -- node [left] {$ \xi_p$} (LB);
		\draw [->] (LT) -- node [above ] {$ f_p$} (RT);
		\draw [->] (RT) -- node [right] {$ \xi_{p+1}$} (RB);
		\draw [->] (LB) -- node [below] {$ $} (RB);
\end{tikzpicture}
\end{center}
where $Gr_d(\R^p) \to Gr_d(\R^{p+1})$ is induced by the standard inclusion of $\R^p$ into $\R^{p+1}$. We have a canonical isomorphism 
\begin{equation*}
	f_p^* \xi_{p+1}^* \gamma^\perp_d \cong \xi_{p}^* \gamma^\perp_d \oplus \varepsilon
\end{equation*}
of vector bundles over $X_p$, where $\varepsilon$ denotes trivial bundle of rank one. Hence we have induced maps of Thom spaces:
\begin{equation*}
	\Sigma Th(\xi^*_p \gamma^\perp_d) \to Th(\xi^*_{p+1} \gamma^\perp_d).
\end{equation*}

\begin{definition}\label{def:Madsen-Tillmann-spectra}
	Let $\Bxi = \{ (X_p, \xi_p)\}$ denote a collection of $X_p$ with connecting maps, as above. Then the \emph{Madsen-Tillmann} spectrum is the Thom spectrum $M T\Bxi$ whose $p^\text{th}$ space is 
	\begin{equation*}
		(M T\Bxi)_p = Th(\xi_p^* \gamma_d^\perp)
	\end{equation*}
	and with the above defined connecting maps. 
\end{definition}

\begin{example}[orientations]
	We may take $X_p$ to be the Grassmanian of oriented $d$-planes in $\R^p$. In this case we write $M TSO(d)$ for the corresponding Madsen-Tillmann spectrum.
\end{example}

The Madsen-Tillmann spectrum $M T\Bxi$ is $(-d)$-connective. As a homology theory we have $M T\Bxi_k(Y)$ is represented by (c.f. \cite{179327}.)
\begin{enumerate}
	\item A closed $(d+k)$-dimensional manifold $M$ embedded into $\R^{p + k}$, with normal bundle $\nu_M$; 
	\item A map $g:M \to X_p$ for some $p$;
	\item An isomorphism $\nu_M \cong g^* \xi_p^* \gamma_d^\perp$; and 
	\item a continuous map $M \to Y$. 
\end{enumerate}
This data is taken up to cobordism in the obvious way together with stablizing the map $g$ along the connecting maps $X_p \to X_{p+1}$, and the embedding along the inclusion $\R^{p + k} \subseteq \R^{p + 1+ k}$. This permits us in many cases to calculate the negative homotopy groups of $M T\Bxi$. See Section~\ref{sec:lod-dim-homotopy-groups} and Appendix~\ref{app:lowhomotopy}.

\subsection{The symmetric monoidal bordism category} \label{subsec:symbord}

Let $\Bxi = \{ (X_p, \xi_p)\}$ be as in the previous section. The for each $d$ and $n$ we get a corresponding family of $n$-fold Segal spaces $\Bord_{d;n}^{(X_{p+n}, \xi_{p+n})}(D^p)$. The $p^\textrm{th}$ term in this sequence is a $E_p$-algebra and we have natural connecting maps:
\begin{equation*}
	\Bord_{d;n}^{(X_{p+n}, \xi_{p+n})}(D^p) \to \Bord_{d;n}^{(X_{p+ 1+n}, \xi_{p+1+n})}(D^{p+1})
\end{equation*}
which are $E_p$-algebra maps. 

The colimit, which we will denote $\Bord^{\Bxi}_{d;n}$, is an $E_\infty$ $n$-fold Segal space and hence is an example of a symmetric monoidal $(\infty,n)$-category. The following is a direct consequence of Theorem~\ref{thm:E_pSpaces} and Theorem~\ref{thm:first_realization_thm}:

\begin{theorem}\label{thm:stable_main}
	If $n\geq 1$ then there is a natural equivalence of $E_\infty$-algebras between the geometric realization $B^n\Bord^{\Bxi}_{d;n}$ and $\Omega^{\infty-n} MT \Bxi$. \qed
\end{theorem}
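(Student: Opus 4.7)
The plan is to deduce this theorem by assembling the $E_p$-equivalences already established at each finite level and passing to the colimit as $p\to\infty$. The key inputs are Corollary~\ref{cor:first_realization_thm} and Theorem~\ref{thm:E_pSpaces}, which together produce a natural zig-zag of $E_p$-algebra weak equivalences
\[
B^n\,\Bord_{d;n}^{(X_{p+n},\xi_{p+n})}(D^p)\;\xrightarrow{\sim}\;\psi_d^{(X_{p+n},\xi_{p+n})}(D^p\times\R^n)\;\xleftarrow{\sim}\;\Psi_d^{(X_{p+n},\xi_{p+n})}(D^p\times\R^n)\;\xrightarrow{\sim}\;\Omega^p Th(\xi_{p+n}^*\gamma_d^\perp),
\]
where the right-hand term is by definition $\Omega^p(MT\Bxi)_{p+n}$.

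First I would verify that this zig-zag is natural (up to homotopy) in $p$. On the bordism side the connecting map is the stabilization $\Bord_{d;n}^{(X_{p+n},\xi_{p+n})}(D^p)\to \Bord_{d;n}^{(X_{p+1+n},\xi_{p+1+n})}(D^{p+1})$ induced by the closed embedding $D^p\hookrightarrow D^{p+1}$ in the last coordinate and the equivariant connecting map $f_{p+n}\colon X_{p+n}\to X_{p+1+n}$. On the Madsen-Tillmann side the connecting map is $\Omega^p(MT\Bxi)_{p+n}\to\Omega^{p+1}(MT\Bxi)_{p+1+n}$ adjoint to the structure map of the spectrum $\Sigma Th(\xi_{p+n}^*\gamma_d^\perp)\to Th(\xi_{p+1+n}^*\gamma_d^\perp)$ from Section~\ref{sec:MTspectra}. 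Each of the three arrows in the zig-zag is constructed functorially from the same geometric data (scanning in the last coordinate, attaching a linearized tangent plane via $L$), so I would check by inspection that both squares of the commuting diagrams of zig-zags commute strictly, using the definitions of $L$ in Section~\ref{sec:emb_thom} and of scanning in Sections~\ref{sec:scan}--\ref{sec:Ep-scanning}.

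Next I would take sequential colimits along $p$. Fat geometric realization commutes with sequential (filtered) colimits of simplicial spaces, and so does the formation of $n$-fold classifying space $B^n$; hence
\[
B^n\,\Bord^{\Bxi}_{d;n}\;=\;B^n\,\operatorname*{colim}_p\Bord_{d;n}^{(X_{p+n},\xi_{p+n})}(D^p)\;\simeq\;\operatorname*{colim}_p B^n\,\Bord_{d;n}^{(X_{p+n},\xi_{p+n})}(D^p).
\]
Taking the colimit of the natural zig-zag of $E_p$-algebra equivalences yields a zig-zag of weak equivalences between $B^n\Bord^{\Bxi}_{d;n}$ and $\operatorname*{colim}_p\Omega^p(MT\Bxi)_{p+n}=\Omega^{\infty-n}MT\Bxi$. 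Because the colimit along the $E_p\to E_{p+1}\to\cdots$ tower of operads is the $E_\infty$-operad, and because each finite-stage equivalence is already an $E_p$-algebra map, the colimit is an equivalence of $E_\infty$-algebras.

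The main obstacle is the naturality check in the first paragraph: one must ensure that the three scanning/Thom-space comparison maps genuinely intertwine the two different stabilizations. This reduces to checking that (i) the section $s$ of Theorem~\ref{thm:E_pSpaces} commutes with the closed embedding $D^p\hookrightarrow D^{p+1}$ (which follows from its construction using scanning functions that vanish to infinite order on the boundary), and (ii) the comparison $L\colon Th(\xi^*\gamma_d^\perp)\to\psi_d(\R^{m};X,\xi)$ is compatible with the connecting maps $f_p$ — this is immediate since $L$ only depends on the $GL$-equivariant data defining $(X_p,\xi_p)$, and the connecting map $f_p$ is equivariant by hypothesis. Once this compatibility is in hand the theorem follows formally.
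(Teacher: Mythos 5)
Your proposal is correct and follows essentially the same route as the paper, which deduces Theorem~\ref{thm:stable_main} directly from Theorem~\ref{thm:first_realization_thm} (via Corollary~\ref{cor:first_realization_thm}) and Theorem~\ref{thm:E_pSpaces} by stabilizing the finite-level $E_p$-equivalences over $p$. The naturality-in-$p$ and colimit bookkeeping you spell out is exactly the content the paper leaves implicit in calling the theorem a ``direct consequence.''
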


\section{Examples and applications}\label{subsec:example_applications}

In this final section we will give several applications of the classification of invertible topological field theories. We will classify certain simple oriented topological field theories with dimensions and category numbers $d, n \leq 4$. Our computations will also lead to a negative answer to an open question raised by Gilmer-Masbaum  \cite[Rmk.~7.5]{MR3100961}.

\subsection{Covers of Madsen-Tillmann spectra}

Let $\Bord_{d;n}^{SO(d)}$ denote the oriented $d$-dimensional symmetric monoidal $(\infty,n)$-category. As we have seen we have a natural identification of infinite loop spaces:
\begin{equation*}
	||\Bord_{d;n}^{SO(d)}|| \simeq \Omega^{\infty -n} MTSO(d) = \Omega^\infty \Sigma^n MTSO(d) \simeq \Omega^\infty \Sigma^n p_{\geq -n} MTSO(d)
\end{equation*}
where $p_{\geq k}E$ is the \emph{Postnikov cover} of the spectrum $E$. We have $\pi_i p_{\geq k}E = 0$ for $i < k$ and there is a map $p_{\geq k}E \to E$ inducing an isomorphism on $\pi_i$ for $i \geq k$. 

The categories $\Bord_{d;n}^{SO(d)}$ are related for different values of $n$ and $d$. For example the $n$-category $\Bord_{d; n}^{SO(d)}$ sits inside the $(n+1)$-category $\Bord_{d; n+1}^{SO(d)}$ as the $n$-category of endomorphisms of the empty $(d-n-1)$-manifold. Said differently, we can use the symmetric monoidal structure to view $\Bord_{d; n}^{SO(d)}$ as an $(n+1)$-category (with one object). To notate this we will add a `$B$' in front to indicate this sort of categorical delooping. Then there is an inclusion map 
\begin{equation*}
	B\Bord_{d; n}^{SO(d)} \to \Bord_{d; n+1}^{SO(d)}
\end{equation*}
Upon passing to geometric realizations this corresponds to the map 
\begin{align*}
	\Omega^\infty \Sigma^n p_{\geq -n} MTSO(d) \simeq & \\
	 \Omega^\infty \Sigma^n MTSO(d)  &\to \Omega^{\infty + 1} \Sigma^{n+1} MTSO(d) \\
	 & \simeq  \Omega^\infty \Sigma^n p_{\geq -n-1} MTSO(d)
\end{align*}
induced from $p_{\geq -n} MTSO(d) \to p_{\geq -n-1} MTSO(d)$. 
Similarly the $d$-dimensional $n$-category $\Bord_{d;n}^{SO(d)}$ also sits inside the $(d+1)$-dimensional $(n+1)$-category $\Bord_{d+1;n+1}^{SO(d+1)}$ as the objects through to the $n$-morphisms. Upon passing to geometric realizations we get the following map of infinite loop spaces:
\begin{equation*}
	\Omega^\infty \Sigma^n MTSO(d) \to \Omega^\infty \Sigma^{n+1} MTSO(d+1).
\end{equation*}

Interpreted in the above way and letting $d$ and $n$ range over $1,2,3,4$ we obtain a grid of higher categories and maps between them. 
\begin{center}
\begin{tikzpicture}
	\matrix[row sep=3mm,column sep=5mm]{
		\node (p11) {$B^3\Bord_{4;1}^{SO(4)}$}; &  
		\node (p12) {$B^2\Bord_{4;2}^{SO(4)}$}; & 
		\node (p13) {$B\Bord_{4;3}^{SO(4)}$}; & 
		\node (p14) {$\Bord_{4;4}^{SO(4)}$}; \\
		
		&
		\node (p21) {$B^2\Bord_{3;1}^{SO(3)}$}; &  
		\node (p22) {$B\Bord_{3;2}^{SO(3)}$}; & 
		\node (p23) {$\Bord_{3;3}^{SO(3)}$};  \\
		&&
 		\node (p31) {$B\Bord_{2;1}^{SO(2)}$}; &  
 		\node (p32) {$\Bord_{2;2}^{SO(2)}$};  \\
		&&&
		\node (p41) {$\Bord_{1;1}^{SO(1)}$}; \\
		};
		\draw[->] (p11) -- (p12);
		\draw[->] (p12) -- (p13);
		\draw[->] (p13) -- (p14);
		
		\draw[->] (p21) -- (p22);
		\draw[->] (p22) -- (p23);
		
		\draw[->] (p31) -- (p32);
		
		\draw[->] (p41) -- (p32);
		\draw[->] (p31) -- (p22);
		\draw[->] (p21) -- (p12);
		
		\draw[->] (p32) -- (p23);
		\draw[->] (p22) -- (p13);
		
		\draw[->] (p23) -- (p14);
\end{tikzpicture}
\end{center}
Passing to geometric realizations gives a corresponding grid of infinite loop spaces and maps as depicted in Figure~\ref{fig:grid-of-loop-spaces}. We will show the indicated maps are weak homotopy equivalences in Cor.~\ref{cor:grid-equivs} below.

\begin{figure}[h]
	\centering
		\begin{tikzpicture}
			\matrix[row sep=3mm,column sep=4mm]{
				\node (p11) {$\Omega^\infty p_{\geq 3} \Sigma^4 MTSO(4)$}; &  
				\node (p12) {$\Omega^\infty p_{\geq 2} \Sigma^4 MTSO(4)$}; & 
				\node (p13) {$\Omega^\infty p_{\geq 1} \Sigma^4 MTSO(4)$}; & 
				\node (p14) {$\Omega^\infty \Sigma^4 MTSO(4)$}; \\
		
				&
				\node (p21) {$\Omega^\infty p_{\geq 2} \Sigma^3 MTSO(3)$}; &  
				\node (p22) {$\Omega^\infty p_{\geq 1} \Sigma^3 MTSO(3)$}; & 
				\node (p23) {$\Omega^\infty \Sigma^3 MTSO(3)$};  \\
				&&
		 		\node (p31) {$\Omega^\infty p_{\geq 1} \Sigma^2 MTSO(2)$}; &  
		 		\node (p32) {$\Omega^\infty \Sigma^2 MTSO(2)$};  \\
				&&&
				\node (p41) {$\Omega^\infty \Sigma MTSO(1)$}; \\
				};
				\draw[->] (p11) -- node[above] {$\sim$} (p12);
				\draw[->] (p12) -- node[above] {$\sim$}(p13);
				\draw[->] (p13) -- (p14);
		
				\draw[->] (p21) -- node[above] {$\sim$}(p22);
				\draw[->] (p22) -- (p23);
		
				\draw[->] (p31) -- (p32);
		
				\draw[->] (p41) -- (p32);
				\draw[->] (p31) -- (p22);
				\draw[->] (p21) -- (p12);
		
				\draw[->] (p32) -- (p23);
				\draw[->] (p22) -- (p13);
		
				\draw[->] (p23) -- (p14);
		\end{tikzpicture}
	\caption{A grid of maps of infinite loop spaces.}
	\label{fig:grid-of-loop-spaces}
\end{figure}

\subsection{Low dimensional homotopy groups of Madsen-Tillmann Spectra} \label{sec:lod-dim-homotopy-groups}
Some of the maps in figure~\ref{fig:grid-of-loop-spaces} are equivalences of infinite loop spaces. Which ones are equivalences can be seen by computing the low dimensional homotopy groups of the corresponding spectra. 
The description at the end of Section~\ref{sec:MTspectra} identifies the homotopy groups of $MTSO(d)$ with the \emph{vector field cobordism groups} which have been computed in low degrees \cite{MR3356279}. For $k < d$ they agree with classical oriented bordism groups:
\begin{equation*}
	\pi_k \Sigma^d MTSO(d) \cong \Omega^\text{or}_{k}, \quad k < d.
\end{equation*}
When $k = d$, $d+1$, and $d+2$, these groups have also been computed \cite{MR3356279}. For the reader's benefit we compute the groups $\pi_d \Sigma^dMTSO(d)$ for $d \leq 4$ in Appendix~\ref{app:lowhomotopy}. This group is given as the quotient of the monoid of diffeomorphism classes of closed compact oriented manifolds $Y$ be the equivalence relation that $[Y] \simeq 0$ whenever there exists a compact oriented $(d+1)$-manifold $W$ with $\partial W \cong Y$ equipped with a non-vanishing vector field restricting to the inward pointing vector field on $Y$.  
For now it suffices to simply quote the result of \cite{MR3356279} which identifies this group:

\begin{theorem}[\cite{MR3356279}]\label{thm:homotopy-MT-spectra}
	We have:
	\begin{equation*}
		\pi_0 MTSO(d) \cong \begin{cases}
		\Z	\oplus \Omega_d^\text{or} & \text{if $d \equiv 0$ mod 4} \\
		\Z/2 \oplus \Omega_d^\text{or}	 & \text{if $d \equiv 1$ mod 4} \\
		\Z	\oplus \Omega_d^\text{or} & \text{if $d \equiv 2$ mod 4} \\
			 \Omega_d^\text{or} & \text{if $d \equiv 3$ mod 4} \\
		\end{cases}
	\end{equation*}
	If $q: \pi_0MTSO(d) \to \Omega^\text{or}_d$ is the natural quotient map then these splittings are given by: 
	\begin{itemize}
		\item $(\frac{1}{2}(\chi + \sigma), q)$ when $d\equiv 0$ mod 4;
		\item $(\frac{1}{2}\chi , q)$ when $d \equiv 2$ mod 4;
		\item $(k_\R, q)$ when $d \equiv 1$ mod 4;
	\end{itemize}
	 where $\chi$ and $\sigma$ are the Euler characteristic and signature, respectively, and  
	$k_\R$ is the mod $2$ reduction of the real form of Kervaire's semi-characteristic:
			\begin{equation*}
				k_\R(M) = \sum_{i = 0}^{(d-1)/4} \dim_\R H^{2i}(M; \R) \text{ mod 2}.
			\end{equation*}
\end{theorem}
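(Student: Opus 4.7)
My plan begins from the Pontryagin--Thom description of $\pi_0 MTSO(d)$ recalled in Section~\ref{sec:MTspectra}: classes are represented by closed oriented $d$-manifolds $Y$, and $[Y]=0$ iff $Y=\partial W$ for some compact oriented $(d+1)$-manifold $W$ carrying a nowhere-vanishing vector field restricting to the inward-pointing normal on $\partial W$. The tautological forgetful map $q\colon\pi_0 MTSO(d)\twoheadrightarrow\Omega^{\text{or}}_d$ is surjective, so the theorem reduces to analyzing $\ker q$ and then constructing an explicit splitting $\phi$ whose form depends on $d\bmod 4$.

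\textbf{Obstruction lemma and connect-sum modification.} For a null-bordism $Y=\partial W$, the only obstruction to extending the inward-normal section of $S(TW)|_{\partial W}$ across $W$ lies in $H^{d+1}(W,\partial W;\pi_d S^d)\cong\Z$ (Lefschetz duality), and by the relative Poincar\'e--Hopf theorem it equals $\chi(W)$. Hence $[Y]=0$ in $\pi_0 MTSO(d)$ iff $Y$ bounds some $W$ with $\chi(W)=0$. Interior connect-sum of $W$ with a closed oriented $(d+1)$-manifold $M$ preserves $\partial W$ and changes $\chi(W)$ by $\chi(M)-2$, so the reachable values of $\chi(W)$ are controlled by which integers occur as Euler characteristics of closed oriented $(d+1)$-manifolds.

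\textbf{Case analysis of $\ker q$.} When $d$ is even, $\dim W$ is odd and the doubling identity $0=\chi(DW)=2\chi(W)-\chi(Y)$ forces $\chi(W)=\tfrac12\chi(Y)$, so the obstruction is a rigid integer invariant of $Y$. When $d\equiv 3\pmod 4$, the manifold $\CP^{(d+1)/2}$ has odd Euler characteristic $(d+3)/2$, so iterated connect-sums realize every integer value of $\chi(W)$ and the obstruction always vanishes: $\ker q=0$. When $d\equiv 1\pmod 4$, Poincar\'e duality and the skew middle intersection form on closed orientable $(4k+2)$-manifolds force their Euler characteristics to be even, so $\chi(W)\bmod 2$ is an invariant of $Y$ alone; by the Kervaire--Atiyah identity this residue equals the real semi-characteristic $k_\R(Y)$, giving a $\Z/2$ obstruction.

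\textbf{Splittings and main obstacle.} I extend the kernel invariants to homomorphisms on all of $\pi_0 MTSO(d)$: set $\phi=\tfrac12(\chi+\sigma)$ when $d\equiv 0\pmod 4$ (integer because $\chi\equiv\sigma\equiv b_{2k}\pmod 2$ by Poincar\'e duality on closed oriented $4k$-manifolds), $\phi=\tfrac12\chi$ when $d\equiv 2\pmod 4$ (integer by the skew middle-form parity), and $\phi=k_\R$ when $d\equiv 1\pmod 4$. Each $\phi$ is additive under disjoint union and vanishes on vector-field null-bordisms (since $\sigma(Y)=0$ whenever $Y$ bounds, $\chi(Y)=2\chi(W)=0$ when $\chi(W)=0$, and the Kervaire--Atiyah identity handles the $k_\R$ case), so it descends to $\pi_0 MTSO(d)$. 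The paired map $(\phi,q)$ is injective because $q(Y)=\phi(Y)=0$ together force $\chi(Y)=0$, whence a vector-field null-bordism exists by the lemma; surjectivity follows from $[S^d]\mapsto(1,0)$ generating the $\Z$ (or $\Z/2$) factor together with lifts of standard generators of $\Omega^{\text{or}}_d$ (such as $\CP^{d/2}$ for $d\equiv 0\pmod 4$) whose $\phi$-values can be shifted by sphere summands to reach any target. The main obstacle I expect is the Kervaire--Atiyah identity $\chi(W)\equiv k_\R(\partial W)\pmod 2$ for $d\equiv 1\pmod 4$: it is the only genuinely non-formal input and requires carefully pairing the long exact sequence of $(W,\partial W)$ with Lefschetz duality over $\R$, using $b_i(W,\partial W)=b_{d+1-i}(W)$ and the evenness of the middle Betti number. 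For $d\le 4$ the identity reduces to the elementary computations carried out directly in Appendix~\ref{app:lowhomotopy}.
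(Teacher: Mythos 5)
Your route is essentially the paper's own: the paper does not prove this theorem in full but quotes it from \cite{MR3356279}, and its Appendix~\ref{app:lowhomotopy} verifies the cases $d\le 4$ by exactly the mechanism you use — the relative Poincar\'e--Hopf/Euler-class description of the kernel of $q$, the doubling identity $\chi(W)=\tfrac12\chi(\partial W)$ when $d$ is even, and connect-sum corrections ($\CP^2$ and $T^4\#\CP^2$ for $d=3$, $D^3\times\Sigma_g$ for $d=4$). What your proposal adds is the uniform argument in all dimensions, and you correctly isolate the one genuinely non-formal ingredient, the congruence $\chi(W)\equiv k_\R(\partial W)\pmod 2$ for compact oriented $W^{4k+2}$, which is precisely what the cited reference supplies and which the paper never proves. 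Your sketch of that congruence does go through: from the long exact sequence of $(W,Y)$ one gets, with $v_i=\operatorname{rank}(H^i(W,Y)\to H^i(W))$ and $\rho_i=\operatorname{rank}(H^i(W)\to H^i(Y))$, the duality relations $v_i=v_{4k+2-i}$ and $\operatorname{rank}(\delta\colon H^i(Y)\to H^{i+1}(W,Y))=\rho_{4k+1-i}$, whence $\chi(W)\equiv\sum_i\rho_i\equiv\sum_j\dim H^{2j}(Y;\R)\pmod 2$ once the skew intersection form forces $v_{2k+1}$ to be even.

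Three concrete points to repair or watch. First, the congruence above is for Atiyah's semicharacteristic summed over \emph{all} even degrees $2j\le d-1$; with the truncated range literally printed in the theorem ($i\le(d-1)/4$) the identity, and the splitting, fail already for $Y=S^1\times S^4$, which bounds $S^1\times D^5$ with $\chi=0$ yet has $\dim H^0+\dim H^2=1$ — so make sure you prove the identity for the correct invariant (the printed range should be read as all even degrees). Second, in the case $d\equiv 3\pmod 4$, connect-summing with $\CP^{(d+1)/2}$ only \emph{raises} $\chi(W)$ (each summand changes $\chi$ by $\chi(\CP^{(d+1)/2})-2>0$, and orientation reversal does not help); you also need a decreasing move, e.g.\ summands with $T^{d+1}$ (change $-2$), before the odd increment lets you hit every integer — this is exactly the paper's $\CP^2$ versus $T^4\#\CP^2$ trick for $d=3$. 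Third, the obstruction to the vector field lives in $H^{d+1}(W,\partial W)\cong\Z^{\pi_0(W)}$ and is the Euler characteristic of each component, not the total; so either state the kernel criterion componentwise (as the paper does) or first make $W$ connected by interior tubes, noting this preserves $\chi$ when $d+1$ is odd and changes it by even amounts when $d+1$ is even, which is harmless in every case you need.
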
 \qed

Here is a table summarizing the above statements about the homotopy groups of $\pi_k \Sigma^dMTSO(d)$ for $d$ up to $4$:
\begin{equation} \label{eqn:homotopy-table}
	\begin{tabular}{l |  llll l}
		$d \backslash k$ &  0 & 1 & 2 & 3 & 4\\ \hline
		1 &  $\Z$ & $\Z/2\Z$ & $\cdot$  & $\cdot$ &  $\cdot$ \\
		2 & $\Z$ & 0 & $\Z$ & $\cdot$ & $\cdot$ \\
		3 & $\Z $ & 0 & 0 & 0 & $\cdot$ \\
		4 & $\Z$ & 0 & 0 & 0 & $\Z \oplus \Z$ \\
	\end{tabular}
\end{equation}
in fact $\Sigma MTSO(1) \simeq \S^0$, and the first row corresponds to the stable stems. 

\begin{corollary}\label{cor:grid-equivs}
	The arrows in Figure~\ref{fig:grid-of-loop-spaces} which are indicated to be equivalences are in fact equivalences. 
\end{corollary}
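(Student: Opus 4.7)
The plan is to reduce each indicated equivalence to a vanishing statement for a specific homotopy group of $\Sigma^d MTSO(d)$, and then to read those vanishings off from the table~\eqref{eqn:homotopy-table}. This is a short argument, essentially an application of the definition of a Postnikov cover, but all the real work sits in the homotopy group computation of Theorem~\ref{thm:homotopy-MT-spectra}.

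First I would note that each horizontal arrow in Figure~\ref{fig:grid-of-loop-spaces} has the form
\begin{equation*}
	\Omega^\infty p_{\geq k} E \longrightarrow \Omega^\infty p_{\geq k-1} E
\end{equation*}
with $E = \Sigma^d MTSO(d)$, induced by the canonical map of Postnikov covers. By construction $\pi_i(p_{\geq k} E) = \pi_i E$ for $i \geq k$ and $\pi_i(p_{\geq k} E) = 0$ for $i < k$, so the map $p_{\geq k} E \to p_{\geq k-1} E$ is an isomorphism on every homotopy group except possibly $\pi_{k-1}$, where the source is $0$ and the target is $\pi_{k-1} E$. Hence it is a stable equivalence (and so $\Omega^\infty$ of it is an equivalence of infinite loop spaces) if and only if $\pi_{k-1} E = 0$.

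Next I would apply this criterion to each of the three arrows marked $\sim$ in Figure~\ref{fig:grid-of-loop-spaces}. The required vanishings are
\begin{equation*}
	\pi_2 \Sigma^4 MTSO(4) = 0, \qquad \pi_1 \Sigma^4 MTSO(4) = 0, \qquad \pi_1 \Sigma^3 MTSO(3) = 0,
\end{equation*}
and all three are explicit entries of the table~\eqref{eqn:homotopy-table} (corresponding to $(d,k) = (4,2), (4,1), (3,1)$), which in turn comes from Theorem~\ref{thm:homotopy-MT-spectra} and the low-dimensional calculations carried out in Appendix~\ref{app:lowhomotopy}.

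There is essentially no obstacle once the homotopy groups of the Madsen--Tillmann spectra have been computed: the corollary is a purely formal consequence of the characterising property of Postnikov covers together with three entries of the table. The genuine work, namely the identification of $\pi_k \Sigma^d MTSO(d)$ for small $k$ and $d$, is exactly what Theorem~\ref{thm:homotopy-MT-spectra} provides, and we simply quote it.
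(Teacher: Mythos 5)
Your argument is correct and is exactly the paper's intended one: the paper gives no separate proof of Corollary~\ref{cor:grid-equivs}, relying precisely on the fact that $p_{\geq k}E \to p_{\geq k-1}E$ is a stable equivalence iff $\pi_{k-1}E=0$, together with the vanishing entries $\pi_2\Sigma^4 MTSO(4)=\pi_1\Sigma^4 MTSO(4)=\pi_1\Sigma^3 MTSO(3)=0$ from the table in~(\ref{eqn:homotopy-table}). Nothing is missing.
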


\begin{corollary}\label{cor:fibersequence}
	For $d = 2,3,4$ there exists a fiber sequence of spectra
	\begin{equation*}
		p_{\geq 1}\Sigma^dMTSO(d) \to \Sigma^d MTSO(d) \to H \Z
	\end{equation*}
\end{corollary}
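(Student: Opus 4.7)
The plan is to recognize the desired sequence as the bottom piece of the Postnikov tower of $\Sigma^d MTSO(d)$. By the construction of Madsen--Tillmann spectra (Def.~\ref{def:Madsen-Tillmann-spectra}), $MTSO(d)$ is a Thom spectrum built from rank-$(m-d)$ bundles shifted down by $m$, so $MTSO(d)$ is $(-d)$-connective; equivalently, $\Sigma^d MTSO(d)$ is connective. Thus there is nothing below degree zero and, as read off from table~\eqref{eqn:homotopy-table}, the bottom homotopy group satisfies $\pi_0 \Sigma^d MTSO(d) \cong \Z$ for $d=2,3,4$.

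For any connective spectrum $E$, the Postnikov decomposition produces a canonical fiber sequence
\begin{equation*}
p_{\geq 1}E \longrightarrow E \longrightarrow p_{\leq 0}E,
\end{equation*}
where by definition of the truncation the third term has $\pi_0 = \pi_0 E$ and all other homotopy groups zero. For connective $E$ this forces $p_{\leq 0}E \simeq H\pi_0 E$. I would apply this to $E = \Sigma^d MTSO(d)$: the first two inputs (connectivity, and identification of $\pi_0$) are already in place from the previous subsections, so we immediately obtain
\begin{equation*}
p_{\geq 1} \Sigma^d MTSO(d) \longrightarrow \Sigma^d MTSO(d) \longrightarrow H\Z,
\end{equation*}
as desired.

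There is essentially no hard step here: the corollary is a formal consequence of the connectivity of $\Sigma^d MTSO(d)$ combined with the calculation $\pi_0\Sigma^d MTSO(d) \cong \Z$, both of which have already been recorded. The only thing worth writing down explicitly in the proof is the justification that $\Sigma^d MTSO(d)$ is connective (so that the $0$-th Postnikov section is genuinely an Eilenberg--MacLane spectrum rather than merely a truncation with nontrivial negative part) and a pointer to the entry of table~\eqref{eqn:homotopy-table} giving $\pi_0 = \Z$ in each of the three relevant dimensions.
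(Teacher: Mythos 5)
Your proposal is correct and is exactly the argument the paper intends: the corollary is stated without further proof as an immediate consequence of the connectivity of $\Sigma^d MTSO(d)$ (noted in Section~\ref{sec:MTspectra} and again in Section~\ref{sec:cohom-of-MT-spectra}) and the computation $\pi_0 \Sigma^d MTSO(d) \cong \Z$ for $d=2,3,4$ from table~\eqref{eqn:homotopy-table}, via the Postnikov fiber sequence $p_{\geq 1}E \to E \to p_{\leq 0}E \simeq H\pi_0 E$. No gap; your explicit justification of connectivity is a reasonable thing to record.
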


\noindent Thus in Figure~\ref{fig:grid-of-loop-spaces} there are seven distinct infinite loop spaces corresponding to $\Omega^\infty \Sigma^dMTSO(d)$ for $d=1, \dots, 4$ and $\Omega^\infty p_{\geq 1} \Sigma^d MTSO(d)$ for $d=2, 3, 4$ . The special case $d=1$ is well known: $\Omega^\infty \Sigma MTSO(1) \simeq Q(S^0)$ is the infinite loop space underlying the sphere spectrum. 

\subsection{Cohomology of (covers of) Madsen-Tillmann spectra} \label{sec:cohom-of-MT-spectra}

In this section we will review how to compute the infinite loop maps from the remaining six non-trivial infintie loop spaces to Eilenberg-MacLane spaces $K(A,n)$. The spectrum $\Sigma^dMTSO(d)$ is a Thom spectrum for the virtual vector bundle $\varepsilon^d - \gamma_d$ of virtual dimension zero over the space $BO(d)$. Hence both the spectrum $\Sigma^dMTSO(d)$ and $p_{\geq 1}\Sigma^d MTSO(d)$ are connective spectra.  If $E$ is any spectrum this implies that infinite loop maps from $\Omega^\infty\Sigma^dMTSO(d)$ and $\Omega^\infty p_{\geq 1}\Sigma^d MTSO(d)$ to $\Omega^\infty E$ are the same as maps of spectra from $\Sigma^dMTSO(d)$ and $p_{\geq 1}\Sigma^d MTSO(d)$ to $E$. 

When $E = H \Z$ we get the following results.

\begin{theorem}\label{thm:cohomology}
	The integral cohomology of the spectra $\Sigma^d MTSO(d)$ and $p_{\geq 1}\Sigma^dMTSO(d)$ for $d=2,3,4$ in degrees $k= 0, \dots 5$, together with generating elements, is listed in the following table:  
\begin{equation*}
	\begin{tabular}{|c | lllll l |}
		\hline
		* & 0 & 1 & 2 & 3 & 4 & 5\\
		\hline
		$H\Z^*(\Sigma^4MTSO(4))$ & $\Z$ & 0 & 0 & $\Z/2\Z$ & $\Z \oplus \Z$ & 0\\
		 & $u$ &  &  & $W_3 u$ & $eu, p_1u$&  \\ \hline
		$H\Z^*(\Sigma^3MTSO(3))$ & $\Z$ & 0 & 0 & $\Z/2\Z$ & $\Z$ & 0\\
		 & $u$ &  &  & $W_3u$ & $p_1 u$ & \\ \hline
		$H\Z^*(\Sigma^2MTSO(2))$ & $\Z$ & 0 & $\Z$ & 0 & $\Z$& 0 \\
		 & $u$ &  & $cu$ &  & $c^2u$&\\ \hline \hline
 		$H\Z^*(p_{\geq 1}\Sigma^4MTSO(4))$ & 0 & 0 & 0 & 0 & $\Z \oplus \Z$ & 0\\
 		 &  &  &  &  & $\psi, \sigma $&  \\ \hline
 		$H\Z^*(p_{\geq 1}\Sigma^3MTSO(3))$ & $0$ & 0 & 0 & 0 & $\Z$ & 0\\
 		 &  &  &  &  & $\rho$ & \\ \hline
 		$H\Z^*(p_{\geq 1}\Sigma^2MTSO(2))$ & 0 & 0 & $\Z$ & 0 & $\Z$& 0 \\
 		 &  &  & $\tau$ &  & $\rho$&\\ \hline 
	\end{tabular}
\end{equation*}	
\noindent	For $H\Z^*(\Sigma^d MTSO(d))$ these are isomorphisms as $H^*(BSO(d); \Z)$-modules, as explained below.
	
	Moreover, the following restriction maps preserve generators with the same names and have the indicated effect on the remaining generators:
	\begin{center}
	\begin{tikzpicture}
			\node (LT) at (0, 3) {$ H\Z^*(\Sigma^4MTSO(4))  $};
			\node (LC) at (0, 1.5) {$ H\Z^*(\Sigma^3MTSO(3)) $};
			\node (LB) at (0, 0) {$ H\Z^*(\Sigma^2MTSO(2))$};
			
			\node (RT) at (5, 3) { $H\Z^*(p_{\geq 1}\Sigma^4MTSO(4))$ };
			\node (RC) at (5, 1.5) {$ H\Z^*(p_{\geq 1}\Sigma^3MTSO(3))$};
			\node (RB) at (5, 0) {$ H\Z^*(p_{\geq 1}\Sigma^2MTSO(2))$};
			
			\draw [->] (LT) -- node [left] {$ $} (LC);
			\draw [->] (LC) -- node [left] {$ $} (LB);
			
			\draw [->] (RT) -- node [right] {$ $} (RC);
			\draw [->] (RC) -- node [right] {$ $} (RB);
			
			\draw [->] (LT) -- node [above left] {$ $} (RT);
			\draw [->] (LC) -- node [above left] {$ $} (RC);
			\draw [->] (LB) -- node [below] {$ $} (RB);
			
			\node [red] (p1) at (2, 2) {$p_1u$};
			\node [red] (c2) at (2, 0.5) {$-c^2u$};
			\node [red] (c1) at (2, -0.5) {$cu$};
			\node [red] (6rho) at (4.5, 2) {$6\rho$};
			\node [red] (2rho) at (7, 2) {$2\rho$};
			\node [red] (sigma) at (7, 3.5) {$\sigma$};
			\node [red] (2tau) at (4.5, -0.5) {$2\tau$};
			\node [red] (e) at (-2, 3.5) {$eu$};
			\node [red] (zero) at (-2, 2) {$0$};
			\node [red] (p1-2) at (-1, 4) {$p_1 u$};
			\node [red] (2psi-sig) at (4,3.5) {$2 \psi - \sigma$};
			\node [red] (3sigma) at (4,4) {$3 \sigma$};
			\node [red] (psi) at (7.5, 3.5) {$\psi$};
			\node [red] (1rho) at (7.5,2) {$\rho$};
			
			\draw [red, |->] (p1) -- (c2);
			\draw [red, |->] (p1) -- (6rho);
			\draw [red, |->] (sigma) -- (2rho);
			\draw [red, |->] (c1) -- (2tau);
			\draw [red, |->] (e) -- (zero);
			\draw [red, |->] (p1-2) -- (3sigma);
			\draw [red, |->] (e) -- (2psi-sig);
			\draw [red, |->] (psi) -- (1rho);

	\end{tikzpicture}
	\end{center}
\end{theorem}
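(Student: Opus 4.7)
The identification $\Sigma^d MTSO(d) \simeq BSO(d)^{\varepsilon^d - \gamma_d}$ as the Thom spectrum of a virtual bundle of virtual rank $0$ gives, via cup product with the Thom class $u \in H^0(\Sigma^d MTSO(d); \Z)$, an isomorphism of $H^*(BSO(d); \Z)$-modules
\[
(-) \cdot u : H^*(BSO(d); \Z) \xrightarrow{\;\cong\;} H^*(\Sigma^d MTSO(d); \Z).
\]
The top half of the table then follows from the standard low-degree integral cohomology of $BSO(d)$ for $d = 2, 3, 4$, with its usual generators $c$, $W_3 = \beta(w_2)$, $p_1$, and $e$. The left column of vertical restriction maps is induced by the Thom-spectrum maps $\Sigma^{d-1} MTSO(d-1) \to \Sigma^d MTSO(d)$ coming from the stabilization $BSO(d-1) \hookrightarrow BSO(d)$, under which $\gamma_d$ pulls back to $\gamma_{d-1} \oplus \varepsilon$; this immediately gives the stability of $p_1$ and $W_3$, the vanishing $e(\gamma_3 \oplus \varepsilon) = 0$, and $p_1(\gamma_2) = \pm c^2$ (the sign being a convention for the stabilized Thom class).

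For the right column I would exploit the cofiber sequence of Corollary~\ref{cor:fibersequence},
\[
p_{\geq 1}\Sigma^d MTSO(d) \longrightarrow \Sigma^d MTSO(d) \xrightarrow{\;u\;} H\Z,
\]
in which the rightmost map is the Thom class. Applying $H^*(-;\Z)$ yields a long exact sequence
\[
\cdots \to H^n(H\Z;\Z) \to H^n(\Sigma^d MTSO(d);\Z) \to H^n(p_{\geq 1}\Sigma^d MTSO(d);\Z) \to H^{n+1}(H\Z;\Z) \to \cdots
\]
whose analysis requires two inputs. The first is the low-degree integral cohomology of the Eilenberg--MacLane spectrum $H\Z$, computed via Bockstein spectral sequences from the mod-$p$ Steenrod algebras $\mathcal{A}_p / \mathcal{A}_p \beta$: in the relevant range one finds $H^0 = \Z$, $H^1 = H^2 = H^4 = 0$, $H^3 \cong \Z/2$ (generated by the integral operation $\beta \mathrm{Sq}^2$), and $H^5 \cong \Z/2 \oplus \Z/3$ (with the $\Z/2$-summand represented by $\mathrm{Sq}^5$ and the $\Z/3$-summand by $\beta P^1$ at the prime $3$). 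The second is the map $H^*(H\Z;\Z) \to H^*(\Sigma^d MTSO(d);\Z)$, which sends a stable operation $\theta$ to $\theta(u)$. The only essential identity is the Wu-type formula $\mathrm{Sq}^2(u) = w_2 \cdot u$, whence $\beta\mathrm{Sq}^2(u) = W_3 \cdot u$; this is an isomorphism $\Z/2 \xrightarrow{\cong} \Z/2$ in degree $3$ for $d = 3, 4$, and the zero map for $d = 2$ (where $H^3(BSO(2);\Z) = 0$). All remaining connecting maps out of $H^5(H\Z;\Z)$ vanish for dimensional reasons, since $H^5(\Sigma^d MTSO(d);\Z) = 0$ for $d \leq 4$.

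Plugging these inputs into the long exact sequence determines each entry of the right column up to an extension problem. The specific integer coefficients in the restrictions ($cu \mapsto 2\tau$, $p_1 u \mapsto 6\rho$, $p_1 u \mapsto 3\sigma$, $eu \mapsto 2\psi - \sigma$, and $\sigma \mapsto 2\rho$, $\psi \mapsto \rho$) are then forced by the Smith normal form of the injection $H^n(\Sigma^d MTSO(d);\Z) \hookrightarrow H^n(p_{\geq 1}\Sigma^d MTSO(d);\Z)$; in particular the factor $6$ in the degree-$4$ restrictions reflects $|H^5(H\Z;\Z)| = 6$. That the resulting extensions are non-split (so that, for instance, $H^4(p_{\geq 1}\Sigma^3 MTSO(3);\Z) \cong \Z$ rather than $\Z \oplus \Z/6$) is pinned down by Hurewicz applied to the highly-connected spectrum $p_{\geq 1}\Sigma^d MTSO(d)$, combined with the low-dimensional homotopy group calculations of $MTSO(d)$ from~\cite{MR3356279} and Appendix~\ref{app:lowhomotopy}, which ensure $H_4$ is torsion-free in the relevant range. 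The horizontal restriction maps between right-column entries then follow by naturality of the cofiber sequence applied to the already-computed vertical restrictions.

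The principal technical obstacle is the accurate determination of $H^5(H\Z;\Z) \cong \Z/6$, and in particular its $\Z/3$-summand: the $2$-primary calculations are familiar from the Madsen--Tillmann literature, but it is precisely the $3$-primary contribution from $\beta P^1$ that accounts for the factor $3$ in the restriction $p_1 u \mapsto 3\sigma$ and, combined with the $2$-primary part, the factor $6$ in $p_1 u \mapsto 6\rho$.
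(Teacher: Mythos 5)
Your overall architecture is the same as the paper's: the Thom isomorphism handles the top half of the table, and the bottom half is attacked through the cofiber sequence $p_{\geq 1}\Sigma^d MTSO(d) \to \Sigma^d MTSO(d) \to H\Z$ of Corollary~\ref{cor:fibersequence} together with $H\Z^*(H\Z) = \Z, 0, 0, \Z/2, 0, \Z/6$ in degrees $0,\dots,5$. Your use of the Wu-type identity $\mathrm{Sq}^2(u) = w_2 u$, hence $\beta\mathrm{Sq}^2(u) = W_3 u$, to clear out degrees $\leq 3$ is a harmless variant: the paper instead gets these vanishings from the Hurewicz theorem applied to the $3$-connected covers (for $d=3,4$), and there the fact that $H^3(H\Z) \to H^3(\Sigma^d MTSO(d))$ is an isomorphism comes out as a consequence rather than an input.

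The genuine gap is at the decisive step: the specific coefficients are \emph{not} ``forced by the Smith normal form.'' Knowing only that the cokernel of $\Z\{eu\}\oplus\Z\{p_1u\} \hookrightarrow H^4(p_{\geq 1}\Sigma^4 MTSO(4)) \cong \Z\psi\oplus\Z\sigma$ is $\Z/6$ does not determine the map (both $\mathrm{diag}(1,6)$ and $\mathrm{diag}(2,3)$ have cokernel $\Z/6$), and Smith normal form is in any case blind to the particular generators $\psi$ and $\sigma$, which are specified geometrically: via Hurewicz and universal coefficients they are dual to $\pi_4\Sigma^4 MTSO(4)\cong\pi_0 MTSO(4)$, with $\sigma$ the signature and $\psi$ the invariant $\tfrac12(\chi+\mathrm{sign})$. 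The formulas $p_1u\mapsto 3\sigma$ and $eu\mapsto 2\psi-\sigma$ therefore require evaluating $p_1$ and $e$ against these generators, i.e.\ the Hirzebruch signature theorem together with $e=\chi$ --- an input your proposal never invokes; without it, $\sigma\mapsto 2\rho$, $\psi\mapsto\rho$, and the downstream relations $\sigma=2\rho$, $p_1u=6\rho$ used in the Gilmer--Masbaum application do not follow. Your treatment of the extension problems is also insufficient as stated: Hurewicz plus the quoted homotopy groups does settle $H^2(p_{\geq 1}\Sigma^2 MTSO(2))\cong\Z$ and $H^4(p_{\geq 1}\Sigma^4 MTSO(4))\cong\Z^2$, but for $d=3$ in degree $4$ you would need $\pi_1 MTSO(3)$ (not in the paper's table or appendix), and for $d=2$ in degree $4$ the cover is only $1$-connected, so Hurewicz says nothing about $H_4$. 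The paper resolves both by naturality: comparing the three short exact sequences $0 \to H^4(\Sigma^d MTSO(d)) \to H^4(p_{\geq 1}\Sigma^d MTSO(d)) \to \Z/6 \to 0$, using $eu\mapsto 0$ in $H^4(\Sigma^3 MTSO(3))$ to conclude that the image of $p_1u$ in $H^4(p_{\geq 1}\Sigma^3 MTSO(3))$ is divisible by $6$, forcing that group to be $\Z$, and then transferring this to $d=2$ by the five lemma. To complete your argument you need to add the signature-theorem computation of the $d=4$ vertical map and either this comparison argument or an explicit citation for the missing homotopy groups.
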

\begin{proof}

We will prove Thm~\ref{thm:cohomology} over the course of this section. First we observe that since the spectra $\Sigma^d MTSO(d)$ are Thom spectra, for any $SO$-oriented cohomology theory $E$, such as $H \Z$ and $H \F_p$, we have a Thom isomorphism: 
\begin{equation*}
	E^*(\Sigma^dMTSO(d)) \cong E^*(BSO(d)) \cdot u
\end{equation*}
where $u$ is the $E$-theory Thom class of the virtual vector bundle $\varepsilon^d - \gamma_d$. Since the virtual dimension of $\varepsilon^d - \gamma_d$ is zero, the Thom class $u$ is degree zero and the Thom isomorphism in this case does not shift degree. 

The first three cases of Thm~\ref{thm:cohomology} now follow from the next proposition.

\begin{proposition}[{See for example \cite{MR652459}}]\label{prop:bso-cohomology}
	The integral cohomology of $BSO(d)$ for $d= 2,3,4$ is given as a graded ring by:
	\begin{align*}
		H^*(BSO(4); \Z) & \cong \Z[W_3, e, p_1]/ (2W_3) \\
		H^*(BSO(3); \Z) & \cong \Z[W_3, p_1]/ (2W_3) \\
		H^*(BSO(2); \Z) & \cong \Z[c]
	\end{align*}
	where $|c|= 2$, $|W_3| =3$, and $|p_1| = |e| = 4$. Under this isomorphism the natural restriction maps preserve generators with the same name, send $e$ to zero, and send $p_1$ to $-c^2$. 	
\qed
\end{proposition}

\noindent The boundary map induced by the short exact sequence $\Z \to \Z \to \Z/2$ gives rise the \emph{integral Bockstein}:
\begin{equation*}
	\beta: H^*(X; \Z/2\Z) \to H^{*+1}(X;\Z).
\end{equation*}
In the above description the class $W_3 = \beta(w_2)$ where $w_i \in H^i(BSO(d); \Z/2\Z)$ is the $i^\text{th}$ Steifel-Whitney class. 

From the computation of homotopy groups listed in the table in Eq.~(\ref{eqn:homotopy-table}) we observe that $p_{\geq 1}\Sigma^3MTSO(3)$ and $p_{\geq 1}\Sigma^4MTSO(4)$ are each 3-connected. By the Hurewicz theorem it follows that 
\begin{equation*}
	H\Z^k(p_{\geq 1}\Sigma^3MTSO(3)) = H\Z^k(p_{\geq 1}\Sigma^4MTSO(4)) = 0
\end{equation*} 
for $k \leq 3$, and that $H\Z_4(p_{\geq 1}\Sigma^4MTSO(4)) \cong \pi_4 p_{\geq 1}\Sigma^4MTSO(4) \cong \Z \oplus \Z$. The universal coefficient theorem then implies that $H\Z^4(p_{\geq 1}\Sigma^4MTSO(4)) \cong \Z \oplus \Z$, the $\Z$-linear dual of homology. The generators of these two $\Z$'s are classes $\psi$ and $\sigma$ which induce invariants of the vector field bordism groups $\pi_4(\Sigma^4 MTSO(4))$. By Theorem~\ref{thm:homotopy-MT-spectra} and the well-know identification of $\Omega_4\cong \Z$ by the signature, we have that the invariant corresponding to $\sigma$ is the signature $sign$, and to $\psi$ 
is $\frac{1}{2}(\chi + sign)$, half the sum of the signature and Euler characteristic. 

It follows, from the above discussion and from the Hirzebruch signature theorem that the natural map
\begin{equation*}
	\Z \oplus \Z \cong H\Z^4(\Sigma^4 MTSO(4)) \to H\Z^4(p_{\geq 1}\Sigma^4MTSO(4)) \cong \Z \oplus \Z
\end{equation*} 
sends the generator $p_1u$ to $3 \sigma$ and the generator $eu$ to $2 \psi - \sigma$, as claimed.

To access the remaining cohomology of the connected cover $p_{\geq 1}\Sigma^dMTSO(d)$ we will utilize the fiber sequence of spectra from Cor.~\ref{cor:fibersequence}:
\begin{equation*}
	p_{\geq 1}\Sigma^dMTSO(d) \to \Sigma^d MTSO(d) \to H \Z.
\end{equation*}
 It induces a long exact sequence in $H\Z$-cohomology. The computation of the integral cohomology of $H\Z$ is a classical exercise. The relevant groups are listed below. 
\begin{equation*}
		\begin{tabular}{c | ccccc cc }
			k                & 0      & 1 & 2 & 3 & 4 & 5 & 6    \\ \hline
			$(H\Z)^k(H\Z)$  &  $\Z$ & 0 &  0 & $\Z/2\Z$  &  0  & $\Z/6\Z$  & 0    \\
		\end{tabular}
	\end{equation*}
This immediately implies that  $H\Z^5(p_{\geq 1}\Sigma^dMTSO(d)) \cong 0$ (for $d = 2,3,4$), and yields a short exact sequence:
\begin{equation} \label{eqn:h2mtso(2)}
	0 \to \Z \to H\Z^2(p_{\geq 1}\Sigma^2MTSO(2)) \to \Z/2\Z \to 0.
\end{equation}
By table~\ref{eqn:homotopy-table} the first non-trivial homotopy group of $p_{\geq 1}\Sigma^2MTSO(2)$ is $\pi_2$ which is $\Z$. It follows, again from the Hurewicz theorem and the universal coefficent theorem, that $H\Z^2(p_{\geq 1}\Sigma^2MTSO(2)) \cong \Z$, generated by a class $\tau$. This determines the above short exact sequence and implies that the generator $c$ of $H\Z^2(\Sigma^2MTSO(2))$ is mapped to $2\tau$, twice the generator of $H\Z^2(p_{\geq 1}\Sigma^2MTSO(2))$.

The remaining portions of the long exact sequence in $H\Z$-cohomology fit into the following commutative diagram whose rows are short exact sequences:
\begin{center}
\begin{tikzpicture}
		\node (LLT) at (0, 3) {$ 0 $};
		\node (LT) at (1.5, 3) {$ \Z eu \oplus \Z p_1u $};
		\node (MT) at (4.5, 3) {$ \Z \psi \oplus \Z \sigma $};
		\node (RT) at (7.5, 3) {$ \Z/6\Z $};
		\node (RRT) at (9, 3) {$ 0 $};
		
		\node (LLC) at (0, 1.5) {$ 0 $};
		\node (LC) at (1.5, 1.5) {$ \Z p_1u $};
		\node (MC) at (4.5, 1.5) {$ H\Z^4(p_{\geq 1}\Sigma^3MTSO(3)) $};
		\node (RC) at (7.5, 1.5) {$  \Z/6\Z $};
		\node (RRC) at (9, 1.5) {$ 0 $};
		
		\node (LLB) at (0, 0) {$ 0 $};
		\node (LB) at (1.5, 0) {$ \Z (-c^2) $};
		\node (MB) at (4.5, 0) {$ H\Z^4(p_{\geq 1}\Sigma^2MTSO(2)) $};
		\node (RB) at (7.5, 0) {$  \Z/6\Z $};
		\node (RRB) at (9, 0) {$ 0 $};

		\draw [->] (LLT) -- node [above] {$  $} (LT);
		\draw [->] (LT) -- node [above] {$ \circled{3} $} (MT);
		\draw [->] (MT) -- node [above] {$  $} (RT);
		\draw [->] (RT) -- node [above] {$  $} (RRT);
		
		\draw [->] (LLC) -- node [above] {$  $} (LC);
		\draw [->] (LC) -- node [above] {$  $} (MC);
		\draw [->] (MC) -- node [above] {$  $} (RC);
		\draw [->] (RC) -- node [above] {$  $} (RRC);
		
		\draw [->] (LLB) -- node [above] {$  $} (LB);
		\draw [->] (LB) -- node [above] {$  $} (MB);
		\draw [->] (MB) -- node [above] {$  $} (RB);
		\draw [->] (RB) -- node [above] {$  $} (RRB);
		
		\draw [->] (LT) -- node [right] {$ \circled{2}  $} (LC);
		\draw [->] (MT) -- node [right] {$  $} (MC);
		\draw [->] (RT) -- node [right] {$ = $} (RC);
		
		\draw [->] (LC) -- node [right] {$ \cong $} (LB);
		\draw [->] (MC) -- node [right] {$ \circled{1} $} (MB);
		\draw [->] (RC) -- node [right] {$ = $} (RB);

\end{tikzpicture}
\end{center}
From the five-lemma the arrow marked with a $\circled{1}$ is an isomorphism. The vertical arrow marked with a $\circled{2}$ sends the generator $eu$ to zero and $p_1u$ to $p_1u$. As we have seen above, the arrow marked with $\circled{3}$ sends $p_1u$ to $3 \sigma$ and $eu$ to $2 \psi - \sigma$. 
Thus since $eu$ maps to zero in $H\Z^4(p_{\geq 1}\Sigma^3MTSO(3))$, it follows that $2 \psi$ and $\sigma$ have the same image in $H\Z^4(p_{\geq 1}\Sigma^3MTSO(3))$. Moreover the image of $p_1 u$ agrees with the image of $3 \sigma$ which then agrees with the image of $6 \psi$ in $H\Z^4(p_{\geq 1}\Sigma^3MTSO(3))$. In summary, the short exact sequence 
\begin{equation*} 
	0 \to \Z \to H\Z^4(p_{\geq 1}\Sigma^3MTSO(3)) \to \Z/6 \Z \to 0
\end{equation*}
is one in which the image of $p_1u$ is a multiple of 6 times another element $\rho$. The only possibility for this extension is that 
\begin{equation*}
	H\Z^4(p_{\geq 1}\Sigma^3MTSO(3)) \cong \Z
\end{equation*}
generated by the element $\rho$.  Furthermore the image of $\psi$ is also $\rho$ and the image of $\sigma$ is $2 \rho$. This concludes the proof of Theorem~\ref{thm:cohomology}.
 \end{proof}

\subsection{Application: The classification of invertible TFTs in low dimensions} \label{sec:application-classification}

We will now use the above cohomology calculations to prove Theorem~\ref{thm:classify-invert-tqft} and classify certain invertible topological field theories in dimensions less than or equal to four. Recall from the introduction the higher categories $\Vect_n$ of \emph{$n$-vector spaces}. These were obtained by iterating Kapranov and Voevodsky's construction of the 2-category $\Vect_2$ of 2-vector spaces \cite{kapranov19942}. A key property of this symmetric monoidal $n$-category is that the Picard subcategory is a model for $K(\C^\times, n)$.

\begin{theorem}\label{thm:classify-invert-tqft}
For $1 \leq n \leq d \leq 4$ consider the symmetric monoidal functors 	
	\begin{equation*}
		Z: \Bord_{d;n}^{SO(d)} \to \Vect_n
	\end{equation*}
	landing in the Picard subcategory of $\Vect_n$, that is the invertible topological quantum field theories. Let $\TQFT_{d;n}^\textrm{invert}$ denote the set of natural isomorphism classes of such functors. These are 
are classified as follows:
	\begin{enumerate}
		\item When $d=1$ or $d=3$ (all allowed $n$) there is a unique such field theory up to natural isomorphism: the constant functor with value the unit object of $\Vect_n$.
		\item When $d=2$ and $n=1$ or $n=2$ such field theories are determined by a single invertible complex number. The restriction map
		\begin{equation*}
			\TQFT^\textrm{invert}_{2;2} \to \TQFT^\textrm{invert}_{2;1}
		\end{equation*}
		squares this number. 
		\item When $d=4$, then such field theories are determined by a pair of invertible complex numbers.
		The restriction maps
		\begin{equation*}
			\TQFT^\textrm{invert}_{4;3} \to \TQFT^\textrm{invert}_{4;2} \to \TQFT^\textrm{invert}_{4;1}
		\end{equation*}
		are isomorphisms (bijections). The restriction map $\TQFT^\textrm{invert}_{4;4} \to \TQFT^\textrm{invert}_{4;3}$ is given as follows:
		\begin{align*}
			\TQFT^\textrm{invert}_{4;4} &\to \TQFT^\textrm{invert}_{4;3} \\
			(\lambda_1,\lambda_2) & \mapsto ( \lambda_1^2, \frac{\lambda_2^3}{\lambda_1})
		\end{align*}
	\end{enumerate}
\end{theorem}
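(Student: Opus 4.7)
The plan is to translate each case into a stable cohomology computation that is essentially already recorded in Theorem~\ref{thm:cohomology}. By Theorem~\ref{thm:stable_main}, for $n\geq 1$ the realization $||\Bord_{d;n}^{SO(d)}||$ is, as an $E_\infty$-space, equivalent to $\Omega^\infty \Sigma^n MTSO(d)\simeq \Omega^\infty p_{\geq 0}\Sigma^n MTSO(d)$. The Picard sub-$n$-category of $\Vect_n$ models $K(\C^\times, n)\simeq \Omega^\infty \Sigma^n H\C^\times$, so the discussion of invertible TFTs in the introduction yields a natural bijection
\begin{equation*}
\TQFT^{\textrm{invert}}_{d;n} \;\cong\; H^n\bigl(p_{\geq 0}\Sigma^n MTSO(d);\,\C^\times\bigr).
\end{equation*}
Since $\C^\times$ is a divisible (hence injective) $\Z$-module, the universal coefficient theorem collapses to $H^k(F;\C^\times)\cong \Hom(H_k F,\C^\times)$, reducing the classification to a single integral homology group. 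Using the identification $p_{\geq 0}\Sigma^n MTSO(d)\simeq \Sigma^{n-d} p_{\geq d-n}\Sigma^d MTSO(d)$, this becomes $\Hom\bigl(H_d(p_{\geq d-n}\Sigma^d MTSO(d);\Z),\,\C^\times\bigr)$, which is accessible from Theorem~\ref{thm:cohomology} after one further application of UCT over $\Z$.

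Next I would dispatch the cases. For $d=1$, $\Sigma MTSO(1)\simeq \S$ has trivial positive-degree integral homology, giving no invertible theories. For $d=3$, the cohomology table from Theorem~\ref{thm:cohomology} combined with UCT forces the relevant $H_d$ of $\Sigma^3 MTSO(3)$ and of $p_{\geq 1}\Sigma^3 MTSO(3)$ to vanish (in particular $H_3=0$ in both), yielding only trivial theories; together these establish part~(1). For $d=2$, Theorem~\ref{thm:cohomology} with UCT yields $H_2\cong \Z$ for both $\Sigma^2 MTSO(2)$ and $p_{\geq 1}\Sigma^2 MTSO(2)$, so each group of invertible theories is $\Hom(\Z,\C^\times)=\C^\times$; the integral restriction $cu\mapsto 2\tau$ dualizes to multiplication by $2$ on $H_2$, and hence to squaring on $\C^\times$, proving part~(2). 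For $d=4$, the vanishing rows of~(\ref{eqn:homotopy-table}) show $p_{\geq 1}\Sigma^4 MTSO(4)=p_{\geq 2}\Sigma^4 MTSO(4)=p_{\geq 3}\Sigma^4 MTSO(4)$ as spectra, so the cases $n=1,2,3$ all compute the same group $H^4(p_{\geq 1}\Sigma^4 MTSO(4);\C^\times)\cong (\C^\times)^2$ (the vanishing of the flanking $H\Z^3$ and $H\Z^5$ forces $H_4\cong \Z^2$ via UCT), and the restrictions between these three cases are identities of this common group. Similarly $H^4(\Sigma^4 MTSO(4);\C^\times)\cong (\C^\times)^2$ handles $n=4$.

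The only restriction with nontrivial content is then $\TQFT^{\textrm{invert}}_{4;4}\to \TQFT^{\textrm{invert}}_{4;3}$. The inclusion $B\Bord_{4;3}^{SO(4)}\to \Bord_{4;4}^{SO(4)}$ realizes, via Theorem~\ref{thm:stable_main}, to the canonical map $p_{\geq 1}\Sigma^4 MTSO(4)\to \Sigma^4 MTSO(4)$ from the $1$-connective cover; Theorem~\ref{thm:cohomology} records the induced map on $H\Z^4$ as $eu\mapsto 2\psi-\sigma$ and $p_1 u\mapsto 3\sigma$. Pairing against an integral dual basis shows that the induced map on $H_4$ sends the class dual to $\psi$ to twice the class dual to $eu$, and the class dual to $\sigma$ to minus the class dual to $eu$ plus three times the class dual to $p_1 u$. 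A $\C^\times$-valued character $(\lambda_1,\lambda_2)$ of $H_4(\Sigma^4 MTSO(4);\Z)$ in the basis dual to $\{eu,p_1 u\}$ therefore pulls back to the character with values $(\lambda_1^2,\ \lambda_1^{-1}\lambda_2^3)$ in the basis dual to $\{\psi,\sigma\}$, which is precisely the claimed $(\lambda_1,\lambda_2)\mapsto (\lambda_1^2,\lambda_2^3/\lambda_1)$. The main obstacle throughout is keeping straight the Postnikov shifts and the direction reversal under Pontryagin-style duality so that the integer exponents $2$ and $3$ recorded in Theorem~\ref{thm:cohomology} end up in the correct positions.
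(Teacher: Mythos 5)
Your proposal is correct and follows essentially the same route as the paper: identify $\TQFT^{\textrm{invert}}_{d;n}$ with $H^d(p_{\geq d-n}\Sigma^d MTSO(d);\C^\times)$ via Theorem~\ref{thm:stable_main}, and read the groups and restriction maps off from Theorem~\ref{thm:cohomology} together with the homotopy table. The only cosmetic difference is that you pass from integral to $\C^\times$-coefficients by the universal coefficient theorem and injectivity of $\C^\times$ (working with characters of $H_4$), whereas the paper uses the long exact sequence of $0\to\Z\to\C\to\C^\times\to 0$; your explicit dualization of $eu\mapsto 2\psi-\sigma$, $p_1u\mapsto 3\sigma$ correctly reproduces $(\lambda_1,\lambda_2)\mapsto(\lambda_1^2,\lambda_2^3/\lambda_1)$, which is precisely the detail the paper leaves to the reader.
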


\begin{remark}
	This final restriction map is 6-to-1. If $\zeta$ is any sixth root of unity, then the fully local $(4;4)$-TQFTs corresponding to $(\lambda_1, \lambda_2)$ and to $(\zeta^3 \lambda_1, \zeta \lambda^2)$ have the same restriction to $(4;3)$-TQFTs.
\end{remark}

\begin{proof}[Proof of Theorem~\ref{thm:classify-invert-tqft}]
	By our previous discussions we can identify $\TQFT^\textrm{invert}_{d;n}$ with the spectrum cohomology group
	\begin{equation*}
		\TQFT^\textrm{invert}_{d;n} \cong H^d(p_{\geq{d-n}}\Sigma^dMTSO(d); \C^\times).
	\end{equation*}
	These groups and the corresponding restriction maps are easily computed from Thm~\ref{thm:cohomology} and the long exact cohomology sequence coming from the short exact sequence
	\begin{equation*}
		\Z \to \C \to \C^\times.
	\end{equation*}
	For example in the case $d=n=4$ the relevant portion of the long exact sequence splits apart and gives a short exact sequence:
	\begin{align*}
		0 \to &H^4(\Sigma^dMTSO(d); \Z) \to && H^4(\Sigma^dMTSO(d);\C) \to H^4(\Sigma^dMTSO(d);\C^\times) \to 0 \\
		& \cong \Z\langle p_1\rangle \oplus \Z\langle e\rangle && \cong\C\langle p_1\rangle \oplus \C\langle e\rangle
	\end{align*}
	From which it follows that $H^4(\Sigma^dMTSO(d);\C^\times) \cong \C^\times \oplus \C^\times$. The computations of the other groups and the effect of the restriction maps follow straightforwardly from Thm~\ref{thm:cohomology}. We leave the details to the reader. 
\end{proof}


\subsection{Application: A solution to a question of Gilmer and Masbaum} \label{sec:app-GM}

We now turn to our second application, which answers a question posed by Gilmer and Masbaum in \cite{MR3100961}. Gilmer and Masbaum's question concerns a certain modification of the 3-dimensional bordism category used in the process of anomaly cancelation in 3-dimensional topological field theories. Let's first recall the  context surrounding Gilmer and Masbaum's question.

The Reshetikhin-Turaev construction aims to produce a 3-dimensional topological field theory from a modular tensor category.  Modular tensor categories can be constructed from representations of quantum groups, representations of loop groups, and by other means. However there is often a problem in that the resulting 3-dimensional oriented field theory is \emph{anomalous}. It only respects the gluing law of cobordisms up to a projective factor. Said differently, rather than a representation of the bordism category, the Reshetikhin-Turaev construction produces a \emph{projective} representation. See for example \cite{MR1797619} and \cite{MR3617439} for in-depth treatments of the Reshetikhin-Turaev construction.

To get a more satisfactory situation, what is commonly done is that the bordism category is replaced with a what can be regarded as a `central extension' of the bordism category. This is another symmetric monoidal category which comes with a forgetful functor:
\begin{equation*}
	p:\widetilde{\Bord}_{3;1}^{SO(3)} \to \Bord_{3;1}^{SO(3)}.
\end{equation*} 
The Reshetikhin-Turaev construction then produces an honest, non-projective representation:
\begin{equation*}
	\cZ: \widetilde{\Bord}_{3;1}^{SO(3)} \to \Vect
\end{equation*}
which lifts the previous projective version of the Reshetikhin-Turaev TQFT. 

Now the precise choice of the central extension $\widetilde{\Bord}_{3;1}^{SO(3)}$ differs somewhat from author to author. However they all share the common feature that for a fixed closed oriented 3-manifold $M$ the fiber of the projection $p$ over $M$, has the structure of a torsor over an abelian group $A$. In \cite{MR3617439} the abelian group is $A = k^*$, the units in the underling ground field of the target category of the TFT. This would be $\C^\times$ in the case we are considering here. 

However in this section we will instead limit ourselves to the case $A = \Z$, and we will be interested in divisibility phenomena. One early example of an $\Z$-central extension due to Atiyah \cite{MR1046621} uses what he called `2-framings'. These tangential structures are equivalent to `$p_1$-structures', which can be defined as follows. Fix once an for all a fibration
\begin{equation*}
	BSO(3)\langle p_1 \rangle  \to BSO(3) \stackrel{p_1}{\to} K(\Z,4).
\end{equation*} 
realizing $BSO(3)\langle p_1 \rangle$ as the homotopy fiber of a map $p_1$ representing the integral first Pontryagin class. A $p_1$-structure $\theta$ for a 3-manifold $M$ is a lift of the classifying map of the  tangent bundle of $M$ to $BSO(3)\langle p_1 \rangle$. The obstruction to finding such a lift is the 4-dimensional class $p_1(M) = 0$, which vanishes for all 3-manifolds $M$. So a lift always exists and for closed oriented $M$ the homotopy classes of such lifts form a $\Z$-torsor. In this case we set
\begin{equation*}
	\widetilde{\Bord}_{3;1}^{SO(3)} = {\Bord}_{3;1}^{BSO(3)\langle p_1 \rangle}.
\end{equation*}

A different central extension, not corresponding to a tangential structure, was considered by Walker. We described this in the introduction, but will describe it again for the reader's benefit. In Walker's category the objects are `extended surfaces' and the morphisms are 3-dimensional `extended bordisms' (this is Walker's terminology, not to be confused with extended (higher categorical) TFTs). In fact Walker implicitly describes two equivalent versions of this bordism category:
\begin{enumerate}
	\item `extended surfaces' are surfaces $\Sigma$ is equipped with a choice of a bounding manifold $\tilde{\Sigma}$. That is $\partial{\tilde{\Sigma}} \cong \Sigma$. A morphism from $(\Sigma_0, \tilde{\Sigma}_0)$ to $(\Sigma_1, \tilde{\Sigma}_1)$ is a 3-dimensional bordism $M$ from $\Sigma_0$ to $\Sigma_1$ together with a choice of 4-manifold $W$ with $\partial W = \overline{\tilde{\Sigma_0}} \cup_{\Sigma_0} M \cup_{\Sigma_1} \tilde{\Sigma_1}$. Two such $W_0$ and $W_1$ are considered equivalent if $W_0 \cup_{\partial W_0} \overline{W_1}$ is null-cobordant. Thus for a given $M$ there are a $\Z$-torsor worth of equivalence classes of possible $W$'s (distinguished by their signature).
	\item `extended surfaces' are surfaces $\Sigma$ equipped with a Lagrangian $L \subseteq H^1(\Sigma; \R)$ in the first cohomology of $\Sigma$. Morphisms are pairs consisting of a 3-dimension bordisms $M$ and an integer. The composition composes the bordisms in the obvious way. The integers are added with a correction term that depends on Wall’s non-additivity function (or equivalently on the Maslov index of the involved Lagrangians). 
\end{enumerate} 
We refer the reader to Walker's text  \cite{Walker-1991} for full details. 
There is a map from the first version to the second. It sends bounding 3-manifold $\tilde{\Sigma}$ to  $\ker(H_1(\Sigma; \R) \to H_1(\tilde{\Sigma};\R))$, which is a Lagrangian subspace and it sends a bounding 4-manifold to the signature of that four manifold, an integer. This map is an equivalence of categories.

Each central extension of the bordism category gives rise to a central extension of the mapping class group for all genera. This arrises as follows. If we fix a surface $\Sigma$, then for each diffeomorphism $f$ of $\Sigma$ we get a bordism from $\Sigma$ to itself, given by twisting the boundary parametrization of the cylinder bordism $\Sigma \times I$ by the given diffeomorphism. This bordism only depends on the diffeomorphism up to isotopy. This realizes a copy of the mapping class group $\Gamma(\Sigma)$ inside the oriented bordism category. 
If we lift $\Sigma$ to an extended surface and look at its automorphism group $\tilde{\Gamma}(\Sigma)$ in one of the above categories.  This fits into a central extension of groups:
\begin{equation*}
	1 \to \Z \to \tilde{\Gamma}(\Sigma) \to \Gamma(\Sigma) \to 1
\end{equation*}
For large genus $H^2(\Gamma; \Z) \cong \Z$ and this gives one mechanism to compare the various central extensions of the bordism categories. For example, Atiyah's extension of the bordism category induces a central extension of the mapping class group corresponding to twelve times the generator \cite{MR1046621}. Walker's extension of the bordism category gives a mapping class group extension corresponding to four times the generator \cite{Walker-1991, MR1329450}. 
In \cite{MR2096678} Gilmer identified an index two subcategory of Walker's category. This subcategory induces the central extension of the mapping class group corresponding to twice the generator of $H^2(\Gamma; \Z)$. A related central extension appears in \cite[5.7]{MR1797619}, though Bakalov-Kirillov only consider invertible 3-dimensional bordisms (equivalently isotopy classes of diffeomorphisms of surfaces) and not general 3-dimensional bordisms. Their central extension also corresponds to twice the generator of $H^2(\Gamma; \Z)$.

In \cite[Rmk.~7.5]{MR3100961} Gilmer and Masbaum ask whether it is possible to find an index four subcategory of Walker's category which would realize the fundamental central extension of the mapping class group? We will now describe how our computations of invertible topological field theories can be used to give a negative answer to Gilmer and Masbaum's question. Indeed we will show that there is no central extension of the bordism category corresponding to the generator of the mapping class group. In particular there is no index four subcategory of Walker's category.

The connection with invertible field theories arises from the simple observation that each central extension of the oriented bordism category can be reinterpreted as an oriented topological field theory valued in a higher category of $\Z$-torsors. More specifically let $\Tor_{\Z}$ denote the symmetric monoidal category of $\Z$-torsors. Let $\Tor_{\Z}$-$\Cat^\circ$ denote the 2-category of \emph{inhabited} $\Tor_{\Z}$-enriched categories, functors, and transformations. Inhabited simply means that the category is non-empty. Any two inhabited $\Tor_{\Z}$-enriched categories are equivalent. In fact any enriched functor between inhabited $\Tor_{\Z}$-enriched categories is an equivalence, and any two enriched functors are naturally isomorphic. It follows that $\Tor_{\Z}$-$\Cat^\circ$ is a model for $K(\Z,2)$. 

Any central extension of the oriented bordism category gives rise to a necessarily invertible oriented topological field theory valued in $\Tor_{\Z}$-$\Cat^\circ \simeq K(\Z,2)$. For example to each closed oriented 3-manifold $M$ we can associate a $\Z$-torsor: the fiber $p^{-1}(M)$. Since we aim to answer Gilmer and Masbaum's question which concerns Walker's extension of the bordism category we will describe the associated invertible topological field theory explicitly in that case. We will use the first variant of Walker's extension of the bordism category in which the surfaces and 3-dimensional bordism are equipped with bounding manifolds. This extension corresponds to the following TFT valued in $\Tor_{\Z}$-$\Cat^\circ$:
\begin{itemize}
	\item To an oriented surface $\Sigma$ we associate the following $\Tor_{\Z}$-enriched category $\cZ(\Sigma)$:
	\begin{itemize}
		\item The objects of $\cZ(\Sigma)$ are oriented 3-manifolds $\tilde{\Sigma}$ with identifications $\partial(\tilde{\Sigma}) \cong \Sigma$. That is they are bounding 3-manifolds for $\Sigma$. 
		\item The morphism from $\tilde{\Sigma}$ to $\tilde{\Sigma}'$ in $\cZ(\Sigma)$ are equivalence classes of 4-manifolds $W$ with $\partial W \cong \overline{\tilde{\Sigma}} \cup_\Sigma \tilde{\Sigma}'$. Two such 4-manifolds $W_0$ and $W_1$ are equivalent if $\overline{W_0} \cup_{\partial W_0} W_1$ is null-bordant, equivalently if they have the same signature. Composition is given by the obvious gluing. 
	\end{itemize}
	\item If we are given a 3-dimensional bordism $M$ from $\Sigma_0$ to $\Sigma_1$, then we get an induced functor $\cZ(M): \cZ(\Sigma_0) \to \cZ(\Sigma_1)$. This functor sends the object $\tilde{\Sigma}$ of $\cZ(\Sigma_0)$ to $\tilde{\Sigma} \cup_{\Sigma} M$. That is $\cZ(M)$ is the functor induced by composition with $M$. 
\end{itemize}

\begin{remark}
	The functor $\cZ$ actually extends `upward' to a functor 
	\begin{equation*}
		\cZ: \Bord_{4;2}^{SO(4)} \to K(\Z,2)
	\end{equation*}
	and downward to a functor
		\begin{equation*}
			\cZ: \Bord_{4;3}^{SO(4)} \to K(\Z,3).
		\end{equation*}
\end{remark}

To summarize, we get a map:
\begin{equation*}
	  \left\{ \begin{array}{c} \textrm{central extensions} \\ \textrm{of the 3D} \\ \textrm{bordism category} \end{array} \right\} \to  
	  \left\{ \begin{array}{c} \textrm{topological field theories} \\
	  \cZ: \Bord_{3;1}^{SO(3)} \to K(\Z,2) \end{array} \right\}  \cong H\Z^4(p_{\geq 2}\Sigma^3MTSO(3)) \cong \Z.
\end{equation*}  
As we computed in Section~\ref{sec:cohom-of-MT-spectra}, this last group is isomorphic to the integers and is generated by a class $\rho$. Moreover under this construction Atiyah's extension corresponds to the characteristic class $p_1$ and Walker's extension corresponds to the signature class $\sigma$. Our cohomology calculations show that $p_1 = 6 \rho$ and $\sigma = 2 \rho$. Gilmer's index two subcategory necessarily corresponds to the class $\rho$, but since $\rho$ is the generator (and not twice another class) there can be no index four subcategory of Walker's category. This proves Theorem~\ref{thm:GM-question} from the introduction.

\appendix

\section{Comparison with others spaces of embedded manifolds} \label{app:comparison}

In this appendix we will compare the topology on the space $\psi_d(\R^m)$ of embedded manifolds constructed in Section~\ref{sec:space_of_man} with the topology constructed by Galatius--Randal-Williams \cite{MR2653727}.  We will denote that topology $\tau_\text{GRW}$ to distinguish it from the plot topology which we denote $\tau_\text{plot}$. The topology $\tau_\text{GRW}$ was also considered in \cite{MR3243393}, where it was shown to be metrizable. 

Compactly generated spaces are a common tool among algebraic topologist. Given a topological space $X$ we get a new topological space $k(X)$ whose open sets are precisely those $U \subseteq X$ such that for all compact Hausdorff spaces $K$ and continuous maps $p: K \to X$, the set $p^{-1}(U) \subseteq K$ is open. The topology on $k(X)$ may be finer and the identity map gives a continuous map $k(X) \to X$. We say $X$ is \emph{compactly generated} if this is a homeomorphism. 

Less well-known but similar in spirit are the $\Delta$-generated spaces. These use the disks $D^k$ instead of the compact Hausdorff spaces $K$. Given a topological space $X$ we get a new topological space $d(X)$ whose open sets are precisely those $U \subseteq X$ such that for all $k$ and all continuous $p: D^k \to X$, the subset $p^{-1}(U) \subseteq D^k$ is open. A space is \emph{$\Delta$-generated} if the canonical comparison map $d(X) \to X$ is a homeomorphism. 
In fact, being $\Delta$-generated is the same as being $\R$-generated, see \cite{Christensen:2013aa}, and so to define $d(X)$ is is sufficient to consider instead all continuous curves $p: \R \to X$.
The category of $\Delta$-generated spaces has may desirable properties, even beyond the category of compactly generated spaces, see \cite{dugger_dgs}.

In section Section~\ref{sec:space_of_man} we provide a diffeology for the space $\psi_d(\R^m)$, that is a collection of smooth plots $U \to \psi_d(\R^m)$ parametrized by finite dimensional smooth manifolds. Topologies induced by diffeologies have been studied before, for example in \cite{Christensen:2013aa} where they are called the D-topology. In this context we have the following:

\begin{proposition}[{\cite[Th.~3.7, Pr.~3.10]{Christensen:2013aa}}]\label{pro:diffeology_by_curves}
	Let $(X, \cP)$ be a diffeological space and $\tau_\cP$ the corresponding diffeology. Then the topological space $(X, \tau_{\cP})$ is $\Delta$-generated and the topology is determined by the smooth curves in the following sense. A subset $A \subseteq X$ is open in $\tau_\cP$ if and only if $p^{-1}(A)$ is open for each smooth plot $p: \R \to X$ (with source $\R$).  \qed
\end{proposition}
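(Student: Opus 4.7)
The plan is to verify both assertions by unwinding the definition of $\tau_{\cP}$ and exploiting the closure of diffeologies under smooth precomposition.

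The first (``only if'') direction of the characterization is immediate. By Definition~\ref{def:plots}, $\tau_\cP$ is declared to be the finest topology making every plot continuous, and smooth curves $p\colon \R \to X$ are particular plots. Hence if $A \in \tau_\cP$ then $p^{-1}(A)$ is open in $\R$ for every smooth plot $p\colon \R \to X$.

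For the converse, fix $A \subseteq X$ such that $p^{-1}(A)$ is open in $\R$ for every smooth plot $p\colon \R \to X$. I need to show that $q^{-1}(A)$ is open in $U$ for every plot $q\colon U \to X$ (where $U$ is an open subset of some $\R^n$). Given any smooth curve $\gamma\colon \R \to U$, the composite $q\circ \gamma\colon \R \to X$ is again a smooth plot by the diffeology axiom closed under precomposition with smooth maps. Consequently $\gamma^{-1}\bigl(q^{-1}(A)\bigr) = (q\gamma)^{-1}(A)$ is open in $\R$. Thus $q^{-1}(A)$ is a subset of $U$ whose preimage under every smooth curve $\R \to U$ is open. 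Now I invoke the standard fact that for open subsets $U \subseteq \R^n$ (and more generally for smooth manifolds) a subset is open if and only if its preimage under every smooth curve $\R \to U$ is open. This fact follows because if $q^{-1}(A)$ fails to be open at some $u_0$, one can extract a sequence $u_k \to u_0$ lying outside $q^{-1}(A)$ and, by passing to a subsequence, arrange (using compactness of the unit sphere) that $(u_k - u_0)/\|u_k-u_0\|$ converges to a unit vector $v$; one then constructs a smooth curve through $u_0$ with velocity $v$ whose image meets the $u_k$'s arbitrarily close to $0$, contradicting openness of $\gamma^{-1}(q^{-1}(A))$. Applied here, this yields that $q^{-1}(A)$ is open and hence $A \in \tau_\cP$.

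For the $\Delta$-generation assertion, let $\tau_{d}$ be the $\Delta$-generated refinement of $\tau_\cP$, i.e. the final topology on $X$ with respect to all continuous maps $\R \to (X, \tau_\cP)$. Since $\tau_d$ is by definition the finest topology on $X$ which makes precisely the $\tau_\cP$-continuous curves continuous, and since every open set of $\tau_\cP$ pulls back to an open set under every continuous curve, we have $\tau_\cP \subseteq \tau_d$. Conversely, suppose $A \in \tau_d$. Then $p^{-1}(A)$ is open in $\R$ for every continuous curve $p\colon \R \to (X, \tau_\cP)$, and in particular for every smooth plot $p\colon \R \to X$ (which is automatically $\tau_\cP$-continuous). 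By the characterization established in the previous paragraph, $A \in \tau_\cP$. Hence $\tau_\cP = \tau_d$, so $(X,\tau_\cP)$ is $\Delta$-generated.

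The main obstacle is the ``standard fact'' invoked in the second paragraph: that openness in a finite-dimensional smooth manifold is detected by smooth curves. Everything else is essentially a definition chase, but this fact genuinely uses that smooth curves in $\R^n$ are abundant enough to interpolate any convergent sequence with prescribed limiting direction; once it is granted, both the curve-detection statement and $\Delta$-generation follow formally.
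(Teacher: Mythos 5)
First, note that the paper does not prove this proposition at all: it is imported verbatim from Christensen--Sinnamon--Wu \cite{Christensen:2013aa}, so there is no in-paper argument to compare against. Your overall architecture is exactly the standard one from that source: the ``only if'' direction is trivial, the converse reduces by precomposition-closure of the diffeology to the statement that openness in an open subset of $\R^n$ is detected by smooth curves, and $\Delta$-generation then follows formally (you prove $\R$-generation, which the paper already identifies with $\Delta$-generation). All of that is correct.

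The genuine weak point is precisely the step you flag as the crux, and your sketch of it would not go through as written. Convergence of the normalized directions $(u_k-u_0)/\|u_k-u_0\|$ to a unit vector $v$ is neither sufficient nor relevant: a smooth curve through $u_0$ with velocity $v$ (e.g.\ the straight line $t\mapsto u_0+tv$) need not contain any of the points $u_k$, and a sequence whose directions converge need not lie on any single smooth curve at all --- for instance $u_0=0$, $u_k=(1/k,(-1)^k/k^2)$ in $\R^2$ has directions converging to $(1,0)$, but any $C^2$ curve through $0$ containing all the $u_k$ would force $f''(0)$ to equal both $+2$ and $-2$ in graph coordinates. What actually makes the lemma work is the \emph{rate} of convergence, not the direction: pass to a subsequence converging fast (say $\|u_k-u_0\|\le 2^{-k^2}$, or $k^m\|u_k-u_0\|\to 0$ for all $m$), and build the curve by the special curve lemma of Boman/Kriegl--Michor, i.e.\ a curve that is locally constant with value $u_k$ near $t=1/k$, glued by bump functions, with $\gamma(t)=u_0$ for $t\le 0$; smoothness at $t=0$ comes from the rapid decay, and the curve typically has all derivatives zero at $0$ rather than ``velocity $v$.'' This curve meets infinitely many $u_k$ while $\gamma(0)=u_0\in q^{-1}(A)$, giving the contradiction. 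With that replacement your proof is complete and coincides with the argument in the cited reference.
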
 

We also have:

\begin{proposition}[{\cite[Pr.~3.11]{Christensen:2013aa}}]\label{pro:identify_delta_gen}
	Every locally path-connected first countable topological space is $\Delta$-generated. \qed
\end{proposition}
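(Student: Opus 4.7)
The plan is to prove the contrapositive characterization directly from the definition of $\Delta$-generated: we must show that if $X$ is locally path-connected and first countable, then a subset $U\subseteq X$ is open whenever $p^{-1}(U)$ is open for every continuous $p:\R\to X$ (equivalently, for every continuous $p:D^k\to X$, using that $\R$-generated equals $\Delta$-generated, cf.\ \cite{Christensen:2013aa}). One direction is tautological, so the content is in producing enough continuous curves into $X$ to detect non-open sets.

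My strategy is the standard sequence-to-curve construction. Fix $U\subseteq X$ such that $p^{-1}(U)$ is open for every continuous $p:\R\to X$, and suppose for contradiction that $U$ fails to be open at some point $x\in U$. Using first countability, choose a decreasing countable neighborhood basis $V_1\supseteq V_2\supseteq\cdots$ at $x$, and using local path-connectedness shrink each $V_n$ to a path-connected open neighborhood. Since $x$ is not interior to $U$, we may pick $x_n\in V_n\setminus U$, and by path-connectedness of $V_n$ choose a continuous path $\gamma_n:[0,1]\to V_n$ from $x$ to $x_n$.

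The key step is assembling the $\gamma_n$ into a single continuous curve $p:\R\to X$ witnessing the failure. Define $p$ by reparametrizing $\gamma_n$ onto the interval $[1/(n+1),1/n]$ so that the endpoints match up consistently, setting $p(0)=x$, and extending by constants outside $[0,1]$. Continuity away from $0$ is clear; continuity at $0$ follows because any neighborhood of $x$ contains some $V_N$, hence contains the image $p([0,1/N])\subseteq \bigcup_{n\geq N}V_n\subseteq V_N$. Then $p(0)=x\in U$ while $p(1/n)=x_n\notin U$ for all $n$, so $0\in p^{-1}(U)$ is not interior, contradicting the hypothesis that $p^{-1}(U)$ is open.

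The only delicate point—and the one I expect to need the most care—is the continuity of the assembled curve at the base point $0$, which is precisely where both hypotheses (first countability, to obtain a shrinking basis controlling $p$ near $0$, and local path-connectedness, to keep each $\gamma_n$ inside $V_n$) are used simultaneously. Everything else is bookkeeping with reparametrizations. Note that no hypothesis on $X$ beyond what is stated is needed; in particular Hausdorffness is not used.
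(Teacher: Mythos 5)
The paper does not prove this statement at all: it is quoted directly from Christensen--Wu (the cited Pr.~3.11), so there is no in-paper argument to compare against. Your proof is correct and self-contained, and it is essentially the standard argument (the same sequence-to-curve construction used in the cited reference): detect openness by curves, and if $U$ failed to be open at $x$, use a decreasing first-countable basis $V_n$ shrunk to path-connected open neighborhoods to pick $x_n\in V_n\setminus U$ and paths $\gamma_n\subseteq V_n$ from $x$ to $x_n$, then concatenate into a single curve $p$ with $p(0)=x$; continuity at $0$ holds because $p([0,1/N])\subseteq V_N$, and $p^{-1}(U)$ is then not open, a contradiction. Your reduction from disk-detection to curve-detection is also fine (restrict a curve to intervals $[-n,n]\cong D^1$, or invoke the $\R$-generated $=$ $\Delta$-generated fact already used in the paper). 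The only point to tighten is the gluing: a single copy of $\gamma_n$ reparametrized onto $[1/(n+1),1/n]$ cannot have both endpoints ``match up'' since $\gamma_n$ runs from $x$ to $x_n$; the standard fixes are to place on $[1/(n+1),1/n]$ either the loop $\gamma_n$ followed by its reverse (so both endpoints are $x$, and the midpoints, which still converge to $0$, map to $x_n\notin U$), or the concatenation of $\overline{\gamma_{n+1}}$ with $\gamma_n$ (so the endpoint values are $x_{n+1}$ and $x_n$, which do match across consecutive intervals and give $p(1/n)=x_n$ as you wrote). With either choice the image of $[1/(n+1),1/n]$ lies in $V_n$, so your continuity-at-$0$ estimate goes through verbatim; this is exactly the bookkeeping you flagged and is not a gap. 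Your remark that no Hausdorff hypothesis is needed is also accurate.
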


We will show:
\begin{theorem}\label{thm:spaceofmanifoldcomparison}
	The identity map is a homeomorphism of topological spaces
	 $(\psi_d(M), \tau_\text{plot}) \cong (\psi_d(M), \tau_\text{GRW})$ between $\psi_d(M)$ equipped with the plot topology and the same set equipped with the topology introduced by Galatius-Randal-Williams \cite{MR2653727}. 
\end{theorem}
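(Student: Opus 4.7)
I will split the proof into the two inclusions $\tau_\text{GRW} \subseteq \tau_\text{plot}$ and $\tau_\text{plot} \subseteq \tau_\text{GRW}$.

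\textbf{First inclusion.} The plan is to show that every smooth plot $p\colon U \to \psi_d(M)$ is continuous as a map into $(\psi_d(M), \tau_\text{GRW})$. Granting this, $\tau_\text{GRW}$ is a topology making all smooth plots continuous, and since $\tau_\text{plot}$ is by definition the finest such topology, one concludes $\tau_\text{GRW} \subseteq \tau_\text{plot}$. To verify continuity, unwind the construction of $\tau_\text{GRW}$ from \cite{MR2653727}: a neighborhood basis at $W$ is parameterized by a compact $K \subseteq M$ together with a $C^\infty$-open neighborhood of the inclusion inside $\mathrm{Emb}(W \cap K, M)$. Given a smooth plot $p$ with $p(u_0) = W$, the assumption that $\Gamma(p) \subseteq U \times M$ is a smooth submanifold with submersive projection onto $U$ produces, near $\{u_0\} \times (W \cap K)$, a smooth family of embeddings $f_u\colon W \cap K \hookrightarrow M$ with $p(u) \cap K = f_u(W \cap K)$ and $f_{u_0}$ the inclusion. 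Continuous dependence of $f_u$ on $u$ in $C^\infty$ places $p(u)$ in any prescribed $\tau_\text{GRW}$-basic neighborhood of $W$ for $u$ sufficiently close to $u_0$.

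\textbf{Second inclusion.} Here I exploit first countability of $\tau_\text{GRW}$, which follows from the metrizability established in \cite{MR3243393}. Suppose $V \subseteq \psi_d(M)$ is $\tau_\text{plot}$-open but fails to be $\tau_\text{GRW}$-open at some $W \in V$; first countability supplies a sequence $W_n \to W$ in $\tau_\text{GRW}$ with $W_n \notin V$. The key lemma to establish is the following: after extracting a subsequence $\{W_{n_k}\}$, there exist a smooth plot $q\colon \R \to \psi_d(M)$ and times $t_k \to 0$ with $q(0) = W$ and $q(t_k) = W_{n_k}$. Granting the lemma, $q^{-1}(V)$ is open in $\R$ and contains $0$, hence contains $t_k$ for large $k$, contradicting $W_{n_k} \notin V$.

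\textbf{Constructing the plot.} Fix a compact exhaustion $K_1 \subseteq K_2 \subseteq \cdots$ of $M$ with $K_n \subseteq \mathrm{int}(K_{n+1})$ and a tubular neighborhood of $W$ in $M$. Convergence $W_n \to W$ in $\tau_\text{GRW}$ allows, after passing to a subsequence, to realize each $W_n \cap K_n$ as the image under the tubular exponential of a section $s_n$ of $\nu_W|_{W \cap K_n}$ with $\|s_n\|_{C^n(K_n)} \le 2^{-n}$. Choose pairwise disjoint intervals $I_n \subseteq \R$ centered at $t_n = 1/n$ with widths decreasing faster than any polynomial, and bump functions $\rho_n$ supported in $I_n$ with $\rho_n(t_n) = 1$. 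I then patch together the $s_n$ using the $\rho_n$, performing a smooth interpolation on each annulus $K_{n+1} \setminus K_n$ between the tubular representation of $W_n$ inside $K_n$ and the actual submanifold $W_n$ outside $K_{n+1}$. The rapid $C^\infty$-decay of $s_n$ together with the fast-decaying widths of $I_n$ guarantee that the resulting graph in $\R \times M$ is a smooth submanifold even at $t = 0$, and submersive onto $\R$.

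\textbf{Main obstacle.} The principal difficulty is this gluing step: arranging $q(t_n) = W_n$ globally on $M$ (not merely on $K_n$) while preserving smoothness of the graph at $t = 0$, where it must collapse onto $\{0\} \times W$. Because $\tau_\text{GRW}$-convergence only controls $W_n$ on compact pieces, one must carefully smooth, on each annulus $K_{n+1} \setminus K_n$, between the normal-bundle representation of $W_n$ on $K_n$ and the essentially unconstrained behavior of $W_n$ outside $K_{n+1}$; balancing the decay rates of $\|s_n\|_{C^n}$, the widths of $I_n$, and the exhaustion $K_n$ so the assembled graph is smooth is the technical crux. Once these parameters are balanced, the construction is the standard partition-of-unity smoothing, and the proof concludes.
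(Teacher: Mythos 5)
Your overall strategy is the same as the paper's: the easy direction is that smooth plots are continuous into $(\psi_d(M),\tau_\text{GRW})$, and the substantive direction is reduced, via countability properties of $\tau_\text{GRW}$, to an interpolation lemma producing a genuine smooth plot passing through (a subsequence of) a GRW-convergent sequence and through its limit at $t=0$ (compare Lemma~\ref{lem:curve_approx}). The gap is in your proof of that key lemma, which is exactly the technical heart of the appendix. Your plot $q$ must satisfy $q(t_k)=W_{n_k}$ \emph{globally on $M$} for infinitely many $k$, with the graph smooth and submersive down to $t=0$. But GRW-convergence constrains $W_{n}$ only on compact sets; outside $K_{n}$ the manifolds $W_{n_k}$ and $W_{n_{k+1}}$ are completely unrelated, and on the time interval between $t_{k+1}$ and $t_k$ your family must supply intermediate fibers -- closed, boundaryless submanifolds of $M$ -- joining them. ``Standard partition-of-unity smoothing'' cannot create these fibers: cutting off normal-bundle sections only works where both manifolds are graphical over $W$, i.e.\ on the compact piece, and you cannot truncate $W_{n_k}$ to its part in $K_{n_k}$, since that has boundary. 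This is precisely why the paper works with a \emph{continuous path} $\gamma$ (available because $\tau_\text{GRW}$ is first countable \emph{and locally path-connected}, hence $\Delta$-generated -- an ingredient your sequence-only setup never secures): the path supplies the intermediate fibers away from the compacta, the hand-built straight-line interpolation in the normal bundle is performed only over the compacta $\overline{K}_j$ after ``delaying'' $\gamma$ near the chosen times, and the Galatius--Randal-Williams smooth approximation lemma (Lemma~\ref{lem:smooth_approx}) is then invoked to smooth the graph on the remaining region rel the locus where it is already smooth. Without that lemma, or an equivalent device, your ``main obstacle'' is not a calibration detail but an unproved statement.

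A secondary, fixable point: your calibration goes the wrong way for smoothness at $t=0$. The $t$-derivatives of the patched family scale like (width of $I_n$)$^{-p}$ times the size of $s_n$, so taking widths ``decreasing faster than any polynomial'' while only requiring $\|s_n\|_{C^n}\le 2^{-n}$ makes these quantities blow up rather than vanish. One must instead pass to a further subsequence so that the section norms are super-small relative to the reciprocal interval widths (the paper takes $\rho_j$-seminorm less than $\tfrac{1}{2j^j}$ against intervals of length comparable to $j^{-2}$) and then verify, as the paper does by an explicit derivative estimate, that all $t$-derivatives of the graph tend to $0$ as $t\to 0$.
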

	
The key lemma which will establish Theorem~\ref{thm:spaceofmanifoldcomparison} is the following:

\begin{lemma}\label{lem:curve_approx}
	Let $\gamma: \R \to (\psi_d(M), \tau_\text{GRW})$ be a continuous path. Then for each $t \in \R$ and each choice of convergent sequence $t_i \to t$, there exists a convergent subsequence $t_j \to t$ and a smooth plot $p: \R \to (\psi_d(M), \tau_\text{plot})$ such that $p(\frac{1}{j}) = \gamma(t_j)$ and $p(0) = \gamma(t)$.	
\end{lemma}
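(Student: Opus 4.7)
My plan is to pass to a subsequence of $(t_i)$ using the GRW-continuity of $\gamma$ and then directly construct the graph $\Gamma(p) \subset \R \times M$ of the desired smooth plot as an embedded smooth submanifold whose projection to $\R$ is a submersion. WLOG assume $t = 0$ and set $W := \gamma(0)$.

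Subsequence selection and plateau construction. I fix a compact exhaustion $K_1 \Subset K_2 \Subset \cdots$ of $M$ and a tubular neighborhood $\nu: N_W \to V$ of $W$. By the definition of $\tau_\text{GRW}$ (cf.\ \cite[\S 2.1]{MR2653727}), GRW-continuity of $\gamma$ at $0$ means that for each $j$ and each $\epsilon > 0$, for $s$ sufficiently near $0$ the intersection $\gamma(s) \cap K_{j+1}$ is the normal graph via $\nu$ of a section of $N_W$ over $W$ of $C^j$-norm less than $\epsilon$. Diagonalizing, I extract a subsequence $t_j \to 0$ with $\gamma(t_j) \cap K_{j+1}$ realized as the graph of a section $s_j$ whose $C^j$-norm on $W \cap K_j$ decays faster than any polynomial in $j$. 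Setting $r_j := 1/j$ and choosing pairwise disjoint closed intervals $I_j \subset \R$ around $r_j$ accumulating only at $0$, I declare $\Gamma \cap (I_j \times M) := I_j \times \gamma(t_j)$ and $\Gamma \cap ((-\infty, 0] \times M) := (-\infty, 0] \times W$. On these regions $\Gamma$ is visibly a smooth submanifold with submersive $\R$-projection, and the fibers at $r = r_j$ and at $r = 0$ are exactly the prescribed manifolds.

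Transition regions and main obstacle. On each transition interval $J_j$ between consecutive plateaus, I need to smoothly interpolate the slice from $\gamma(t_{j+1})$ to $\gamma(t_j)$. Inside the compact region $V \cap K_j$, both are small normal graphs and I interpolate them via the $\nu$-image of $\phi_j(r) s_j + (1-\phi_j(r)) s_{j+1}$ for a smooth $\phi_j: J_j \to [0,1]$ all of whose derivatives vanish at the endpoints of $J_j$, matching the plateaus flatly. The hard part is the ``exterior'' pieces outside $V \cap K_j$, where $\gamma(t_j)$ and $\gamma(t_{j+1})$ have no a priori relationship and need not be ambient-isotopic in $M$. The resolution relies on two features: (i) by GRW-convergence the exterior parts of $\gamma(t_j)$ must eventually escape every compact subset of $M$, so after a further sparsification of the subsequence I may assume these parts lie in disjoint non-compact regions; and (ii) the extra $r$-direction in $\R \times M$ permits cobordism-like constructions unavailable in $M$ alone, so each exterior piece can be smoothly ``brought in from infinity'' as $r$ enters $I_j$ and ``sent back off to infinity'' as $r$ exits $I_j$, via a smooth one-parameter family of ambient embeddings supported in the corresponding non-compact finger. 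The superpolynomial decay of the $C^j$-norms of the $s_j$ guarantees $C^\infty$-flatness of $\Gamma$ against $(-\infty, 0] \times W$ at $r = 0$. The chief obstacle is this cobordism-at-infinity construction; when $M$ is compact the issue disappears since the exterior is empty for large $j$ and only the tubular-neighborhood interpolation remains.
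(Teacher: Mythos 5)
Your subsequence extraction with superpolynomially decaying norms, the plateau-and-interpolation structure, and the flatness argument at $r=0$ all match the paper's construction of the auxiliary path. The genuine gap is exactly at the point you flag as the ``chief obstacle'': the exterior pieces of $\gamma(t_j)$ outside the compact core. Neither of the two features you invoke is established. For (i), GRW-convergence only says that for each fixed compact $K$ the slice $\gamma(t_j)\cap K$ is eventually a small graph over $W$; it gives no control whatsoever over how the exterior parts of $\gamma(t_j)$ and $\gamma(t_{j+1})$ sit relative to \emph{each other} in the ends of $M$, so no sparsification of the subsequence puts them into ``disjoint non-compact fingers''. For (ii), sending an exterior piece ``off to infinity'' while keeping the graph region fixed requires a one-parameter family of self-embeddings of $M$ supported away from the core whose images eventually miss the exterior piece; for a general ambient manifold $M$ (and the lemma must hold for general $M$, since Theorem~\ref{thm:spaceofmanifoldcomparison} is stated in that generality) no such family exists --- this is essentially a tameness/engulfing hypothesis, and indeed everywhere else in the paper such ``slide to infinity'' moves are performed only in explicit $\R$-factor directions or under the tameness assumption of Definition~\ref{def:tame_manifold}. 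You would also still have to check that your hand-built graph $\Gamma$ is a closed submanifold with submersive projection along the interface where the artificial exterior meets the interpolated graph region, which your proposal does not address.

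The paper's proof sidesteps all of this: on the transition intervals the modified path $\tilde\gamma$ is defined by the blend $(1-\varphi_j)\,s(t)+\varphi_j(\cdots)$, i.e.\ \emph{outside} the compact core it simply agrees with the original, merely continuous, path $\gamma$ (which makes sense because the reparametrized path is chosen, via the open sets $M_{\overline{L}_j,U_j}$ of Lemma~\ref{lem:tub_nbhd} and a ``delaying'' of $\gamma$ near the $t_j$, to stay graphical near $\overline{L}_j$ on the whole relevant parameter range). Smoothness is then only claimed on the plateaus, on $(-\infty,0]\times M$, and on the core regions $\overline{K}_j$ over the transition intervals, and the final smooth plot is produced by invoking the Galatius--Randal-Williams relative smooth approximation result (Lemma~\ref{lem:smooth_approx}) to smooth $\tilde\gamma$ while keeping it fixed on a closed set $Z$ containing the points $\{1/j\}$ and $\{0\}$, so that $p(1/j)=\gamma(t_j)$ and $p(0)=\gamma(t)$ are preserved. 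So the correct repair of your argument is not a cobordism-at-infinity construction but an appeal to that approximation lemma (or an equivalent smoothing-rel-a-closed-set statement), without which the proof is incomplete.
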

 
\begin{proof}[Proof of Theorem~\ref{thm:spaceofmanifoldcomparison}, given Lemma~\ref{lem:curve_approx}:]
	The topology $\tau_\text{GRW}$ was shown to metrizable in \cite{MR3243393}. In particular it is first countable. A quick inspection of either \cite{MR3243393} or \cite{MR2653727} shows that  is also locally path connected, and consequently by Prop.~\ref{pro:identify_delta_gen}, the $\tau_\text{GRW}$ topology is $\Delta$-generated. It follows that the closed sets of $\tau_\text{GRW}$ are determined by the continuous paths $\gamma: \R \to (\psi_d(M), \tau_\text{GRW})$, as in Prop.~\ref{pro:identify_delta_gen}. It will be sufficient to show these are same closed sets determined by the smooth plots. 
	
	Let $A \subseteq  (\psi_d(M), \tau_\text{plot})$ be a closed subset in the smooth plot topology. It suffices to show for all curves $\gamma: \R \to \psi_d(M)$ which are continuous in the $\tau_\text{GRW}$-topology,  that $\gamma^{-1}(A) \subseteq \R$ is closed. To that end suppose that $t \in \R$ is a limit point of $\gamma^{-1}(A)$, and hence there exists a convergent sequence $t_i \to t$ with $t_i \in \gamma^{-1}(A)$. By Lemma~\ref{lem:curve_approx} there exists a subsequence $\{ t_j \}$ and a smooth plot $p: \R \to \psi_d(M)$ such that $p(\frac{1}{j}) = \gamma(t_j)$ and $p(0) = \gamma(t)$. In particular $\frac{1}{j} \in p^{-1}(A)$. Now since $p$ is a smooth plot, we have that $p^{-1}(A) \subseteq \R$ is closed, and hence $p(0) = \gamma(t) \in A$ as well. It follows that $\gamma^{-1}(A)$ is closed, as desired. 	
\end{proof}

Galatius--Randal-Williams prove many useful facts about their topology, but we will only need to use two.

\begin{lemma}[\cite{MR2653727}]\label{lem:tub_nbhd}
	Let $W \in \psi_d(\R^m)$, and let fix a tubular neighborhood, that is a open neighborhood $N \subseteq \nu_W$ of the zero section of the normal bundle of $W$ and an embedding $N \subseteq \R^n$ identifying $W$ with the zero section $W \subseteq N$. Consider the space $\Gamma_c(N)$ of compactly supported sections of $N$ as a topological space with the strong $C^\infty$ topology (see \cite[Sect~2.1]{MR2653727}).
Then for all compact subsets $K \subset \R^m$ and all open $U \subseteq \Gamma_c(N)$ the following subset of $\psi_d(\R^m)$ is open in $\tau_\text{GRW}$:
	\begin{equation*}
		M_{K,U} = \left\{ W' \in  \psi_d(\R^m) \; | \; \begin{array}{c}
			\text{there exists an open neighborhood } V \supseteq K, \\
			 W' \cap V \text{ differs from } W \cap V \text{ by an element of } U \subseteq \Gamma_c(N)
		\end{array}  \right\}
	\end{equation*}
	\qed
\end{lemma}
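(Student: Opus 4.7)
The plan is to verify directly that $M_{K,U}$ is a neighborhood of each of its points in $\tau_{\text{GRW}}$. Recall that $\tau_{\text{GRW}}$ is defined in \cite[Sect.~2.1]{MR2653727} so that for each basepoint $W_0 \in \psi_d(\R^m)$, a neighborhood basis of $W_0$ consists of sets of the form $M^{W_0}_{K_0,U_0}$: manifolds which, on some open neighborhood of $K_0$, arise as the graph of a section $s_0 \in U_0 \subseteq \Gamma_c(N_0)$ in a chosen tubular neighborhood $N_0$ of $W_0$. Thus the content of the lemma is: given $W' \in M_{K,U}$, produce such a basic neighborhood of $W'$ contained in $M_{K,U}$.

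First I would fix $W' \in M_{K,U}$. By definition there is an open $V \supseteq K$ and $s \in U$ such that $W' \cap V$ is the image of $W \cap V$ under the section map $x \mapsto x + s(x)$ in the chosen identification $N \subseteq \R^m$. Shrinking if necessary, choose a relatively compact open $V_1$ with $K \subseteq V_1 \subseteq \overline{V_1} \subseteq V$. I would then build a tubular neighborhood $N'$ of $W' \cap V_1$ by pushing $N|_{V_1}$ forward along the diffeomorphism $x + v \mapsto (x + s(x)) + v$ (possibly shrinking fibers to remain inside $N$). This provides an explicit identification between the zero section of $N'$ and $W' \cap V_1$ together with a canonical fiber-preserving diffeomorphism $\Phi : N' \to N|_{V_1}$ over $\R^m$.

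The core step is then a continuity check: the induced map $\Phi_* : \Gamma_c(N') \to \Gamma_c(N)$ sending a section $s'$ of $N'$ to the unique section $s''$ of $N$ whose graph in $N$ coincides (over the appropriate open set) with the graph of $s'$ in $N'$ is continuous in the strong $C^\infty$ topology; in local coordinates this map has the shape $s''(x) = s(x) + s'(x + s(x))$ up to the bundle identifications. This is a standard but fiddly statement about the continuity of composition in the strong $C^\infty$ topology, which holds once sections are restricted to have support in a fixed compact subset of $V_1$. Since $\Phi_*(0) = s \in U$, continuity of $\Phi_*$ provides an open neighborhood $U' \subseteq \Gamma_c(N')$ of $0$ with $\Phi_*(U') \subseteq U$; then $M^{W'}_{K,U'} \subseteq M^{W}_{K,U} = M_{K,U}$, which is the desired basic GRW-neighborhood of $W'$.

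The main obstacle is the verification that $\Phi_*$ is continuous in the strong $C^\infty$ topology; this is where the smoothness and compact support of $s$ are crucially used, and where one must carefully localize to the compact $\overline{V_1}$ in order to convert estimates on $s'$ near $W' \cap V_1$ into estimates on $s''$ near $W \cap V_1$. The remaining work — localizing to $V_1$, confirming that the pushed-forward $N'$ is a genuine tubular neighborhood, and unwinding the definition of $\tau_{\text{GRW}}$ — is straightforward bookkeeping.
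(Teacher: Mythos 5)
There is nothing in the paper to compare your argument against: Lemma~\ref{lem:tub_nbhd} is quoted from Galatius--Randal-Williams \cite{MR2653727} with a citation in lieu of proof, and the appendix only consumes it as a black box (via Lemma~\ref{lem:curve_approx} and Theorem~\ref{thm:spaceofmanifoldcomparison}). So you are supplying a proof of one of the imported inputs. Your strategy is the right one, and it is essentially the standard verification that the graph-chart neighborhoods define a topology in which they are open: given $W'\in M_{K,U}$ witnessed by $(V,s)$, transplant the chart at $W$ to a chart at $W'$ along the graph map and use continuity of the induced map $\Phi_*$ on section spaces at $0$ to manufacture a basic neighborhood of $W'$ inside $M_{K,U}$. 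You also identify the right reason the continuity holds, which is worth making explicit: the base diffeomorphism $x\mapsto e(x,s(x))$ differs from the inclusion only over the compact support of $s$, so its derivatives of every fixed order are uniformly bounded, $\Phi_*(s')-s$ is the transplanted $s'$ extended by zero, and its seminorms are controlled by those of $s'$; moreover, because $N'$ is cut out of $N$ by the graph map, the transplanted section automatically takes values in $N$.

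Two steps are thinner than they should be. First, the basic $\tau_{\text{GRW}}$-neighborhoods of $W'$ are defined using a tubular neighborhood of all of $W'$, whereas your $N'$ is only a vector-bundle neighborhood of $W'\cap V_1$; you must either extend it to an honest tubular neighborhood of $W'$ agreeing with the pushed-forward one near $K$, or invoke independence of the neighborhood filter from the choice of tubular neighborhood --- and that independence is itself one of the consistency statements that has to be proved at this stage, so it cannot simply be assumed. Second, continuity of $\Phi_*$ does not by itself finish the membership check. If $W''$ agrees with the graph of $s'\in U'$ on some neighborhood $V''\supseteq K$, then the graph of $t=\Phi_*(s')$ coincides with the graph of $s'$ only over base points in $W\cap V_1$; over $W\setminus V_1$ it is the graph of $s$, which need not lie in $W''$. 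To get the required set equality $W''\cap V'''=\mathrm{graph}(t)\cap V'''$ on some neighborhood $V'''$ of $K$ you still have to rule out sheets of $\mathrm{graph}(t)$ lying over distant base points entering $V'''$, which needs a compactness/properness argument (and possibly a $C^0$-shrinking of $U'$), not an estimate in $\Gamma_c$. Both points are standard and fixable, so I would call the proposal correct in outline but incomplete exactly at the bookkeeping it defers.
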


The strong $C^\infty$ topology is well studied. 
We do not need to know much about the strong $C^\infty$ topology on $\Gamma_c(\nu_W)$ except that it is locally convex vector space and the topology is induced by a countable directed family of seminorms $\{\rho_k\}$ with $\rho_k \leq \rho_{k+1}$. 

The second result of Galatius--Randal-Williams that we will use is their smooth approximation lemma. We only state a special restricted case for curves:

\begin{lemma}[\cite{MR2653727}] \label{lem:smooth_approx}
	Let $\gamma: \R \to \psi_d(M)$ be a continuous path with respect to the topology $\tau_\text{GRW}$. 
	Let $V \subseteq \R\times M$ be open and $S$ such that $\overline{V} \subseteq int(S) $. Then there exists a homotopy $F: [0,1] \times \R \to \psi_d(\R^m)$ starting at $\gamma$, which is smooth on $(0,1] \times V \subseteq [0,1] \times \R \times \R^m$ and is constant outside $S$. Furthermore, if $\gamma$ is already smooth on an open set $A\subseteq V$, then the homotopy can be assumed smooth on $[0,1] \times A$ and constant on $[0,1] \times Z$ for any closed subset $Z \subseteq A$. \qed
\end{lemma}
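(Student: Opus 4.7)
The plan is to construct $F$ by convolving $\gamma$ with a mollifier in the $\R$-variable, with the mollifier bandwidth controlled by the homotopy parameter $\tau \in [0,1]$ so that $\tau = 0$ reproduces $\gamma$ exactly and $\tau > 0$ produces a family of submanifolds that is jointly smooth in $(\tau, t, x)$. Because the GRW topology is locally modelled by the strong $C^\infty$-topology on sections of a tubular neighborhood (Lemma~\ref{lem:tub_nbhd}), the problem reduces to the classical one of smoothing a continuous one-parameter family of compactly supported $C^\infty$ sections of a vector bundle, where standard convolution-in-time arguments apply.

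First, for each $t_0 \in \R$ and each $x_0 \in \gamma(t_0) \cap \overline{S}$, continuity of $\gamma$ in $\tau_{\text{GRW}}$ together with Lemma~\ref{lem:tub_nbhd} supplies an open interval $J \ni t_0$, an open $K \ni x_0$, a tubular neighborhood $N$ of $\gamma(t_0) \cap K$ in $\R^m$, and a continuous family $t \mapsto s_t \in \Gamma_c(N)$ (in the strong $C^\infty$-topology) such that $\gamma(t) \cap K$ is the graph of $s_t$ for all $t \in J$. Continuity in each seminorm $\rho_k$ permits differentiation under the integral, so the convolution
\begin{equation*}
s_t^{\epsilon}(x) \;=\; \int_\R \phi_\epsilon(u)\, s_{t-u}(x)\, du,
\end{equation*}
with $\phi_\epsilon$ a standard mollifier of width $\epsilon$, is jointly smooth in $(t,x)$ for each $\epsilon > 0$ and converges to $s_t$ in every seminorm as $\epsilon \to 0$. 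This is the local smoothing.

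Next, I would cover a neighborhood of $\R \times (\overline{V} \cap \overline{S})$ by countably many such charts $(J_i, K_i, N_i, s_{i,\cdot})$ with locally finite overlap, pick a smooth partition of unity $\{\eta_i\}$ subordinate to this cover, a cutoff $\chi$ that equals $1$ on $\overline{V}$, vanishes outside $\mathrm{int}(S)$, and vanishes on an open neighborhood of $Z$, and a smooth modulation $\epsilon(\tau)$ with $\epsilon(0) = 0$ and $\epsilon(\tau) > 0$ for $\tau > 0$. The homotopy is defined by replacing $s_{i,t}$ in each chart by $(1 - \tau\chi\eta_i)\, s_{i,t} + \tau\chi\eta_i\, s_{i,t}^{\epsilon(\tau)}$. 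For $\epsilon(\tau)$ sufficiently small (which one arranges by further shrinking the cover), the perturbed sections remain $C^0$-small sections of $N_i$ and define smooth submanifolds $F(\tau, t) \subseteq \R^m$. The conditions on $\chi$ then yield constancy outside $S$ and on $Z$, while on $A$ one arranges the cover so that charts meeting $A$ already carry smooth $s_{i,t}$, so convolution preserves the smoothness there.

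The main obstacle will be compatibility of the chartwise smoothings on overlaps $N_i \cap N_j$: the identifications of $\gamma(t) \cap N_i \cap N_j$ with sections over the two different base manifolds $\gamma(t_i)$ and $\gamma(t_j)$ differ by a $t$-dependent smooth diffeomorphism, and convolving chartwise generally yields two inequivalent smoothings of $\gamma(t)$, so a naive partition-of-unity patch does not assemble into a single submanifold. The cleanest resolution is to avoid chartwise convolution and instead perturb the graph $\Gamma(\gamma) \subseteq \R \times \R^m$ inside a single globally chosen tubular neighborhood of $\Gamma(\gamma) \cap (\R \times V)$ (in $\R \times \R^m$ with the product metric) by a mollifier applied only in the $\R$-factor; the local computation above guarantees that this ambient perturbation agrees, up to $C^\infty$-small error, with the chartwise convolutions, and the partition of unity $\eta_i\chi$ is then used only to turn the perturbation off smoothly outside $S$ and on $Z$, not to reconcile inequivalent smoothings. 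Joint smoothness in $(\tau, t, x)$ on $(0,1] \times V$ follows once $\epsilon(\tau)$ is chosen to be smooth in $\tau$ and bounded away from $0$ on every $[\tau_0, 1]$ with $\tau_0 > 0$.
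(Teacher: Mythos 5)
First, note that the paper does not actually prove this lemma: it is quoted from Galatius--Randal-Williams \cite{MR2653727} (with the remark that keeping the homotopy constant on $Z$ is an easy extension of their argument), so any complete proof you give must in effect redo their smooth approximation argument. Your local step is fine: in a chart supplied by Lemma~\ref{lem:tub_nbhd}, a path that is continuous in the strong $C^\infty$-topology on $\Gamma_c(N)$ becomes, after convolution in the time variable, jointly smooth in $(t,x)$ for each $\epsilon>0$, with convergence in every seminorm as $\epsilon\to 0$; that is a standard and correct computation, and it is the same local mechanism used in \cite{MR2653727}.

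The gap is in your proposed globalization. You correctly identify that chartwise convolutions do not agree on overlaps (the section descriptions over $\gamma(t_i)$ and $\gamma(t_j)$ differ by a $t$-dependent diffeomorphism, and a partition of unity cannot average two different submanifolds), but your ``cleanest resolution'' does not exist: you ask for a single tubular neighborhood of the graph $\Gamma(\gamma)\subseteq\R\times\R^m$ over $\R\times V$ and a mollification in the $\R$-factor performed inside it. Since $\gamma$ is only continuous in $\tau_\text{GRW}$, the graph $\Gamma(\gamma)$ is merely a closed subset which is smooth in each slice; it is not a smooth (or even $C^1$) submanifold of $\R\times\R^m$, so it has no normal bundle and no tubular neighborhood, and there is no well-defined operation of ``mollifying the subset in the $\R$-direction'' without first choosing exactly the kind of $t$-dependent normal structure whose non-existence caused the overlap problem. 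Indeed, if such a global normal structure varying smoothly in $t$ existed, the path would already be essentially smooth, so the proposed fix is circular. The way this is actually handled in \cite{MR2653727} is by an induction over a countable, locally finite system of charts together with a relative statement: one smooths in one chart at a time, using that convolution of an already-smooth family stays smooth and that $C^\infty$-small perturbations keep the graphical description valid in the neighboring charts, so that earlier gains are not destroyed; the constancy outside $S$ and on $Z$ is then obtained by cutoffs exactly as you describe. Without this inductive, relative patching (or some substitute for it), your construction does not produce a well-defined family $F(\tau,t)\in\psi_d(\R^m)$.
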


\begin{remark}
	The result stated in Galatius--Randal-Williams does not mention being able to keep the homotopy constant on the closed set $Z$, but this is an easy extension of their proof. 
\end{remark}

\begin{proof}[Proof of Lemma~\ref{lem:curve_approx}]
	Let $\gamma: \R \to \psi_d(M)$ be a path which is continuous in the topology $ \tau_\text{GRW}$, and let  $t_i \to t$ be a convergent sequence in $\R$. We wish to show that there exists a convergent subsequence $t'_j \to t$ and a smooth plot $p: \R \to \psi_d(M)$ such that $p(\frac{1}{j}) = \gamma(s_j)$ and $p(0) = \gamma(t)$.
	
Without loss of generality we may assume that the $t_i$ are distinct and strictly decreasing. That is $t_i > t_{i+1} > t$. Moreover we can replace $\gamma$ with a continuous path which is delayed around each $t_i$ (i.e. is a constant path in a neighborhood around each $t_i$). Thus we may assume that there are sequences $a_i$, $b_i$ with 
\begin{equation*}
	\dots < a_{i+1} < t_{i+1}< b_{i+1} < a_i < t_i < b_i <  \cdots 
\end{equation*}
	such that the restriction of $\gamma$ to $[a_i, b_i]$ is the constant path with value $\gamma(t_i)$. 

Let $W = \gamma(t)$. Fix a tubular neighborhood $N \subseteq M$ of $W$ as in  Lemma~\ref{lem:tub_nbhd}. We may assume $N$ is convex. Fix a metric $g$ on $\nu_W$. Next we choose a nested sequence of compact subsets $C_j \subseteq K_j \subseteq L_j \subseteq W$, and smooth bump functions $\varphi_i: M \to [0,1]$ with $\varphi_i|_{K_i} \equiv 1$ and $\varphi_i|_{W \setminus L_i} \equiv 0$ such that $int(C_j) \neq \emptyset$,$C_j \subseteq int(K_j)$,  $L_j \subseteq C_{j+1}$, and 
\begin{equation*}
	\cup_j C_j = \cup_j K_j = \cup_j L_j = W.
\end{equation*}
and moreover we choose $\epsilon_j$ such that $\overline{B}(\epsilon_j, g) \cap \nu_W|_{L_j} \subseteq N$ where $\overline{B}(\epsilon_j, g)$ is the closed disk bundle of radius $\epsilon_j$ in the metric $g$. The restriction of this disk bundle to $L_j$ is a compact subset of $M$. For future reference we let:
\begin{align*}
	\overline{L}_j &= \overline{B}(\epsilon_j, g) \cap \nu_W|_{L_j} \\
	\overline{K}_j &= \overline{B}(\epsilon_j, g) \cap \nu_W|_{K_j} \\
	\overline{C}_j &= \overline{B}(\epsilon_j, g) \cap \nu_W|_{C_j}
\end{align*}

We will inductively choose the subsequence $t'_j$ as follows. For each $j$ we consider the open subset $U_j = \Gamma_c(N) \cap B(\frac{1}{2 j^j}; \rho_j) \subseteq \Gamma_c(\nu_W)$ which is the intersection of $N$ and the open ball of radius $\frac{1}{2 j^j}$ around the zero section in the $j^\text{th}$ seminorm $\rho_j$. By Lemma~\ref{lem:tub_nbhd} the subset
	\begin{equation*}
		M_{\overline{L}_j,U_j} = \left\{ W' \in  \psi_d(\R^m) \; | \; \begin{array}{c}
			\text{there exists an open neighborhood } V \supseteq \overline{L}_j, \\
			 W' \cap V \text{ differs from } W \cap V \text{ by an element of } U_j \subseteq \Gamma_c(N)
		\end{array}  \right\}
	\end{equation*}
is an open neighborhood of $W = \gamma(t)$. Hence $\gamma^{-1}(M_{\overline{L}_j,U_j})$ is an open neighborhood of $t \in \R$. It follows that there exists some $i$ such that $t_i < t'_{j-1}$ such that for all $s$ with $t \leq s \leq b_i$,  $\gamma(s) \in M_{\overline{L}_j,U_j}$. Set $t'_j = t_i$, $a'_j = a_i$, and $b'_j = b_i$. 

Next we reparametrize $\gamma$ by precomposing with a homeomorphism of $\R$ which sends:
\begin{align*}
	0 & \mapsto t \\ 
	\frac{1}{j} &\mapsto t'_j \\ 
	\frac{j + \frac{2}{3}}{j (j+1)} & \mapsto a'_j \\ 
	\frac{j + \frac{1}{3}}{j (j+1)} & \mapsto b'_{j+1}
\end{align*}
We will still use $\gamma$ to denote this reparametrized path. 

We now define a new continous path $\tilde{\gamma}: \R \to \psi_d(M)$. It satisfies:
\begin{equation*}
 \tilde{\gamma}(t) = \begin{cases}
 	W = \gamma(0) & t \leq 0 \\
	\gamma(\frac{1}{j}) & \frac{j + \frac{2}{3}}{j ( j+1)} \leq t \leq \frac{(j-1) + \frac{1}{3}}{(j-1) j} \\
	\gamma(1) & t \geq 1
 \end{cases}	
\end{equation*}
It remains to define $\tilde{\gamma}$ on the intervals: 
\begin{equation*}
	\left[  \frac{j + \frac{1}{3}}{j ( j+1)},  \frac{j + \frac{2}{3}}{j ( j+1)} \right]
\end{equation*}
On this interval $\tilde{\gamma}(t)$ agrees with $\gamma(t)$ outside of $\overline{L}_j$. By our choice of subsequence $t'_j$ we know that in a neighborhood of $\overline{L}_j$, $\gamma(t)$ is given by the graph of a section $s(t)$ of $\Gamma_c(N)$ over $W$. The same is true for $\tilde{\gamma}(t)$. In the same neighborhood of $\overline{L}_j$, $\tilde{\gamma}(t)$ is the graph of the section $\tilde{s}(t) \in \Gamma(N)$ defined as 
\begin{equation*}
	\tilde{s}(t) = (1- \varphi_j) \cdot s(t)  + \varphi_j |_W ( s(\frac{1}{j+1}) + \phi\left( \frac{t - \frac{j + \frac{1}{3}}{j (j+1)}  }{ \frac{j + \frac{2}{3}}{j (j+1)} - \frac{j + \frac{1}{3}}{j (j+1)} } \right) \cdot (s(\frac{1}{j}) - s(\frac{1}{1+j})) )
\end{equation*}
where $\phi: \R \to [0,1]$ is any fixed smooth map which is 0 on $\{ t \; |\; t \leq 0\}$ and 1 on $\{ t \; |\; t \geq 1\}$, and $\varphi_j$ is the smooth bump function chosen earlier which is identically one on $K_j$ and zero on the compliment of $L_j$. 

Thus near $\overline{K}_j$, in the interval in question,  $\tilde{\gamma}$ is smooth and traces out a straight line path from $\gamma(\frac{1}{j})$ to $\gamma(\frac{1}{j+1})$. Out side of $\overline{K}_j$ it interpolates continuously back to $\gamma(t)$. We have that $\tilde{\gamma}$ is smooth on the union 
\begin{equation*}
	\left(\bigcup_j M \times \left[ \frac{j + 1 + \frac{2}{3}}{(j+1)(j+2)}, \frac{j + \frac{1}{3}}{j ( j+1)} \right]\right) \cup \left( \bigcup_j \overline{K}_j  \times 	\left[  \frac{j + \frac{1}{3}}{j ( j+1)},  \frac{j + \frac{2}{3}}{j ( j+1)} \right] \right)  
\end{equation*}
and is also clearly smooth on $  (-\infty,0) \times M$ and $ (1, \infty) \times M $. In fact $\tilde{\gamma}$ is smooth at $t=0$ as well. This can be seen by computing the derivatives $\tilde(\gamma)$ with respect to $t$, which must vanish at all orders. For $t < \frac{1}{j}$ and in $\overline{K}_j$ we have have 
\begin{equation*}
	(\tilde{s})^{(p)}(t) = \phi^{(p)}\left( \frac{t - \frac{j + \frac{1}{3}}{j (j+1)}  }{ \frac{j + \frac{2}{3}}{j (j+1)} - \frac{j + \frac{1}{3}}{j (j+1)} } \right)\cdot 3^p j^p (j+1)^p \cdot (s(\frac{1}{j}) - s(\frac{1}{1+j})) 
\end{equation*}
which converges to zero as $j \to \infty$ in any of the seminorms $\rho_j$.

Now we apply the smooth approximation Lemma~\ref{lem:smooth_approx} with $V = S = M \times \R$. $A$ is the interior of the union of $(-\infty, 0] \times M$ and
	\begin{equation*}
		\left(\bigcup_j M \times \left[ \frac{(j + 1) + \frac{2}{3}}{(j+1)(j+2)}, \frac{j + \frac{1}{3}}{j ( j+1)} \right]\right) \cup \left( \bigcup_j \overline{K}_j  \times 	\left[  0,  \frac{j + \frac{2}{3}}{j ( j+1)} \right] \right)  
	\end{equation*}
while $Z$ is the union of $(-\infty, 0] \times M$ and
\begin{equation*}
		\left(\bigcup_j M \times \left[ \frac{(j + 1) + \frac{5}{6}}{(j+1)(j+2)}, \frac{j + \frac{1}{6}}{j ( j+1)} \right]\right) \cup \left( \bigcup_j \overline{C}_j  \times 	\left[  0,  \frac{j + \frac{2}{3}}{j ( j+1)} \right] \right)  
\end{equation*}
The result is a smooth plot $p: \R \to \psi_d(M)$ which agrees with $\tilde{\gamma}$ on $Z$. In particular
\begin{equation*}
	p(\frac{1}{j}) = \tilde{\gamma}(\frac{1}{j}) = \gamma(t_j) 
\end{equation*}
as desired. 
\end{proof}

\section{Embedded manifolds and $BDiff(M)$}\label{app:embedded_classifying}

The purpose of this section is to advertise the plot-theoretic approach to the topology on the space of embeddings by providing a simple proof that the space of embeddings is a union of classifying spaces:
\begin{equation*}
	\psi_d(D^\infty) = \colim_p \psi_d(D^p) \simeq \coprod_{[M]} B Diff(M)
\end{equation*}
is a disjoint union, taken over the diffeomorphism types of compact $d$-manifolds $M$, of the classifying space of the diffeomorphism group of $M$. Recall that $\psi_d(D^p)$ is the space of closed embedded $d$-manifolds $W \subseteq D^p$ which are disjoint from $\partial D^p$. In particular $W$ is a compact $d$-manifold. The colimit $\psi_d(D^\infty)$ is similar with manifolds embedded into $D^\infty$. 

Fix a compact $d$-manifold $M$ and let $\psi_d^M(D^p)$ be the subset of $\psi_d(D^p)$ consisting of those embedded manifolds $W$ which are diffeomorphic to $M$ (the diffeomorphism is not specified). 

\begin{lemma}
	The space $\psi_d(D^p)$ decomposes as a disjoint union over the set of diffeomorphism types $[M]$ of compact $d$-manifolds $M$:
	\begin{equation*}
		\psi_d(D^p) = \coprod_{[M]} \psi_d^M(D^p).
	\end{equation*}
\end{lemma}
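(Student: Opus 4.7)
The set-theoretic decomposition is trivial: every $W \in \psi_d(D^p)$ is a compact $d$-manifold (closed in $D^p$ and disjoint from $\partial D^p$), so it lies in exactly one $\psi_d^M(D^p)$. The content of the lemma is therefore purely topological: each $\psi_d^M(D^p)$ must be shown to be clopen in $\psi_d(D^p)$. Since the complement of $\psi_d^M(D^p)$ is the union of the other pieces $\psi_d^{M'}(D^p)$, it suffices to show that each $\psi_d^M(D^p)$ is open; equivalently, I will show that the diffeomorphism type defines a locally constant function on $\psi_d(D^p)$.

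Because the topology on $\psi_d(D^p)$ is the plot topology, a subset $A \subseteq \psi_d(D^p)$ is open exactly when $q^{-1}(A)$ is open in $U$ for every smooth plot $q: U \to \psi_d(D^p)$. Thus the plan is to show the following: for every plot $q: U \to \psi_d(D^p)$ with $U$ a connected smooth manifold, all the fibers $q(u)$, $u \in U$, are pairwise diffeomorphic. Fix such a plot. By definition it is the data of a smooth submanifold $\Gamma(q) \subseteq U \times D^p$, disjoint from $U \times \partial D^p$, such that the projection $\pi: \Gamma(q) \to U$ is a submersion, and with $\pi^{-1}(u) = \{u\} \times q(u)$ for each $u$.

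The key geometric input is Ehresmann's fibration theorem: a proper submersion between smooth manifolds is a locally trivial smooth fiber bundle. Applied to $\pi$, this would give that $\Gamma(q) \to U$ is a fiber bundle, so the fibers over any connected $U$ are all diffeomorphic to a fixed compact $d$-manifold $M$, proving $q(U) \subseteq \psi_d^M(D^p)$ and hence that $\psi_d^M(D^p)$ is open. The step that requires real justification is therefore properness of $\pi$, which is the main obstacle. Each fiber $\{u\} \times q(u)$ is compact (since $q(u) \subseteq D^p$ is closed and disjoint from $\partial D^p$), but this is only fiberwise compactness, not properness. I expect to handle this by establishing that $\Gamma(q)$ is in fact a closed subset of $U \times D^p$: since $q(u)$ is a closed subset of $D^p$ contained in the interior for each $u$, and since smoothness of the family together with the submersion hypothesis prevents $\Gamma(q)$ from accumulating on $U \times \partial D^p$ locally in $U$, one verifies closedness by a standard argument on convergent sequences. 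Once $\Gamma(q)$ is closed in $U \times D^p$, the projection $\pi$ factors through the proper map $U \times D^p \to U$, hence is proper, and Ehresmann's theorem concludes the proof. If global closedness is hard to arrange uniformly over all of $U$, the same conclusion follows locally: at each $u_0 \in U$, one chooses a compact neighborhood of $q(u_0)$ in the interior of $D^p$ in which $\Gamma(q)$ is trapped for nearby $u$, giving local properness, which is enough to trivialize $\pi$ near $u_0$ and conclude that the diffeomorphism type is locally constant.
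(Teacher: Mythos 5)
Your strategy is the paper's: detect openness on plots, show that over a connected parameter manifold the projection $\Gamma(q)\to U$ is a proper submersion, invoke Ehresmann to conclude the diffeomorphism type of the fibers is locally constant, and deduce that each $\psi_d^M(D^p)$ is clopen. (The paper first reduces to plots with source $\R$, using that the topology of a diffeological space is detected by smooth curves, but nothing in this lemma needs that reduction.) The difference is in how properness is obtained, and that is where your proposal has a genuine gap. You try to \emph{derive} closedness of $\Gamma(q)$ in $U\times D^p$ from the other conditions, claiming that the submersion hypothesis prevents accumulation on $U\times\partial D^p$. But fiberwise closedness together with the submanifold and submersion conditions does not imply the graph is closed, and the dangerous accumulation is at interior points, not at the boundary: take $d=0$, $p=1$, $U=\R$ and $\Gamma=\{(t,0)\mid t<0\}\subseteq\R\times D^1$, i.e.\ $q(t)=\{0\}$ for $t<0$ and $q(t)=\emptyset$ for $t\geq 0$. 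Every fiber is a closed $0$-submanifold of $D^1$ disjoint from $\partial D^1$, $\Gamma$ is an embedded $1$-submanifold with submersive projection, yet $\Gamma$ is not closed (it accumulates at the interior point $(0,0)$). Your fallback via local properness fails on the same example: $\Gamma$ is trapped in a compact interior set for all $u$ near $0$, but the projection over any neighborhood of $0$ is still not proper, and the fiber type jumps from a point to $\emptyset$. Indeed, if such maps were admitted as plots the lemma itself would be false, since the preimage of the piece indexed by the empty manifold under this map is $[0,\infty)$, which is not open.

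The repair is not to prove closedness but to read it as part of the data of a plot, which is what the paper does: in its proof the graph of a plot is recalled as an embedded manifold in $\R\times D^p$, disjoint from the boundary and closed as a subset of $\R\times D^p$ (i.e.\ it is itself a point of the space of embedded manifolds of the product), with submersive projection. Granting that, your own factoring argument — a closed inclusion into $U\times D^p$ followed by the projection $U\times D^p\to U$, which is proper because $D^p$ is compact — gives properness immediately, and the rest of your argument coincides with the paper's proof.
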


\begin{proof}
	By Prop.~\ref{pro:diffeology_by_curves} the topology of any diffeological space, such as $\psi_d(D^p)$, is determined by the curves, that is the plots index by $\R$. So we will only consider those. Recall that a  plot $R \to \psi_d(D^p)$ consists of an embedded $(d+1)$-manifold $W \subseteq \R \times \D^p$, disjoint from the boundary, and such that the projection $W \to \R$ is a submersion. Since the projection $\R \times \D^p \to \R$ is proper (since $D^p$ is compact) and $W \subseteq \R \times D^p$ is closed, it follows that $W \to \R$ is also proper. Then by Ehresmann's fibration theorem $W \to \R$ is a locally trivial fiber bundle, and hence trivial since $\R$ is connected and contractible. It follows that $W \cong M \times \R$ (as a space over $\R$) for some compact $d$-manifold $M$ and hence the plot is entirely contained in $\psi_d^M(D^p)$. It follows that the sets $\psi_d^M(D^p)$ are both closed and open.   
\end{proof}

From the proof we also see that the plots for $\psi_d^M(D^p)$ have a nice description. A plot $\R \to \psi_d^M(D^p)$ consists of an embedded manifold $W \subseteq \R \times D^p$, disjoint from the boundary such that there exists a diffeomorphism $W \cong \R \times M$, commuting with the projection to $\R$. This suggests introducing a new space:
\begin{definition}
		Let $Emb(M,D^p)$ denote the set of embeddings $M \hookrightarrow D^p$ which are disjoint from $\partial D^p$, topologized by the following set of plots. A plot $\R \to Emb(M,D^p)$ is an embedding 
		\begin{equation*}
			\phi:\R \times M \to \R \times D^p
		\end{equation*}
		commuting with the projection to $\R$ and disjoint from the boundary. 
\end{definition}

There is a map $Emb(M, D^p) \to \psi_d^M(D^p)$ which sends an embedding to its image $\phi(M) \subseteq D^p$. This clearly sends plots to plots and hence is continuous. The fiber is easily seen to be the diffeomorphism group $Diff(M)$ with plots $\R \to Diff(M)$ given by diffeomorphisms $\R \times M \cong \R \times M$ commuting with the projection to $\R$. 

\begin{lemma}
	The map $Emb(M, D^p) \to \psi_d^M(D^p)$ realizes $Emb(M, D^p)$ as a $Diff(M)$-principle bundle over $\psi_d^M(D^p)$.
\end{lemma}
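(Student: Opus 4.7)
The plan is to verify the three defining properties of a principal $\mathrm{Diff}(M)$-bundle in turn: a free and fiber-transitive action, and local triviality. Throughout we will check continuity at the level of plots, exploiting the diffeological description of both spaces.

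First I would set up the action. The group $\mathrm{Diff}(M)$ acts on $\mathrm{Emb}(M,D^p)$ by precomposition, $(g,\phi)\mapsto \phi\circ g$, and this plainly sends plots (i.e.\ families $\R\times M\to \R\times D^p$ of embeddings) to plots. Since each $\phi:M\hookrightarrow D^p$ is injective, the action is free. If $\phi_1,\phi_2$ have the same image $W\in \psi_d^M(D^p)$ then $g:=\phi_1^{-1}\circ\phi_2$ is a well-defined diffeomorphism of $M$ with $\phi_1 g=\phi_2$, so the orbits of the action are exactly the fibers of $\mathrm{Emb}(M,D^p)\to \psi_d^M(D^p)$. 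Thus this map exhibits $\psi_d^M(D^p)$ set-theoretically as the orbit space $\mathrm{Emb}(M,D^p)/\mathrm{Diff}(M)$.

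Next I would build local sections by tubular neighborhoods. Fix $\phi_0\in \mathrm{Emb}(M,D^p)$ with image $W_0=\phi_0(M)$. Choose a tubular neighborhood $N\subseteq D^p$ of $W_0$, i.e.\ an open neighborhood $N$ together with a smooth retraction $\pi:N\to W_0$ realizing $N$ as the total space of the normal bundle $\nu_{W_0}$. Let $U_0\subseteq \psi_d^M(D^p)$ consist of those $W$ with $W\subseteq N$ and such that $\pi|_W:W\to W_0$ is a diffeomorphism. Then the formula
\begin{equation*}
  s:U_0\longrightarrow \mathrm{Emb}(M,D^p),\qquad s(W)(m)\;=\;(\pi|_W)^{-1}\bigl(\phi_0(m)\bigr)
\end{equation*}
defines a set-theoretic section of $\mathrm{Emb}(M,D^p)\to \psi_d^M(D^p)$ over $U_0$. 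To verify that $U_0$ is open and that $s$ is continuous, I would work directly with plots: given a smooth plot $p:\R\to \psi_d^M(D^p)$ with $p(0)\in U_0$, the total space $W\subseteq \R\times D^p$ is a submanifold submerging over $\R$, and for small $t$ the composition $W\cap(\R\times N)\to \R\times W_0$ induced by $\mathrm{id}\times\pi$ is a local diffeomorphism over $\R$ by the inverse function theorem. Its inverse, postcomposed with $\mathrm{id}\times\phi_0$, is precisely the plot $\R\to \mathrm{Emb}(M,D^p)$ corresponding to $s\circ p$; in particular $p^{-1}(U_0)$ is open and $s\circ p$ is a plot. This proves that $U_0$ is open and $s$ is continuous.

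Finally, I would assemble these into the principal bundle structure. The combined map
\begin{equation*}
  U_0\times \mathrm{Diff}(M)\longrightarrow \mathrm{Emb}(M,D^p)|_{U_0},\qquad (W,g)\longmapsto s(W)\circ g,
\end{equation*}
is a continuous bijection, and its inverse is $\phi\mapsto (\phi(M),\; s(\phi(M))^{-1}\circ \phi)$, which is continuous by the same plot-theoretic argument applied to $s$. Translating the neighborhood $U_0$ around by the $\mathrm{Diff}(M)$-action and varying $\phi_0$ over all of $\mathrm{Emb}(M,D^p)$ produces an open cover of $\psi_d^M(D^p)$ over each member of which the bundle is trivialized $\mathrm{Diff}(M)$-equivariantly, which is exactly the principal bundle condition. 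The main obstacle is the plot-theoretic verification of local triviality in the middle step; everything else follows formally once the tubular neighborhood argument is carried out carefully in families.
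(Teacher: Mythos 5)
Your construction is essentially the paper's: both proofs fix a tubular neighborhood $N$ of a reference submanifold, take as trivializing neighborhood the set of $W\in\psi_d^M(D^p)$ that lie in $N$ and project diffeomorphically onto the zero section, and trivialize by composing with the projection $\pi$ (your section $s(W)=(\pi|_W)^{-1}\circ\phi_0$ is inverse to the paper's trivialization $\phi\mapsto(\pi\circ\phi,\phi(M))$, up to identifying $W_0$ with $M$ via $\phi_0$); the free-action/orbit discussion at the start is fine and is left implicit in the paper. The one place where you genuinely diverge is the verification that the trivializing set is open and the section continuous, and there your argument as written has a gap. The inverse function theorem gives you a local diffeomorphism of the family near the central fiber, but the conclusion that $p^{-1}(U_0)$ is open needs two further points it does not supply: (i) that the \emph{entire} fiber $p(t)$ remains inside $N$ for small $t$ (a priori part of $p(t)$ could lie outside the region where your IFT statement says anything), and (ii) that $\pi|_{p(t)}:p(t)\to W_0$ is actually bijective for small $t$, not merely a proper local diffeomorphism (hence a covering map). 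Both are true and standard: (i) uses that the graph of the plot is closed and $N^c\subseteq D^p$ is compact, so $\Gamma(p)\cap(\R\times N^c)$ has closed image in $\R$ missing $0$; (ii) uses compactness of the fibers together with injectivity at $t=0$ and an open-and-closed argument in $W_0$.

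The paper handles exactly these compactness issues by a different, slicker device: the conditions ``$W\subseteq N$'' and ``$\pi|_W$ has no critical points'' are exhibited as preimages of the open point $\{\emptyset\}\subseteq\cl(K)$ for compact $K$ (Corollary~\ref{cor:empty_open}), via the continuous maps $W\mapsto W\cap N^c$ and $W\mapsto\pi(\mathrm{crit}(\pi|_W))$, so openness comes for free without any family-wise IFT analysis; your route works too once (i) and (ii) are supplied, but you should supply them. One small slip at the end: translating $U_0$ by the $Diff(M)$-action does nothing, since the action preserves fibers of $Emb(M,D^p)\to\psi_d^M(D^p)$; the open cover of $\psi_d^M(D^p)$ comes solely from varying the reference submanifold $W_0=\phi_0(M)$.
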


\begin{proof}
	We need to show that $Emb(M, D^p)$ is locally isomorphic to a trivial $Diff(M)$-principle bundle. Fix an embedding $M \subseteq D^p$. We will also let $M$ denote the image $ M \in \psi_d^M(D^p)$. Let $\nu_{M}$ be the normal bundle of $M$ and fix a tubular neighborhood of $N$. We let $N \subseteq D^p$ denote the tubular neighborhood image of $\nu_{N}$. The compliment $N^c \subseteq D^p$ is a compact subset. Consider the map:
	\begin{align*}
		\psi_d^M(D^p) &\to \cl(N^c)
		W &\mapsto W \cap N^c
	\end{align*}
	This map sends plots to plots and hence is continuous. Moreover, since $N^c$ is compact Lemma~\ref{cor:empty_open} tells us that $\{\emptyset\} \subseteq \cl(N^c)$ is open. It follows that 
	\begin{equation*}
		U_N = \{ W \in \psi_d^M(D^p) \; | \; W \subseteq N \} = ( (-) \cap N^c)^{-1}(\emptyset)
	\end{equation*}
	is an open subset of $\psi_d^M(D^p)$. 

	Let $\pi:N \to M$ be the projection to the zero section. There is an embedding $\Gamma(\nu_{M}) \to U_N$ which takes a section of the normal bundle of $M$ to its graph viewed as a submanifold of $N$. Its image consists of those $W \in U_N$ such that $\pi|_W: W \to M$ is a diffeomorphism. We will now show that this is an open subset of $U_N$. Consider the map
	\begin{align*}
		\rho:U_N & \to \cl(M) \\
		W & \mapsto \pi(\{ w \in W \;|\; w \text{ is a critical point for } \pi|_W  \})
	\end{align*}
which sends $W$ to the image in $M$ of the critical points of $\pi|_W$. The set of critical points is a closed subset of $W$. Since $W$ is compact there exists a $\lambda$ such that $W$ is contained in the image of the $\lambda$-disk bundle of $\nu_M$ (for some chosen metric). It follows that $\pi|_W: W \to M$ is proper and hence the map $\rho$ is well-defined. It also clearly sends plots to plots and so is continuous. Since $M$ is compact $\{\emptyset\} \subseteq \cl(M)$ is open by Lemma~\ref{cor:empty_open}, and hence its inverse image
\begin{equation*}
	V_N = \{ W \in U_N \; |\; \pi|_W: W \to M \text{ is a diffeomorphism} \} \cong \Gamma(\nu_M)
\end{equation*}
	 is an open subset of $\psi_d^M(D^p)$.
	 
	 The restriction of $Emb(M,D^p)$ over $V_N$ consists of those embeddings $\phi:M \hookrightarrow N \subseteq D^p$ such that
	 \begin{equation*}
	 	M \stackrel{\phi}{\to} N \stackrel{\pi}{\to} M
	 \end{equation*}
	 is a diffeomorphism. We have a canonical isomorphism:
	\begin{align*}
		Emb(M,D^p)|_{V_N} &\cong Diff(M) \times V_N \\ 
		(\phi: M \to N) & \mapsto ( \pi \circ \phi,  \phi(M) \subseteq N)
	\end{align*}
	which is clearly compatible with the action of $Diff(M)$ on $Emb(M,D^p)$. This shows that $Emb(M,D^p)$ is a locally trivial principle $Diff(M)$-bundle over  $\psi_d^M(D^p)$.
\end{proof}

\begin{lemma}
	The colimit $Emb(M,D^\infty)$ is contractible. 
\end{lemma}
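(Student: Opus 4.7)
The plan is to use the classical ``push into a complementary subspace and interpolate'' trick. First, I observe that since $M$ is compact, every embedding $\phi \in Emb(M, D^\infty)$ has image lying inside $\mathrm{int}(D^p) \cong \R^p$ for some $p$, because $\phi(M)$ is compact and disjoint from $\partial D^p$. Plot-theoretically the same is true of any plot $\R \to Emb(M, D^\infty)$ (a compact family still has bounded image in each fiber, but more importantly each plot factors through some $D^p$). Hence it suffices to produce a contraction of $Emb(M, \R^\infty) = \colim_p Emb(M, \R^p)$ to a basepoint.

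Fix the splitting $\R^\infty = \R^\infty_{\mathrm{odd}} \oplus \R^\infty_{\mathrm{even}}$ by odd/even coordinates, and let $A \colon \R^\infty \to \R^\infty_{\mathrm{odd}}$ be the linear isometric embedding $(x_1,x_2,x_3,\ldots) \mapsto (x_1,0,x_2,0,x_3,\ldots)$. By Whitney embedding choose once and for all a basepoint embedding $\phi_* \colon M \to \R^\infty_{\mathrm{even}}$. I will build a contraction $H \colon [0,1] \times Emb(M,\R^\infty) \to Emb(M,\R^\infty)$ in two stages. For $s \in [0,\tfrac12]$, set $L_s = (1-2s)\,\mathrm{id} + 2s\,A$ and $H_s(\phi) = L_s\circ\phi$; an easy inductive check on coordinates shows that $L_s$ is injective for all $s \in [0,\tfrac12]$, so $L_s\circ\phi$ is an injective immersion and hence an embedding by compactness of $M$. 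For $s \in [\tfrac12,1]$, set $H_s(\phi) = (2-2s)\,A\phi + (2s-1)\,\phi_*$, a straight-line interpolation from $A\phi$ to $\phi_*$; this gives $H_0 = \mathrm{id}$ and $H_1 \equiv \phi_*$.

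The key verification in the second stage is that each intermediate map is still an embedding. The point is that $A\phi$ takes values in $\R^\infty_{\mathrm{odd}}$ while $\phi_*$ takes values in the complementary subspace $\R^\infty_{\mathrm{even}}$. Hence for $s \in (\tfrac12,1)$, setting $u = 2-2s$ and $v = 2s-1$ (both positive), equality $u\,A\phi(x) + v\phi_*(x) = u\,A\phi(y) + v\phi_*(y)$ forces $A\phi(x) = A\phi(y)$ and $\phi_*(x) = \phi_*(y)$ by comparing odd and even components, so $x=y$. The same complementarity argument applied to the differentials shows the map is an immersion, and compactness of $M$ upgrades this to an embedding.

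The main obstacle is continuity, which must be checked in the plot topology on $Emb(M, D^\infty)$. A plot $\R \to Emb(M,D^\infty)$ is, by definition, a smooth fibered embedding $\R \times M \hookrightarrow \R \times D^p$ for some $p$. Applying $H$ fiberwise produces a smooth fibered map $\R \times [0,1] \times M \to \R \times [0,1] \times D^{2p+1}$ (the doubling in dimension comes from $A$), and the verifications above show that for each $(r,s)$ the restriction to $\{r\}\times\{s\}\times M$ is an embedding. This is precisely the plot condition for the diffeology on $Emb(M, D^\infty)$, so $H$ sends plots to plots and is therefore continuous. This completes the contraction.
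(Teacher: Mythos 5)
Your proof is correct, and it is a genuinely different implementation of the contraction than the one in the paper, though both belong to the same standard family of ``infinite swindle'' arguments for embeddings into $\R^\infty$. The paper fixes the basepoint $\phi_0$ (supported in the first $N$ coordinates), and contracts by a finite concatenation of elementary moves: interpolate a coordinate shift, linearly switch the freed coordinate to the corresponding coordinate of $\phi_0$, repeat $N$ times, and finally scale the remaining coordinates to zero; each intermediate map is an embedding because at every moment either the $\phi_0$-part or the shifted $\phi$-part is already injective (and the shift interpolation is injective by the same triangular induction you use for $L_s$). Your version replaces this step-by-step bookkeeping with a single uniform two-stage homotopy: first interpolate with the ``spread into odd coordinates'' map $A$ (your inductive injectivity check for $L_s$ is right: even coordinates force $z_{2k}=0$, then the odd equations $(1-2s)z_{2k-1}+2sz_k=0$ kill the rest by strong induction), then take the straight line to a basepoint supported in the even coordinates, where injectivity and the immersion condition follow from the direct-sum decomposition, and compactness of $M$ upgrades injective immersions to embeddings. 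What your route buys is a formula independent of any auxiliary $N$ and a cleaner verification; what the paper's route buys is that every interpolation except the shift is trivially an embedding because an existing block of coordinates is left untouched.

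Two small points. First, your opening reduction to $Emb(M,\R^\infty)$ is the weakest step as written: a contraction of $Emb(M,\R^\infty)$ does not formally transport to $Emb(M,D^\infty)$ just because individual images are compact (one needs, e.g., a coordinatewise diffeomorphism $\R^p\cong \mathrm{int}(D^p)$ compatible with the stabilization maps). But the reduction is unnecessary: your homotopies are convex combinations in each coordinate, so if $\phi$ and a suitably rescaled Whitney basepoint $\phi_*$ have image in the open cube, every intermediate embedding does too, and you can run the whole argument inside $D^\infty$ directly. Second, your continuity check (fiberwise smooth formulas send product-type plots to plots, stage by stage in the colimit) is at the same level of rigor as the paper's own treatment, which asserts continuity of its concatenated paths without comment, so no complaint there.
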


\begin{proof}
	A standard contraction works in this case. Let $\phi_0 = (\phi_0^{(k)})_{k = 1}^\infty$ be the coordinates of an element $Emb(M,D^\infty) = \colim_p Emb(M,D^p)$. Thus there exists $N$ such that $\phi_0^{(k)} \equiv 0$ is the constant zero function whenever $k > N$. 
	
	 We will build a continous contraction onto $\phi_0$. For an arbitrary element $\phi: M \to D^\infty$, we will let $(\phi^{(k)})_{k = 1}^\infty$ denote its coordinates. 	We consider the family of embeddings $R^\infty \to R^\infty$ which on coordinates is given by
	\begin{equation*}
		e_i \mapsto t e_i + (1-t)e_i
	\end{equation*}
	This is an injection for each $t$ and hence postcomposing with this gives a path from the embedding $(\phi^{(1)},\phi^{(2)}, \dots )$ to the embedding $(0, \phi^{(1)},\phi^{(2)}, \dots )$. A similar path allows us to change the first coordinate to the coordinate function $\phi_0^{(1)}$. Then we shift again to get $(\phi_0^{(1)}, 0 ,\phi^{(1)},\phi^{(2)}, \dots )$, and then change the second coordiante to $\phi_0^{(2)}$. In this way we obtain a sequence of paths:
	\begin{align*}
		(\phi^{(1)},\phi^{(2)}, \dots ) \\
		(0, \phi^{(1)},\phi^{(2)}, \dots ) \\
		(\phi_0^{(1)}, \phi^{(1)},\phi^{(2)}, \dots ) \\
		(\phi_0^{(1)}, 0 ,\phi^{(1)},\phi^{(2)}, \dots ) \\
		(\phi_0^{(1)}, \phi_0^{(2)},\phi^{(1)},\phi^{(2)}, \dots ) \\
		(\phi_0^{(1)}, \phi_0^{(2)}, 0 ,\phi^{(1)},\phi^{(2)}, \dots ) \\
		(\phi_0^{(1)}, \phi_0^{(2)}, \phi_0^{(3)}, ,\phi^{(1)},\phi^{(2)}, \dots ) \\
		\cdots
	\end{align*}
After the $N^\text{th}$ iteration we have arrived at an embedding of the form 
\begin{equation*}
	(\phi_0^{(1)}, \phi_0^{(2)}, \dots, \phi_0^{(N)}, \phi^{(1)},  \phi^{(2)}, \dots)
\end{equation*}	
At this point we run a linear homotopy shrinking all coordiantes in degrees $k > N$ to zero to obtain the embedding:
\begin{equation*}
	(\phi_0^{(1)}, \phi_0^{(2)}, \dots, \phi_0^{(N)}, 0 , 0, \dots) = \phi_0
\end{equation*} 	
as desired. 	
\end{proof}

\begin{corollary}
	$\psi_d^M(D^p) \simeq BDiff(M)$ is a model for the classifying space of the diffeomorphism group $Diff(M)$. \qed
\end{corollary}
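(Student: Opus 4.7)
The plan is to obtain the corollary as the direct consequence of the two preceding lemmas by passing to the colimit in the ambient dimension $p \to \infty$. Concretely, the second lemma furnishes, for each $p$, a principal $\mathrm{Diff}(M)$-bundle
\begin{equation*}
    \mathrm{Diff}(M) \longrightarrow Emb(M,D^p) \longrightarrow \psi_d^M(D^p),
\end{equation*}
and these bundles are compatible with the inclusions $D^p \hookrightarrow D^{p+1}$ (an embedding into $D^p$ is also an embedding into $D^{p+1}$, and the local trivializations constructed from a tubular neighborhood $N \subseteq D^p$ are compatible when one enlarges the ambient disk). Taking the colimit in $p$ yields a principal $\mathrm{Diff}(M)$-bundle
\begin{equation*}
    \mathrm{Diff}(M) \longrightarrow Emb(M,D^\infty) \longrightarrow \psi_d^M(D^\infty).
\end{equation*}

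First I would verify that the colimit of principal bundles remains a principal bundle. Since local triviality is a local condition and the opens $V_N$ constructed in the proof of the second lemma are compatible under the stabilization $D^p \subseteq D^{p+1}$, their colimits form an open cover of $\psi_d^M(D^\infty)$ over which $Emb(M,D^\infty)$ trivializes $\mathrm{Diff}(M)$-equivariantly. Next, the third lemma tells us that the total space $Emb(M,D^\infty)$ is contractible. The long exact sequence of homotopy groups of a fibration (applied to the underlying Serre fibration of the principal bundle) then forces
\begin{equation*}
    \pi_k\bigl(\psi_d^M(D^\infty)\bigr) \cong \pi_{k-1}\bigl(\mathrm{Diff}(M)\bigr)
\end{equation*}
for all $k \geq 1$, and $\pi_0(\psi_d^M(D^\infty)) = *$. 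These are precisely the homotopy groups of $B\mathrm{Diff}(M)$.

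To upgrade this to a weak equivalence, I would invoke the classical fact that any principal $G$-bundle whose total space is contractible classifies $G$, i.e.\ the base is canonically weakly equivalent to $BG$; this is typically expressed by the Borel construction $EG \times_G \mathrm{Diff}(M) \simeq B\mathrm{Diff}(M)$, or equivalently by noting that the map to $B\mathrm{Diff}(M)$ classifying the bundle is a weak equivalence since both source and target have the same homotopy groups and the map is induced by a principal bundle with contractible total space. The main technical obstacle is therefore the first step: carefully checking that local triviality survives the colimit in the category of $\Delta$-generated (or diffeological) spaces. This requires verifying that the stabilization maps $V_N \hookrightarrow V_{N'}$ (for nested tubular neighborhoods as $p$ grows) are open embeddings whose union exhausts $\psi_d^M(D^\infty)$, and that the trivializations $Emb(M,D^p)|_{V_N} \cong \mathrm{Diff}(M) \times V_N$ are compatible with the colimit topology defined by plots. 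Once that is in place, the corollary is immediate.
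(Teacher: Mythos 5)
Your proposal is correct and is exactly the argument the paper intends: the corollary (which should be read with $D^\infty$, i.e.\ the colimit, as in the two preceding lemmas) follows immediately by combining the principal $\mathrm{Diff}(M)$-bundle $Emb(M,D^\infty) \to \psi_d^M(D^\infty)$ with the contractibility of its total space. The paper records no further proof, treating precisely this combination as immediate, so your extra care about local triviality surviving the colimit is a reasonable elaboration rather than a divergence.
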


\section{Low degree homotopy groups of $MTSO(d)$} \label{app:lowhomotopy}

For $k=0$ we have that $\pi_0 MTSO(d)$ is given by 
 the quotient of the monoid of diffeomorphism classes of oriented $d$-manifolds modulo the relation that $[M] \sim 0$ whenever there exists an oriented $(d+1)$-manifold $W$ with $\partial W = M$ and with a non-vanishing vector field on $W$ which restricts to the inward pointing vector field on $M$. 
The Euler class is the obstruction to finding a non-vanishing vector field which restricts to the inward pointing one on the boundary, and so we can restate the relation as: $[M] \sim 0$ whenever there exists an oriented $(d+1)$-manifold $W$ with $\partial W = M$ and such that each component of $W$ has zero Euler characteristic. 

In dimension $d=1$ we see that the Euler characteristic of $S^1 \times I$ is zero and so $2 \cdot [S^1] \sim 0$,  which the Euler characteristing of a genus $g$ surface with $k$ disks removed is $2 -2g - k$, which can only vanish if $k$ is even (and then only rarely). It follows that $\pi_0 MTSO(1) = \Z/2$, which of course also follows from the description $MTSO(1) \simeq \Sigma^{-1} \S$.

Now consider dimension $d=2$. Let $H_g$ be the solid genus $g$ handle body. The boundary is $\partial H_g = \Sigma_g$ is the surface of genus $g$ and $\chi(H_g) = 1-g$. From this we have:
\begin{equation*}
	\chi(H_g \setminus \sqcup_k D^3) = 1-g + k
\end{equation*}
which will vanish when $k = g-1$. Thus in $\pi_0MTSO(2)$ we obtain the relation $[\Sigma_g] = (1-g) \cdot [S^2]$. Moreover since the Euler characteristic of any connected oriented 3-manifold is zero, it follows that if $M$ is an oriented connected 3-manifold bounding $k$ copies of $S^2$, then $\chi(M) = k$. Which means $k \cdot [S^2] \sim 0$ if and only if $k=0$. Hence $\pi_0MTSO(2) \cong \Z$, and this isomorphism sends an oriented surface to its Euler characteristic divided by two.   

When $d=3$ we will see that $\pi_0MTSO(3) = 0$. Fix an oriented 3-manifold $M$. Then, since $\Omega^\text{or}_3 = 0$, there exists some connected oriented 4-manifold $W$ with $\partial W = M$. If $\chi(W) = 0$, then $W$ witnesses the relation $[M] \sim 0$ and we are done. Otherwise we replace $W$ with a connect sum. Taking the connect sum with $\C\P^2$ raises the Euler characteristic by one without changing the boundary, and hence if $\chi(M) = k$ is negative, we take the connect sum of $W$ with $\C\P^2$ $|k|$-times. We have:
\begin{equation*}
	\chi( W \# \underbrace{\C\P^2 \# \cdots \# \C\P^2}_{\text{$|k|$-times}}) =  0.
\end{equation*}
On the other hand taking the connect sum with $T^4 \# \C\P^2$ lowers the Euler characteristic by one, and so if $k$ is positive, we have:  
 \begin{equation*}
 	\chi( W \# \underbrace{T^4 \#\C\P^2 \# \cdots \# T^4 \#\C\P^2}_{\text{$k$-times}}) =  0.
 \end{equation*}
In both case we obtain a manifold witnessing $[M] \sim 0$. 

The case $d=4$ is more interesting because there are oriented 4-manifolds which do not bound oriented 5-manifolds. The group $\Omega_4^\text{or} \cong \Z$ and is given by the signature. We have a surjective homomorphism:
\begin{equation*}
	\pi_0 MTSO(4) \to \Omega_4^\text{or} \cong \Z
\end{equation*}
The kernel consists of those 4-manifolds which do bound oriented 5-manifolds, modulo those which bound connected oriented 5-manifolds with zero Euler characteristic. Observe that if $W$ is a 5-manifold bounding a 4-manifold $M$, then $\chi(W) = \frac{1}{2} \chi(M)$, so for example $D^5$ is a $5$-manifold with $\partial D^5 = S^4$ and  $\chi(D^5) = +1$,  while $D^3 \times \Sigma_g$ is a $5$-manifold with $\partial(D^3 \times \Sigma_g) = S^2 \times \Sigma_g$ (which has zero signature) and  $\chi(D^3 \times \Sigma_g) = 2 - 2g$. Note also that if $W_1$ and $W_2$ are $5$-manifolds, then $\chi(W_1 \# W_2) = \chi(W_1) + \chi(W_2)$.

Now suppose that $W$ is a connected oriented 5-manifold bounding the 4-manifold $M$. If $k = \frac{1}{2}\chi(M) \leq 0$ is non-positive, then $W \setminus \sqcup_k D^5$ has vanishing Euler characteristic and bounds $M \cup \sqcup_k S^4$, which gives $[M] \sim (-k) \cdot [S^4]$. For example:
\begin{equation*}
	[S^2 \times \Sigma_g] \sim (2-2g) \cdot [S^4].
\end{equation*}
If $k = \frac{1}{2}\chi(M) >  0$ is positive, then taking the connect sum with copies of $D^3 \times \Sigma_g$ and possibly a copy of $D^5$ we get a manifold with zero Euler characteristic whose boundary is the union of $M$ and copies of $S^2 \times \Sigma_g$ and possibly a copy of $S^2$. This gives a linear relation which shows that
\begin{equation*}
	[M] \sim \frac{1}{2}\chi(M) \cdot [S^4].
\end{equation*}
Thus we have shown that there is a short exact sequence (which is necessarily split)
\begin{equation*}
	0 \to \Z \to \pi_0MTSO(4) \to \Z = \Omega_4^\text{or} \to 0
\end{equation*}
On the kernel of the signature map $\sigma: \pi_0MTSO(4) \to \Z \cong \Omega_4^\text{or}$, the identification $\ker(\sigma) \cong \Z$ is given by half the Euler characteristic $\frac{1}{2}\chi$, but that doesn't give a well-defined splitting in general since the Euler characteristic of oriented 4-manifolds can sometimes be odd (For example $\chi(\C\P^2)=3$). 

In fact by Poincare duality we have that 
\begin{equation*}
	\chi(M) = \begin{cases}
		\text{even } & \text{if $\sigma(M)$ is even} \\
		\text{odd } & \text{if $\sigma(M)$ is odd} \\
	\end{cases}.
\end{equation*} 
and so the quantity $\chi(M) + \sigma(M)$ is always even. 
An explicit splitting is given by the isomorphism
\begin{align*}
	\pi_0MTSO(4) & \cong \Z \oplus \Z \\
	M & \mapsto ( \frac{1}{2}(\chi(M) + \sigma(M)), \sigma(M)).
\end{align*}

The same sort of arguments work in higher dimensions and the observed phenomena depend of $d$ mod 4.

\printbibliography

\end{document}